\numberwithin{equation}{section}
\newtheorem{theorem}{Theorem}[section]
\newtheorem{lemma}[theorem]{Lemma}
\newtheorem{lem}[theorem]{Lemma}
\newtheorem{prop}[theorem]{Proposition}
\newtheorem{corollary}[theorem]{Corollary}
\theoremstyle{definition}
\newtheorem{defi}[theorem]{Definition}
\newtheorem{example}[theorem]{Example}
\newtheorem{remark}[theorem]{Remark}
\newtheorem*{rep@theo}{\rep@title}
\newcommand{\newreptheo}[2]{%
\newenvironment{rep#1}[1]{%
 \def\rep@title{#2 \ref{##1}}%
 \begin{rep@theo}}%
 {\end{rep@theo}}}
\newtheorem{theo}{Theorem}
\newtheorem*{conjecture}{Conjecture}
\newtheorem*{corol}{Corollary}
\newtheorem*{Defin}{Definition}
 \theoremstyle{plain}
\newtheorem*{namedthm}{\namedthmname}
\newcounter{namedthm}
 \newcommand{\CB}{\mathcal B}
 \newcommand{\D}{\mathbb D}
 \newcommand{\R}{\mathbb R}
 \newcommand{\Q}{\mathbb Q}
 \newcommand{\C}{\mathbb C}
  \newcommand{\PP}{\mathbb P}
 \newcommand{\N}{\mathbb N}
 \newcommand{\Z}{\mathbb Z}
 \newcommand{\Ox}{\mathcal O}
 \newcommand{\X}{\mathcal X}
 \newcommand{\Y}{\mathcal Y}
 \newcommand{\A}{\mathbb A}
 \newcommand{\Ll}{\mathcal L}
 \newcommand{\NA}{\mathrm{NA}}
 \newcommand{\FX}{\mathfrak X}
 \newcommand{\CD}{\mathcal D}
 \newcommand{\CZ}{\mathcal Z}
 \newcommand{\Id}{\mathrm{Id}}
 \newcommand{\CF}{\mathcal F}
 \newcommand{\rela}{\mathrm{rel}}
 \newcommand{\bDiv}{\mathbf{Div}}
 \newcommand{\bD}{\mathbf{D}}
 \newcommand{\bB}{\mathbf{B}}
 \newcommand{\triv}{\mathrm{triv}}
 \newcommand{\cM}{\mathcal M}
 \newcommand{\lct}{\mathrm{lct}}
 \newcommand \M{\mathcal M}
 \newcommand \CH{\mathcal H}
 \newcommand \PSH {{\rm PSH}}
  \newcommand \divv {{\rm div}}
  \newcommand \ord {{\rm ord}}
 \newcommand \vol{{\rm Vol}}
\newcommand{\Ric}{{\rm Ric}}
 \newcommand \cE{\mathcal E }
 \newcommand \mK{\mathcal K}
 \newcommand \lra{\longrightarrow}
 \newcommand{\hooklongrightarrow}{\lhook\joinrel\longrightarrow}
\subjclass[2020]{32Q26, 14J45, 32Q20, 32U05}
\keywords{K-stability, K\"ahler-Einstein metrics, Yau-Tian-Donaldson conjecture}
\begin{document}

\title[A relative YTD conjecture and stability thresholds]{A relative Yau-Tian-Donaldson conjecture\\ and stability thresholds}

\author{Antonio Trusiani}

\address{Institut de Math\'ematiques de Toulouse   \\ Universit\'e de Toulouse \\
118 route de Narbonne \\
31400 Toulouse, France\\}

\email{\href{mailto:antonio.trusiani@math.univ-toulouse.fr}{antonio.trusiani@math.univ-toulouse.fr}}
\urladdr{\href{https://sites.google.com/view/antonio-trusiani/home}{https://sites.google.com/view/antonio-trusiani/home}}
\date{\today}

\begin{abstract}
Generalizing Fujita-Odaka invariant, we define a function $\tilde{\delta}$ on a set of \emph{generalized} $b$-divisors over a smooth Fano variety. This allows us to provide a new characterization of uniform $K$-stability.
A key role is played by a new Riemann-Zariski formalism for $K$-stability.\newline
For any generalized $b$-divisor $\bD$, we introduce a (uniform) $\bD$\emph{-log $K$-stability notion}.
We prove that the existence of a unique K\"ahler-Einstein metric with prescribed singularities implies this new $K$-stability notion when the prescribed singularities are given by the generalized $b$-divisor $\bD$.\newline
We connect the existence of a unique K\"ahler-Einstein metric with prescribed singularities to a uniform \emph{$\bD$-log Ding-stability notion} which we introduce.
We show that these conditions are satisfied exactly when $\tilde{\delta}(\bD)>1$, extending to the $\bD$-log setting the $\delta$-valuative criterion of Fujita-Odaka and Blum-Jonsson.\newline
Finally we prove the \emph{strong} openness of the uniform $\bD$-log Ding stability as a consequence of the strong continuity of $\tilde{\delta}$.
\end{abstract}

 \maketitle

%\tableofcontents

\section*{Introduction}
The Yau-Tian-Donaldson conjecture for smooth Fano varieties has been solved in a series of papers \cite{CDS15I, CDS15II, CDS15III}. It produces a deep link between Differential and Algebraic Geometry, connecting the existence of K\"ahler-Einstein metrics to the algebro-geometric notion called $K$-(poly)stability. There have been several developments in the last decades around the study of $K$-stability (see the survey \cite{Xu20} and reference therein).

The Yau-Tian-Donaldson conjecture has then been extended to singular varieties and to the \emph{log setting} \cite{LTW22, Li22, LXZ22}, i.e. to log Fano pairs $(Y,\Delta)$: $\Delta$ effective $\Q$-divisor such that $(Y,\Delta)$ is klt and $-(K_Y+\Delta)$ is ample. In this case $(Y,\Delta)$ is \emph{log K-stable} if and only if there exists a \emph{log} K\"ahler-Einstein metric.

When a pair $(Y,\Delta)$ \emph{dominates} a fixed Fano manifold $X$ through a birational morphism $Y\to X$, a log K\"ahler-Einstein for $(Y,\Delta)$ can be recovered from a \emph{K\"ahler-Einstein metric with prescribed singularities} on $X$ \cite{Tru20c}. These metrics are given as weak solutions to the Einstein equation
\begin{equation}
    \label{eqn:KEIntro}
    \begin{cases}
        \Ric\big(\omega+\frac{i}{2\pi}\partial \bar{\partial}u\big)=\omega+\frac{i}{2\pi}\partial \bar{\partial}u\\
        u\in \PSH(X,\omega), \,\lvert u-\psi \rvert\leq C
    \end{cases}
\end{equation}
where $\omega$ is a fixed K\"ahler form in $c_1(X)$ and where $\psi\in \PSH(X,\omega)$ ($\omega$-plurisubharmonic function) represents the \emph{prescribed singularities}. The Ricci form is defined in a weak sense and (\ref{eqn:KEIntro}) boils down to solving a complex Monge-Ampère equation (see Definition \ref{Defi:KE}). Let us recall that if $h$ is a smooth metric on $-K_X$ associated to $\omega$ then $\omega+\frac{i}{2\pi}\partial\bar{\partial}u$ is the curvature of the \emph{singular} positive metric $he^{-2u}$ \cite{Dem90}.\newline
%One then seeks for a solution of (\ref{eqn:KEIntro}) such that $\lvert \psi-u\rvert\leq C$ globally where $\psi\in \PSH(X,\omega)$ represents the \emph{prescribed singularities}.\newline

The upshot is that, in line with the MMP's philosophy, it is possible to study all log K\"ahler-Einstein metrics of pairs $(Y,\Delta)$ dominating $X$ directly on $X$ through the K\"ahler-Einstein metrics with prescribed singularities.

Let us stress that there are many prescribed singularities that are not recovered from dominating log Fano pairs.\newline

This article has the following two main goals.
\begin{itemize}
    \item[i)] Define algebro-geometric notions to characterize the existence of K\"ahler-Einstein metrics with prescribed singularities (a relative Yau-Tian-Donaldson correspondence).
    \item[ii)] Exploit valuative criteria and analyze how these new algebro-geometric notions are related to the usual $K$-stability. 
\end{itemize}

An analytic analogue of the second goal has been addressed in \cite{Tru20c} through the study of an extended Tian's $\alpha$-invariant \cite{Tian87}.

\subsection*{The $\delta$-invariant functions.\newline}

To determine if a $\Q$-Fano variety is $K$-stable an efficient criterion is provided by the $\delta$-invariant $\delta(X)$  \cite{FO18, BJ20}. Namely, $\delta(X)>1$ if and only if $(X,-K_X)$ is \emph{uniformly} $K$-stable. The latter is a strengthening of the $K$-stability notion which is equivalent to $K$-stability when the automorphism group of $X$ is discrete \cite{BHJ17, Der16, LXZ22}.\newline

We extend $\delta(X)$ in two different ways, obtaining functions $\delta, \tilde{\delta}: \bDiv_{-K_X}(X)\to \R$ where $\bDiv_{-K_X}(X)$ is a particular set of \emph{generalized $b$-divisors} (see Definition \ref{defi:Lpositive}). Here  $\bD=\{D_Y\}_{Y\geq X}$ is a generalized $b$-divisor if it is $b$-divisor in the sense of \cite{Sho03} where infinite countable sum is allowed.
We use the notation $\delta_\bD:=\delta(\bD)$, $\tilde{\delta}_\bD:=\tilde{\delta}(\bD)$.\newline%, and we call them respectively the $\bD$\emph{-log delta invariant} and the \emph{modified} $\bD$\emph{-log delta invariant}.
The definitions of $\delta,\tilde{\delta}:\bDiv_{-K_X}(X)\to \R$ are given using a Riemann-Zariski formalism which we do not state here (see subsection \ref{ssec:deltas}). 
However we can describe the values of $\delta,\tilde{\delta} $ at $\Q$-divisors.

Set $L:=-K_X$ and let $D_Y\subset Y$ be a $\Q$-divisor for $Y\overset{\rho}{\geq} X$, i.e. $Y$ dominates $X$ through a projective birational morphism $\rho:Y\to X$.
If $L_Y:=\rho^*L-D_Y$ is ample and $(Y,B_Y:=D_Y-K_{Y/X})$ is klt, then $D_Y$ naturally defines a $b$-divisors $\bD\in \bDiv_L(X)$ by pulling back and $(Y,B_Y)$ is a log Fano pair. In this case
\begin{equation}
    \label{eqn:DeltaIntro}
    \delta_{\bD}=\delta_{B_Y}(Y,L_Y):=\inf_{E/Y}\frac{A_{(Y,B_Y)}(E)}{S_{L_Y}(E)}
\end{equation}
(Remark \ref{rem:Deltas}). We denoted by $\delta_{B_Y}(Y,L_Y)$ the log version of the \emph{$\delta$-invariant}.
The infimum considers all prime divisors $E$ \emph{over} $Y$, i.e. on all $Z\geq Y$. The log discrepancy of a fixed prime divisor $E$ on $Z\overset{\rho}{\geq}X$ is defined as $A_{(Y,B_Y)}(E):=1+\ord_E(K_{Z/(Y,B_Y)})$, while $S_{L_Y}(E):=\frac{1}{(L_Y^n)}\int_{0}^{+\infty}\mathrm{Vol}_{Z}(\rho^*L_Y-tE)dt$ is the \emph{expected vanishing order of $L_Y$} along $E$. We refer to \cite{FO18, BJ20} for a discussion on how $\delta_{B_Y}(Y,L_Y)$ arises as infimum of log canonical thresholds wrt to $(Y,B_Y)$ of certain $\Q$-divisors $F\sim_\Q L_Y$. \newline
Let us stress that $B_Y$ is \emph{not} necessarily effective, although the negative part is exceptional with respect to $Y\geq X$.

By what said before the log Fano pair $(Y,B_Y)$ defines a prescribed singularities on $X$. Thus one of the main idea of the paper is to modify $\delta_\bD$ so that it measures the singularities of all prime divisors over $X$ with respect to $(X,\psi)$ where $\psi$ is the prescribed singularities describing $\bD$. Algebraically we have
$$
\tilde{\delta}_\bD:=\inf_{E/Y} \frac{A_{(Y,B_Y)}(E)+\ord_E(D_Y)}{S_{L_Y}(E)+\ord_E(D_Y)}
$$
(Remark \ref{rem:Deltas}). Indeed, $A_{(Y,B_Y)}(E)+\ord_E(D_Y)=A_X(E)$ is the log discrepancy of $E$ with respect to $X$, while $S_{L_Y}(E)+\ord_E(D_Y)$ can be seen as an expected vanishing order of $L$ along $E$ where one only considers sections that are \emph{compatible} with the singularities $\psi$.\newline

The prescribed singularities $\psi$ given by $\Q$-divisors $D_Y$ can be described by certain particular $\omega$-plurisubharmonic functions called \emph{algebraic model type envelopes}. Denote by $\cM_D(X,\omega)$ the set of all model type envelopes that can be approximated by algebraic model type envelopes \cite{Tru20c}. Algebraically $\cM_D(X,\omega)$ describes the \emph{closure} of all ample test configurations for $(X,-K_X)$ \cite{DX20}.

We show that $\bDiv_{-K_X}(X)$ is in one-to-one correspondence with
$$
\cM^+_D(X,\omega):=\big\{\psi\in \cM_D(X,\omega)\, :\, \int_X MA_\omega(\psi)>0\big\},
$$
where $MA_\omega$ is the non-pluripolar Monge-Ampère operator (Proposition \ref{prop:Corre}). This analytic description of $\bDiv_{-K_X}(X)$ is advantageous because it allows us to work directly on $(X,\omega)$. \newline

Studying $\tilde{\delta}:\bDiv_{-K_X}(X)\to \R$, we obtain the following new $K$-stability criterion.
\begin{theo}
\label{thmA}
Let $X$ be a smooth Fano variety, $L=-K_X$. Then $(X,L)$ is uniformly $K$-stable if and only if
\begin{equation}
    \label{eqn:Toulouse1}
    \sup_{\bD\in\bDiv_L(X)} \tilde{\delta}_{\bD}^{\frac{1}{n}}\langle (L-\bD)^n \rangle> (L^n)
\end{equation}
\end{theo}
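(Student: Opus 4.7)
The plan is to reduce Theorem~\ref{thmA} to the Fujita--Odaka/Blum--Jonsson valuative criterion, according to which $(X, L)$ is uniformly $K$-stable if and only if $\delta(X) > 1$. More precisely, I aim to establish the identity
\[
\sup_{\bD \in \bDiv_L(X)} \tilde{\delta}_\bD^{1/n}\,\langle(L-\bD)^n\rangle \;=\; \delta(X)^{1/n}\,(L^n),
\]
so that condition (\ref{eqn:Toulouse1}) becomes exactly $\delta(X) > 1$.

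The lower bound $\geq$ is realized at the trivial generalized $b$-divisor $\bD_\triv$ (all $D_Y = 0$): by Remark~\ref{rem:Deltas}, $\tilde{\delta}_{\bD_\triv} = \delta(X)$ and $\langle(L-\bD_\triv)^n\rangle = (L^n)$. This already handles the implication ``uniformly $K$-stable $\Rightarrow$ (\ref{eqn:Toulouse1})''. For the reverse upper bound, I would fix $\bD \in \bDiv_L(X)$ represented on a fine enough model $Y \overset{\rho}{\geq} X$ by $(Y, D_Y)$ with $L_Y := \rho^*L - D_Y$ nef and $(Y, B_Y)$ klt, so $\langle(L-\bD)^n\rangle = (L_Y^n)$. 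The target is $\tilde{\delta}_\bD\,(L_Y^n)^n \leq \delta(X)\,(L^n)^n$. Writing both sides as infima over prime divisors $E$ over $X$ and using $A_X(E) = A_{(Y,B_Y)}(E) + \ord_E(D_Y)$, this reduces to the valuative inequality
\[
(L_Y^n)^n\, S_L(E) \;\leq\; (L^n)^n\,\bigl(S_{L_Y}(E) + \ord_E(D_Y)\bigr)
\]
for every prime divisor $E$ over $X$. I would prove this by splitting $\int_0^\infty \vol(\rho^*L - tE)\,dt$ at $t = \ord_E(D_Y)$, using the decomposition $\rho^*L - tE = L_Y + D'_Y - (t-\ord_E(D_Y))E$ with $D'_Y \geq 0$ not containing $E$; bounding the $[0, \ord_E(D_Y)]$ part by $\ord_E(D_Y)\cdot(L^n)$; and estimating the tail via the Brunn--Minkowski/Khovanskii--Teissier concavity of $\vol^{1/n}$ together with $(L_Y^n) \leq (L^n)$ (from pseudoeffectivity of $D_Y$).

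The main obstacle is precisely this valuative volume inequality. A naive bound $S_L(E) \leq S_{L_Y}(E) + \ord_E(D_Y)$ already fails in elementary examples (e.g.\ $X = \PP^1$ with $E$ outside $\mathrm{supp}(D_Y)$), so one genuinely needs the $(L_Y^n)^n/(L^n)^n$ weighting; this is the structural reason behind the exponent $1/n$ in (\ref{eqn:Toulouse1}). Finally, extending the inequality from algebraic $\bD$ to arbitrary $\bD \in \bDiv_L(X)$ is handled by the Riemann--Zariski formalism of subsection~\ref{ssec:deltas}, exploiting continuity of $\tilde{\delta}$ and of the non-pluripolar intersection $\langle\,\cdot\,\rangle$ along increasing nets of models $Y \geq X$.
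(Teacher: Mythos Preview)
Your overall plan is on target and matches the paper: the identity
\[
\sup_{\bD\in\bDiv_L(X)}\tilde{\delta}_\bD^{1/n}\langle(L-\bD)^n\rangle=\delta(X)^{1/n}(L^n)
\]
is exactly what the paper proves (Proposition~\ref{prop:QQQQQ}(iii) via Theorem~\ref{thm:Main}(ii)), and you correctly isolate the nontrivial direction as the pointwise bound $V_1^n S_L(E)\le V_0^n\bigl(S_{L_Y}(E)+\ord_E(D_Y)\bigr)$ for every $E$.

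The gap is your proposed proof of that bound. Your split gives $\int_0^\nu f\le\nu V_0$ (fine), but for the tail you have $f(\nu+s)=\vol(L_Y+D'_Y-sE)\ge\vol(L_Y-sE)=g(s)$ by effectivity of $D'_Y$, which is the wrong direction; and Brunn--Minkowski/Khovanskii--Teissier concavity of $\vol^{1/n}$ produces \emph{lower} bounds on volumes, not the upper bound you need. Concretely, on the interval $s\in(\tau_{L_Y}(E),\,\tau_L(E)-\nu)$ one has $g(s)=0$ while $f(\nu+s)>0$, so no pointwise comparison between $f(\nu+\cdot)$ and $g(\cdot)$ can close the argument. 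Your sketch does not indicate how to overcome this.

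The paper's route is genuinely different. It joins $0$ to $\bD$ by the continuous path $t\mapsto\bD_t$ (via $\psi_t=P_\omega[t\psi_1+(1-t)\psi_0](0)$) and, writing $G_E(t)=S_{\bD_t}(E)+\ord_E(\bD_t)$, proves the differential inequality $\frac{d}{dt}\log G_E\ge n\,\frac{d}{dt}\log V_t$ along this path (Step~2 of the proof of Theorem~\ref{thm:Main}). The two analytic inputs are Proposition~\ref{prop:Magic}, which controls $\frac{d}{dt}\bigl(V_tG_E(t)\bigr)$ from below by $(\tau_t^E+\nu_t^E)\frac{dV_t}{dt}$, and Proposition~\ref{prop:OtherIneq}, the bound $\tau_{\bD}(E)\le(n+1)S_{\bD}(E)$. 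Integrating the differential inequality from $0$ to $1$ yields $G_E(1)\ge G_E(0)(V_1/V_0)^n$, which is precisely your target inequality. So the ``concavity'' that actually drives the proof is not BM/KT on $\vol^{1/n}$ but the comparison $\tau\le(n+1)S$ (itself a consequence of the elementary bound $\langle(L-\bD-xE)^n\rangle\ge(1-x/\tau)^n V_\bD$), leveraged through a Gr\"onwall-type argument along the path.
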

The quantity $\langle (L-\bD)^n\rangle$ is the \emph{volume} of $L-\bD$ in a Riemann-Zariski sense as in \cite{BFJ09}. If $\bD=D_Y$ for a $\Q$-divisor $D_Y$ on $Y\geq X$ then $\langle (L-\bD)^n\rangle=(L_Y^n)$.

If $(X,-K_X)$ is uniformly $K$-stable then $\tilde{\delta}_0=\delta(X)>1$ by \cite{FO18, BJ20}, i.e. one implication of Theorem \ref{thmA} is already known. The novelty of Theorem \ref{thmA} regards instead the reverse arrow. It says that estimating $\tilde{\delta}$ at no trivial elements $\bD\in \bDiv_{-K_X}(X)$ provides information on uniform $K$-stability.
Analytically (\ref{eqn:Toulouse1}) can be rephrased as
$$
\sup_{\psi\in\cM^+_D(X,\omega)}\tilde{\delta}_{\bD_\psi}^{\frac{1}{n}}\int_X MA_\omega(\psi)> \int_X \omega^n
$$
where $\bD_\psi$ is the element in $\bDiv_L(X)$ associated to $\psi\in\cM^+_D(X,\omega)$.

We expect that Theorem \ref{thmA} will help to classify the $K$-stability of Fano manifolds. \newline

As $\cM^+_D(X,\omega)$ has a natural $d_S$-distance \cite{DDNL19}, the set $\bDiv_{-K_X}(X)$ inherits a \emph{strong topology}.
\begin{theo}
\label{thmB}
Let $X$ be a smooth Fano variety, $L=-K_X$. Then $\tilde{\delta}:\bDiv_L(X)\to \R$ is strongly continuous and
\begin{equation}
    \label{eqn:LLL2}
    \delta(X)\leq (L^n)\inf_{\bD\in\bDiv_L(X)}\frac{\tilde{\delta}_\bD}{\langle (L-\bD)^n\rangle}\leq  (L^n)\inf_{\bD\in\bDiv_L(X)} \frac{\lct_X(\bD)}{\langle(L-\bD)^n\rangle}
\end{equation}
where $\lct_X(\bD)=\inf_{Y\geq X} \lct_{(Y,B_Y)}(D_Y)+1$, for $B_Y:=D_Y-K_{Y/X}$;
\end{theo}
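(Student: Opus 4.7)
\emph{Strategy.} My plan is to handle the chain of inequalities first, since they reduce to manipulations of the valuative formula $\tilde{\delta}_\bD = \inf_E A_X(E)/(S_{L_Y}(E)+\ord_E D_Y)$ recalled in the introduction, combined with the identity $\lct_X(\bD) = \inf_E A_X(E)/\ord_E D_Y$ (obtained from $A_X(E) = A_{(Y,B_Y)}(E)+\ord_E D_Y$) and the valuative characterization $\delta(X) = \inf_E A_X(E)/S_L(E)$ of Fujita--Odaka and Blum--Jonsson. Strong continuity I would address separately at the end. The rightmost inequality of (\ref{eqn:LLL2}) is then immediate: the denominator defining $\tilde\delta_\bD$ exceeds the one defining $\lct_X(\bD)$ by the non-negative term $S_{L_Y}(E)$, hence $\tilde\delta_\bD \leq \lct_X(\bD)$, and dividing by $\langle(L-\bD)^n\rangle$ and taking the infimum over $\bD$ yields the claim.

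\emph{Left inequality.} The heart of the argument is the sharp volume comparison
\begin{equation*}
    \langle(L-\bD)^n\rangle \bigl(S_{L_Y}(E)+\ord_E D_Y\bigr) \leq (L^n)\, S_L(E)
\end{equation*}
for every prime divisor $E$ on any model $Z\geq Y$; coupled with $A_X(E)\geq \delta(X)S_L(E)$ it yields $\tilde\delta_\bD\geq \delta(X)\langle(L-\bD)^n\rangle/(L^n)$. To prove it I would write the pullback $\pi'^*D_Y = aE + D''$ with $a = \ord_E(\pi'^*D_Y)$, substitute $s=t+a$ in $(L_Y^n)S_{L_Y}(E) = \int_0^{\tau_{L_Y}(E)}\vol(\pi^*L - (t+a)E - D'')\,dt$, and combine three ingredients: (i) monotonicity $\vol(\pi^*L-sE-D'')\leq \vol(\pi^*L-sE)$ when $D''$ is effective; (ii) the threshold comparison $\tau_{L_Y}(E)+a\leq \tau_L(E)$; and (iii) the bound $\vol(\pi^*L-sE)\geq (L_Y^n)$ for $s\in[0,a]$, which follows from nefness of $L_Y$. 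The trivial case $\bD=0$ gives equality, confirming that the bound is sharp. The main technicality is that $D''$ need not be effective, since the klt assumption on $(Y,B_Y)$ only bounds coefficients from above; I would absorb this by passing to a sufficiently high model on which the non-effective pieces are $X$-exceptional and hence contribute trivially to the volume, or, alternatively, by approximation inside the generalized $b$-divisor formalism coupled with the lower semi-continuity of $\tilde\delta$.

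\emph{Strong continuity.} The bijection $\bDiv_L(X)\leftrightarrow \cM_D^+(X,\omega)$ transports the strong topology to the $d_S$-distance on the envelopes $\psi_\bD$. My plan is to re-express $S_{L_Y}(E)+\ord_E D_Y$ as a genuinely analytic invariant $S_E(\psi_\bD)$, given as an integral of non-pluripolar volumes attached to $\psi_\bD$ and to the divisorial valuation $\ord_E$, and to invoke the $d_S$-continuity of such non-pluripolar Monge--Amp\`ere integrals from \cite{DDNL19}. This yields pointwise continuity of $\bD\mapsto A_X(E)/(S_E(\psi_\bD)+\ord_E D_Y)$ for each fixed $E$. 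The principal obstacle is upgrading pointwise continuity of an infinite family of continuous functionals to continuity of the infimum $\tilde\delta_\bD$. I would tackle this by establishing $E$-uniform estimates of the form $|S_E(\psi)-S_E(\psi')|\leq \eta(d_S(\psi,\psi'))\,A_X(E)$ with $\eta(t)\to 0$ as $t\to 0$, in the spirit of the basis-divisor filtration approximation used in \cite{BJ20} to handle $\delta$, combined with an Izumi-type lower bound on $A_X(E)$ that propagates the convergence uniformly in $E$ through the infimum.
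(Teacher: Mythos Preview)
Your treatment of the chain of inequalities is essentially correct, and close in spirit to the paper's. The right inequality is exactly Lemma~\ref{lem:Delta&lct}. For the left inequality, your volume computation establishes precisely the monotonicity $V_\bD(S_\bD(E)+\ord_E\bD)\leq (L^n)S_L(E)$, which is the case $\bD_0=0$ of the first half of (\ref{eqn:Magic}) in Proposition~\ref{prop:Magic}; the paper obtains it analytically via the formula $F_E(t)=\int_0^{g(t)}\int_X MA_\omega(v_r^t)\,dr$ and monotonicity of the non-pluripolar product, whereas you argue directly with divisorial volumes. Your worry about effectivity of $D''$ is misplaced: $\bD\in\bDiv_L(X)$ is effective by Definition~\ref{defi:Lpositive}(i), so $\pi^*D_Y$ is effective and $D''=\pi^*D_Y-aE$ (with $a$ the coefficient of $E$) is effective by construction. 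You are confusing $D_Y$ with the boundary $B_Y=D_Y-K_{Y/X}$; the klt hypothesis constrains $B_Y$, not $D_Y$.

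The strong continuity argument, however, is a genuine gap. Your plan amounts to proving a uniform estimate $|S_E(\psi)-S_E(\psi')|\leq \eta(d_S(\psi,\psi'))A_X(E)$, which is the right shape, but you give no mechanism for it. The paper's route is sharper and avoids any Izumi-type input: it interpolates $\bD_0\leq\bD_1$ by the path $\psi_t=P_\omega[t\psi_1+(1-t)\psi_0](0)$ and proves the two-sided bound $(V_1/V_0)^n\leq \tilde{\delta}_0/\tilde{\delta}_1\leq V_0/V_1$ (Theorem~\ref{thm:Main}(ii)). The upper bound is the monotonicity of $F_E$ you already have; the lower bound comes from a differential inequality $\frac{d}{dt}\log G_E(t)\geq n\frac{d}{dt}\log V_t$ for $G_E(t)=S_t^E+\nu_t^E$, obtained by combining (\ref{eqn:Magic}) with $\tau_t^E+\nu_t^E\leq (n+1)(S_t^E+\nu_t^E)$ (Proposition~\ref{prop:OtherIneq}). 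For non-comparable $\bD_k\to\bD$ one reduces to the ordered case via the max-envelope $\tilde{\psi}_k=P_\omega[\max(\psi_k,\psi)](0)$ (Proposition~\ref{prop:d_S}(ii)). Since volumes are $d_S$-continuous, the two-sided bound gives $\log\tilde{\delta}$ continuous with an explicit modulus---no uniformity argument over $E$ is needed at the end. Your approach might be salvageable, but the interpolation-and-integrate argument is both shorter and yields quantitative control.
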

The \emph{log-canonical threshold} in Theorem \ref{thmB} coincides with the complex singularity exponent $\lct_X(\psi)$ of the associated model type envelope $\psi$, i.e.
$$
\lct_X(\bD)=\inf_{E/X} \frac{A_X(E)}{\ord_E(\bD)}=\inf_{E/X}\frac{A_X(E)}{\nu(\psi,E)}=\sup\{c>0\, : \, \mathcal{I}(c\psi)=\mathcal{O}_X\}=:\lct_X(\psi).
$$
While in Theorem \ref{thmA} the values of $\tilde{\delta}$ at non-trivial elements give sufficient conditions to the uniform $K$-stability, in Theorem \ref{thmB}, $\bD\to \tilde{\delta}_\bD$ produces necessary conditions, i.e. it provides obstructions to uniform $K$-stability.
Analytically (\ref{eqn:LLL2}) becomes
$$
\delta(X)\leq \int_X \omega^n \inf_{\psi\in\cM^+_D(X,\omega)} \frac{\tilde{\delta}_{\bD_\psi}}{\int_X MA_\omega(\psi)}\leq \int_X \omega^n \inf_{\psi\in\cM^+_D(X,\omega)} \frac{\lct_X(\psi)}{\int_X MA_\omega(\psi)}.
$$

We invite the reader to consult Section \ref{sec:Last} for further and more general results regarding $\tilde{\delta}:\bDiv_L(X)\to \R$.% with other connections between $\delta(X)$ and $\tilde{\delta}$.

\subsection*{A relative Yau-Tian-Donaldson correspondence.\newline}
Let $L\to X$ be an ample line bundle over a normal projective variety $X$.
For any $\bD\in\bDiv_L(X)$ we define a natural $\bD$\emph{-log} $K$\emph{-stability notion}.\newline
We first interpret the usual $K$-stability notion in a \emph{Riemann-Zariski perspective}. Intuitively, we consider all the log $K$-stability notions for pairs $(Y,B_Y)$ dominating $X$ as particular algebro-geometric notions over $X$. We refer to Section \ref{sec:K-stability} for this new Riemann-Zariski formalism of $K$-stability.\newline

As a result, we extend the intersection formula of the \emph{Donaldson-Futaki invariant} associated to ample and normal test configurations $(\X,\Ll)$ \cite{Wang12, Oda13} to our setting in a natural way. Namely, we define a $\bD$-log Donaldson-Futaki invariant
$$
(\X,\Ll)\to DF(\X,\Ll;\bD)
$$
which is translation and pull-back invariant (Corollary \ref{cor:DF}).
\begin{Defin}
Let $(X,L)$ be a polarized normal projective variety and let $\bD\in\bDiv_L(X)$. Then $(X,L)$ is said to be
\begin{itemize}
    \item[i)] $\bD$\emph{-log} $K$\emph{-stable} if $DF(\X,\Ll;\bD)\geq 0$ for any ample and normal test configuration $(\X,\Ll)$ with equality if and only if $(\X,\Ll)$ is $\bD$-trivial;
    \item[ii)] \emph{uniformly $\bD$-log $K$-stable} if there exists $A>0$ such that $DF(\X,\Ll;\bD)\geq A J^\NA(\X,\Ll;\bD)$ for any ample and normal test configuration $(\X,\Ll)$.
\end{itemize}
\end{Defin}
The non-negative functional $J^\NA(-;\bD)$ measures the $\bD$-triviality of normal test configurations (Definition \ref{defi:JandE}). It reminds and extends the \emph{Non-Archimedean} $J$\emph{-energy} \cite{BHJ17}. Similarly, the $\bD$-triviality generalizes the triviality given by $\X\simeq X\times \C$ and it can be read analytically in terms of \emph{psh test curves} (Proposition \ref{prop:Dtriviality}).
Clearly uniform $\bD$-log $K$-stability implies $\bD$-log $K$-stability.

When $\bD=D_Y$ for an effective $\Q$-divisor $D_Y$ on $Y\overset{\rho_Y}{\geq} X$ such that $B_Y:=D_Y-K_{Y/X}$ is effective and $(Y,B_Y)$ is klt, the $\bD$-log $K$-stability of $(X,L)$ for $\bD$ corresponds to the usual log $K$-stability of $(Y,L_Y:=\rho_Y^*L-D_Y)$ with respect to the pair $(Y,B_Y)$. More generally, if $D_Y$ is a $\Q$-divisor for any $Y\geq X$, then the $\bD$-log $K$-stability of $(X,L)$ can be interpreted as an \emph{asymptotic log $K$-stability notion of $(Y,L_Y)$ wrt $(Y,B_Y)$} (Proposition \ref{prop:KstClass}).

Similar observations holds for the uniform $\bD$-log $K$-stability notion.\newline

In the Fano case, we propose the following $\bD$-log Yau-Tian-Donaldson conjecture.
\begin{conjecture}
Let $X$ be a smooth Fano variety, $L=-K_X$. Fix $\bD\in\bDiv_L(X)$ and let $\psi\in \cM^+_D(X,\omega)$ be the associated model type envelope. Then the followings are equivalent:
\begin{itemize}
    \item[i)] $(X,-K_X)$ admits a $[\psi]$-KE metric;
    \item[ii)] $(X,-K_X)$ is $\bD$-log $K$-stable.
\end{itemize}
\end{conjecture}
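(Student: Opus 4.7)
The plan is to split the conjecture into its two implications and to adapt the variational approach to Yau--Tian--Donaldson to the prescribed-singularities setting, working with the model-type envelope $\psi$ and the relative finite energy class $\mathcal E^{1}(X,\omega,\psi)$ of Darvas--Di Nezza--Lu.

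For the direction (i) $\Rightarrow$ (ii), I would use the relative Ding functional $D_\psi$. A $[\psi]$-KE metric $u_0$ is, by definition, a critical point of $D_\psi$, and as in the absolute case it can be shown to be a global minimizer of $D_\psi$ on $\mathcal E^{1}(X,\omega,\psi)$. Given any ample normal test configuration $(\mathcal X,\mathcal L)$, construct an associated weak geodesic ray $(u_t)_{t\geq 0}$ inside $\mathcal E^{1}(X,\omega,\psi)$ compatible with the prescribed singularities. The central analytic input is the identity
\[
\lim_{t\to\infty}\frac{D_\psi(u_t)}{t} \;=\; D^{\NA}(\mathcal X,\mathcal L;\bD),
\]
where the right-hand side is a non-Archimedean $\bD$-log Ding invariant dominating $DF(\mathcal X,\mathcal L;\bD)$ up to a non-negative $J^{\NA}(-;\bD)$ remainder. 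Convexity of $D_\psi$ along the ray forces the slope, and hence $DF(\mathcal X,\mathcal L;\bD)$, to be non-negative. The strict inequality for non-$\bD$-trivial test configurations will follow by identifying geodesic rays with vanishing slope via Proposition~\ref{prop:Dtriviality}, translating the analytic triviality of the associated psh test curve into $\bD$-triviality.

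For (ii) $\Rightarrow$ (i), the strategy is to leverage Theorems~A--B, together with a Li--Xu--Zhuang-style upgrade from $K$-stability to a valuative bound. The announced equivalence between $\tilde\delta(\bD)>1$, uniform $\bD$-log Ding stability, and the existence of a $[\psi]$-KE already settles the uniform regime (and, via the Fano-case equivalence of Ding and $K$-stability, uniform $\bD$-log $K$-stability). For the remaining non-uniform case, I would follow the LXZ strategy \cite{LXZ22}, adapted to the prescribed-singularities framework: quotient out the action of the stabiliser of $[\psi]$ inside $\mathrm{Aut}(X)$ to pass to a \emph{reduced} $\bD$-log uniform $K$-stability notion, transfer this to properness of $D_\psi$ modulo this group, and conclude existence by the variational principle in $\mathcal E^{1}(X,\omega,\psi)$.

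The main obstacle is expected to lie in this second implication at the valuative step: the LXZ argument exploits fine birational geometry of families of Fano varieties and the behaviour of $\delta$ under special degeneration, and one must check that the analogous statements survive once $L$ is replaced by the generalised $b$-divisor $L-\bD$ and log discrepancies are twisted into the expressions $A_{(Y,B_Y)}(E)+\mathrm{ord}_E(D_Y)$ governing $\tilde\delta$. In particular, establishing a suitable Fujita-type volume inequality and a special-test-configuration optimisation in the Riemann--Zariski framework of Section~\ref{sec:K-stability} is where most of the work should lie; a secondary difficulty is the construction and slope formula for weak geodesic rays in $\mathcal E^{1}(X,\omega,\psi)$ attached to arbitrary ample normal test configurations.
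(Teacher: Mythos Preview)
The statement you are attempting to prove is explicitly presented in the paper as a \emph{conjecture}, not a theorem; there is no proof in the paper to compare against. The paper only partially resolves it: Theorem~\ref{theoC} shows that the existence of a \emph{unique} $[\psi]$-KE metric implies \emph{uniform} $\bD$-log $K$-stability, and Theorem~\ref{thmD} gives the equivalence between unique $[\psi]$-KE, uniform $\bD$-log Ding stability, and $\tilde\delta_{\bD}>1$ under the hypothesis $\lct_X(\bD)>1$. Neither direction of the conjecture as stated (mere existence, mere $K$-stability) is established.

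Your sketch for $(i)\Rightarrow(ii)$ contains two issues. First, you have the inequality between $DF$ and $D^{\NA}$ reversed: Proposition~\ref{prop:DingK} gives $DF(\mathcal X,\mathcal L;\bD)\geq D^{\NA}(\mathcal X,\mathcal L;\bD)$, not the other way. Combined with the slope formula and convexity, this still yields $DF\geq 0$, so $K$-\emph{semistability} follows as you suggest. Second, and more seriously, your argument for strict inequality does not work. Knowing that $D_\psi$ attains its minimum only gives a non-negative slope; to conclude that the slope of $D_\psi$ vanishes \emph{only} along $\bD$-trivial rays you would need either coercivity (which is exactly what the uniqueness hypothesis in Theorem~\ref{thm:Ding} buys, and which the paper uses in Theorem~\ref{thm:KEDing}) or a strict convexity/linearity analysis of the $L$-functional along the specific geodesic rays produced by test configurations. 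Proposition~\ref{prop:Dtriviality} characterises $\bD$-triviality in terms of the test curve being constant equal to $\psi$, but it does not by itself show that a non-trivial test curve forces a strictly positive slope of $D_\psi$. This gap is already present in the absolute case and is not addressed here.

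For $(ii)\Rightarrow(i)$ you correctly locate the difficulty, but your proposal is a programme rather than a proof. The LXZ argument relies on finite generation and the existence of optimal destabilisations realised by divisorial valuations, all of which sit in the classical log Fano framework where $D_Y$ is a fixed $\Q$-divisor on a fixed model. Nothing in the paper establishes the analogous structure theory for generalised $b$-divisors $\bD\in\bDiv_L(X)$: there is no special-test-configuration reduction, no optimal-degeneration theorem, and no $\mathbb G$-equivariant version of $\tilde\delta_{\bD}$ developed here (the paper itself flags this last point under ``Further generalizations''). Invoking LXZ at this level of generality is therefore speculative, and the conjecture remains open.
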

By a $[\psi]$-KE metric, we mean a K\"ahler-Einstein metric with prescribed singularities encoded in $\psi$.\newline

We partially solve this conjecture.
\begin{theo}
\label{theoC}
\label{thmC}
Let $X$ be a smooth Fano variety, $L=-K_X$. Fix $\bD\in\bDiv_L(X)$ and let $\psi\in\cM^+_D(X,\omega)$ be the associated model type envelope. If there exists a unique $[\psi]$-KE metric then $(X,-K_X)$ is uniformly $\bD$-log $K$-stable.
\end{theo}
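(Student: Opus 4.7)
The plan is to follow the variational approach to the YTD correspondence of Berman--Darvas--Lu, Li and Boucksom--Hisamoto--Jonsson, adapted to the prescribed singularities setting. In the classical case, existence of a K\"ahler--Einstein metric on a Fano manifold with discrete automorphism group is upgraded to coercivity of the Mabuchi functional, and a slope inequality then yields uniform $K$-stability. Here the uniqueness hypothesis for the $[\psi]$-KE metric will play the role that discreteness of $\mathrm{Aut}(X)$ plays classically.

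The first step is to translate existence and uniqueness of a $[\psi]$-KE metric into a coercivity statement for a Mabuchi-type functional $M_\psi$ on the space $\mathcal E^1(X,\omega;[\psi])$ of finite energy $\omega$-psh potentials with singularity type $\psi$. Concretely, I would produce constants $A>0$ and $B\in\R$ such that
\[
M_\psi(\varphi)\geq A\,J_\psi(\varphi)-B \quad \text{for all } \varphi\in\mathcal E^1(X,\omega;[\psi]),
\]
where $J_\psi$ is the natural analogue of the $J$-functional relative to $\psi$. This builds on the analytic framework of \cite{Tru20c} and on the Darvas geometry of $\mathcal E^1(X,\omega;[\psi])$: uniqueness of the $[\psi]$-KE potential rules out finite energy geodesics along which $M_\psi$ is affine, and together with the lower semicontinuity and compactness already available in this relative setting it should promote minimization to genuine coercivity.

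The second step is to link this coercivity to an arbitrary ample normal test configuration $(\X,\Ll)$. I would associate to $(\X,\Ll)$ a maximal geodesic ray $t\mapsto \varphi_t$ in $\mathcal E^1(X,\omega;[\psi])$ preserving the singularity type $\psi$ along the ray, using the Ross--Witt Nystr\"om correspondence between psh test curves and geodesic rays, adapted so that the ray stays in $\cM^+_D(X,\omega)$ and remains compatible with the prescribed singularities encoded by $\bD$. One then establishes the two slope formulas
\[
\lim_{t\to\infty}\frac{M_\psi(\varphi_t)}{t}=DF(\X,\Ll;\bD),\qquad \lim_{t\to\infty}\frac{J_\psi(\varphi_t)}{t}=J^{\NA}(\X,\Ll;\bD).
\]
Combining these with the coercive inequality and letting $t\to\infty$ yields $DF(\X,\Ll;\bD)\geq A\,J^{\NA}(\X,\Ll;\bD)$, which is precisely uniform $\bD$-log $K$-stability.

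The main obstacle is the $M_\psi$-slope formula: one has to check that the intersection-theoretic definition of $DF(\X,\Ll;\bD)$ coming from the Riemann--Zariski formalism of Section~\ref{sec:K-stability} agrees with the asymptotic Mabuchi slope along the relative geodesic ray. The natural route is approximation: when $\bD=D_Y$ is a genuine $\Q$-divisor on a model $Y\geq X$ with $(Y,B_Y)$ klt and log Fano, the formula reduces to the known log slope formula on $(Y,L_Y)$; one then passes to the limit over the Riemann--Zariski tower using continuity of the intersection numbers $\langle (L-\bD)^n\rangle$ and strong $d_S$-continuity in $\cM^+_D(X,\omega)$, while simultaneously verifying that the geodesic ray and its Legendre transform behave well under this approximation. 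Controlling this limit uniformly, so that both slopes pass to the limit together with $\psi$, is the technical heart of the argument.
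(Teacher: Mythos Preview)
Your overall variational strategy is sound, but there is a genuine gap in the second step: the asserted slope identity
\[
\lim_{t\to\infty}\frac{M_\psi(\varphi_t)}{t}=DF(\X,\Ll;\bD)
\]
is not correct, even in the absolute case $\bD=0$. Along the geodesic ray attached to an ample test configuration the Mabuchi slope is governed by the non-Archimedean Mabuchi functional $M^{\NA}$, and one only has $DF=M^{\NA}+((\X_{0,\mathrm{red}}-\X_0)\cdot\Ll^n)/V\geq M^{\NA}$, with equality precisely when the central fibre is reduced. So even if you produce Mabuchi coercivity and a correct $J$-slope formula, you still need the separate intersection-theoretic inequality $DF\geq M^{\NA}$ in the $\bD$-log setting to close the argument; you cannot read it off from the slope of $M_\psi$. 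In addition, the paper does not develop any $\bD$-log $M^{\NA}$, nor a Mabuchi slope formula relative to $\psi$ (the entropy term makes this considerably more delicate than the energy terms), and the reference \cite{Tru20c} supplies coercivity of the $[\psi]$-relative \emph{Ding} functional, not of a Mabuchi functional.

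The paper avoids all of this by working with the Ding functional instead. By \cite{Tru20c} (Theorem~\ref{thm:Ding}), uniqueness of the $[\psi]$-KE metric forces $\mathrm{Aut}(X,[\psi])^\circ=\{\Id\}$ and gives coercivity $D_\psi\geq A\,J_\psi-B$ on $\cE^1_\psi$. For each ample test configuration one takes the associated $[\psi]$-relative psh ray (Corollary~\ref{cor:Abo}) and computes the slopes of $E_\psi$, $J_\psi$ via positive Deligne pairings (Corollary~\ref{cor:SlopeEJ}) and of $L$ via Proposition~\ref{prop:L}; these give exactly $E^{\NA}(\cdot;\bD)$, $J^{\NA}(\cdot;\bD)$, $L^{\NA}(\cdot;\bD)$, hence $D^{\NA}(\Ll;\bD)\geq A\,J^{\NA}(\Ll;\bD)$ (Theorem~\ref{thm:KEDing}). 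Uniform $\bD$-log $K$-stability then follows from the algebraic inequality $DF(\Ll;\bD)\geq D^{\NA}(\Ll;\bD)$ of Proposition~\ref{prop:DingK}, proved by reducing to the log case on each $Y\geq X$ and passing to the limit in the Riemann--Zariski tower. So the bridge between analysis and $DF$ is the single inequality $DF\geq D^{\NA}$, not a Mabuchi slope formula.
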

The uniqueness of $[\psi]$-KE metric is equivalent to ask that the group $\mathrm{Aut}(X,[\psi])=\big\{F\in \mathrm{Aut}(X)\, : \, \lvert \psi-\psi\circ F\rvert\leq C\big\}$ is discrete \cite{Tru20c}, which is a condition generically satisfied.\newline

To prove Theorem \ref{thmC}, we use a variational approach similarly to \cite{BBJ15} passing through a $\bD$\emph{-log Ding stability notion}. Indeed, combining the Riemann-Zariski framework with pluripotential theory, we define a $\bD$\emph{-log Ding functional}
$$
(\X,\Ll)\to D^\NA(\X,\Ll;\bD)
$$
which is translation and pull-back invariant. It generalizes the \emph{Non-Archimedean Ding functional} \cite{Berm16, BHJ17}.
\begin{Defin}
Let $(X,L)$ be a polarized smooth variety and let $\bD\in\bDiv_L(X)$. Then $(X,L)$ is said to be
\begin{itemize}
    \item[i)] $\bD$\emph{-log Ding stable} if $D^\NA(\X,\Ll;\bD)\geq 0$ for any $(\X,\Ll)$ ample and normal test configuration with equality if and only if $(\X,\Ll)$ is $\bD$-trivial;
    \item[ii)]\emph{uniformly} $\bD$\emph{-log Ding stable} if there exists $A>0$ such that $D^\NA(\X,\Ll;\bD)\geq A J^\NA(\X,\Ll;\bD)$ for any ample and normal test configuration $(\X,\Ll)$.
\end{itemize}
\end{Defin}
The $\bD$-log Ding stability can be viewed as an \emph{asymptotic log Ding stability} in the case when $D_Y$ is a $\Q$-divisor for any $Y\geq X$ (Proposition \ref{prop:DStCar}).\newline

Assuming $L=-K_X$, we prove that the $\bD$-log Ding functional dominates the $\bD$-log Donaldson-Futaki invariant (Proposition \ref{prop:DingK}). Thus Theorem \ref{thmC} follows from our next result.
\begin{theo}
\label{thmD}
Let $X$ be a smooth Fano variety, $L=-K_X$. Fix $\bD\in\bDiv_L(X)$ with $\lct_X(\bD)>1$ and let $\psi\in\cM^+_D(X,\omega)$ be the associated model type envelope. The followings are equivalent:
\begin{itemize}
    \item[i)] there exists a unique $[\psi]$-KE metric;
    \item[ii)] $(X,-K_X)$ is uniformly $\bD$-log Ding-stable;
    \item[iii)] $\delta_\bD>1$;
    \item[iv)] $\tilde{\delta}_\bD>1$.
\end{itemize}
\end{theo}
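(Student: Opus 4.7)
The plan is to establish the chain $(\mathrm{i}) \Leftrightarrow (\mathrm{ii}) \Leftrightarrow (\mathrm{iii}) \Leftrightarrow (\mathrm{iv})$, treating each link with a distinct ingredient. The hard step is $(\mathrm{i}) \Leftrightarrow (\mathrm{ii})$, a Berman--Boucksom--Jonsson-type variational argument adapted to the prescribed-singularities class $\cE^1(X,\omega;\psi)$; the remaining equivalences are essentially algebraic and follow Fujita--Odaka and Blum--Jonsson in the Riemann--Zariski formalism of Section \ref{sec:K-stability}.

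For $(\mathrm{i}) \Leftrightarrow (\mathrm{ii})$, recall from \cite{Tru20c} that the existence and uniqueness of a $[\psi]$-KE metric is equivalent to coercivity of the $\psi$-relative Ding functional on $\cE^1(X,\omega;\psi)$ modulo $\mathrm{Aut}(X,[\psi])$. I would test this coercivity along maximal finite-energy geodesic rays arising from ample normal test configurations $(\X,\Ll)$, identifying the slopes at infinity with the non-Archimedean functionals $D^\NA(\X,\Ll;\bD)$ and $J^\NA(\X,\Ll;\bD)$ defined earlier. Once this dictionary is set up, uniform $\bD$-log Ding stability is exactly the non-Archimedean shadow of coercivity. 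The reverse implication follows by approximating arbitrary psh test curves in $\cM^+_D(X,\omega)$ by ample normal test configurations and running the standard variational argument to produce a minimizer of the $\psi$-relative Ding functional, which is the $[\psi]$-KE.

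For $(\mathrm{ii}) \Leftrightarrow (\mathrm{iii})$, I would extend the Fujita--Odaka/Blum--Jonsson valuative criterion to the $\bD$-log setting. When $\bD$ comes from a $\Q$-divisor $D_Y$ on a fixed model $Y \geq X$ with $(Y,B_Y)$ klt and $L_Y$ ample, the identification $\delta_\bD = \delta_{B_Y}(Y,L_Y)$ from \eqref{eqn:DeltaIntro}, combined with the known equivalence between uniform log Ding stability of $(Y,L_Y)$ relative to $(Y,B_Y)$ and $\delta_{B_Y}(Y,L_Y) > 1$, gives the claim. For a general $\bD \in \bDiv_L(X)$, Proposition \ref{prop:DStCar} identifies $\bD$-log Ding stability as an asymptotic log Ding stability along models, and $\delta_\bD$ is by definition the Riemann--Zariski limit of the $\delta_{B_Y}(Y,L_Y)$, so one passes to the limit.

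For $(\mathrm{iii}) \Leftrightarrow (\mathrm{iv})$, I would compare the two invariants divisorially. Setting $c_E := \ord_E(\bD) \geq 0$ and using $A_X(E) = A_{(Y,B_Y)}(E) + c_E$, the ratios
\begin{equation*}
\tilde{\delta}_E = \frac{A_{(Y,B_Y)}(E) + c_E}{S_{L_Y}(E) + c_E}, \qquad \delta_E = \frac{A_{(Y,B_Y)}(E)}{S_{L_Y}(E)}
\end{equation*}
satisfy $\tilde{\delta}_E \geq 1+\varepsilon \Rightarrow \delta_E \geq 1+\varepsilon$ by a one-line algebraic inequality, giving $(\mathrm{iv}) \Rightarrow (\mathrm{iii})$ unconditionally. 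The reverse direction is where $\lct_X(\bD) > 1$ enters: it provides a uniform $\eta > 0$ with $A_{(Y,B_Y)}(E) \geq \eta\, c_E$ for every divisorial valuation $E/X$, which prevents $\tilde{\delta}_E$ from approaching $1$ as $c_E$ becomes large relative to $S_{L_Y}(E)$; combined with $\delta_\bD > 1$ this yields a uniform lower bound strictly greater than $1$ for $\tilde{\delta}_\bD$. The main obstacle throughout the proof is the first equivalence: one must set up a robust non-Archimedean calculus for model-type envelopes in $\cM^+_D(X,\omega)$ whose singularities need not arise from any single dominating model, and verify that slopes of the $\psi$-relative energies along finite-energy geodesic rays match $D^\NA(-;\bD)$ and $J^\NA(-;\bD)$ in the full Riemann--Zariski formalism.
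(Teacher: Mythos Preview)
Your outline for $(\mathrm{i})\Rightarrow(\mathrm{ii})$ and for $(\mathrm{iii})\Leftrightarrow(\mathrm{iv})$ matches the paper: the former is Theorem~\ref{thm:KEDing} (coercivity of $D_\psi$ plus slope formulas from Corollary~\ref{cor:SlopeEJ} and Proposition~\ref{prop:L}), and the latter is exactly Proposition~\ref{prop:deltacompa}, where you correctly identify that $\lct_X(\bD)>1$ is only used for $(\mathrm{iii})\Rightarrow(\mathrm{iv})$.

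However, two of your proposed implications take a different route from the paper and each has a real gap.

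\emph{The implication $(\mathrm{ii})\Rightarrow(\mathrm{iii})$.} You propose to reduce to the known log valuative criterion on models $(Y,B_Y)$ via Proposition~\ref{prop:DStCar} and then pass to the limit, asserting that ``$\delta_\bD$ is by definition the Riemann--Zariski limit of the $\delta_{B_Y}(Y,L_Y)$''. This is not how $\delta_\bD$ is defined (Definition~\ref{defi:delta}), and no such convergence is established in the paper; moreover, Proposition~\ref{prop:DStCar} gives $D^\NA(\Ll;\bD)$ as a $\sup\inf$ of the $D^\NA_{B_Y}$, which does not by itself transfer a uniform stability constant for $(X,L;\bD)$ to uniform constants for the $(Y,B_Y)$. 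The paper instead proves $(\mathrm{ii})\Rightarrow(\mathrm{iii})$ directly on $X$ (Theorem~\ref{thm:UniformToDelta}): for each prime divisor $E$ over $X$ it constructs, via Demailly's multiplier-ideal approximation and the projection $P_\omega[\psi]$ of Proposition~\ref{prop:Proj}, an explicit sequence of ample test configurations (Propositions~\ref{prop:Esmooth}, \ref{prop:Appoo}) whose $D^\NA$ and $J^\NA$ converge to quantities controlled by $A_X(E)$, $S_\bD(E)$ and $\tau_\bD(E)$; the uniform Ding inequality applied to this sequence then yields $A_X(E)-\ord_E(\bD)\geq (1+\delta/n)S_\bD(E)$.

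\emph{The implication $(\mathrm{ii})\Rightarrow(\mathrm{i})$.} You propose to go directly, ``approximating arbitrary psh test curves in $\cM_D^+(X,\omega)$ by ample normal test configurations''. The paper does \emph{not} establish such an approximation for arbitrary finite-energy $[\psi]$-relative geodesic rays; Propositions~\ref{prop:Esmooth} and~\ref{prop:Appoo} only approximate the very specific rays $\{u_t^{\psi,E}\}$ attached to a single divisor $E$. The paper avoids this difficulty entirely by closing the circle through $(\mathrm{iii})\Rightarrow(\mathrm{iv})\Rightarrow(\mathrm{i})$: once $\tilde{\delta}_\bD>1$, Theorem~\ref{thm:GeodStab} proves geodesic stability of $D_\psi$ directly from the psh-test-curve description (Propositions~\ref{prop:EnergyFormula}, \ref{prop:Necessaria}, \ref{prop:L2}), with no test configurations involved, and Theorem~\ref{thm:ConclusionKE} upgrades geodesic stability to coercivity by a compactness/convexity contradiction argument in $(\cE^1_\psi,d_1)$. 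This Darvas--Zhang-style step is the piece your outline is missing.
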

Note that the equivalence $(i)\Longleftrightarrow (ii)$ represents an algebro-geometric characterization of the existence of K\"ahler-Einstein metrics with prescribed singularities, i.e. a first instance of a Yau-Tian-Donaldson correspondence.

The last two points instead give valuative criteria, generalizing \cite{FO18, BJ20} and further justifying the definitions of $\delta$ and $\tilde{\delta}$. In particular, Theorems \ref{thmA}, \ref{thmB} and \ref{thmD} connect the uniform $\bD$-log Ding-stability to the usual uniform $K$-stability.

From the analytic point of view, they relate the existence of $[\psi]$-KE metrics with that of the genuine K\"ahler-Einstein metrics (cf. \cite{Tru20c}).\newline

Moreover we also have the following easy consequence of Theorems \ref{thmB}, \ref{thmD}.
\begin{corol}
\label{corA}
Let $X$ be a smooth Fano variety, $L=-K_X$. Then
$$
\bDiv^{UD}(X):=\big\{\bD\in \bDiv_L(X)\, :\, \lct_X(\bD)>1 \,\mathrm{and}\, (X,L)\, \mathrm{uniformly}\, \bD\mathrm{-log}\, K\mathrm{-stable}\big\}
$$
is strongly open.
\end{corol}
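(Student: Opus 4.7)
The plan is to identify $\bDiv^{UD}(X)$ with the superlevel set $\{\bD\in\bDiv_L(X) : \tilde{\delta}_\bD>1\}$ and then to invoke the strong continuity of $\tilde{\delta}$ provided by Theorem \ref{thmB}. The first observation is that the two defining conditions of $\bDiv^{UD}$ are not independent at the level of $\tilde{\delta}$: from the valuative formula $\tilde{\delta}_\bD = \inf_{E/X}\frac{A_X(E)}{S_{L_Y}(E)+\ord_E(\bD)}$, dropping the non-negative term $S_{L_Y}(E)$ in the denominator yields the pointwise bound $\tilde{\delta}_\bD \leq \inf_{E/X}\frac{A_X(E)}{\ord_E(\bD)} = \lct_X(\bD)$. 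Consequently $\tilde{\delta}_\bD>1$ already forces $\lct_X(\bD)>1$, so the $\lct$-constraint in the definition of $\bDiv^{UD}$ is automatically absorbed by the $\tilde{\delta}$-condition.

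Next I would apply Theorem \ref{thmD}: as soon as $\lct_X(\bD)>1$, its equivalences $(ii)\Leftrightarrow(iv)$ match uniform $\bD$-log Ding-stability with $\tilde{\delta}_\bD>1$; coupled with Proposition \ref{prop:DingK}, which compares the $\bD$-log Ding functional with the $\bD$-log Donaldson--Futaki invariant in the Fano setting, this identifies uniform $\bD$-log $K$-stability with $\tilde{\delta}_\bD>1$ on the klt region $\{\lct_X>1\}$. Combining with the first paragraph, one obtains the identification $\bDiv^{UD}(X)=\{\bD\in\bDiv_L(X) : \tilde{\delta}_\bD>1\}$.

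Finally, the strong continuity of $\tilde{\delta}:\bDiv_L(X)\to\R$ established in Theorem \ref{thmB} makes the superlevel set $\{\tilde{\delta}>1\}$ strongly open, and the corollary follows. The main conceptual hurdle, to the extent that there is one, is the matching of uniform $\bD$-log $K$-stability with uniform $\bD$-log Ding-stability used in the middle step: one direction is immediate from Proposition \ref{prop:DingK}, while the other depends crucially on the hypothesis $\lct_X(\bD)>1$, which is exactly the regime in which Theorem \ref{thmD} provides the full chain of equivalences and closes the loop. Modulo this, the argument rests on only two ingredients, the definitional bound $\tilde{\delta}\leq\lct$ and the continuity of $\tilde{\delta}$, consistent with the author's claim that the corollary is an easy consequence of Theorems \ref{thmB} and \ref{thmD}.
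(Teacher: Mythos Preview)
Your overall strategy matches the paper's: identify $\bDiv^{UD}(X)$ with $\{\tilde{\delta}>1\}$ via Theorem~\ref{thmD} and the bound $\tilde{\delta}_\bD\le\lct_X(\bD)$ (Lemma~\ref{lem:Delta&lct}), then invoke the strong continuity of $\tilde{\delta}$ from Theorem~\ref{thmB}. This is exactly what the paper does in Theorem~\ref{thm:SO}.

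There is, however, a genuine gap in your middle step. You claim that Proposition~\ref{prop:DingK} together with Theorem~\ref{thmD} ``identifies uniform $\bD$-log $K$-stability with $\tilde{\delta}_\bD>1$ on the klt region'', and that Theorem~\ref{thmD} ``closes the loop''. But Theorem~\ref{thmD} does \emph{not} list uniform $\bD$-log $K$-stability among its equivalent conditions; its item (ii) is uniform $\bD$-log \emph{Ding} stability. Proposition~\ref{prop:DingK} gives only the inequality $DF(\Ll;\bD)\ge D^{\NA}(\Ll;\bD)$, hence only the implication (uniform Ding $\Rightarrow$ uniform $K$). The reverse implication is precisely the unresolved $\bD$-log Yau--Tian--Donaldson conjecture stated in the introduction. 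So from ``$\lct_X(\bD)>1$ and $(X,L)$ uniformly $\bD$-log $K$-stable'' you cannot deduce $\tilde{\delta}_\bD>1$ with the tools available.

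What is going on is that the introduction's display of $\bDiv^{UD}(X)$ is inconsistent with the body of the paper: in Section~\ref{sec:Last} the superscript $UD$ stands for \emph{uniform Ding}, and the paper's proof (Theorem~\ref{thm:SO}) treats that version. Read that way, your argument is correct and coincides with the paper's; read literally as stated (with $K$-stability), neither your argument nor the paper's establishes the result.
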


\subsection*{Strategy of the proofs.\newline}
Let us start explaining the main ideas in the proof of Theorem \ref{thmD} as it is dense of techniques and new ideas.

The proof $(i)\Rightarrow (ii)$ is based on two main ingredients: an analytic characterization of the existence of $[\psi]$-KE metrics and a generalization of \emph{Deligne pairings}.
More precisely, in \cite{Tru20c} the author proved that the existence of a unique $[\psi]$-KE metric is equivalent to the coercivity of a functional, the $[\psi]$\emph{-relative Ding functional}. 
This functional is defined on a complete geodesic metric space $\big(\cE^1_\psi,d_1\big)$ containing all $\varphi\in \PSH(X,\omega)$ such that $\lvert\varphi-\psi \rvert\leq C$.
In this paper, we first observe that every normal and ample test configuration defines a \emph{geodesic ray} in $\big(\cE^1_\psi,d_1\big)$ (Proposition \ref{prop:Current}, Corollary \ref{cor:Abo}).
Then we define positive Deligne pairings in the prescribed singularities setting to connect the value $D^\NA (\X,\Ll)$ to the slope at infinity of the $[\psi]$-relative Ding functional along the associate geodesic ray (subsection \ref{ssec:JandE}).
Although the same idea of the proof has been used in \cite{BBJ15} for the absolute setting $\psi=0$, in the prescribed singularities setting there are many new difficulties and technicalities.
Moreover, let us stress that our algebraic-geometric stability notions are defined in a Riemann-Zariski framework.
Thus a careful pluripotential-theoretical analysis is developed to define prescribed singularities that can be read in this abstract context (Section \ref{sec:Ding-StabilitySection}).

To prove $(ii)\Rightarrow (iii)$ we proceed in the following way. Through a generalization of Ross-Witt Nystr\"om to the prescribed singularities setting of the correspondence between \emph{psh test curves} and geodesic rays, we naturally associate to any $E$ prime divisor over $X$ a geodesic ray $\{u_t^{\psi,E}\}_{t\geq 0}$ in $\big(\cE^1,d_1\big)$. Then we relate the slope at infinity of the $[\psi]$-relative Ding functional along $\{u_t^{\psi,E}\}_{t\geq 0}$ to the quantity $A_X(E)-\ord_E(\bD)-S_\bD(E)$, where, generalizing formula (\ref{eqn:DeltaIntro}),
$$
\delta_\bD:=\inf_{E/X}\frac{A_X(E)-\ord_E(\bD)}{S_\bD(E)}
$$
(Definition \ref{defi:delta}). Thus the strategy consists in constructing a sequence of ample and normal test configurations $(\X_k,\Ll_k)$ such that
\begin{itemize}
    \item[i)] the sequence $D^\NA(\X_k,\Ll_k;\bD)$ converges to the slope of the $[\psi]$-relative Ding functional along $\{u_t^{\psi,E}\}_{t\geq 0}$;
    \item[ii)] the sequence $J^\NA(\X_k,\Ll_k;\bD)$ converges to a quantity that dominates a positive multiple of $S_\bD(E)$.
\end{itemize}
We achieve it in Propositions \ref{prop:Esmooth}, \ref{prop:Appoo} combining the Demailly's regularization argument based on multiplier ideal sheaves exploited in \cite{BBJ15} with a \emph{projection argument for different prescribed singularities}. More precisely, we first prove the result in the absolute setting $\psi=0$, recovering \cite{BBJ15} (Propsition \ref{prop:Esmooth}). Then we extend a projection operator, which goes back to \cite{RWN14}, $P_\omega[\psi](\cdot):\big(\cE^1_0,d_1\big)\to \big(\cE^1_\psi,d_1\big)$ to geodesic rays (see Proposition \ref{prop:Proj}). Thus we accomplish our strategy thanks to good properties of such projection operator. Indeed the inspiring thought is that one can recover all the quantities involved, such as $D^\NA(\X_k,\Ll_k;\bD)$ and $J^\NA(\X_k,\Ll_k;\bD)$, from the absolute setting through this pluripotential-theoretical techniques. Let us stress out that in this part of the proof the advantages of working on $(X,\omega)$ varying the prescribed singularities instead than varying the log Fano pairs above $X$ are evident.

Next, we prove the equivalence $(iii)\Leftrightarrow (iv)$ in Proposition \ref{prop:deltacompa}. This is inspired by an analogous result in \cite{Tru20c} regarding generalizations of the Tian's $\alpha$-invariant.

Finally, the proof of $(iv)\Rightarrow (i)$ is influenced by the recent paper \cite{DZ22}. We first prove that $\tilde{\delta}_\bD>1$ implies a \emph{uniform geodesic stability} of the $[\psi]$-relative Ding functional (Theorem \ref{thm:GeodStab}). Namely the slope at infinity of the $[\psi]$-relative Ding functional along normalized geodesic rays is bigger than a uniform positive constant. To accomplish this, we exploit the properties of psh test curves and a formula for the slope of the $[\psi]$-relative Monge-Ampère energy along geodesic rays (Propositions \ref{prop:EnergyFormula}, \ref{prop:Necessaria}). Then to show the coercivity of the $[\psi]$-relative Ding functional, we proceed by contradiction combining a compactness argument in $\big(\cE^1_\psi,d_1\big)$ with the convexity properties of the $[\psi]$-relative Ding functional (Theorem \ref{thm:ConclusionKE}). Finally the existence of a $[\psi]$-KE metric follows by the aforementioned variational approach developed in \cite{Tru20c}.\newline

Theorem \ref{thmC} is a consequence of Theorem \ref{thmD} and of the inequality
\begin{equation}
    \label{eqn:DDF}
     DF(\X,\Ll;\bD)\geq D^\NA(\X,\Ll;\bD),
\end{equation}
which is proved in subsection \ref{ssec:DingToK}. We show (\ref{eqn:DDF}) using a Non-Archimedean point of view as in \cite{BHJ17} when $\bD$ is given by pulling back a $\Q$-divisor $D_Y$ living on $Y\geq X$ (Proposition \ref{prop:DingK}). For general $\bD:=\{D_Y\}_{Y\geq X}$, the inequality (\ref{eqn:DDF}) is reached by an approximation argument. Indeed we recover $D^\NA(\X,\Ll;\bD)$ as liminf of $D^\NA(\X,\Ll;D_Y)$ and similarly for the $\bD$-log Donaldson-Futaki invariant (Propositions \ref{prop:KstClass}, \ref{prop:DStCar}). In particular we interpret the $\bD$-log $K$-stability as an asymptotic log $K$-stability and similarly for the $\bD$-log Ding stability.\newline

Theorems \ref{thmA}, \ref{thmB} follows by an analysis of $\tilde{\delta}_\bD$ along increasing paths $[0,1]\ni t\to \bD_t\in \bDiv_L(X)$. A key point is to adopt the analytic point of view given by the associated decreasing path $[0,1]\ni t\to \psi_t\in \mathcal{M}_D^+(X,\omega)$. Let us stress out again that working directly on $(X,\omega)$ is a main advantage. Indeed it allows us to use many properties of the set $\mathcal{M}_D^+(X,\omega)$ and of $\PSH(X,\omega)$ in general.

\subsection*{About the assumptions.\newline}
All the main Theorems stated in the Introduction require $X$ to be \emph{smooth}. The main reason is to make the paper more readable. Indeed, we expect that the proofs and the techniques developed in this paper will be extended to general $\Q$-Fano varieties in future works.\newline
Let us stress that the definition of $\bD$-log $K$-stability is given in the full generality of a polarized normal variety (see Section \ref{sec:K-stability}).

In the absolute setting it is well-known that a klt assumption is necessary to have Ding stability. Similarly, in \cite{Tru20c} the author observed that $\lct_X(\psi)>1$ is necessary to have a $[\psi]$-KE metric. However, a priori uniform $\bD$-log Ding-stable only implies $\lct_X(\bD)\geq 1$, and this causes the extra hypothesis "$\lct_X(\bD)>1$" in Theorem \ref{thmD} and in the subsequent corollary. More precisely, such hypothesis is exclusively used in the implication $(iii)\Rightarrow (iv)$ of Theorem \ref{thmD} (see Proposition \ref{prop:deltacompa}).

\subsection*{Further generalizations.\newline}
It is possible to extend the result of this paper to the log setting. Namely, replacing $X$ by a log Fano pair $(X,\Delta)$, one can convert all the proofs/definitions to such setting getting for instance an algebro-geometric characterization of the existence of $\Delta$-log $[\psi]$-KE metrics (see \cite{Tru20b}).

When $\mathrm{Aut}(X,[\psi])$ is not discrete one can replace $\delta,\tilde{\delta}, J^\NA(\cdot;\bD)$ and the uniform Ding and $K$-stability notions by their $\mathbb{G}$-equivariant versions similarly to \cite{His19, Li22}. It is then natural to wonder if \cite[Theorem D]{Tru20c} and the Theorems of this paper adapt to the $\mathbb{G}$-equivariant setting.

\subsection*{Recent related works.\newline}
Dervan and Reboulet recently in \cite{DR22}, assuming $-K_X$ being big and klt, proved that the existence of a unique (weak) K\"ahler-Einstein metric implies a uniform Ding stability notion which is similar to the one studied in our setting (see Remark \ref{rem:Llct}). We adapted to the prescribed singularities their use of Deligne pairings (Proposition \ref{prop:Slope}).

Darvas and Zhang in \cite{DZ22} developed a $\delta$-invariant theory when $-K_X$ is big, proving in particular that $\delta(X)>1$ implies the existence of a (weak) K\"ahler-Einstein metric. As already mentioned, some of their ideas are used in Section \ref{sec:YTD} for the implication $(iv)\Rightarrow (i)$ in Theorem \ref{thmD}.

Supposing $-K_X$ big, Xu in \cite{Xu22} proved that the $K$-stability condition (given in terms of the $\delta$-invariant) of $(X,-K_X)$ forced to have a klt anticanonical model, whose stability properties are essentially the same as that of $(X,-K_X)$. Moreover, in \cite{Xu22} it is also showed that the uniform Ding stability introduced in \cite{DR22} implies $\delta(X)>1$. This concludes the equivalence between the existence of weak K\"ahler-Einstein, the uniform Ding stability and $\delta(X)>1$ thanks to \cite{DZ22, DR22}.\newline

It would be interesting to see if the proof of the implication $(ii)\Rightarrow (iv)$ of Theorem \ref{thmD} can be adapted to the big setting. Together with \cite{DR22} and \cite{DZ22} this would give an uniform Ding version of the Yau-Tian-Donaldson conjecture in the case $-K_X$ is big, without passing to the klt anticanonical model as in \cite{Xu22}.
%\subsection*{Future developments.\newline}
%...
\subsection*{Structure of the paper.\newline}
After recalling some preliminaries (Section \ref{sec:Preliminaries}), Section \ref{sec:K-stability} is dedicated to introduce the $\bD$-log $K$-stability notion. Here, in particular we adopt the Riemann-Zariski perspective which will be central in all the paper.\newline
In Section \ref{sec:Ding-StabilitySection} we prove the one-to-one correspondence between $\bDiv_L(X)$ and $\cM^+_D(X,\omega)$ and we define the $\bD$-log Ding-stability notion through the pluripotential approach of \emph{plurisubharmonic rays} and of \emph{singularity types}. We also prove that the uniform $\bD$-log Ding stability implies the uniform $\bD$-log $K$-stability. \newline
Section \ref{sec:YTD} is then the core of the paper, where we prove Theorems \ref{thmC}, \ref{thmD} and we introduce the $\delta,\tilde{\delta}$-functions. We also interpret $D^\NA(\cdot;\bD)$ in a more classical way in terms of log canonical thresholds as in \cite{Berm16}.\newline
Finally in Section \ref{sec:Last} we study the properties of the $\tilde{\delta}$-functions proving Theorems \ref{thmA}, \ref{thmB} and the strong openness of $\bDiv^{UD}(X)$.

\subsection*{Acknowledgements.\newline}
The author is supported by a postdoctoral grant from the Knut and Alice Wallenberg foundation. The author thanks Dervan and Reboulet for interesting discussions, comments and for sharing their work. He wants to thank Reboulet for also suggesting to prove the openness of $\bDiv^{UD}(X)$. In addition, the author thanks Vincent Guedj and Chung-Ming Pan for helpful remarks, and Sébastien Boucksom to have pointed him a mistake in the previous version.

\section{Preliminaries}
\label{sec:Preliminaries}
\subsection{K\"ahler-Einstein metrics with prescribed singularities}
\label{ssec:KE}
In this subsection, $X$ will be a $n$-dimensional compact K\"ahler manifold. We will use the notation $dd^c:=\frac{i}{2\pi}\partial \bar{\partial}$.

\subsubsection{Singular metrics}
Letting $\theta$ be a smooth and closed $(1,1)$-form, by $\PSH(X,\theta)$ we denote the set of all \emph{$\theta$-plurisubharmonic ($\theta$-psh) functions}, i.e. all $u:X\to \R\cup\{-\infty\}$ such that locally $u+g$ is plurisubharmonic for any $g$ local potential of $\theta$. Since $\PSH(X,\theta)\subset L^1(X)$, the set of $\theta$-psh functions inherits a \emph{weak topology} given by the $L^1$-convergence.

If $\theta\in c_1(L)$ for a pseudoeffective line bundle $L\to X$, then the elements in $\PSH(X,\theta)$ can be thought as \emph{singular (hermitian) metrics} on $L\to X$ in the sense of \cite{Dem90}. Indeed, fixing $h$ smooth metric on $L\to X$ with curvature $\theta$, the one-to-one correspondence between singular metrics on $L\to X$ and $\PSH(X,\theta)$ is given by
$$
\PSH(X,\theta)\ni u\to he^{-2u}.
$$
The curvature of the singular metric $he^{-2u}$ is then the closed and positive $(1,1)$-current $\theta_u:=\theta+dd^c u$. As an important example, if $D\in \lvert mL\rvert$ is an effective divisor defined by a holomorphic section $s_D\in H^0(X,mL)$, then $u_D:=\frac{1}{m}\log\lvert s_D\rvert_{h^m}\in \PSH(X,\theta)$ and
$$
\theta+dd^c u_D=[D],
$$
\emph{current of integration along $D$}.
\subsubsection{Non-pluripolar product}
In \cite{GZ07, BEGZ10} the authors generalized the pioneering works of Bedford-Taylor \cite{BT87} on defining a \emph{non-pluripolar product} among currents. \newline
We invite the reader to consult \cite{BEGZ10} (see also \cite{GZ17}) for the precise definition of this construction. For the purpose of the paper, we just need to recall the following properties.

Denoting by $\mathcal{T}$ the set of all closed and positive $(1,1)$-currents on $X$, the map
$$
\mathcal{T}^n\ni (T_1,\dots,T_n)\longrightarrow \langle T_1 \wedge \cdots \wedge T_n \rangle,
$$
given by the non-pluripolar product, is multilinear, symmetric, homogeneous of degree $1$ in each variable and it has image in the set of positive measures on $X$ which does not put mass on pluripolar sets. Moreover it coincides with the usual wedge product when the $T_j$'s  are actually smooth forms.\newline
By the $\partial \bar{\partial}$-lemma, choosing representatives $\theta_j$ of the cohomology classes $\{T_j\}$, we have $T_j=\theta_j+dd^c u_j$ for $u_j\in \PSH(X,\theta_j)$. In particular one may wonder if the total mass
$$
\int_{X}\langle (\theta_1+dd^c u_1)\wedge \cdots \wedge (\theta_n+dd^c u_n) \rangle
$$
can be computed in cohomology, i.e. if it depends only on $\{\theta_j\}$. This is not the case: considering for instance $T_1=[D]$ one obtains
$$
\langle T_1\wedge \cdots \wedge T_n\rangle=0
$$
as a consequence of the fact that $T_1$ is supported on a pluripolar set.
\subsubsection{Relative full Monge-Ampère mass}
The set $\PSH(X,\theta)$ has a natural partial order given by $u\preccurlyeq v$ if $u\leq v+C$ for a constant $C\in \R$ (and in this case we will say that \emph{$u$ is more singular than $v$}). As proved in \cite[Corollary 2.3]{BEGZ10}, \cite[Theorem 1.2]{WN17} and \cite[Theorem 1.1]{DDNL17b}
$$
\int_X \langle (\theta_1+dd^c u_1)\wedge \cdots \wedge (\theta_n+dd^c u_n) \rangle\leq \int_X \langle (\theta_1+dd^c v_1)\wedge \cdots \wedge (\theta_n+dd^c v_n) \rangle
$$
if $u_j\preccurlyeq v_j$ for any $j=1,\dots,n$, i.e. the total mass of the non-pluripolar product respects the partial order $\preccurlyeq$. In particular, considering the case $\theta_1=\cdots=\theta_n$, the total mass of the \emph{Monge-Ampère operator} $MA_\theta(u):=\langle(\theta+dd^c u)^n\rangle$ is maximal if $u$ has \emph{minimal singularities}. Consider
$$
V_\theta:=\sup\big\{u\in \PSH(X,\theta)\, :\, u\leq 0\big\}.
$$
This is a $\theta$-psh function. Then $u$ is said to have minimal singularities if $u-V_\theta$ is globally bounded, i.e. if $u \approx V_\theta$ in the sense of the partial order $\preccurlyeq$. Let us stress that the set of \emph{full Monge-Ampère mass}
$$
\cE(X,\theta):=\Bigg\{u\in \PSH(X,\theta)\, :\, \int_X MA_\theta(u)=\int_X MA_\theta(V_\theta)\Bigg\}
$$
contains $\theta$-psh functions which are more singular than $V_\theta$.\newline
Furthermore we also recall that $\int_X MA_\theta(V_\theta)>0$ if and only if the class $\{\theta\}$ is big. For instance if $\{\theta\}$ is integral, i.e.  $\{\theta\}=c_1(L)$ for a big line bundle, then $\int_X MA_\theta(V_\theta)=\vol_X(L)$.

From now on we will then assume that the class $\{\theta\}$ is big.

\subsubsection{Model type envelopes}
Changing perspective, one may wonder what are the \emph{essential} singularities of $u\in\PSH(X,\theta)$, i.e. for instance which singularities can be removed preserving the total Monge-Ampère mass. In \cite{RWN14} the authors defined the $\theta$-psh function
$$
\psi:=P_\theta[u](0)=\Big(\sup\big\{v\in \PSH(X,\theta)\, : \, v\leq 0, v\preccurlyeq u\big\}\Big)^*
$$
where the star is for the upper semicontinuous regularization. Denote by
$$
\cM(X,\theta):=\big\{\psi\in \PSH(X,\theta)\, : \, P_\theta[\psi](0)=\psi\big\},
$$
the set of all \emph{model type envelopes} (see \cite{DDNL17b, Tru19}). There are plenty of this functions as the map $P_\theta[\cdot](0):\PSH(X,\theta)\to \PSH(X,\theta)$ is a projection when restricted to $\theta$-psh function with positive total Monge-Ampère mass \cite[Theorem 3.12]{DDNL17b}. Letting $\cM^+(X,\theta)$ be the set of all $\theta$-psh model type envelopes with positive total Monge-Ampère mass, we then have that for $\psi\in \cM^+(X,\theta)$
$$
\cE(X,\theta,\psi):=\Big\{u\in \PSH(X,\theta)\, : \, u\preccurlyeq\psi, \, \int_XMA_\theta(u)=\int_X MA_\theta(\psi) \Big\}
$$
coincides with the preimage of $\psi$ with respect to the projection map $P_\theta[\cdot](0)$, i.e. $P_\theta[u](0)=\psi$ if and only if $u\in\cE(X,\theta,\psi)$ \cite[Theorem 1.3]{DDNL17b}. Elements in $\cE(X,\theta,\psi)$ are said to have \emph{$[\psi]$-relative full Monge-Ampère mass}.\newline

$\cM(X,\theta)$ satisfies the following properties:
\begin{itemize}
    \item[i)] for any $u\preccurlyeq \psi\in \cM(X,\theta)$ the inequality $u\leq \psi+\sup_X u $ holds. In particular the partial order $\preccurlyeq$ coincides with $\leq$ on $\cM(X,\theta)$;
    \item[ii)] The weak closure of any totally ordered set $\mathcal{A}\subset \cM^+(X,\theta)$ belongs to $\cM(X,\theta)$ \cite[Lemma 2.6]{Tru19}, and moreover the Monge-Ampère operator $\mathcal{A}\ni \psi\to MA_\theta (\psi)$ extend to a homeomorphism $\overline{\mathcal{A}}\to MA_\theta(\overline{\mathcal{A}})$ with respect to the natural weak topologies (since the proof in \cite[Lemma 3.12]{Tru20a} easily extends to the big case).\newline
\end{itemize}

When $\omega$ is K\"ahler, some model type envelopes in $\cM(X,\omega)$ are of an algebraic nature.

We recall that $u\in \PSH(X,\omega)$ is said to have \emph{algebraic singularities encoded in $(\mathcal{I},c)$} for $\mathcal{I}$ coherent ideal sheaf and $c>0$ if locally
$$
u=g+c\log \Big(\sum_j\lvert f_j \rvert\Big)
$$
where $\{f_j\}_j$ are local generators of $\mathcal{I}$ while $g$ is smooth. In this case $\psi:=P_\omega[u](0)$ is a model type envelope and $\psi\approx u$ (\cite[Remark 4.6]{RWN14}, \cite[Proposition 4.36]{DDNL17b}). Considering $\rho:Y\to X$ a log resolution of $\mathcal{I}$ we obtain
$$
p^*\big(\omega+dd^c u\big)= \eta+c[D]
$$
for a smooth, closed and semipositive $(1,1)$-form $\eta$, where $p^{-1}\mathcal{I}=\mathcal{O}_Y(-D)$ for an effective divisor $D$. Moreover the map $v\to \tilde{v}:=(v-u)\circ p$ induces a bijection between $\{v\in \PSH(X,\omega)\, : \, v\preccurlyeq \psi \}$ and $\PSH(Y,\eta)$, which also preserve the total Monge-Ampère mass, i.e. $\int_X MA_\omega(v)=\int_Y MA_\eta(\tilde{v})$ \cite[Lemma 4.6]{Tru20b}. This allows to study the properties of $\PSH(Y,\eta)$ and of the nef class $\{\eta\}$ directly over $X$.\newline 
We say that $\psi\in \cM(X,\omega)$ is an \emph{algebraic model type envelope} if $\psi=P_\omega[u](0)$ for $u$ with algebraic singularities type, and we denote by $\cM_D(X,\omega)$ the weak closure under decreasing limits of the set of algebraic model type envelopes.
\begin{remark}
\label{rem:Imodel}
The set $\cM_D(X,\omega)$ has been introduced in \cite[Definition 3.2]{Tru20c} and independently in \cite{DX20} using the \emph{singularity data} encoded in the multiplier ideal sheaves $\mathcal{I}(t\psi)$. More precisely, as a consequence of \cite[Theorem 3.3, Proposition 3.5]{Tru20c}, a model type envelope $\psi$ belong to $\cM_D(X,\omega)$ if and only if $u\preccurlyeq \psi$ for any $u\in \PSH(X,\omega)$ such that $\mathcal{I}(tu)=\mathcal{I}(t\psi)$ for any $t>0$. The elements in $\cM_D(X,\theta)$ are called \emph{$\mathcal{I}$-models} in \cite{DX20} where it is proven that they characterize the closure of test configurations (see \cite[Theorem 1.1]{DX20}).
\end{remark}
We will also need the following fact, which is a consequence of \cite[section 3]{Tru20c} (see also \cite[section 2.4]{DX20}). Assume that $X$ is projective, $Y\to X$ is a birational morphism and let $F$ be a prime divisor on $Y$. If $\psi_k\in\cM_D^+(X,\omega)$ is a decreasing sequence converging to $\psi\in \cM_D^+(X,\omega)$ then  $\nu(\psi_k,F)\nearrow \nu(\psi,F)$ (we denote by $\nu(u,F)$ the generic Lelong number of $u$ along $F$).
\subsubsection{Relative K\"ahler-Einstein metrics}
Assume $X$ Fano and let $\omega$ be a K\"ahler form such that $\{\omega\}=c_1(X)$.\newline

We recall that the Ricci curvature associated to $\omega$ is the $(1,1)$-form defined as $\Ric(\omega)=-dd^c\log \omega^n$. In particular, using the $1$-$1$ correspondence between metrics on $K_X$ and volumes forms given by $\mu_h=e^{-2f}i^{n^2}\Omega\wedge \bar{\Omega}$ where $f:=\log\lvert \Omega \rvert_h$ for any local holomorphic volume form $\Omega$, the Ricci curvature extends to singular metrics $he^{-2u}$ such that $MA_\omega(u)$ corresponds to singular metrics on $K_X$. Indeed, defining $\Ric(\mu):=dd^c f$ if locally $\mu=e^{-2f}i^{n^2}\Omega\wedge \bar{\Omega}$, one uses the notation
$$
\Ric(\omega+dd^c u):=\Ric\big(MA_\omega(u)\big)
$$
if $MA_\omega(u)$ has Lebesgue density.
\begin{defi}
\label{Defi:KE}
Let $\psi\in \cM^+(X,\omega)$. Then $\omega+dd^c u$ is said to be a $[\psi]$\emph{-K\"ahler-Einstein} ($[\psi]$-KE) if $u$ solves
\begin{equation}
    \label{eqn:KE}
    \begin{cases}
    \Ric(\omega+dd^c u)=\omega+dd^c u\\
    u\in \PSH(X,\omega), \, \lvert u-\psi\rvert\leq C
    \end{cases}
\end{equation}
\end{defi}
One checks that $\omega +dd^c u$ is a $[\psi]$-KE metrics if and only if $u\in \PSH(X,\omega)$ solves the complex Monge-Ampère equation
$$
\begin{cases}
MA_\omega(u)=e^{-2u}d\mu\\
\lvert u- \psi\rvert\leq C
\end{cases}
$$
where $d\mu=e^{2\rho}\omega$ up to translate $\rho$ (see \cite[Proposition 4.3]{Tru20b}). We can assume wlog that $d\mu$ is a probability measure: it is the \emph{adapted probability measure to $h$}, i.e. $\Ric(\mu)=\omega$.\newline
A necessary condition to have $[\psi]$-KE metrics is that $\mathcal{I}(\psi)=\mathcal{O}_X$ ($e^{-2\psi}\in L^1$), i.e. that $(X,\psi)$ is \emph{klt}. We refer to this condition adding the subscript "klt". For instance
$$
\cM^+_{klt}(X,\omega):=\big\{\psi\in\cM^+(X,\omega)\, : \, (X,\psi)\, \mathrm{is}\, \mathrm{klt}\big\}.
$$

When $\psi=0$, Definition \ref{Defi:KE} coincides with the definition of the genuine K\"ahler-Einstein metric since any solution $\omega+dd^c u$ of (\ref{eqn:KE}) can be then showed to be smooth.

When $\psi$ is a klt algebraic model type envelope with singularities encoded in $(\mathcal{I},c)$, then by \cite[Proposition 4.8]{Tru20b} there is a $1$-$1$ correspondence between $[\psi]$-KE metrics and \emph{log KE metrics on the weak Fano pair $(Y,\Delta)$} where $\pi:Y\to X$ is given as log-resolution of $\mathcal{I}$ while $\Delta=cD-K_{Y/X}$ for $D$ effective divisor such that $\pi^{-1}\mathcal{I}=\mathcal{O}_Y(-D)$. The upshot is that "log KE metrics \emph{above} $X$" are particular KE metrics with prescribed singularities.
\subsection{Log K-stability}
\label{sec:K}
In this subsection, $X$ will be a projective variety\footnote{By projective variety we mean a reduced and irreducible projective scheme over $\C$}.\newline
Borrowing the Minimal Model Program terminology, a $\Q$-Weil divisor $B$ on $X$ is called \emph{boundary} if $K_{(X,B)}:=K_X+B$ is $\Q$-Cartier. In this case $(X,B)$ is said to be a \emph{pair}.\newline
We also recall that for any proper birational morphism $\mu:Y\to X$ with $Y$ normal and any $F\subset Y$ prime divisor, the \emph{log-discrepancy} function $A_{(X,B)}(\cdot)$ of the pair $(X,B)$ is defined as
$$
A_{(X,B)}(F):=1+{\rm ord}_F(K_{Y/(X,B)}),
$$
where $K_{Y/(X,B)}:=K_Y-\mu^*K_{(X,B)}$. The log-discrepancy measures the singularities of the pair $(X,B)$, and $(X,B)$ is said to be \emph{sublc} (resp. \emph{subklt}) if $A_{(X,B)}(F)\geq 0$ (resp. $A_{(X,B)}(F)>0$). If furthermore the boundary $B$ is effective then we use the terminology \emph{lc} (resp. \emph{klt}).\newline
When $B=0$, we will simply set $A_X:=A_{(X,0)}$.
\subsubsection{Test configurations}
We recall the definition of test configurations.
\begin{defi}
A test configuration $\X$ for $X$ consists of:
\begin{itemize}
    \item[(i)] a flat and projective morphism of schemes $\pi: \X \lra \A^1 $;
    \item[(ii)] a $\C^*$-action on $\X$ lifting the canonical action on $\A^1$;
    \item[(iii)] an isomorphism $\X_1\simeq X$.
\end{itemize}
\end{defi}
Observe that $\X$ is a projective variety (see \cite[Proposition 2.6.(vi)]{BHJ17}).\newline
A test configuration $\X'$ \emph{dominates} $\X$ if the canonical $\C^*$-isomorphism $\X'\setminus \X'_0\simeq \X\setminus \X_0$ extends to a morphism $\X'\to \X$. Note that any two test configurations can be dominated by a third. In particular, for any test configuration $\X$ there exists a test configuration $\X'$ dominating $\X$ and the trivial test configuration $X\times \A^1$: the so-called \emph{determination of $\X$}. Any test configuration $\X$ admits a smooth determination $\X'$.\newline

Assume now that $X$ is endowed of a $\Q$-line bundle $L$.
\begin{defi}
\label{defi:TC}
A test configuration $(\X,\Ll)$ for $(X,L)$ consists of a test configuration $\X$ for $X$ and of:
\begin{itemize}
    \item[(iv)] a $\C^*$-linearized $\Q$-line bundle $\Ll$ on $\X$;
    \item[(v)] an isomorphism $(\X_1,\Ll_1)\simeq (X,L)$ extending the one in (iii).
\end{itemize}
\end{defi}
A test configuration $(\X,\Ll)$ is said to be ample, semiample (resp. normal) if $\Ll$ is ample, semiample (resp. $\X$ is normal).\newline

There are useful operations on test configurations:
\begin{itemize}
    \item given $(\X,\Ll)$ test configuration for $(X,L)$ and given $\X'$ test configuration for $X$ that dominates $\X$ through $\X'\overset{\rho}{\to}\X$, the \emph{pull-back of $(\X,\Ll)$} is the test configuration $(\X',\rho^*\Ll)$;   
    \item given two test configurations $(\X,\Ll), (\X,\Ll')$ for $(X,L), (X,L')$ and $c,c'\in \Q$, we can consider the linear combination $(\X, c\Ll+c'\Ll')$ which is clearly a test configuration for $(X,cL+c'L')$. In particular, for any $c\in \Q$, twisting the $\C^*$-linearization on $\Ll$ by $t^c$ produces a test configuration that can be identified with $(\X,\Ll+c\X_0)$.
\end{itemize}
Another essential tool is given by the \emph{compactification of a test configuration}. Gluing together $\X$ and $X\times (\PP^1\setminus\{0\})$ through the $\C^*$-equivariant isomorphism $\X\setminus \X_0\simeq X\times (\C\setminus \{0\})$, we obtain the compactification $\overline{\X}$. We denote by $\overline{\Ll}$ the $\C$-linearized $\Q$-line bundle on $\overline{\X}$.\newline

We also need to recall that a normal test configuration $(\X,\Ll)$ is said to be \emph{trivial} if $\X\simeq X\times\C$. In this case there exists $c\in\Q$ such that $\Ll=p_1^*L+c\X_0$.\newline
To characterize the triviality of test configurations, in \cite{Der16} and independently in \cite{BHJ17} a \emph{minimum norm} has been introduced. If $\X$ dominates $X\times\C$ through a morphism $\mu_\X:\X\to X\times\C$, then it is defined as
$$
J^{\NA}(\X,\Ll):=V^{-1}\big(\overline{\mu_\X^*p_1^*L}^n\cdot \overline{\Ll}\big)-V^{-1}(n+1)^{-1}\big(\overline{\Ll}^{n+1}\big)
$$
for $V:=(L^n)$, while for the general case it is defined taking a determination of $\X$ and observing that $J^{\NA}$ is pull-back invariant.\newline
Then $J^{\NA}$ it is non-degenerate in the sense that $J^{\NA}(\X,\Ll)\geq 0$ with equality if and only if $(\X,\Ll)$ is a trivial test configuration (\cite[Theorem 1.3]{Der16}, \cite[Corollary B]{BHJ17}).

\subsubsection{Log Donaldson-Futaki invariant}
\label{sssec:DF}
Assume $X$ to be normal, and let $B$ a boundary. Then, for any normal test configuration $(\X,\Ll)$ for $(X,L)$ we denote by
\begin{equation}
    \label{eqn:DFinvariant}
    DF_B(\X,\Ll):=V^{-1}(K_{(\Bar{\X},\Bar{\CB})/\PP^1}\cdot \Bar{\Ll}^n)+\Bar{S}_BV^{-1}\frac{(\Bar{\Ll}^{n+1})}{n+1}
\end{equation}
the \emph{log Donaldson-Futaki invariant of $(\X,\Ll)$ with respect to the pair $(X,B)$}, where:
\begin{itemize}
    \item $V:=(L^n)$ is the volume of the polarized variety $(X,L)$;
    \item $\Bar{S}_B:=nV^{-1}(-K_{(X,B)}\cdot L^{n-1})$ is the log version of the mean scalar curvature;
    \item $\CB$ (resp. $\Bar{\CB}$) is the $\Q$-Weil divisor on $\X$ (resp. on $\Bar{\X}$) given by the component-wise Zariski closure in $\X$ (resp. $\Bar{\X}$) of the $\C^*$-invariant $\Q$-Weil divisor on $\X\setminus \X_0$ obtained by extending $B\subset\X_1$;
    \item $K_{(\Bar{\X},\Bar{\CB})/\PP^1}:=K_{(\Bar{\X},\Bar{\CB})}-\pi^*K_{\PP^1}=K_{\Bar{\X}}+\Bar{\CB}-\pi^*K_{\PP^1}$.
\end{itemize}
It is not difficult to check that the log Donaldson-Futaki invariant is preserved under pull-back, i.e. $DF_B(\X,\Ll)=DF_B(\X',\rho^*\Ll)$ if $\rho:\X'\to \X$, and under translation by a constant, i.e. $DF_B(\X,\Ll+c\X_0)=DF_B(\X,\Ll)$.
\begin{defi}
Let $(X,L)$ be a polarized projective variety. Then $(X,L)$ is said to be \emph{log $K$-stable} with respect to the boundary $B$ if $DF_B(\X,\Ll)\geq 0$ for any normal ample test configuration $(\X,\Ll)$ with equality if and only if $(\X,\Ll)$ is trivial.\newline
$(X,L)$ is said to be \emph{uniformly log $K$-stable} with respect to the boundary $B$ if there exists $\delta>0$ such that $DF_B(\X,\Ll)\geq \delta J^{\NA}(\X,\Ll)$ for any normal ample test configuration $(\X,\Ll)$.
\end{defi}
By what recalled previously, uniform log $K$-stability is a strengthening of log $K$-stability.
\begin{remark}
In the case $B=0$, the Donaldson-Futaki invariant can be defined for general test configuration and the usual definition of K-stability requires to consider all ample test configurations. On the other hand any test configuration $(\X,\Ll)$ admits a \emph{normalization} $(\Tilde{\X},\Tilde{\Ll})$, which is still a test configuration for $(X,L)$, given by $\nu:\Tilde{\X}\to \X$, normalization of $\X$, and by $\Tilde{\Ll}:=\nu^*\Ll$. Then \cite[Proposition 3.15]{BHJ17} implies that it is enough to consider normal ample configurations.
\end{remark}
\subsection{The Non-Archimedean perspective}
\label{ssec:NA}
To study the (log) K-stability notions there is a useful Non-Archimedean point of view which we will now briefly recall.
\subsubsection{Valuations}
A \emph{valuation} on a finitely generated field extension $K$ of $k$ is a group homomorphism $v:K^*\to (\R,+)$ such that $v(f+g)\geq \min\{v(f),v(g)\}$ and $v_{|k*}\equiv 0$. It will be often convenient to set $v(0)=+\infty$.
The trivial valuation $v_{triv}$ is defined by $v_{triv}\equiv 0$.\newline
We will denote by $X^{\divv}$ the set of all \emph{divisorial valuations}, i.e. the set of all valuation of the form $v=c\ord_F$ where $c>0$ and $F$ prime divisor \emph{over} $X$, i.e. there exists a normal variety $Y$ and a proper birational morphism $\mu:Y\to X$. In this case the generic point $\xi$ of $\mu(F)$ represents the \emph{center} of the valuation, i.e. $v\geq 0$ on $\Ox_{X,\xi}$ and $v>0$ on its maximal ideal.\newline

The inclusion $K\subset K(t)$ produces a natural restriction map $v\to r(v)$ which sends divisorial valuations to trivial or divisorial valuations \cite[Lemma 4.1]{BHJ17}. On the other hand the \emph{Gauss extension} of a valuation $v$ on $K$ is the valuation on $K(t)$ defined as
$$
G(v)(f):=\min_{\lambda\in\Z}\big(v(f_\lambda)+\lambda\big)
$$
for all $f$ with Laurent polynomial expansion $f=\sum_{\lambda\in\Z}f_\lambda t^{\lambda}$. In particular $r\big(G(\nu)\big)=\nu$.

\subsubsection{Test configurations as Non-Archimedean metrics}
Each test configuration $(\X,\Ll)$ defines a function on $X^{\divv}$ in the following way. Let $(\Tilde{\X},\Tilde{\Ll})$ be a smooth determination of $(\X,\Ll)$, i.e. $\X$ is smooth and there are $\C^*$-equivariant morphisms $\rho:\tilde{\X}\to \X$ and $\mu :\tilde{\X}\to X\times \C$. Then
$$
\tilde{\Ll}:=\rho^*\Ll=\mu^*p_X^*L+F
$$
for a unique $\Q$-divisor $F$ supported on $\tilde{\X}_0$, and $\varphi_{(\X,\Ll)}:X^\divv\to \R$ is defined as
$$
\varphi_{(\X,\Ll)}(\nu):=G(\nu)(F).
$$
By construction, $\varphi_{(\X_1,\Ll_1)}=\varphi_{(\X_2,\Ll_2)}$ if and only if there exists $\X_3\overset{f_i}{\geq} \X_i$ such that $f_1^*\Ll_1=f_2^*\Ll_2$. Namely, iff they are \emph{equivalent} in the sense of \cite[Definition 6.1]{BHJ17}. As in \cite{BHJ17} we denote by $\CH^\NA(L):=\{\varphi_{(\X,\Ll)}\, :\, (\X,\Ll) \, \mathrm{ample}\}$ the equivalence classes of test configurations for $(X,L)$ that contains an ample test configurations, and we refer to \cite[section 6.8]{BHJ17} (and references therein) for the interpretation of $\CH^\NA(L)$ as set of positive \emph{Non-Archimedean metrics}.

\subsection{Riemann-Zariski space}
\label{ssec:RZ}
Let $X$ to be a projective variety. \newline
To work with classes living on arbitrary modifications of $X$ it is natural to introduce the following construction. As in \cite{BFJ09}, we endow the set of all possible birational morphism $Y\to X$ where $Y$ is a smooth variety with the partial order relation $Y'\geq Y$ if there exists a birational morphism $Y'\to Y$, and we define the \emph{Riemann-Zariski space} as the projective limit of this directed and ordered family, i.e.
$$
\FX:=\varprojlim_{Y} Y.
$$
According to \cite[Definition 1.1]{BFJ09}, denoting by $N^p(Y)$ the real vector space of numerical equivalence of $p$-codimensional cycles, the \emph{space of $p$-codimensional Cartier classes on $\FX$} is defined as the inductive limit of $N^p(Y)$ where the arrows are induced by pull-back, i.e.
$$
CN^p(\FX):=\varinjlim_{Y}N^p(Y).
$$
Similarly, the \emph{space of $p$-codimensional Weil classes on $\FX$} is defined as the projective limit of $N^p(Y)$, i.e.
$$
N^p(\FX):=\varprojlim_{Y}N^p(Y).
$$
These two spaces are endowed by their natural topologies. \newline
In practice, any element $D\in N^p(\FX)$ is described by its family of incarnations $D_Y\in N^p(Y)$ for any $Y$, where if $Y'\overset{\rho}{\geq} Y$ then
$$
\rho_{*}D_{Y'}=D_{Y}.
$$
Note also that a sequence $\{D_{k}\}_{k\in\N}\subset N^p(\FX)$ converges to $D\in N^p(\FX)$ iff $D_{Y,k}\to D_Y$ in $N^p(Y)$ for any $Y$ (this is also called \emph{weak topology}).\newline
A element $D\in CN^p(\FX)$ can be instead described by its \emph{determination} $D_Y\in N^p(Y)$, i.e. by its incarnation on $Y$, where $Y$ is chosen so that for any $Y'\overset{\rho}{\geq} Y$
$$
D_{Y'}=\rho^*D_Y.
$$
It is then not difficult to check that there are natural injective continuous maps
\begin{gather*}
    N^p(Y)\hooklongrightarrow CN^p(\FX),\\
    CN^p(\FX)\hooklongrightarrow N^p(\FX),
\end{gather*}
and that moreover the second map has dense image in $N^p(\FX)$ \cite[Lemma 1.2]{BFJ09}.
\subsubsection{Positivity notions}
Given $Y'\overset{\rho}{\geq} Y$ and $D\in N^1(Y)$, it is well-known that $\rho^*D$ is nef (resp. big, psef) iff $D$ is nef (resp. big, psef), i.e. these two positivity notions are preserved under pull-back. Hence the space $CN^1(\FX)$ naturally inherits these positivity notions.\newline
In particular, we can say that $D_1\geq D_2$ for $D_1,D_2\in CN^1(\FX)$ if $D:=D_1-D_2\in CN^1(\FX)$ is psef, i.e. if $D_Y$ is psef for some determination of $D$.\newline
Since the psef notion makes also sense among $p$-codimensional numerical classes, we say that $\alpha\geq \alpha'$ for $\alpha,\alpha'\in CN^p(\FX)$ if $\alpha-\alpha'$ is pseudoeffective.\newline

A class $\alpha\in N^p(\FX)$ is said to be \emph{psef} if $\alpha_Y\in CN^p(\FX)$ is psef for any $Y\geq X$. Note that the nomenclature chosen is coherent since a class $\alpha\in CN^p(\FX)$ is psef as a Weil class if and only if it is psef as a Cartier class.

Similarly a nef Cartier class $\alpha\in CN^1(\FX)$ can also be interpreted as a \emph{nef} Weil class in $N^1(\FX)$, since the latter are defined as weak closure of nef Cartier classes.
\subsubsection{Positive intersection product}
\label{ssec:PIPS}
Given $D_1,\dots,D_p\in CN^1(\FX)$ nef, their intersection product is naturally defined as
$$
D_{1,Y}\cdots D_{p,Y}\in CN^p(\FX)
$$
where $Y$ is chosen so that $D_1,\dots,D_p$ have a common determination on $Y$.\newline
In \cite{BDPP13, BFJ09} the authors extended the intersection product to big divisors in $CN^p(\FX)$. Given $D_1,\dots,D_p\in CN^1(\FX)$ big, their \emph{positive intersection product}
$$
\langle D_1\cdots D_p \rangle\in N^p(\FX)
$$
is defined as the least upper bound of the set of classes
$$
(D_1-F_1)\cdots(D_p-F_p) 
$$
where $F_i$ varies over the effective Cartier $\Q$-divisors on $\FX$ such that $D_i-F_i\in CN^1(\FX)$ is nef.
\begin{theorem}{\cite[Theorem 3.5]{BDPP13}, \cite[Propositions 2.9, 2.12, 2.13]{BFJ09}}
\label{thm:PIP}
The positive intersection product is symmetric homogeneous of degree 1, super-additive in each variable and it varies continuously on the $p$-fold product of the big cone in $CN^1(\FX)$. Moreover $\langle D_1\cdots D_p\rangle$ coincides with the least upper bound of the set of all intersection products $(F_1\cdots F_p)$ where $F_i\in CN^1(\FX)$ are nef classes such that $F_i\leq D_i$. In particular if $D_1,\dots,D_p$ are nef then
$$
\langle D_1\cdots D_p\rangle=(D_1\cdots D_p).
$$
\end{theorem}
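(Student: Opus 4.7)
The plan is to follow the framework of \cite{BDPP13, BFJ09}. Symmetry and degree-one homogeneity are immediate from the defining least upper bound, so the substantive content lies in: (a) the alternative characterization via nef classes $F_i \leq D_i$, (b) super-additivity, (c) continuity on the big cone, and (d) reduction to the ordinary intersection product in the nef case.

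For (a), one inclusion is trivial: if $F_i$ is an effective Cartier $\Q$-divisor with $D_i - F_i$ nef, then $D_i - F_i$ is itself a nef class bounded by $D_i$, so the corresponding intersection product lies inside the second supremum. For the reverse, let $G_i \in CN^1(\FX)$ be nef with $G_i \leq D_i$. Using bigness of $D_i$ via a Kodaira/Siu decomposition, one writes $mD_i = A_i + E_i$ on some $Y \geq X$ with $A_i$ ample and $E_i$ effective. For $\varepsilon > 0$, the class $G_i + \varepsilon A_i/m$ is ample, hence $D_i - G_i - \varepsilon A_i/m$ is psef and, after passing to a sufficiently high common model, is approximated by effective Cartier $\Q$-divisors $F_i^{(\varepsilon)}$ with $D_i - F_i^{(\varepsilon)}$ nef and $F_i^{(\varepsilon)} \to D_i - G_i$ as $\varepsilon \to 0$. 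Continuity of the intersection on the nef cone then gives $(G_1 \cdots G_p) \leq \langle D_1 \cdots D_p\rangle$.

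For (b), given nef $G_i \leq D_i$ and $G_i' \leq D_i'$, the sum $G_i + G_i'$ is a nef class bounded by $D_i + D_i'$; multilinearity on the nef cone together with the characterization of (a) yields super-additivity in each variable by taking suprema. For (d), the trivial choice $F_i = 0$ in the defining supremum shows $\langle D_1 \cdots D_p\rangle \geq (D_1 \cdots D_p)$, and conversely any nef $G_i \leq D_i$ satisfies $(G_1 \cdots G_p) \leq (D_1 \cdots D_p)$ because the differences $D_i - G_i$ are nef and the intersection of nef classes is non-negative; iterating term by term gives the reverse inequality.

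The main obstacle is continuity (c) on the big cone. Upper semicontinuity follows from super-additivity: if $D_i^{(k)} \to D_i$ in $CN^1(\FX)$ and $A$ is a fixed ample class, then for $k$ large $D_i^{(k)} \leq D_i + \varepsilon A$, so super-additivity and homogeneity give the bound $\limsup_k \langle D_1^{(k)} \cdots D_p^{(k)}\rangle \leq \langle D_1 \cdots D_p\rangle + O(\varepsilon)$. Lower semicontinuity is the delicate point: approximate $\langle D_1 \cdots D_p\rangle$ from below by ordinary intersection products $(F_1 \cdots F_p)$ on a single determination $Y$, with $F_i$ ample and $F_i \leq D_i$, via the Fujita-type argument of \cite[Proposition 2.12]{BFJ09}; then for $k$ large the shifted classes $F_i - \varepsilon A$ remain below $D_i^{(k)}$ on $Y$ and ample, and the classical continuity of the intersection pairing on $N^*(Y)$ closes the loop. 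This Fujita-style approximation, performed uniformly across the $p$ factors and transported through the projective system defining $\FX$, is the technical heart of the argument.
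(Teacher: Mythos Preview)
The paper does not give its own proof of this statement: it is quoted verbatim from \cite[Theorem 3.5]{BDPP13} and \cite[Propositions 2.9, 2.12, 2.13]{BFJ09}, with no argument beyond the citation. Your sketch is therefore not competing with anything in the present paper, and it does follow the broad outline of the cited references, so in that sense it is appropriate.

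That said, two points in your sketch are imprecise. In part (d), you write that ``the differences $D_i - G_i$ are nef''. This is false: from $G_i \leq D_i$ you only know that $D_i - G_i$ is pseudoeffective (and in the original defining supremum, the differences $F_i$ are \emph{effective}, not nef). The correct argument expands $(D_1\cdots D_p) = \prod ((D_i - F_i) + F_i)$ and observes that every cross term is a product of nef classes $D_j - F_j$ with effective classes $F_i$; such mixed products are psef, which gives the desired inequality. Your phrasing as written does not justify the step.

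In part (c), your upper-semicontinuity argument invokes ``super-additivity and homogeneity'' to bound $\langle D_1^{(k)}\cdots D_p^{(k)}\rangle$ from above, but super-additivity only gives \emph{lower} bounds. What you actually need is \emph{monotonicity} of the positive intersection product (which follows at once from the characterization in (a)): if $D_i^{(k)} \leq D_i + \varepsilon A$ then $\langle D_1^{(k)}\cdots D_p^{(k)}\rangle \leq \langle (D_1+\varepsilon A)\cdots(D_p+\varepsilon A)\rangle$. You then still have to show that the right-hand side tends to $\langle D_1\cdots D_p\rangle$ as $\varepsilon \to 0$, which is not automatic from what you have written and is part of the content of \cite[Proposition 2.9]{BFJ09}.
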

\subsubsection{Volume \& Restricted volume}
\label{ssec:Vol}
To any Cartier divisor $D$ on a variety $Y$ is associated its \emph{volume}
$$
\vol_Y(D):=\limsup_{k\to +\infty}\frac{n!}{k^n}h^0(Y,kD)\in \R_{\geq 0}
$$
which is $>0$ exactly when $D$ is big. Then Fujita's Theorem says that
$$
\vol_Y(D)=\langle D^n\rangle
$$
when $D$ is big \cite{Fuj94}, where on the right hand side we obviously mean the positive intersection product of the Cartier class $D\in CN^1(\FX)$ associated to the big divisor $D$.\newline
It is well-known that the volume extends to a continuous function on $N^1(Y)$ (see for instance \cite{Laz04}). It actually defines a $C^1$-function on the big cone: for any $F=\sum_j\alpha_jF_j$ divisor on $Y$, $F_j$ prime divisors, the equality
$$
\frac{d}{dt}_{|t=0}\vol_Y(D+tF)=n\sum_j \alpha_j \vol_{Y|F_j}(D)
$$
holds (see \cite[Corollary C]{LM09}, \cite[Theorem A, Theorem B]{BFJ09}). The \emph{restricted volumes} on the right hand side are defined as
$$
\vol_{Y|F_j}(D):=\limsup_{k\to +\infty}\frac{(n-1)!}{k^{n-1}}h^0(Y|F_j,kD)
$$
for $h^0(Y|F_j,kD)$ rank of the restriction map $H^0(Y,kD)\to H^0(F_j,kD_{|F_j})$.\newline
The restricted volume has also the following interpretation in terms of the positive intersection product:
$$
\vol_{Y|F_j}(D)=\langle D^{n-1}\rangle \cdot F_j
$$
(see \cite[Theorem B]{BFJ09}), where the pairing $N^{n-1}(\FX)\times CN^1(\FX)\to N^n(\FX)\simeq \R$ extends the usual intersection product of Cartier divisors by the projection formula. In particular the restricted volume $\vol_{Y|F}(\cdot)$ naturally extends to a continuous function on the big cone.

\section{$\bD$-Log $K$-stability}
\label{sec:K-stability}
Let $X$ be a normal variety and let $L\to X$ be an ample line bundle.\newline

With the notation $Y\geq X$ we mean that $Y$ is a smooth projective variety and that there exists a birational morphism $Y\to X$.
\begin{defi}
\label{defi:genbdiv}
For any $Y\geq X$ let $D_Y=\sum a_{F_Y} F_Y$ where the formal sum is over all prime divisor $F_Y\subset Y$ and $a_{F,Y}\in\R$. We will then say that $\bD:=\{D_Y\}_{Y\geq X}$ is a \emph{generalized $b$-divisor on $X$} if
\begin{itemize}
    \item[i)] for any $Y\geq X$ the support $\mathrm{Supp}(D_Y)=\left\{F_Y\, : \, a_{F_Y}\neq 0\right\}$ is at most countable (we then write $D_Y=\sum_j a_{Y,j} F_{Y,j}$);
    \item[ii)] for any $Y'\overset{p}{\geq} Y$ the equality $p_* D_{Y'}=D_Y$ holds, where $p_*D_{Y'}=\sum_j a_{Y',j} p_*(F_{Y',j})$.
\end{itemize}
We will denote by $\bDiv^g(X)$ the set of generalized $b$-divisors on $X$.
\end{defi}
Clearly a generalized $b$-divisor $\bD$ is a $b$-divisor in the sense of \cite{Sho03} if and only if $D_Y$ is a $\R$-divisor for any $Y\geq X$ (i.e. the sum in $(i)$ is actually a finite sum).
\begin{defi}
\label{defi:Lpositive}
We say that a generalized $b$-divisor $\bD\in \bDiv^g(X)$ is \emph{$L$-positive} if
\begin{itemize}
    \item[i)] $\bD$ is effective and increasing:  $a_{F_Y}\geq 0$ for any $Y\geq X$, $F\subset Y$ prime divisor and if $Y'\geq Y$ through a birational morphism $p:Y'\to Y$ then $p^*D_Y\leq D_{Y'}$;
    \item[ii)] $L-\bD$ is nef;% $L_Y:=\rho_Y^*L-D_Y$ is nef for any $Y\overset{\rho_Y}{\geq} X$;
    \item[iii)] there exists a constant $a>0$ such that $\vol_Y(\rho_Y^*L-D_Y)\geq a$ for any $Y\overset{\rho_Y}{\geq} X$.
\end{itemize}
We denote by $\bDiv_L(X)$ the set of $L$-positive generalized $b$-divisors on $X$.
\end{defi}
When $D_Y=\sum_n a_nF_n$ is a infinite sum, the numerical classes $\{\sum_{n=1}^ka_n F_n\}\in N^1(Y)$ form an increasing sequence of pseudoeffective classes which is uniformly bounded above by $\rho_Y^*L$. Thus numerical classes $\{D_Y\}\in N^1(Y)$, given as limit in $N^1(Y)$, defines a class $\alpha\in N^1(\FX)$ such that the incarnations $(L-\alpha)_Y$ are \emph{uniformly big} by the condition $(iii)$.
The second condition instead means that the numerical class $L-\alpha$ is nef as a Weil class in $N^1(\FX)$, i.e. it belongs to the weak closure of the set of all nef Cartier classes (see subsection \ref{ssec:RZ}). Combining \cite[Proposition 2.4]{Bou04} and \cite[Lemma 2.10]{BdFF12}, the latter condition is equivalent to say that $L_Y:=\rho_Y^*L-D_Y$ is \emph{nef in codimension 1}, which in turn can be expressed by saying that $L_Y$ has a closed and positive current with zero Lelong number along any prime divisor thanks to \cite[Propositions 3.2, 3.6]{Bou04} and the fact that $L_Y$ is big.
%conditions $(ii), (iii)$ mean that $L_{Y,k}:=\rho_Y^*L-\sum_{n=1}^{k}a_n F_n\leq \rho^*L$ is nef and that $\vol_Y(L_{Y,k})\geq a$ for any $k\in\N$. Indeed the numerical classes $\{\sum_{n=1}^ka_n F_n\}\in N^1(Y)$ form an increasing sequence of pseudoeffective classes which is uniformly bounded above by $\rho_Y^*L$. Thus the numerical class $\{D_Y\}\in N^1(Y)$ is well-defined as limit in $N^1(Y)$ and it is big and nef by $(ii), (iii)$.\newline
%In other words, any $L$-positive generalized $b$-divisor $\bD\in \bDiv_L(X)$ defines a class $\alpha\in N^1(\FX)$ given by the incarnations $\alpha_Y:=\{D_Y\}\in N^1(Y)$. Moreover $(ii)$ implies that $L-\alpha$ defines a nef class in $N^1(\FX)$, while $(iii)$ means that the Cartier classes $(L-\alpha)_Y\in CN^1(Y)$ are \emph{uniformly big}.
Let us stress that $(i)$ is encoded in the generalized $b$-divisor $\bD$ and not only on its class $\alpha\in N^1(\FX_{\PP^1}^{\C^*})$, i.e. it is not a \emph{numerical condition}.

\subsection{Riemann-Zariski perspective}
To define a $K$-stability notion with respect to a $L$-positive generalized $b$-divisor $\bD\in \bDiv_L(X)$, the inspiring idea is that the \emph{positivity} to study is encoded in the classes associated to $L_Y:=\rho_Y^*L-D_Y$ where $Y\overset{\rho_Y}{\geq}X$.\newline
Thus, similarly to subsection \ref{ssec:RZ}, it appears natural to consider all test configurations for $(Y,L_Y)$ on equal footing defining a $\C^*$-equivariant version of the Riemann-Zariski space.\newline

We consider all $\CZ$ smooth projective varieties that dominates $X\times \PP^1$ through a $\C^*$-equivariant birational morphism with respect to the canonical $\C^*$-action on $\PP^1$. This set, endowed with its natural partial order (denoted by $\CZ\overset{\C^*}{\geq}X\times\PP^1$), is non-empty and directed.
\begin{defi}
The projective limit of smooth projective varieties $\CZ\overset{\C^*}{\geq}X\times\PP^1$,
$$
\FX_{\PP^1}^{\C^*}:=\varprojlim_{\CZ}\CZ,
$$
will be called the $\C^*$\emph{-equivariant Riemann-Zariski space over $X\times\PP^1$}.
\end{defi}
\emph{The space of $\C^*$-invariant $p$-codimensional Cartier (resp. Weil) classes on $\FX_{\PP^1}^{\C^*}$} $CN^p(\FX_{\PP^1}^{\C^*})$ (resp. $N^p(\FX_{\PP^1}^{\C^*})$) can be then defined similarly as done in subsection \ref{ssec:RZ} considering $\C^*$-invariant classes.\newline
Clearly, if we denote by $\FX_{\PP^1}$ the Riemann-Zariski space over $X\times \PP^1$ we have continuous injective maps
\begin{gather*}
    CN^1(\FX_{\PP^1}^{\C^*})\hooklongrightarrow CN^1(\FX_{\PP^1}),\\
    N^1(\FX_{\PP^1}^{\C^*})\hooklongrightarrow N^1(\FX_{\PP^1}).
\end{gather*}

By what said in section \ref{sec:K}, any test configuration $\X$ for $X$ has a natural compactification over $\PP^1$ and any smooth and dominating test configuration $(\X,\Ll)$ for $(X,L)$, defines an element in $CN^1(\FX_{\PP^1}^{\C^*})$.
\begin{defi}
\label{defi:Lamp}
Let $L\to X$ be an ample line bundle. We say that $\alpha\in N^1(\FX_{\PP^1}^{\C^*})$ is a \emph{test configuration} class (resp. an \emph{ample test configuration class}) if there exists a (resp. an ample and) normal test configuration $(\X',\Ll')$ for $(X,L)$ such that $\alpha=\Ll$ through the injective continuous maps
$$
N^1(\X)\hooklongrightarrow CN^1(\FX_{\PP^1}^{\C^*})\hooklongrightarrow N^1(\FX_{\PP^1}^{\C^*})
$$
where $\X$ is a smooth projective variety that $\C^*$-equivariantly dominates $\overline{\X'}$ while $\Ll$ is the $\Q$-line bundle on $\X$ given by pulling back $\overline{\Ll'}$.\newline
To lighten the notation, we will denote by $\FX_\alpha$ the set of all pairs $(\X,\Ll)$ such that \begin{itemize}
    \item[i)] $(\X,\Ll)$ is associated to $\alpha$ as above;
    \item[ii)] $\X$ dominates $X\times\PP^1$;
    \item[iii)] $\X\setminus\X_0\simeq X\times(\PP^1\setminus\{0\})$ (i.e. $\X$ is given as compactification of a smooth test configuration).
\end{itemize}
\end{defi}

As said in subsection \ref{ssec:NA}, two test configurations $(\X_1,\Ll_1), (\X_2,\Ll_2)$ for $(X,L)$ are said to be \emph{equivalent}, $(\X_1,\Ll_1)\sim (\X_2,\Ll_2)$ if there exists a third test configuration $(\X_3,\Ll_3)$ for $(X,L)$ which is the pullback of both $(\X_1,\Ll_1),(\X_2,\Ll_2)$ (\cite[Definition 6.1]{BHJ17}). Equivalently $(\X_1,\Ll_1)\sim (\X_2,\Ll_2)$ if $\varphi_{(\X_1,\Ll_1)}=\varphi_{(\X_2,\Ll_2)}$.
\begin{lemma}
\label{lem:Nice}
The natural map
$$
\iota:\left\{(\X,\Ll)\, \mathrm{normal}\, \mathrm{test}\, \mathrm{configuration}\, \mathrm{for}\, (X,L)\right\}/\sim \longrightarrow N^1(\FX_{\PP^1}^{\C^*})
$$
is injective with image the set of test configuration classes. In particular, $\CH^\NA(L)$ can be identified with the set of ample test configuration classes.
\end{lemma}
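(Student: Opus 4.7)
The well-definedness of $\iota$ is immediate from the definition of the equivalence relation: if $(\X_1,\Ll_1)\sim(\X_2,\Ll_2)$, the common dominating $(\X_3,\Ll_3)$ with $f_i^*\Ll_i=\Ll_3$ pulls back to the same $\Q$-line bundle on any $\C^*$-equivariant smooth projective $\tilde{\X}$ dominating $\overline{\X_3}$ and $X\times\PP^1$, yielding the same Cartier---hence Weil---class in $N^1(\FX_{\PP^1}^{\C^*})$. The identification of the image with the set of test configuration classes is tautological from Definition \ref{defi:Lamp}, and the final sentence on $\CH^\NA(L)$ follows at once. The substance of the lemma is therefore the injectivity of $\iota$.

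Given normal $(\X_1,\Ll_1),(\X_2,\Ll_2)$ with the same image in $N^1(\FX_{\PP^1}^{\C^*})$, I would take a $\C^*$-equivariant smooth projective $\tilde{\X}$ simultaneously dominating $\overline{\X_1},\overline{\X_2}$ and $X\times\PP^1$ via $\rho_i:\tilde{\X}\to\overline{\X_i}$ and $\mu:\tilde{\X}\to X\times\PP^1$, with $\tilde{\X}\setminus\tilde{\X}_0\simeq X\times(\PP^1\setminus\{0\})$ (existence being standard by normalization plus equivariant resolution). On the generic fiber $\rho_i^*\overline{\Ll_i}$ and $\mu^*p_X^*L$ both restrict to $L$, so one may write
$$
\rho_i^*\overline{\Ll_i}=\mu^*p_X^*L+F_i,
$$
for $\C^*$-invariant $\Q$-Cartier divisors $F_i$ on $\tilde{\X}$ supported on the central fiber $\tilde{\X}_0$. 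The injectivity of $N^1(\tilde{\X})\hookrightarrow CN^1(\FX_{\PP^1}^{\C^*})\hookrightarrow N^1(\FX_{\PP^1}^{\C^*})$ recorded in Subsection \ref{ssec:RZ} forces the difference $F:=F_1-F_2$ to be numerically trivial on $\tilde{\X}$.

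The crux is to upgrade this numerical triviality to the honest equality $F_1=F_2$ of divisors. Write $\tilde{\X}_0=\sum_i b_i E_i$ and $F=\sum_i a_i E_i$. Zariski's lemma applied to the connected fibers of $\pi:\tilde{\X}\to\PP^1$ asserts that the intersection matrix $(E_i\cdot E_j)$ is negative semi-definite with one-dimensional kernel spanned by $\tilde{\X}_0$; since $F\cdot E_j=0$ for all $j$, this yields $F=c\tilde{\X}_0$ for some $c\in\Q$. Intersecting with any horizontal curve $C\subset\tilde{\X}$ then gives $0=F\cdot C=c\deg(C/\PP^1)$, forcing $c=0$ and hence $F_1=F_2$. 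Consequently $\rho_1^*\overline{\Ll_1}=\rho_2^*\overline{\Ll_2}$ as $\Q$-line bundles on $\tilde{\X}$, and restricting to $\tilde{\X}\cap\pi^{-1}(\A^1)$ produces a common dominating test configuration witnessing $(\X_1,\Ll_1)\sim(\X_2,\Ll_2)$. I expect the Zariski-lemma-plus-horizontal-curve step to be the main technical point; every other step is essentially formal from the Riemann-Zariski framework of Subsection \ref{ssec:RZ} and the $\C^*$-equivariant geometry of test configurations.
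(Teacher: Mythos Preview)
Your proposal is correct and follows essentially the same route as the paper: reduce to a common smooth dominating model, write each pulled-back line bundle as $\mu^*p_X^*L+F_i$ with $F_i$ supported on the central fiber, and use Zariski's lemma to upgrade numerical triviality of $F_1-F_2$ to actual equality. The only cosmetic point is that your ``intersection matrix $(E_i\cdot E_j)$'' should be read as the higher-dimensional version (i.e.\ cut down by $H^{n-1}$ for an ample $H$), since $\tilde{\X}$ has dimension $n+1$; the paper sidesteps this by simply citing the appropriate reference.
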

Clearly $\iota([(\X,\Ll)])=\Ll$ where $(\X,\Ll)$ is a smooth representative and where by an abuse of notation on the right hand side we mean the numerical class associated to the compactification of $\Ll$ (i.e. $\{\overline{\Ll}\}\in N^1(\X)\subset N^1(\FX_{\PP^1}^{\C^*})$).
\begin{proof}
By definition two normal test configurations $(\X_1,\Ll_1), (\X_2,\Ll_2)$ for $(X,L)$ defines the same test configuration class $\alpha \in N^1(\FX_{\PP^1}^{\C^*})$ if and only if $\alpha$ is also defined by a normal test configuration $(\X_3,\Ll_3)$ such that $\X_3\overset{\rho_i}{\geq}\X_i$ and $\Ll_3\equiv_{num}\rho_i^*\Ll_i$. In particular $\iota$ is well-defined and to conclude the proof it is enough to prove that if $(\X,\Ll_1),(\X,\Ll_2)$ are two smooth and dominating $X\times\C$ test configurations such that $\Ll_1\equiv_{num}\Ll_2$ then $\Ll_1=\Ll_2$. But by definition of test configurations
$$
\Ll_i=\mu^*p_X^*L + D_i
$$
for a unique $\Q$-divisor $D_i$ supported on the central fiber. Thus, $\Ll_1\equiv_{num}\Ll_2$ implies that $D_1-D_2\equiv_{num} 0$, and an application of Zariski's Lemma (see for instance the proof of \cite[Proposition 3.2.18]{Sjo17}) leads to $D_1=D_2$ and hence to $\Ll_1=\Ll_2$.
\end{proof}

Next, we observe that there is a injective continuous map
$$
CN^1(\FX)\hooklongrightarrow CN^1(\FX_{\PP^1}^{\C^*})
$$
induced by the projection map $p_X:X\times\PP^1\to X$. Indeed, given a Cartier class $\alpha\in CN^1(\FX)$ and letting $Y\geq X$ such that $\alpha$ has a determination on $Y$, the Cartier class $\alpha\times\PP^1\in CN^1(\FX_{\PP^1}^{\C^*})$ is induced by the class $\alpha_Y\times\PP^1\in CN(Y\times\PP^1)$. In particular, letting $\bD\in \bDiv_L(X)$ be a $L$-positive generalized $b$-divisor, the net of Cartier classes $\{D_Y\times\PP^1\}_{Y\geq X}\subset N^1(\FX_{\PP^1}^{\C^*})$ is increasing and bounded above by $L\times \PP^1$. Thus, \cite[Proposition 1.5]{BFJ09} implies that the net converges to an element which we will denote by $\bD\times \PP^1\in N^1(\FX_{\PP^1}^{\C^*})$. \newline
In practice, if $\CZ_{\pi^{-1}\C^*}\simeq Y\times \C^*$, one has
$$
(\bD\times\PP^1)_\CZ=f_*\mu'^* (D_Y\times \PP^1)
$$
with respect to any diagram
$$
\begin{tikzcd}
    \CZ' \ar[dr, "\mu'"] \ar[r, "f"] & \CZ \ar[d, dashed, "\mu"]\\
    & Y\times\PP^1 
\end{tikzcd}
$$
Similarly, one can construct the Weil class $(L-\bD)\times \PP^1\in N^1(\FX_{\PP^1}^{\C^*})$.\newline

Moreover, any generalized $b$-divisor $\bD=\{D_Y\}_{Y\geq X}$ on $X$ defines a $\C^*$-invariant generalized $b$-divisor $\CD$ on $X\times\PP^1$ in a natural way. Indeed, letting $\CZ$ be a smooth projective variety $\CZ\overset{\C^*}{\geq} X\times \PP^1$ such that $ \CZ_{\pi^{-1}\C^*}\simeq Y\times\C^* $
for a smooth and projective variety $Y\geq X$, we define $\CD_{\CZ}:=\sum a_{F_Y}\CF_{Y,\CZ}$ if $D_Y=\sum a_{F_Y}F_Y$ where $\CF_{Y,\CZ}$ is the divisor given as Zariski closure in $\CZ$ of $F_Y\times \C^*$ with respect to the open embedding of $Y\times \C^*$ into $\CZ$.

Observe that $\CD=\{\CD_{\CZ}\}_{\CZ}$ defines a pseudoeffective class in $N^1(\FX_{\PP^1}^{\C^*})$ such that
$$
\CD\leq \bD\times\PP^1
$$
in $N^1(\FX_{\PP^1}^{\C^*})$ since, by construction, for any $\tilde{\CZ}\geq X\times\PP^1$ one has
$$
\CD_{\tilde{\CZ}}\leq D_{Y'}\times\PP^1
$$
where $\tilde{\CZ}_{\pi^{-1}\C^*}\simeq Y'\times\C^*$.
\begin{remark}
\label{rem:NI}
Two \emph{numerically equivalent} generalized $b$-divisors $\bD=\{D_Y\}_{Y\geq X}, \bD'=\{D_Y'\}_{Y\geq X}$, i.e. $\{\bD\}=\{\bD'\}\in N^1(\FX)$ as Weil classes on $\FX$, may define $b$-divisors $\CD,\CD'$ with different Weil classes $\{\CD\}\neq \{\CD'\}$ on $\FX_{\PP^1}^{\C^*}$. Indeed it is enough to consider $\mu_\X:\X:=\mathrm{Bl}_{(p,0)}(X\times \PP^1)\to X\times\PP^1$ where $p\in D, p\notin D'$ for two numerical equivalent divisor $D,D'$ to have $\{\CD'_\X\}=\{\mu_\X^*p_X^*D\}\in N^1(\X)$ while $\{\CD_\X\}=\{\mu_\X^*p_X^*D-c\mathcal{E}\}\in N^1(\X)$ where $\cE$ is the exceptional divisor while $c>0$ is a constant depending on the multiplicity of $D$ at $p$ and on $X$.
\end{remark}

\subsubsection{From test configurations to relatively positive classes}
By what observed before, the most natural classes to consider in this context are those given by
$$
\Ll-\CD\in N^1(\FX_{\PP^1}^{\C^*})
$$
where $\Ll\in CN^1(\FX_{\PP^1}^{\C^*})$ varies among all ample test configuration classes (Definition \ref{defi:Lamp}). However, a priori $\Ll-\CD$ may not be a psef class in $N^1(\FX_{\PP^1}^{\C^*})$. 
\begin{lem}
\label{lem:Effe}
Let $\Ll\in N^1(\FX_{\PP^1}^{\C^*})$ be a test configuration class. Then
$$
m_{0}:=\inf\{m\in\Q\, : \, \Ll+m(X\times\{0\})-L\times\PP^1 \, \mathrm{is}\, \mathrm{pseudoeffective}\}
$$
is a finite quantity, where $X\times\{0\},L\times\PP^1 \in N^1(\FX_{\PP^1}^{\C^*})$ are the Weil classes associated respectively to $X\times\{0\}, L\times \PP^1\in N^1(X\times \PP^1)$. Moreover, by an abuse of notations, for any $(\X,\Ll)\in \FX_\Ll$ and for any $m>m_0$ there exists a unique effective $\R$-divisor $F$ supported on the central fiber $\X_0$ such that the $\R$-divisor
$$
\mu_\X^*p_X^*L+F
$$
is a representative of $\Ll+m\X_0\in N^1(\X)$ where $\mu_\X: \X\to X\times\PP^1$.
\end{lem}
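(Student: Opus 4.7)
The plan is to exploit the structural description of test configurations in the class. Pick $(\X,\Ll)\in\FX_\Ll$: since $\X$ is smooth, dominates $X\times\PP^1$ via $\mu_\X$, and $\X\setminus\X_0\simeq X\times(\PP^1\setminus\{0\})$ is $\C^*$-equivariant, the $\Q$-line bundles $\Ll$ and $\mu_\X^*p_X^*L$ agree on $\X\setminus\X_0$. Hence $\Ll-\mu_\X^*p_X^*L$ is represented by a unique $\Q$-divisor $F_0$ supported on $\X_0$. Writing $\X_0=\sum_i b_i E_i$ with $b_i\in\N_{>0}$ and $F_0=\sum_i a_i E_i$ with $a_i\in\Q$, one has
\begin{equation*}
\Ll+m\X_0-\mu_\X^*p_X^*L \;=\; \sum_i(a_i+m b_i)\,E_i \quad \text{in } N^1(\X).
\end{equation*}

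Finiteness of $m_0$ splits into two elementary bounds. For the upper bound, any $m\geq m_*:=\max_i(-a_i/b_i)$ makes the representative above effective, hence pseudoeffective on $\X$. Since the Weil class $\Ll+m(X\times\{0\})-L\times\PP^1$ is Cartier with determination on $\X$ and pseudoeffectivity passes through pullback and pushforward, this transfers to pseudoeffectivity in $N^1(\FX_{\PP^1}^{\C^*})$, so $m_0\leq m_*<+\infty$. For the lower bound, suppose for contradiction that $m_k\to-\infty$ with the class pseudoeffective for each $m_k$; dividing by $|m_k|$ and using closedness of the pseudoeffective cone in $N^1(\X)$ yields $-[\X_0]$ pseudoeffective, which contradicts the pointedness of the pseudoeffective cone since $[\X_0]$ is a nonzero effective class (e.g.\ $\X_0\cdot H^n>0$ for any ample $H$).

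For the second assertion, fix $m>m_0$: by density of $\Q$ and upward-closedness of $\{m:\text{class psef}\}$ (which in turn follows from pseudoeffectivity of $[\X_0]$), the class is pseudoeffective on $\X$, with natural divisorial representative $F_m:=\sum_i(a_i+mb_i)E_i$ supported on $\X_0$. The main obstacle is to upgrade pseudoeffectivity of $F_m$ to genuine effectivity, i.e.\ to show $a_i+mb_i\geq 0$ for every $i$. This requires a higher-dimensional Zariski-type lemma for the fibration $\X\to\PP^1$, in the spirit of the one invoked in Lemma~\ref{lem:Nice} via \cite[Proposition 3.2.18]{Sjo17}: for $H$ an ample class on $\X$, the symmetric bilinear form $a_{ij}:=(E_i\cdot E_j\cdot H^{n-1})$ on the $\R$-span of components of $\X_0$ is negative semidefinite with kernel spanned by $(b_i)_i$. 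Combining this with $(Ac)_i\geq 0$ for a pseudoeffective $D=\sum c_iE_i$ (obtained by intersecting with the movable curve classes $E_i\cdot H^{n-1}$) and the standard linear-algebra argument setting $\lambda:=\min_j c_j/b_j$ and $c':=c-\lambda b\geq 0$ forces $\lambda\geq 0$ and hence $c_i\geq 0$. Uniqueness of $F$ supported on $\X_0$ with the prescribed numerical class follows from the same Zariski lemma.
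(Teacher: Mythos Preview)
Your argument tracks the paper's one-line proof (which just cites Zariski's Lemma from Lemma~\ref{lem:Nice}) and correctly handles both the finiteness of $m_0$ and the uniqueness of $F$. The finiteness is fine: the upper bound via $m_*:=\max_i(-a_i/b_i)$ and the lower bound via the limiting argument showing $-[\X_0]$ would be psef (contradicting $\X_0\cdot H^n>0$) are both valid, and uniqueness is exactly the Zariski-type statement used in Lemma~\ref{lem:Nice}.

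The effectivity step, however, has a real gap. You assert that $E_i\cdot H^{n-1}$ is a movable curve class, so that $(Ac)_i=D\cdot E_i\cdot H^{n-1}\geq 0$ for every psef $D$. This is false whenever $\X_0$ has at least two components: taking $D=E_i$ (effective, hence psef) gives $(Ac)_i=E_i^2\cdot H^{n-1}=a_{ii}$, and since $\sum_j b_j a_{ij}=\X_0\cdot E_i\cdot H^{n-1}=0$ with $a_{ij}\geq 0$ for $j\neq i$ and some $a_{ij}>0$ by connectedness of the fiber, one has $a_{ii}<0$. So $E_i\cdot H^{n-1}$ is \emph{not} in the movable cone. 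Worse, if the inequalities $(Ac)_i\geq 0$ did hold for all $i$, your linear-algebra argument (pairing $c'\geq 0$ against $Ac'=Ac$ and using negative semidefiniteness) would force $c'=0$, i.e.\ $D=\lambda\X_0$ --- a conclusion plainly false for, say, $D=E_1$.

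What is actually needed to pass from $m>m_0$ to $F_m\geq 0$ is the identity $m_0=m_*$, equivalently that a psef class with a representative supported on $\X_0$ has that representative effective. One route: for the strict transform $E_0$ of $X\times\{0\}$ the movable curves given by strict transforms of $\{x\}\times\PP^1$ (general $x$) meet only $E_0$ among the $E_i$, giving $c_0\geq 0$ directly; for the $\mu_\X$-exceptional components one then appeals to the Negativity Lemma for $\mu_\X$. Since the subsequent uses in the paper only require ``$m\gg 0$'', the issue is harmless downstream, but as stated the implication $m>m_0\Rightarrow F_m\geq 0$ does require this extra input beyond your movable-curve claim.
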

\begin{proof}
This is a well-known result, which follows from what said during the proof of Lemma \ref{lem:Nice}.
\end{proof}
\begin{prop}
\label{prop:Big}
Let $\Ll\in N^1(\FX_{\PP^1}^{\C^*})$ be a test configuration class. Then there exists $m_0\in \R$ such that
$$
\Ll_m:=\Ll+m(X\times \{0\})-\CD\geq (L-\bD)\times \PP^1 + (m-m_0)(X\times\{0\})
$$
for any $m\geq m_0$. Moreover, for any $m>m_0$, the net of big Cartier classes
$$
\{\Ll_{m,\CZ}\}_{\CZ\geq X\times\PP^1}:=\{\Ll+m(X\times\{0\})-\CD_\CZ\}_{\CZ\geq X\times\PP^1}\subset N^1(\FX_{\PP^1}^{\C^*})
$$
weakly converging to $\Ll_m$ satisfies
$$
\langle \Ll_{m,\CZ}^{n+1}\rangle\geq (n+1)(m-m_0)\langle L_Y^n\rangle
$$
for any $Y\geq X$ if $\CZ_{\pi^*\C^*}\simeq Y\times\C^*$, where $L_Y=\rho_Y^*L-D_Y$ for $\rho_Y:Y\to X$ birational morphism.
\end{prop}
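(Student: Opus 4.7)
My plan is to split the proposition into its two assertions, which require rather different ingredients.

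For the first inequality, I would simply rearrange
\[
\Ll_m - (L-\bD)\times\PP^1 - (m-m_0)(X\times\{0\}) \;=\; \bigl(\Ll + m_0(X\times\{0\}) - L\times\PP^1\bigr) \,+\, \bigl(\bD\times\PP^1 - \CD\bigr).
\]
The first summand is psef by Lemma \ref{lem:Effe} (the psef cone is closed, so the infimum defining $m_0$ is attained), while the second is psef by the inequality $\CD\leq\bD\times\PP^1$ recorded just before the proposition. Summing proves the inequality for every $m\geq m_0$.

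For the volume estimate, fix $m>m_0$ and work on a smooth model $\CZ$ with $\CZ_{\pi^{-1}\C^*}\simeq Y\times\C^*$ taken sufficiently large so that it $\C^*$-equivariantly dominates a representative $\X\in\FX_\Ll$ (so that Lemma \ref{lem:Effe} pulls back to $\CZ$ and $\Ll_{m,\CZ}$ is Cartier) as well as $Y\times\PP^1$ through a morphism $f:\CZ\to Y\times\PP^1$ with $\mu_\CZ=(\rho_Y\times\mathrm{id})\circ f$ and $\CZ_0 = f^*(Y\times\{0\})$. Pulling the decomposition of Lemma \ref{lem:Effe} back from $\X$ yields $\Ll + m\CZ_0 = \mu_\CZ^*(L\times\PP^1) + F_m^\CZ$ with $F_m^\CZ$ effective and supported on $\CZ_0$. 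Since each $\CF_{Y,\CZ}$ is the strict transform of $F_Y\times\PP^1$ along $f$, effectiveness of pull-back along a birational morphism of smooth varieties gives
\[
f^*(D_Y\times\PP^1) \;=\; \CD_\CZ + \sum_{F_Y} a_{F_Y}\, E_{F_Y}
\]
for effective $f$-exceptional divisors $E_{F_Y}$ on $\CZ_0$, and together with $\mu_\CZ^*(L\times\PP^1) = f^*((\rho_Y^*L)\times\PP^1)$ this yields the key decomposition
\[
\Ll_{m,\CZ} \;=\; f^*(L_Y\times\PP^1) + F_m^\CZ + \sum_{F_Y} a_{F_Y}\, E_{F_Y}.
\]

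The idea is then to exhibit a nef Cartier class $P\leq\Ll_{m,\CZ}$ with computable top self-intersection and apply Theorem \ref{thm:PIP}. I would set $P := f^*(L_Y\times\PP^1) + (m-m_0)\CZ_0$, which is nef as a sum of pull-backs of nef classes. The inequality $P\leq\Ll_{m,\CZ}$ is equivalent to the pseudoeffectivity of $(F_m^\CZ - (m-m_0)\CZ_0) + \sum a_{F_Y} E_{F_Y}$: the second summand is effective, while uniqueness in Lemma \ref{lem:Effe} forces $F_m^\CZ = F_{m_0+\varepsilon}^\CZ + (m-m_0-\varepsilon)\CZ_0$ for every $\varepsilon>0$, so $F_m^\CZ - (m-m_0)\CZ_0 = \lim_{\varepsilon\to 0^+}F_{m_0+\varepsilon}^\CZ$ is a weak limit of effective divisors and hence psef. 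Theorem \ref{thm:PIP} then gives $\langle\Ll_{m,\CZ}^{n+1}\rangle \geq (P^{n+1})$, and $(P^{n+1}) = (n+1)(m-m_0)(L_Y^n)$ follows by expanding the binomial and using $(L_Y\times\PP^1)^{n+1}=0$, $(Y\times\{0\})^2=0$ on $Y\times\PP^1$, and $\CZ_0 = f^*(Y\times\{0\})$.

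The main technical obstacle I anticipate is the case when $D_Y$ is a genuinely countably infinite sum: the identity relating $f^*(D_Y\times\PP^1)$ and $\CD_\CZ$ must then be interpreted at the level of Weil classes in $N^1(\FX_{\PP^1}^{\C^*})$, and the pseudoeffectivity of $\sum a_{F_Y} E_{F_Y}$ justified via the finite truncations $D_Y^{(N)} = \sum_{j\leq N}a_j F_j$ together with closedness of the psef cone under weak limits. Once this limiting procedure is set up, the nefness of $P$, the inequality $P\leq\Ll_{m,\CZ}$ and the final intersection computation are all stable under passage to the limit.
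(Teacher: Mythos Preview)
Your proof is correct and follows essentially the same strategy as the paper: both parts hinge on the inequality $\CD_\CZ\leq D_Y\times\PP^1$ (recorded just before the proposition) together with Lemma~\ref{lem:Effe}, yielding the nef lower bound $\Ll_{m,\CZ}\geq L_Y\times\PP^1+(m-m_0)(X\times\{0\})$ and then invoking Theorem~\ref{thm:PIP}. The only difference is presentational: the paper obtains this inequality in one line from $\CD_\CZ\leq D_Y\times\PP^1$, while you unpack it via the explicit decomposition $\Ll_{m,\CZ}=f^*(L_Y\times\PP^1)+F_m^\CZ+\sum a_{F_Y}E_{F_Y}$ and verify $P\leq\Ll_{m,\CZ}$ componentwise; your $P$ is exactly the paper's nef lower bound pulled back to $\CZ$.
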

\begin{proof}
The first assertion immediately follows from Lemma \ref{lem:Effe} observing that $\CD\leq \bD\times \PP^1$ and that $(L-\bD)\times \PP^1=L\times\PP^1-\bD\times\PP^1$. Then, if $\CZ_{\pi^*\C^*}\simeq Y\times\C^*$, as $\CD_\CZ\leq \bD\times\PP^1$, Theorem \ref{thm:PIP} implies
\begin{multline*}
    \langle\Ll_{m,\CZ}^{n+1} \rangle\geq \langle \left(L_Y\times \PP^1 + (m-m_0)X\times\{0\}\right)^{n+1}\rangle=\\
    =(n+1)\langle L_Y^n\rangle(m-m_0)\vol_{\PP^1}(H)=(n+1)(m-m_0)\langle L_Y^n\rangle
\end{multline*}
where by $H=\{0\}\in\PP^1$ we denoted the numerical class of any point in $\PP^1$.
\end{proof}
\begin{remark}
Since the Cartier classes $\Ll_{m,Y}$ in Propoostion \ref{prop:Big} are $\C^*$-invariant, the positive intersection product, which a priori considers big Cartier classes in Riemann-Zariski space $\FX_{\PP^1}$, can be seen as defined in the natural $\C^*$-equivariant way. Namely the product $\langle \alpha_1\cdots \alpha_p \rangle\in N^p(\FX_{\PP^1}^{\C^*})$ for $\alpha_j\in CN^1(\FX_{\PP^1}^{\C^*})$ big classes, is given as the least upper bound of the set of classes
$$
(\alpha_1-F_1)\cdots (\alpha_p-F_p)
$$
where $F_i$ varies over the effective and $\C^*$-invariant Cartier $\Q$-divisor on $\FX_{\PP^1}^{\C^*}$ such that $D_i-F_i$ in nef and it coincides with the product $\langle \alpha_1\cdots \alpha_p\rangle\in N^p(\FX_{\PP^1})$ where the classes $\alpha_j$ are seen as big Cartier classes in $\FX_{\PP^1}$. In particular the properties stated in Theorem \ref{thm:PIP} and the results listed in subsection \ref{ssec:Vol} keep to hold in the space $\FX_{\PP^1}^{\C^*}\subset \FX_{\PP^1}$.
\end{remark}

\subsection{The Donaldson-Futaki invariant}
Using the Riemann-Zariski perspective of the previous subsection, we want here to attach a natural weight $DF(\Ll;\bD)$, the \emph{Donaldson-Futaki invariant}, to any test configuration class $\Ll\in N^1(\FX_{\PP^1}^{\C^*})$.

\subsubsection{Restricted volume for Weil classes}
As recalled in section \ref{ssec:RZ} the projection formula allows to extend the intersection product $\alpha\cdot \beta$ for $\alpha \in CN^{n-1}(\FX_{\PP^1}^{\C^*}), \beta\in CN^1(\FX_{\PP^1}^{\C^*})$ to the case when either $\alpha$ or $\beta$ is just a Weil class.
\begin{defi}
\label{defi:Pro}
Let $\alpha\in N^{n-1}(\FX_{\PP^1}^{\C^*}), \beta\in N^1(\FX_{\PP^1}^{\C^*})$. Then
$$
\alpha \cdot \beta:=\sup_{\CZ'}\inf_{\CZ\geq \CZ'}\alpha_{\CZ}\cdot \beta_{\CZ}.
$$
when $\alpha_{\CZ}\cdot \beta_{\CZ}\in \R$ is given as product on $\CZ$.
\end{defi}
It follows from the projection formula that this definition extends the usual intersection product when at least one of the two classes is Cartier. Moreover when $\alpha, \beta$ are given as intersection of nef Weil classes then it coincides with the intersection product defined in \cite[Section 3]{DF21}.
\subsubsection{The intersection formula for the Donaldson-Futaki invariant}
Let $\Ll\in N^1(\FX_{\PP^1}^{\C^*})$ be a test configuration class. Then, by Proposition \ref{prop:Big} there exists $m_0\in\Q$ such that for any $m>m_0$
$$
\{\Ll_{m,\CZ}\}_{\CZ\geq X\times\PP^1}=\{\Ll+m(X\times\{0\})-\CD_\CZ\}_{\CZ\geq X\times\PP^1}\subset N^1(\FX_{\PP^1}^{\C^*})
$$
is a net of big Cartier classes which weakly converges to
$$
\Ll_m:=\Ll+m(X\times\{0\})-\CD\in N^1(\FX_{\PP^1}^{\C^*}).
$$
It is then natural to wonder what happens to the positive intersection products of $\Ll_{m,\CZ}$.
\begin{lem}
\label{lem:Ammappate}
Let $k=1,\dots, n+1$ and let $\CZ, \CZ'$ such that $\CZ_{\pi^{-1}\C^*}\simeq \CZ'_{\pi^{-1}\C^*}\simeq Y\times\C^*$ for $Y\geq X$. If they also dominate $\X$ for $(\X,\Ll)\in\FX_\Ll$, then
$$
\langle \Ll_{m,\CZ}^k\rangle =\langle\Ll_{m,\CZ'}^k \rangle,
$$
as classes in $N^k(\FX_{\PP^1}^{\C^*})$, for any $m>m_0$. Moreover the net $\{\langle \Ll_{m,\CZ}^k\rangle\}_{\CZ\geq \X}\subset N^k(\FX_{\PP^1}^{\C^*})$ is decreasing for any $k=1,\dots,n+1$ and for any $m>m_0$ fixed.
\end{lem}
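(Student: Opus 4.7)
I would prove both statements by a single comparison along any $\C^*$-equivariant birational morphism $p:\CZ'\to\CZ$ between two elements of the indexing set. Since $\Ll$ and the fibre class $X\times\{0\}$ are pulled back by $p$, the key object to analyse is
\[
\Ll_{m,\CZ'}-p^*\Ll_{m,\CZ}=p^*\CD_\CZ-\CD_{\CZ'}.
\]
Let $q:Y'\to Y$ be the birational morphism induced by $p$ on the $\C^*$-fibres, where $\CZ_{\pi^{-1}\C^*}\simeq Y\times\C^*$ and $\CZ'_{\pi^{-1}\C^*}\simeq Y'\times\C^*$. Restricted to the $\C^*$-fibre this difference equals $(q^*D_Y-D_{Y'})\times\C^*$, which is non-positive by the $L$-positivity condition $D_{Y'}\geq q^*D_Y$; so the horizontal part $E^h_-$ of $\CD_{\CZ'}-p^*\CD_\CZ$ is effective. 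On the other hand, $\CD_{\CZ'}$ is the Zariski closure of $D_{Y'}\times\C^*$ and hence has no component supported on $\CZ'_0$; consequently the vertical part of $p^*\CD_\CZ-\CD_{\CZ'}$ coincides with the vertical part $E^v_+$ of $p^*\CD_\CZ$ and is effective.

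The crucial observation is that every prime divisor $F$ in the support of $E^v_+$ sits inside $\CZ'_0$ and satisfies $p(F)\subset\CD_\CZ\cap\CZ_0$; since $\CD_\CZ$ is horizontal, this intersection has codimension at least $2$ in $\CZ$, so $p(F)$ has dimension strictly less than $\dim F$, i.e.\ $F$ is $p$-exceptional. Combining these facts rewrites the decomposition as the identity
\[
\Ll_{m,\CZ'}+E^h_-=p^*\Ll_{m,\CZ}+E^v_+\qquad\text{in }CN^1(\FX_{\PP^1}^{\C^*}),
\]
with $E^h_-\geq 0$ effective and $E^v_+\geq 0$ effective and $p$-exceptional.

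Now I would apply two standard properties of the positive intersection product from subsection \ref{ssec:PIPS}: birational invariance of $\langle\cdot\rangle$ as a Weil class, and invariance under addition of a pullback-exceptional effective divisor. Together these give $\langle(p^*\Ll_{m,\CZ}+E^v_+)^k\rangle=\langle\Ll_{m,\CZ}^k\rangle$. Monotonicity of positive intersection products on the big cone, combined with $E^h_-\geq 0$, then yields
\[
\langle\Ll_{m,\CZ'}^k\rangle\leq\langle(\Ll_{m,\CZ'}+E^h_-)^k\rangle=\langle(p^*\Ll_{m,\CZ}+E^v_+)^k\rangle=\langle\Ll_{m,\CZ}^k\rangle,
\]
which is the decreasing assertion. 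For the first assertion I would choose a common $\C^*$-equivariant smooth dominator $\CZ''\geq\CZ,\CZ'$ preserving the isomorphism type of the generic $\C^*$-fibre (constructed by resolving the fibre product), reducing to the case $\CZ'\geq\CZ$ with $q=\mathrm{id}$. Then $E^h_-=0$ and the chain above collapses, giving the required equality $\langle\Ll_{m,\CZ'}^k\rangle=\langle\Ll_{m,\CZ}^k\rangle$.

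The main technical obstacle is justifying the invariance claim $\langle(p^*A+E)^k\rangle=\langle A^k\rangle$ for arbitrary $k$, not only in top degree. In top degree ($k=\dim\CZ'$) this is the classical identity $\vol(p^*A+E)=\vol(A)$, following from $p_*\Ox_{\CZ'}(E)=\Ox_\CZ$ for effective $p$-exceptional $E$. For $1\leq k<\dim\CZ'$ the statement is subtler but can be extracted from the characterization of $\langle\alpha^k\rangle$ as the least upper bound of intersection numbers $(F_1\cdots F_k)$ over nef $F_i\leq\alpha$, together with an application of the negativity lemma to push forward nef approximations, in the spirit of \cite[Propositions~2.9 and 2.12]{BFJ09}.
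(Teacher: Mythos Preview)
Your proposal is correct and follows essentially the same approach as the paper: reduce to $\CZ'\geq\CZ$ by directedness, identify $p^*\CD_\CZ-\CD_{\CZ'}$ as an effective $p$-exceptional piece plus (in the decreasing case) an effective horizontal piece coming from $D_{Y'}\geq q^*D_Y$, then apply monotonicity together with the characterization of $\langle\,\cdot\,\rangle$ in Theorem~\ref{thm:PIP}. The paper organizes the two assertions separately, introducing for the decreasing part the auxiliary divisor $\CD_{Y,\Y'}$ (the Zariski closure of $(\rho^*D_Y)\times\C^*$), which is exactly the horizontal part of $p^*\CD_\CZ$ in your decomposition; your explicit remark that the invariance $\langle(p^*A+E)^k\rangle=\langle A^k\rangle$ for $k<n+1$ ultimately rests on a negativity-lemma argument is a detail the paper leaves implicit in its appeal to Theorem~\ref{thm:PIP}.
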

\begin{proof}
Since the set of projective smooth varieties $\CZ$ that $\C^*$-invariantly dominates $\X$ and that satisfies $\CZ_{\pi^{-1}\C^*}\simeq Y\times\C^*$ is \emph{directed}, we can assume that $\CZ'$ dominates $\CZ$ without loss of generality.\newline
By definition, we then have $\CD_\CZ\geq \CD_{\CZ'}$ as pseudoeffective Weil classes in $N^1(\FX_{\PP^1}^{\C^*})$, which leads to $\Ll_{m,\CZ}\leq \Ll_{m,\CZ'}$ as big Cartier classes in $N^1(\FX_{\PP^1}^{\C^*})$. Hence $ \langle \Ll_{m,\CZ}^k \rangle\leq \langle \Ll_{m,\CZ'}^k\rangle $ in $N^k(\FX_{\PP^1}^{\C^*})$.\newline
Vice versa, by Theorem \ref{thm:PIP} it is enough to prove that any $\alpha\in CN^1(\FX_{\PP^1}^{\C^*})$ nef class such that $\alpha\leq \Ll_{m,\CZ'}$ satisfies $\alpha\leq \Ll_{m,\CZ}$. But, letting $f':\CZ'\to \CZ$, this follows from the fact that $f'^*\CD_\CZ-\CD_{\CZ'}$ is an effective $f'$-exceptional divisor and from $\Ll_{m,\CZ'}=f'^*\Ll_{m,\CZ}+\left(f'^*\CD_\CZ-\CD_{\CZ'}\right)$ in $N^1(\CZ')$. \newline
Next, to prove that $\{\langle\Ll_{m,\CZ}^k\rangle\}_{\CZ\geq \X}$ is decreasing it is sufficient to prove it for $\Y',\Y$ such that $\Y'\overset{\tilde{\rho}}{\geq}\Y$, $\Y'_{\pi^{-1}\C^*}\simeq Y'\times\C^*,\Y_{\pi^{-1}\C^*}\simeq Y\times\C^*,$ for $Y'\overset{\rho}{\geq}Y$. Denoting by $\CD_{Y,\Y'}$ the component-wise Zariski closure in $\Y'$ of $(\rho^*D_Y)\times\C^*$ with respect to the open embedding $Y'\times\C^*$ into $\Y'$, we then have
$$
\Ll_{m,\Y'}\leq \Ll_{m,\Y'}+\CD_{\Y'}-\CD_{Y,\Y'}=:\tilde{\Ll}_{m,\Y'}
$$
as $D_{Y'}\geq \rho^*D_Y$. In particular, 
$$
\langle \Ll_{m,\Y'}^k \rangle\leq \langle \tilde{\Ll}_{m,\Y'}^k \rangle
$$
for any $k=1,\dots,n$. Then, by construction we also have $\tilde{\Ll}_{m,\Y'}=\tilde{\rho}^*\Ll_{m,\Y}+\tilde{\rho}^*\CD_\Y-\CD_{Y,\Y'}$ in $N^1(\Y')$, and the equality $\langle \tilde{\Ll}_{m,\Y'}^k  \rangle=\langle \Ll_{m,\Y}^k \rangle$ again follows from Theorem \ref{thm:PIP} and the fact that $\tilde{\rho}^*\CD_\Y-\CD_{Y,\Y'}$ is an effective $\tilde{\rho}$-exceptional divisor.
\end{proof}
\begin{lem}
\label{lem:PIPWeil}
Let $k=1,\dots,n+1$ let $m\gg 1$ big enough. Then the net
$$
\{\langle \Ll_{m,\CZ}^k \rangle\}_{\CZ\geq X\times\PP^1}\subset N^k(\FX_{\PP^1}^{\C^*})
$$
of Weil psef classes has a unique weak accumulation point, which we will denote by $\langle\Ll_m^k\rangle$. Moreover, letting $(\X,\Ll)\in\FX_\Ll$, if $\CZ_{\pi^{-1}\C^*}\simeq Y\times \C^*$ for $Y\geq X$ and $\CZ\geq \X$ then
\begin{equation}
    \label{eqn:Inca}
    \langle \Ll_m^k\rangle_{\CZ}= \langle \Ll_{m,\Y}^k\rangle_{\CZ}
\end{equation}
as incarnations on $\CZ$ for any $\Y\geq \X$ such that $\Y_{\pi^{-1}\C^*}\simeq Y\times \C^*$.
\end{lem}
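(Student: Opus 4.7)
The strategy is to build the Weil class $\langle\Ll_m^k\rangle$ incarnation-by-incarnation using the monotonicity supplied by Lemma~\ref{lem:Ammappate}(ii), to promote compatibility under pushforward into a well-defined element of the projective limit $N^k(\FX_{\PP^1}^{\C^*})$, and finally to identify the incarnations via Lemma~\ref{lem:Ammappate}(i).

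First I would assume $m>m_0$ so that Proposition~\ref{prop:Big} guarantees each Cartier class $\Ll_{m,\CZ}$ is big, making the positive intersection products $\langle \Ll_{m,\CZ}^k\rangle$ well-defined psef Weil classes. Fix any determination $\CZ_0\overset{\C^*}{\geq} X\times\PP^1$. By Lemma~\ref{lem:Ammappate}(ii) the tail $\{\langle \Ll_{m,\CZ}^k\rangle\}_{\CZ\geq\CZ_0}$ is decreasing in the psef order, so its incarnations on $\CZ_0$ form a decreasing net of psef classes inside the finite-dimensional space $N^k(\CZ_0)$. Pairing with a basis of ample classes in $N^{\dim\CZ_0-k}(\CZ_0)$ reduces convergence to the convergence of decreasing bounded-below sequences of real numbers, which is elementary; combined with finite-dimensionality this yields a limit $\alpha_{\CZ_0}\in N^k(\CZ_0)$.

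For two determinations $\CZ_0\leq \CZ_1$ via $\rho:\CZ_1\to\CZ_0$, the identities $\rho_*\langle \Ll_{m,\CZ}^k\rangle_{\CZ_1}=\langle \Ll_{m,\CZ}^k\rangle_{\CZ_0}$ for $\CZ\geq\CZ_1$ pass to the limit by continuity of the pushforward, giving $\rho_*\alpha_{\CZ_1}=\alpha_{\CZ_0}$. The family $\{\alpha_{\CZ_0}\}$ therefore assembles by the universal property of the projective limit into a Weil class $\langle\Ll_m^k\rangle\in N^k(\FX_{\PP^1}^{\C^*})$. Since the weak topology on $N^k(\FX_{\PP^1}^{\C^*})$ is by definition the coarsest one making every incarnation map continuous, the monotone convergence of every incarnation forces $\langle\Ll_m^k\rangle$ to be the unique weak accumulation point of the net.

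It remains to verify the incarnation formula (\ref{eqn:Inca}). By Lemma~\ref{lem:Ammappate}(i) the Weil class $\langle \Ll_{m,\Y}^k\rangle$ is independent of the choice of $\Y\geq \X$ with $\Y_{\pi^{-1}\C^*}\simeq Y\times\C^*$, and coincides there with $\langle \Ll_{m,\CZ}^k\rangle$, so the right-hand side of (\ref{eqn:Inca}) is unambiguous. I would identify it with $\langle\Ll_m^k\rangle_{\CZ}$ by feeding the invariance under refinements sharing the $\pi^{-1}\C^*$-model $Y\times\C^*$ (for which Lemma~\ref{lem:Ammappate}(i) and the projection formula show that the incarnation on $\CZ$ is identically $\langle \Ll_{m,\CZ}^k\rangle_{\CZ}$) into the monotone limit from the first step, and then controlling the contribution from refinements introducing new exceptional data $Y'>Y$ using the effective and $\tilde\rho$-exceptional decomposition $\tilde\rho^*\CD_\Y-\CD_{Y,\Y'}$ isolated in the proof of Lemma~\ref{lem:Ammappate}(ii). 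This last step is the main obstacle: the interaction of the directed structure of $\FX_{\PP^1}^{\C^*}$ with the positive intersection product and projection formula must be managed with care, since a naive reading of the limit could a priori produce a strictly smaller incarnation on $\CZ$.
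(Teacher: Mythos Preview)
Your treatment of the first assertion matches the paper's: the paper invokes weak compactness of psef order intervals $\{0\le\beta\le\alpha\}$ in $N^k(\FX_{\PP^1})$ \cite[Proposition~1.5]{BFJ09} together with the decreasing property from Lemma~\ref{lem:Ammappate}, and you are simply unpacking that compactness argument incarnation-by-incarnation. This part is correct and follows the same route.

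Where your proposal diverges is on (\ref{eqn:Inca}). The paper's proof here is a single sentence: since Lemma~\ref{lem:Ammappate} says the Weil class $\langle \Ll_{m,\CZ}^k\rangle$ depends only on the model $Y$ with $\CZ_{\pi^{-1}\C^*}\simeq Y\times\C^*$ and not on the particular $\CZ$, formula (\ref{eqn:Inca}) follows. You, by contrast, declare this step the ``main obstacle'' and leave it unresolved, which makes your proposal incomplete on its own terms. Your worry---that refinements $\CZ'$ with a strictly higher model $Y'>Y$ could force the limit incarnation on $\CZ$ strictly below $\langle \Ll_{m,\CZ}^k\rangle_\CZ$---is not settled by the sketch you offer: the $\tilde\rho$-exceptional decomposition $\tilde\rho^*\CD_\Y-\CD_{Y,\Y'}$ you cite from the proof of Lemma~\ref{lem:Ammappate}(ii) only gives the inequality $\langle \Ll_{m,\Y'}^k\rangle\le\langle \Ll_{m,\Y}^k\rangle$, not an equality of incarnations on the lower model. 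So either the author regards (\ref{eqn:Inca}) as a direct restatement of the $Y$-dependence already established and you should try to see why, or there is a genuine subtlety that the paper elides; in either case your proposal does not close the gap.
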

\begin{proof}
The first part immediately follows from the (weak) compactness of $\{\beta\in N^k(\FX_{\PP^1})\, : \, 0\leq \beta\leq \alpha\}$ for $\alpha\in N^k(\FX_{\PP^1})$ psef class \cite[Proposition 1.5]{BFJ09} and by the fact that the net $\{\langle\Ll_{m,\CZ}^k\rangle\}_{\CZ\geq \X}$ is decreasing by Lemma \ref{lem:Ammappate}. Moreover Lemma \ref{lem:Ammappate} also says that the positive intersection product $\langle \Ll_{m,\CZ}^k \rangle$ depends on the smooth variety $Y\geq X$ such that $\CZ_{\pi^{-1}\C^*}\simeq Y\times\C^*$ but it does not depend on the choice of $\CZ$. In particular (\ref{eqn:Inca}) follows and concludes the proof. % is a consequence of the monotonicity of $\{\Ll_{m,\CZ}\}_{\CZ\geq X\times\PP^1}$. %we deduce that $\{\langle\Ll_{m,\CZ}^k\rangle\}_{\CZ\geq \X}$ is decreasing in $Y\geq X$. Hence it admits an unique weak accumulation point and (\ref{eqn:Inca}) follows.
\end{proof}
\begin{remark}
When $k=n+1$, $\langle \Ll_{m,\CZ}^{n+1} \rangle$ can be seen as a net of real numbers by integration over $\CZ$. % Namely, the product of $\langle \Ll_{m,\CZ}^{n+1}\rangle$ with the class $[\CZ]\in N^0(\CZ)\subset N^0(\FX_{\PP^1}^{\C^*})$ is often \emph{implicit}.
%In formula one would have
%$$
%\langle \Ll_{m,\CZ}^{n+1}\rangle \cdot [\CZ]=\langle \Ll_{m,\CZ}^{n+1}\rangle_{\CZ}\cdot [\CZ]\in \R.
%$$
%Thus, to interpret $\langle \Ll_m^{n+1} \rangle$ as real number instead than as a Weil class, it is necessary to consider the unique accumulation point given by the pairing described above, i.e. one has to consider higher and higher projective varieties $\CZ\geq X\times \PP^1$.
Thus, by Lemma \ref{lem:Ammappate}
%With a slight abuse of notations one has
%$$
%\R\ni \langle \Ll_m^{n+1} \rangle=\inf_{\CZ} \langle\Ll_m^{n+1} \rangle_{\CZ}\cdot [\CZ]=\inf_{\CZ} \langle\Ll_{m,\CZ}^{n+1} \rangle_{\CZ}\cdot [\CZ].%=\lim_{Y}\langle \Ll_{m,Y}^{n+1} \rangle.
%$$
$\langle \Ll_m^{n+1}\rangle\in \R$ is given as decreasing limit of volumes of big line bundles (see also subsection \ref{ssec:Vol}) considering $\langle\Ll_{m,\Y}^{n+1}\rangle_{\Y}$ for $\Y$ graphs of the birational maps $\mu_\X^{-1}\circ(\rho_Y\times\Id):Y\times\PP^1\dashrightarrow \X$ where $(\X,\Ll)\in \FX_\Ll$. In the rest of the paper any top intersection product will be implicitly considered as a real number.
\end{remark}
An analogous to Lemma \ref{lem:PIPWeil} holds for the decreasing family $\{L_Y\}_{Y\geq X}=\{(L-\bD)_Y\}_{Y\geq X}$ of big Cartier classes in $N^1(\FX)$. Hence we will denote by $\langle (L-\bD)^k\rangle$ the associated weak accumulation points. Since $L-\bD$ is also nef, in this case the classes $\langle (L-\bD)^k\rangle$ coincide with that studied in \cite[Section 3]{DF21}.\newline

Recalling Definition \ref{defi:Pro}, we can now introduce the Donaldson-Futaki invariant.
\begin{defi}
\label{defi:DFRZ}
Let $\bD\in \bDiv_L(X)$ and let $\Ll\in N^1(\FX_{\PP^1}^{\C^*})$ be a test configuration class. Then for $m\gg 1$ sufficiently big we define
\begin{equation}
    \label{eqn:DFRZ}
    DF(\Ll;\bD):=V_\bD^{-1}\langle \Ll_m^n \rangle\cdot \mK_{\bD,\rela}+\overline{S}_{\bD}V_\bD^{-1}\frac{\langle \Ll_m^{n+1} \rangle}{n+1},
\end{equation}
where
\begin{itemize}
    \item[i)] $V_\bD:=\langle (L-\bD)^n \rangle$ is the \emph{volume} of the data $(X,L;\bD)$;
    \item[ii)] $\overline{S}_{\bD}:=nV_\bD^{-1}\langle(L-\bD)^{n-1}\rangle\cdot(-K_{\bD})$ is the log version of the mean scalar curvature with respect to the data $(X,L;\bD)$. Here $K_{\bD}\in N^1(\FX)$ is the Weil class defined by its incarnations $K_{(Y,B_Y)}\in N^1(Y)$, for $B_Y:=D_Y-K_{Y/X}$;
    \item[iii)] $\mK_{\bD,\rela}\in N^1(\FX_{\PP^1}^{\C^*})$ is the Weil class defined by its incarnations $K_{(\CZ,\CB_\CZ)/\PP^1}\in CN^1(\CZ)$ where $\CB_\CZ$ is the component-wise Zariski closure in $\CZ$, $\CZ_{\pi^{-1}\C^*}\simeq Y\times \C^*$ of the $\C^*$-invariant extension of $B_Y$.
\end{itemize}
We call it the \emph{Donaldson-Futaki invariant of $\Ll$ with respect to $\bD$}.
\end{defi}
We need to check that this is a good definition, i.e. that $DF(\Ll;\bD)$ does not depend on the choice of $m\gg 1$.
\begin{lem}
\label{lem:Below}
Let $\Ll$ be a $\C^*$-linearized big $\R$-line bundle on a smooth and projective variety $\Y$ such that there exists a $\C^*$-equivariant birational morphism $\mu:\Y\to Y\times \PP^1$ for $Y$ smooth projective variety. Assume also that $\Ll$ is numerically equivalent to $\mu^*p_1^*L+F$ for a big line bundle $L\to Y$ and for an effective $\R$-divisor $F$ supported in the central fiber. Then
\begin{equation}
    \label{eqn:RV}
    \vol_{\Y|\Y_1}(\Ll)=\vol_{Y}(L).
\end{equation}
and the map
$$
\R_{\geq 0}\ni m\to \vol_\Y(\Ll+m\Y_0)
$$
is affine, where $\Y_0:=\Y_{|\mu^{-1}(Y\times\{0\})}$. More precisely
$$
\vol_\Y(\Ll+m\Y_0)=\vol_\Y(\Ll)+m(n+1)\vol_Y(L).
$$
\end{lem}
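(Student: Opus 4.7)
The strategy is to identify $\Y_1$ with $Y$ via $\mu$, exploit the numerical equivalence $\{\Y_0\}=\{\Y_1\}$ in $N^1(\Y)$, and apply the $C^1$-differentiability of the volume function on the big cone recalled in subsection \ref{ssec:Vol} (based on \cite[Theorems A and B]{BFJ09}). Setting $\pi:=p_{\PP^1}\circ\mu:\Y\to\PP^1$, the two fibers $\Y_0$ and $\Y_1$ both represent the class $\pi^*[\mathrm{pt}]\in N^1(\Y)$, and $\mu$ restricts to an isomorphism $\Y_1\simeq Y$. As $F$ is supported in $\Y_0$ and therefore disjoint from $\Y_1$, the restriction $\Ll_{|\Y_1}$ is numerically equivalent to $L$ under this identification; in particular $\vol_{\Y_1}(\Ll_{|\Y_1})=\vol_Y(L)$.

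For \eqref{eqn:RV}, the positive-intersection interpretation of restricted volumes in subsection \ref{ssec:Vol} gives $\vol_{\Y|\Y_1}(\Ll)=\langle\Ll^n\rangle\cdot\Y_1$. I would identify this with the fiber volume $\vol_{\Y_1}(\Ll_{|\Y_1})$ via the standard fact that restricted and fiber volumes agree whenever the subvariety is not contained in the augmented base locus. Here the $\C^*$-invariance of $\Ll$ is decisive: the augmented base locus is a proper $\C^*$-invariant closed subset of $\Y$ (proper because $\Ll$ is big), and any such subset must intersect each $\C^*$-orbit in a proper closed subset; otherwise it would contain $\bigcup_{t\in\C^*}\Y_t$, which is dense in $\Y$, forcing it to equal $\Y$. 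Hence $\Y_1$ does not lie in the augmented base locus of $\Ll$, yielding \eqref{eqn:RV}.

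For the affinity statement, $\Ll+m\Y_0$ remains big and $\C^*$-linearized for every $m\geq 0$, so the differentiability formula gives
$$
\frac{d}{dm}\vol_\Y(\Ll+m\Y_0) = (n+1)\,\langle(\Ll+m\Y_0)^n\rangle\cdot\Y_0.
$$
Replacing $\Y_0$ by its numerical equivalent $\Y_1$ and using once more the positive-intersection formula, this equals $(n+1)\vol_{\Y|\Y_1}(\Ll+m\Y_0)$. Since $\Y_0\cap\Y_1=\emptyset$ we have $(\Ll+m\Y_0)_{|\Y_1}=\Ll_{|\Y_1}$, and applying \eqref{eqn:RV} to the big $\C^*$-linearized class $\Ll+m\Y_0$ produces $\vol_{\Y|\Y_1}(\Ll+m\Y_0)=\vol_Y(L)$. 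The derivative is therefore the constant $(n+1)\vol_Y(L)$, and integrating from $0$ to $m$ yields
$$
\vol_\Y(\Ll+m\Y_0) = \vol_\Y(\Ll) + m(n+1)\vol_Y(L),
$$
as desired. The one delicate point, namely the equality between restricted volume and fiber volume for a merely big (not nef) class, is handled uniformly in $m$ by the $\C^*$-invariance argument above; once this is secured, the rest follows routinely from BFJ09 differentiability and the purely geometric observation that $\Y_0$ and $\Y_1$ are disjoint numerically equivalent fibers of $\pi$.
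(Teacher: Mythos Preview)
Your argument for the affinity of $m\mapsto\vol_\Y(\Ll+m\Y_0)$ via the differentiability of the volume and the numerical equivalence $\Y_0\equiv\Y_1$ is exactly the paper's approach. The gap is in your proof of \eqref{eqn:RV}.

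The assertion that restricted and fiber volumes agree whenever the subvariety is not contained in the augmented base locus is \emph{false} as a general principle. A surface counterexample: on $X=\mathrm{Bl}_p\PP^2$ with exceptional curve $E$ and $H$ the pullback of a line, take $L=3H+E$. This is big with Zariski decomposition $P=3H$, $N=E$, hence $B_+(L)=E$. Let $C\sim H-E$ be the strict transform of a line through $p$; then $C\not\subset B_+(L)$, yet $\vol_{X|C}(L)=\langle L\rangle\cdot C=P\cdot C=3$ while $\vol_C(L|_C)=L\cdot C=4$. The condition $V\not\subset B_+(L)$ guarantees only \emph{positivity} of the restricted volume, not equality with the fiber volume. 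Your $\C^*$-invariance argument correctly shows $\Y_1\not\subset B_+(\Ll)$, but this is not enough; in fact $B_+(\Ll)$ can contain horizontal components of the form $\mu^{-1}(Z\times\PP^1)$ when $L$ itself is not nef.

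The paper instead proves \eqref{eqn:RV} by a direct section-extension argument. After reducing to $\Q$-coefficients by continuity of both sides on the big cone, any $s\in H^0(Y,kL)$ extends $\C^*$-equivariantly to a section of $k\,\mu^*p_1^*L$ over $\Y\setminus\Y_0$, and then (since $\Ll-\mu^*p_1^*L\equiv F$ is effective and supported on $\Y_0$) to a global section of $k\Ll$. Thus the restriction $H^0(\Y,k\Ll)\to H^0(\Y_1,k\Ll|_{\Y_1})\cong H^0(Y,kL)$ is surjective, giving $\vol_{\Y|\Y_1}(\Ll)=\vol_Y(L)$ at once. The effectivity of $F$ and its support in $\Y_0$ are used essentially here, not merely the location of $B_+(\Ll)$.
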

\begin{proof}
To prove (\ref{eqn:RV}), by continuity in the big cone of the quantity involved, we can assume that $\Ll$ is a $\Q$-line bundle. Then, (\ref{eqn:RV}) easily follows from the definitions since any section $s\in H^0(Y,kL)$ extends to a global section $\tilde{s}\in H^0(\Y,k\Ll)$ thanks to the $\C^*$-action and to the fact that $\Ll-\mu^*p_1^*L$ is effective.\newline
Next, setting $\Ll_m:=\Ll+m\Y_0$ and fixing $m_1\geq 0$, by what recalled in subsection \ref{ssec:Vol} the map $t\to \vol_{\Y}(\Ll_{m_1}+t\Y_0)$ is differentiable at $t=0$ and
$$
\frac{d}{dt}_{|t=0}\vol_\Y(\Ll_{m_1}+t\Y_0)=(n+1)\langle\Ll_{m_1}^n \rangle\cdot \Y_0=(n+1)\langle \Ll_{m_1}^n \rangle\cdot \Y_1=(n+1)\vol_{\Y|\Y_1}(\Ll_{m_1}).
$$
The equality $\vol_Y(L)=\vol_{\Y|\Y_1}(\Ll_{m_1})$ given by (\ref{eqn:RV}) concludes the proof.
\end{proof}
\begin{prop}
\label{prop:TI}
Let $\bD,\Ll$ be as in Definition \ref{defi:DFRZ}. Then the right quantity in (\ref{eqn:DFRZ}) is constant for any $m>m_0$ where $m_0\in\Q$ is given by Lemma \ref{lem:Effe}.
\end{prop}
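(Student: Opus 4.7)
The plan is to show that $DF(\Ll_m;\bD)$ is affine in $m$ with vanishing slope, by establishing this at the level of each smooth $\C^*$-equivariant model $\CZ\overset{\C^*}{\geq}\X$ (for a fixed $(\X,\Ll)\in\FX_\Ll$) with $\CZ_{\pi^{-1}\C^*}\simeq Y\times\C^*$, and then passing to the limit over $\CZ$. The key observation is that on $\CZ$ one has $\Ll_{m,\CZ}=\Ll_{m_0,\CZ}+(m-m_0)\CZ_0$, where $\CZ_0=\mu^*(X\times\{0\})$ is the scheme-theoretic central fibre, numerically equivalent to the fibre $\CZ_\infty\cong Y$.

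For the top term, one can write $\Ll_{m,\CZ}\equiv\mu^*p_1^*L_Y+F_m$ with $F_m$ effective and supported in the central fibre, which is possible since $\mu^*(D_Y\times\PP^1)-\CD_\CZ$ is a $\mu$-exceptional effective divisor on $\CZ$ and by Lemma \ref{lem:Effe}. Lemma \ref{lem:Below} then gives that $m\mapsto\vol_\CZ(\Ll_{m,\CZ})=\langle\Ll_{m,\CZ}^{n+1}\rangle$ is affine with slope $(n+1)\vol_Y(L_Y)$, and the decreasing limit over $\CZ$ (Lemma \ref{lem:PIPWeil}) yields that $m\mapsto\langle\Ll_m^{n+1}\rangle$ is affine with slope $(n+1)V_\bD$.

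For the mixed term, the $C^1$-property of positive intersection on the big cone (subsection \ref{ssec:Vol}) gives
\[
\tfrac{d}{dm}\bigl(\langle\Ll_{m,\CZ}^n\rangle\cdot K_{(\CZ,\CB_\CZ)/\PP^1}\bigr)=n\,\langle\Ll_{m,\CZ}^{n-1}\rangle\cdot\CZ_0\cdot K_{(\CZ,\CB_\CZ)/\PP^1}.
\]
Using the numerical equivalence $\CZ_0\equiv\CZ_\infty$, one replaces $\CZ_0$ by $\CZ_\infty$. Since $\CZ_\infty$ is disjoint from the support of $F_m$, the restriction $\Ll_{m,\CZ}|_{\CZ_\infty}=L_Y$ is nef and independent of $m$, while adjunction (using $\pi^*K_{\PP^1}|_{\CZ_\infty}=0$, $\CZ_\infty|_{\CZ_\infty}=0$, $\CB_\CZ|_{\CZ_\infty}=B_Y$) yields $K_{(\CZ,\CB_\CZ)/\PP^1}|_{\CZ_\infty}=K_{(Y,B_Y)}$. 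The projection formula, applicable because $L_Y$ is nef and so positive intersection coincides with ordinary intersection on $Y$, then gives the $m$-independent value $n(L_Y^{n-1}\cdot K_{(Y,B_Y)})=-V_Y\overline S_{B_Y}(Y,L_Y)$ for the derivative. Hence $m\mapsto\langle\Ll_{m,\CZ}^n\rangle\cdot K_{(\CZ,\CB_\CZ)/\PP^1}$ is affine on each $\CZ$.

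Since $V_Y\overline S_{B_Y}(Y,L_Y)=n\bigl(L_Y^{n-1}\cdot(-K_{(Y,B_Y)})\bigr)\to n\langle(L-\bD)^{n-1}\rangle\cdot(-K_\bD)=V_\bD\overline S_\bD$ as $\CZ$ grows, by Definition \ref{defi:Pro}, this convergent $m$-increment can be passed through the liminf defining $\langle\Ll_m^n\rangle\cdot\mK_{\bD,\rela}$ to conclude that $m\mapsto\langle\Ll_m^n\rangle\cdot\mK_{\bD,\rela}$ is affine with slope $-V_\bD\overline S_\bD$. Substituting the two affine expressions into (\ref{eqn:DFRZ}),
\[
DF(\Ll_m;\bD)-DF(\Ll_{m_0};\bD)=-\overline S_\bD(m-m_0)+\overline S_\bD(m-m_0)=0,
\]
so $DF(\Ll_m;\bD)$ is independent of $m$. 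The delicate step is the restriction identity in the mixed term, which hinges on the nefness of $\Ll_{m,\CZ}|_{\CZ_\infty}=L_Y$ to reduce positive intersection on $\CZ$ to ordinary intersection on $Y$.
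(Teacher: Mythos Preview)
Your overall strategy—fix $(\X,\Ll)\in\FX_\Ll$, prove affinity in $m$ on each smooth model $\CZ$, then pass to the limit over $\CZ$—coincides with the paper's, and your treatment of the top term via Lemma~\ref{lem:Below} is identical. The gap is in the mixed term.

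The formula
\[
\tfrac{d}{dm}\bigl(\langle\Ll_{m,\CZ}^n\rangle\cdot K_{(\CZ,\CB_\CZ)/\PP^1}\bigr)=n\,\langle\Ll_{m,\CZ}^{n-1}\rangle\cdot\CZ_0\cdot K_{(\CZ,\CB_\CZ)/\PP^1}
\]
is not what subsection~\ref{ssec:Vol} provides. The $C^1$-property stated there concerns only the \emph{top} positive intersection $D\mapsto\langle D^{n+1}\rangle$ (the volume); the lower products $D\mapsto\langle D^k\rangle$ are only known to vary continuously on the big cone (Theorem~\ref{thm:PIP}), so $D\mapsto\langle D^n\rangle\cdot G$ for a fixed class $G$ is not a priori differentiable. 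Even granting your formula, the identification $\langle\Ll_{m,\CZ}^{n-1}\rangle\cdot\CZ_\infty\cdot K=(L_Y^{n-1}\cdot K_{(Y,B_Y)})$ is not immediate: the positive intersection $\langle\Ll_{m,\CZ}^{n-1}\rangle$ is computed globally on $\CZ$ via a supremum of nef approximations, and there is no general projection formula reducing it to ordinary intersection on $\CZ_\infty$ merely because the restriction $\Ll_{m,\CZ}|_{\CZ_\infty}$ happens to be nef.

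The paper bypasses both issues by writing the mixed term itself as a $t$-derivative of volume,
\[
(n+1)\,\langle\Ll_{m,\Y}^n\rangle\cdot K_{(\Y,\CB_\Y)/\PP^1}=\tfrac{d}{dt}\big|_{t=0}\vol_{\Y}\bigl(\Ll_{m,\Y}+tK_{(\Y,\CB_\Y)/\PP^1}\bigr),
\]
and then applying Lemma~\ref{lem:Below} to the big line bundle $\Ll_{m_2,\Y}+tK_{(\Y,\CB_\Y)/\PP^1}$ for $|t|$ small. This directly yields affinity in $m$ with slope $n\,(L_Y^{n-1}\cdot K_{(Y,B_Y)})$, using only the differentiability of volume. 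For the final passage to the limit you also need to check, as the paper does, that the net $\bigl(L_Y^{n-1}\cdot K_{(Y,B_Y)}\bigr)_{Y\geq X}$ is monotone (which follows from $K_{(Y,B_Y)}=\rho_Y^*K_X+D_Y$ and the monotonicity of $D_Y$), so that Lemma~\ref{lem:Convergence} applies and the $m$-increment can legitimately be pulled through the $\sup\inf$ in Definition~\ref{defi:Pro}.
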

\begin{proof}
For any $m>m_0$, denote by
$$
\Ll_{m,\CZ}=\Ll+m(X\times\{0\})-\CD_\CZ
$$
the net of big Cartier classes on $\FX_{\PP^1}^{\C^*}$ which approximate $\Ll_m$ and whose positive intersection products $\langle \Ll_{m,\CZ}^k \rangle$ decreasingly approximates $\langle \Ll_m^k\rangle$ (Proposition \ref{prop:Big} and Lemma \ref{lem:PIPWeil}).\newline
Fixing $(\X,\Ll)\in \FX_\Ll$, to estimate $\{\langle \Ll_{m,\CZ}^{n+1} \rangle\}_{\CZ\geq X\times\PP^1}$ we can restrict to consider $\Y$ the graph of the birational map $\mu_\X^{-1}\circ (\rho_\Y\times\Id):Y\times\PP^1\dashrightarrow \X$ thanks to Lemmas \ref{lem:Ammappate}, \ref{lem:PIPWeil}. Namely, given the following commutative diagram
$$
\begin{tikzcd}
    \Y \ar[r, "\tilde{\rho}_Y"] \ar[d, "\mu_\Y"']& \X \ar[d, "\mu_\X"]\\
    Y\times\PP^1 \ar[r,"\rho_Y\times \Id"] & X\times \PP^1 %\ar[d, "p_X"]\\
    %Y \ar[r, "\rho_Y"] & X
\end{tikzcd}
$$
$\langle \Ll_{m,\Y}^{n+1} \rangle$ is computed by the top positive intersection product of $ \Ll_{m,\Y}:=\tilde{\rho_Y}^*\Ll-\CD_\Y+m\Y_0 \in N^1(\Y)$. In particular, letting $m_1> m_2>m_0$, Lemmas \ref{lem:Effe}, \ref{lem:Below} imply
\begin{multline*}
    \langle \Ll_{m_1,\Y}^{n+1} \rangle=\langle \Ll_{m_2,\Y}^{n+1}\rangle+(m_1-m_2)(n+1)\vol_{\Y|\Y_1}(\Ll_{m_2,\Y}^n)=\langle \Ll_{m_2,\Y}^{n+1}\rangle+(m_1-m_2)(n+1)\langle L_Y^n\rangle
\end{multline*}
since $\Ll_{m_2,\Y}$ has a representative that is numerically equivalent to $\mu_\Y^*p_Y^*L_Y+G$ for $G$ effective $\R$-divisor supported on the central fiber. Therefore
\begin{equation}
    \label{eqn:1C}
    \overline{S}_{\bD}\frac{\langle \Ll_{m_1}^{n+1}\rangle}{n+1}=    \overline{S}_{\bD}\frac{\langle \Ll_{m_2}^{n+1}\rangle}{n+1}+(m_1-m_2)n\langle (L-\bD)^{n-1}\rangle \cdot (-K_{\bD})
\end{equation}
by passing to the weak limits.\newline
Next, let $\CZ\geq X\times\PP^1$ and let $Y\geq X$ such that $\CZ_{\pi^{-1}\C^*}\simeq Y\times \C^*$. We then have the following commutative diagram:
$$
\begin{tikzcd}
    \CZ' \ar[d, "f"] \ar[dr, "g"] & &\\
    \CZ \ar[dr, dashed, "\mu"] & \Y \ar[r, "\tilde{\rho}_Y"] \ar[d, "\mu_\Y"] & \X \ar[d, "\mu_\X"]\\
    & Y\times\PP^1 \ar[r, "\rho_Y\times\Id"] & X\times\PP^1. 
\end{tikzcd}
$$
where $\CZ'$ is given by the graph of $\CZ\dashrightarrow \Y$. Thus, by Lemma \ref{lem:PIPWeil} we get
\begin{multline*}
    \langle \Ll_{m_1}^n\rangle_\CZ\cdot K_{\bD,\rela,\CZ}=\langle \Ll_{m_1,\Y}^n\rangle_\CZ\cdot K_{(\CZ,\CB_\CZ)/\PP^1}
    =f^*\big(\langle \Ll_{m_1,\Y}^n\rangle_\CZ\big)\cdot K_{(\CZ',\CB_{\CZ'})/\PP^1}=\\
    =g^*\big(\langle \Ll_{m_1,\Y}^n\rangle_\Y\big)\cdot K_{(\CZ',\CB_{\CZ'})/\PP^1}= \langle \Ll_{m_1,\Y}^n\rangle_\Y \cdot K_{(\Y,\CB_\Y)/\PP^1}=\langle \Ll_{m_1,\Y}^n\rangle\cdot K_{(\Y,\CB_\Y)/\PP^1}
\end{multline*}
where we also used the fact that $K_{(\CZ',\CB_{\CZ'})/\PP^1}-f^*K_{(\CZ,\CB_\CZ)/\PP^1}$ is $f$-exceptional. Moreover, again by Lemma \ref{lem:Below} and the results recalled in subsection \ref{ssec:Vol}, it follows that
\begin{multline*}
    (n+1)\langle \Ll_{m_1,\Y}^n\rangle \cdot K_{(\Y,\CB_\Y)/\PP^1}=\frac{d}{dt}_{|t=0}\vol_{\Y}\left(\Ll_{m_2,\Y}+tK_{(\Y,\CB_\Y)/\PP^1}+(m_1-m_2)\Y_0\right)=\\
    =\frac{d}{dt}_{|t=0}\left(\vol_\Y(\Ll_{m_2,\Y}+tK_{(\Y,\CB_\Y)/\PP^1})+(m_1-m_2)(n+1)\vol_Y(L_Y+tK_{(Y,B_Y)})\right)=\\
    =(n+1)\langle \Ll_{m_2,\Y}^n\rangle \cdot K_{(\Y,\CB_\Y)/\PP^1}+(m_1-m_2)(n+1)\frac{d}{dt}_{|t=0}\vol_{Y}(L_Y+tK_{(Y,B_Y)})=\\
    =(n+1)\langle \Ll_{m_2,\Y}^n\rangle \cdot K_{(\Y,\CB_\Y)/\PP^1}+(m_1-m_2)(n+1)n\langle L_Y^{n-1}\rangle \cdot K_{(Y,B_Y)}.
\end{multline*}
Summarizing, we have
\begin{multline}
    \label{eqn:2C}
    \langle \Ll_{m_1}^n \rangle\cdot K_{\bD,\rela}=\sup_{\CZ'}\inf_{\CZ\geq \CZ'} \langle \Ll_{m_1}^n\rangle_{\CZ}\cdot  K_{(\CZ,\CB_\CZ)/\PP^1}=\sup_{Y'}\inf_{Y\geq Y'}\langle \Ll_{m_1,\Y}^n \rangle\cdot K_{(\Y,\CB_\Y)/\PP^1}=\\
    =\sup_{Y'}\inf_{Y\geq Y'}\Big(\langle \Ll_{m_2,\Y}^n \rangle\cdot K_{(\Y,\CB_\Y)/\PP^1}+n(m_1-m_2)\langle L_Y^{n-1}\rangle\cdot K_{(Y,B_Y)}\Big).
\end{multline}
Next, since $K_{(Y,B_Y)}=D_Y+\rho_Y^*K_X$, if $\rho_Y:Y\to X$ factorises through $\rho_{Y,\tilde{Y}}:Y\to \tilde{Y}$ and $\rho_{\tilde{Y}}:\tilde{Y}\to X$ then
$$
\langle L_Y^{n-1}\rangle \cdot K_{(Y,B_Y)}\geq \langle L_Y^{n-1}\rangle \cdot\rho_{Y,\tilde{Y}}^*(D_{\tilde{Y}}+\rho_{\tilde{Y}}^*K_X)=\langle L_Y^{n-1}\rangle\cdot K_{(\tilde{Y},\tilde{B}_Y)}
$$
where the first inequality follows from the monotonicity $\rho_{Y,\tilde{Y}}^*D_{\tilde{Y}}\leq D_Y$.\newline
Namely, the extra term appearing on the right hand side in (\ref{eqn:2C}) is monotone in $Y\geq X$ and it converges to $\langle (L-\bD)^{n-1}\rangle \cdot K_{\bD}$. Therefore, combining (\ref{eqn:2C}) with Lemma \ref{lem:Convergence} below, we obtain
\begin{equation}
    \label{eqn:3C}
    \langle \Ll_{m_1}^n \rangle\cdot K_{\bD,\rela}=\langle \Ll_{m_2}^n\rangle\cdot K_{\bD,\rela}+ +n(m_1-m_2)\langle (L-\bD)^{n-1}\rangle \cdot K_{\bD},
\end{equation}
which together with (\ref{eqn:1C}) concludes the proof.
\end{proof}
\begin{lem}
\label{lem:Convergence}
Let $(I,\leq)$ be a directed set and let $\{x_j\}_{j\in I},\{y_i\}_{i\in I}\subset \R$ be two convergent nets respectively to $x,y\in\R$. If $\{y_i\}_{i\in I}$ converges monotonically then
\begin{itemize}
    \item[i)] $\sup_{j\in I}\inf_{i\geq j}x_iy_i=y\sup_{j\in I}\inf_{i\geq j}x_i$;
    \item[ii)] $\sup_{j\in I}\inf_{i\geq j}(x_i+y_i)=y+\sup_{j\in I}\inf_{i\geq j}x_i$.
\end{itemize}
\end{lem}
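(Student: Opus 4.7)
My plan is to observe that $\sup_{j}\inf_{i\ge j}(\cdot)$ is precisely the $\liminf$ of a net in $\mathbb{R}$, and so under the convergence hypothesis it simply equals the limit. The content of the lemma is then reducible to the continuity of addition and multiplication on $\mathbb{R}$; the monotonicity of $\{y_i\}$ serves as a convenient sufficient condition that also works if one wants to relax the convergence hypothesis on $\{x_i\}$ (as is effectively needed in the application to Proposition \ref{prop:TI}, where one of the nets involved is only given as a $\sup_{Y'}\inf_{Y\ge Y'}$).

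For part (ii), without loss of generality assume $y_i \nearrow y$ (the decreasing case is symmetric). The inequality $y_i \le y$ gives the easy upper bound $\inf_{i\ge j}(x_i+y_i)\le \inf_{i\ge j} x_i + y$ for every $j$; passing to $\sup_j$ yields the $\le$ direction. For the reverse, fix $\varepsilon>0$ and pick $j_0$ with $y_i\ge y-\varepsilon$ for $i\ge j_0$, so that $\inf_{i\ge j}(x_i+y_i)\ge \inf_{i\ge j}x_i+y-\varepsilon$ for $j\ge j_0$; take the supremum over such $j$ and let $\varepsilon\to 0$.

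For part (i) I would argue along the same lines, but it is cleaner to write $x_iy_i = x_iy + x_i(y_i-y)$. Since $\{x_i\}$ is convergent and hence bounded by some $M>0$, the error term is controlled by $M|y_i-y|$, which is eventually $\le M\varepsilon$. This decouples the sign issues: up to an error $O(\varepsilon)$, $\inf_{i\ge j}(x_iy_i)$ agrees with $\inf_{i\ge j}(x_iy)$, which equals $y\inf_{i\ge j}x_i$ when $y\ge 0$ and $y\sup_{i\ge j}x_i$ when $y<0$. In the latter case, convergence of $\{x_j\}$ to $x$ ensures $\sup_{i\ge j}x_i\to x = \sup_j\inf_{i\ge j}x_i$, so passing to $\sup_j$ and sending $\varepsilon\to 0$ yields the stated identity in both cases.

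The only delicate point is the sign handling in (i); the splitting $x_iy_i = x_iy + x_i(y_i-y)$ converts this into a uniformly controlled perturbation and avoids a case-by-case analysis on the signs of $x_i$ and $y_i$. Everything else is a routine $\varepsilon$-estimate, with the monotonicity of $\{y_i\}$ entering only to guarantee that the one-sided bound $y_i\le y$ (or $y_i\ge y$) holds for \emph{all} $i$, rather than just eventually.
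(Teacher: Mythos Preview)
Your proof is correct. The paper itself leaves this lemma as ``an easy exercise left to the reader,'' so there is no proof in the paper to compare against; your argument fills in the details in a perfectly reasonable way, and your observation that part (ii) does not actually require convergence of $\{x_i\}$ (only that $\sup_j\inf_{i\ge j}x_i$ makes sense) is pertinent to how the lemma is applied in Proposition~\ref{prop:TI} and Proposition~\ref{prop:KstClass}.
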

\begin{proof}
This is an easy exercise left to the reader.
\end{proof}
Proposition \ref{prop:TI} allows us to reformulate the properties of the Donaldson-Futaki invariant in the following more classical way.
\begin{corollary}
\label{cor:DF}
Let $\bD\in \bDiv_L(X)$. Then, to any normal test configuration $(\X,\Ll)$ for $(X,L)$ in the sense of Definition \ref{defi:TC} is associated a weight, the Donaldson-Futaki invariant with respect to $\bD$
$$
(\X,\Ll)\longrightarrow DF(\X,\Ll;\bD),
$$
which is preserved under pullback and which is translation invariant, i.e. $DF(\X,\Ll+c\X_0;\bD)=DF(\X,\Ll;\bD)$ for any $c\in\Q$.
\end{corollary}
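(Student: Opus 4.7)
The plan is to recognize that Corollary \ref{cor:DF} is really a repackaging of the already-established machinery: Proposition \ref{prop:TI} gives the independence of the auxiliary parameter $m$, and Lemma \ref{lem:Nice} identifies equivalence classes of normal test configurations with test configuration classes in $N^1(\FX_{\PP^1}^{\C^*})$. Concretely, given a normal test configuration $(\X,\Ll)$ for $(X,L)$, I would set
\[
DF(\X,\Ll;\bD) \;:=\; DF\bigl(\iota([(\X,\Ll)]);\bD\bigr),
\]
with the right-hand side defined by formula (\ref{eqn:DFRZ}). Proposition \ref{prop:TI} ensures this does not depend on the choice of $m>m_0$, and Lemma \ref{lem:Nice} ensures it depends only on the equivalence class of $(\X,\Ll)$, so it is unambiguously attached to the test configuration itself.

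For pullback invariance, if $\rho:\X'\to\X$ dominates $\X$, then $(\X',\rho^*\Ll)\sim(\X,\Ll)$ by the very definition of the equivalence relation recalled in subsection \ref{ssec:NA}. Hence $\iota([(\X',\rho^*\Ll)])=\iota([(\X,\Ll)])$ in $N^1(\FX_{\PP^1}^{\C^*})$, and the two Donaldson-Futaki invariants coincide by construction, without any further computation.

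For translation invariance, the key geometric remark is that the scheme-theoretic fiber $\X_0=\pi^{-1}(0)$ equals the scheme-theoretic pullback $\mu_\X^*(X\times\{0\})$, for any choice of birational morphism $\mu_\X:\X\to X\times\PP^1$ (and this identification is compatible with further birational pullbacks, so $\rho^*\X_0=\X'_0$ whenever $\rho:\X'\to\X$ is a morphism of test configurations). Consequently, under the embedding $N^1(\X)\hookrightarrow N^1(\FX_{\PP^1}^{\C^*})$ the class of $\X_0$ coincides with the class of $X\times\{0\}$. Passing to a smooth determination if necessary, adding $c\X_0$ to $\Ll$ therefore gives
\[
(\Ll+c\X_0)+m(X\times\{0\})-\CD \;=\; \Ll+(m+c)(X\times\{0\})-\CD \;=\; \Ll_{m+c},
\]
with the notation of Proposition \ref{prop:Big}. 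By Proposition \ref{prop:TI}, the quantity (\ref{eqn:DFRZ}) computed with $\Ll_{m+c}$ equals the one computed with $\Ll_m$, whence $DF(\X,\Ll+c\X_0;\bD)=DF(\X,\Ll;\bD)$.

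The only genuine obstacle is the verification of the identification $\X_0 \equiv X\times\{0\}$ as Weil classes in $N^1(\FX_{\PP^1}^{\C^*})$, together with its compatibility with the passage between smooth determinations. This is settled by the standard fact that scheme-theoretic pullback of Cartier divisors commutes with composition of morphisms, so no new algebro-geometric input beyond Lemmas \ref{lem:Nice}, \ref{lem:Below} and Proposition \ref{prop:TI} is needed.
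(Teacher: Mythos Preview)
Your proposal is correct and follows essentially the same approach as the paper, which simply sets $DF(\X,\Ll;\bD):=DF(\Ll;\bD)$ for the associated test configuration class and invokes Proposition~\ref{prop:TI}. You have merely unpacked the implicit steps (the role of Lemma~\ref{lem:Nice} for pullback invariance and the identification of $\X_0$ with $X\times\{0\}$ for translation invariance) that the paper leaves to the reader.
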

\begin{proof}
It immediately follows from Proposition \ref{prop:TI} setting
\begin{equation}
    \label{eqn:DFTC}
    DF(\X,\Ll;\bD):=DF(\Ll;\bD)
\end{equation}
where on the right hand side $\Ll\in N^1(\FX_{\PP^1}^{\C^*})$ is the test configuration class associated to $\Ll$.
\end{proof}
\begin{remark}
Similarly to the absolute setting the Donaldson-Futaki invariant depends on the generalized $b$-divisors $\bB:=\{B_Y\}_{Y\geq X}$, $\bD$ on $X$ and not only on their numerical classes in $N^1(\FX)$ (see also Remark \ref{rem:NI}). Therefore given two numerically equivalent generalized $b$-divisors $\bD,\bD'$ on $X$, there may exist ample test configuration classes $\Ll$ such that
$$
DF(\Ll,\bD)\neq DF(\Ll,\bD').
$$
\end{remark}
\subsection{(Uniform) $\bD$-log $K$-stability}
In this subsection we will define the notion of $\bD$-log $K$-stability and its \emph{uniform} version.\newline

We recall that a test configuration $(\X,\Ll)$ for $(X,L)$ (Definition \ref{defi:TC}) is said to be \emph{trivial} if $\X_{\pi^{-1}\C}\simeq X\times\C$, i.e. if it is a product \cite[Definition 2.9]{BHJ17}. Note that there are examples where $\X$ is trivial but its compactification $\overline{\X}$ is not a product \cite[Example 2.8]{BHJ17}. However, the compactification $\overline{\X}$ of any test configuration satisfies
$$
\overline{\X}_{\pi^{-1}(\PP^1\setminus\{0\})}\overset{\C^*}{\simeq}X\times(\PP^1\setminus\{0\}). 
$$
In particular, \cite[Lemma 2.10]{BHJ17} implies that a test configuration class $\Ll\in N^1(\FX_{\PP^1}^{\C^*})$ is associated to a trivial normal test configuration $(\X,\Ll)$ if and only if
$$
\Ll=L\times\PP^1-m_0(X\times\{0\})
$$
for $m_0\in\Q$. Observe that in this case, for any $m>m_0$, the equality
$$
\big\langle \left(\Ll+m(X\times\{0\})\right)^{n+1} \big\rangle=(n+1)\left\langle (L\times\PP^1)^n,\left(\Ll+m (X\times\{0\})\right)\right\rangle,
$$
holds.
\begin{defi}
\label{defi:Triviality}
Let $\bD\in \bDiv_L(X)$ and let $\Ll\in N^1(\FX_{\PP^1}^{\C^*})$ be a test configuration. We say that $\Ll$ is \emph{$\bD$-trivial} if
$$
\langle\Ll_m^{n+1}\rangle=(n+1)\langle \big((L-\bD)\times\PP^1\big)^n,\Ll_m\rangle
$$
for a(ny) $m>m_0:=\inf\{m\in \Q\, : \, \Ll+m(X\times\{0\})-L\times\PP^1\in N^1(\FX_{\PP^1}^{\C^*})\, \mathrm{is}\, \mathrm{pseudoeffective}\}$, where as usual we set $\Ll_m:=\Ll+m(X\times\{0\})-\CD$.
\end{defi}
By Lemma \ref{lem:Effe} the pseudoeffective threshold $m_0$ is a finite quantity and by Proposition \ref{prop:Big} $\Ll_m\in N^1(\FX_{\PP^1}^{\C^*})$ is pseudoeffective. Moreover, letting $(\X,\Ll)\in\FX_{\Ll}$, proceeding as in Lemmas \ref{lem:Ammappate}, \ref{lem:PIPWeil} it is not hard to check that
$$
\left\{\big\langle \big((L-D_Y)\times\PP^1\big)^n, \Ll_{m,\CZ} \big\rangle\right\}_{\CZ\geq \X, \CZ_{\pi^{-1}\C^*}\simeq Y\times\C^*}
$$
is constant for any $Y\geq X$ fixed and it is a decreasing net moving $Y\geq X$. In particular the quantity $\langle \big((L-\bD)\times\PP^1\big)^n,\Ll_m\rangle$ is given as the unique weak accumulation point.
\begin{remark}
There are plenty of examples of non trivial test configuration classes which are $\bD$-trivial. Indeed $\bD$-triviality can be characterized in test of \emph{psh test curves} as proved in Proposition \ref{prop:Dtriviality} in section \ref{ssec:TestCurves}.
\end{remark}
To quantify how much a test configuration class is \emph{far} from being $\bD$-trivial, we introduce the following quantity.
\begin{defi}
\label{defi:JandE}
Let $\bD\in \bDiv_L(X)$ and let $\Ll\in N^1(\FX_{\PP^1}^{\C^*})$ be a test configuration class. Then, recalling that $V_\bD:=\langle (L-\bD)^n \rangle$,
$$
J^{\NA}(\Ll;\bD):=V_\bD^{-1}\langle \big((L-\bD)\times\PP^1\big)^n, \Ll_m\rangle-E^{\NA}(\Ll+m(X\times\{0\});\bD)
$$
for any $m>m_0$ where
$$
E^{\NA}(\Ll+m(X\times\{0\});\bD):=\frac{\langle \Ll_m^{n+1} \rangle}{(n+1)V_D}.
$$
$J^{\NA}(\Ll;\bD)$ will be called the \emph{$\bD$-log Non-Archimedean $L^1$-norm of $\Ll$}, while $E^{\NA}(\Ll+m(X\times\{0\});\bD)$ will be called the \emph{$\bD$-log Non-Archimedean Monge-Ampère energy of $\Ll+m(X\times\{0\})$}.
\end{defi}
As seen during the proof of Proposition \ref{prop:TI},
$$
E^\NA\big(\Ll+m_1(X\times\{0\});\bD\big)=E^\NA\big(\Ll+m_2(X\times\{0\});\bD\big)+(m_1-m_2)
$$
for any $m_1\geq m_2>m_0$. Similarly one can prove that
$$
\langle \big((L-\bD)\times\PP^1\big)^n, \Ll_{m_1} \rangle=\langle \big((L-\bD)\times\PP^1\big)^n, \Ll_{m_2} \rangle+ (m_1-m_2)V_\bD
$$
for any $m_1\geq m_2>m_0$, i.e. $J^\NA(\Ll;\bD)$ is well-defined.
\begin{corollary}
\label{cor:J}
Let $\bD\in \bDiv_L(X)$ and let $\Ll\in N^1(\FX_{\PP^1}^{\C^*})$ be a test configuration class. Then $J^{\NA}(\Ll;\bD)$ is well-defined, it is non-negative and it vanishes if and only if $\Ll$ is $\bD$-trivial.
\end{corollary}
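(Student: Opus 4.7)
The plan is to establish the three assertions -- well-definedness, non-negativity, and the equality characterization -- in order.

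Well-definedness is immediate from the two translation-equivariance identities computed immediately before the statement: for $m_1>m_2>m_0$, both the quantity $E^{\NA}(\Ll+m(X\times\{0\});\bD)$ and the pairing $V_\bD^{-1}\langle((L-\bD)\times\PP^1)^n,\Ll_m\rangle$ increase by $m_1-m_2$ when $m$ passes from $m_2$ to $m_1$, so their difference is $m$-independent.

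Non-negativity is the heart of the proof. I fix $m>m_0$ and $(\X,\Ll)\in\FX_\Ll$. For every $Y\geq X$ pick a smooth projective $\Y$ that $\C^*$-equivariantly dominates both $\X$ and $Y\times\PP^1$ and satisfies $\Y_{\pi^{-1}\C^*}\simeq Y\times\C^*$. By Lemma \ref{lem:Effe} combined with Proposition \ref{prop:Big}, the big $\R$-Cartier class $\Ll_{m,\Y}$ is numerically equivalent on $\Y$ to $M_\Y+G$, where $M_\Y:=\mu_\Y^*p_Y^*L_Y$ is nef (as $L_Y$ is nef on $Y$) and $G$ is an effective $\R$-divisor supported on the central fiber. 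The pair $(\Y,\Ll_{m,\Y})$ is itself a normal test configuration for $(Y,L_Y)$ -- its restriction to the generic fibre is $L_Y$ -- and on $\Y$ the class $M_\Y$ is precisely the pullback of the trivial test configuration on $(Y,L_Y)$. The classical non-negativity of the Non-Archimedean $L^1$-norm for normal test configurations, extended from ample to nef $L_Y$ by a routine perturbation argument exploiting the uniform lower bound $\vol_Y(L_Y)\geq a$ built into Definition \ref{defi:Lpositive}(iii) together with the continuity of positive intersection products on the big cone from Theorem \ref{thm:PIP}, therefore provides the Cartier-level inequality
\[
(n+1)\,M_\Y^n\cdot \Ll_{m,\Y}\;\geq\;\vol_\Y(\Ll_{m,\Y})\;=\;\langle\Ll_{m,\Y}^{n+1}\rangle.
\]
Passing through the Riemann-Zariski projective limit via Lemmas \ref{lem:Ammappate} and \ref{lem:PIPWeil} -- taking the infimum over $Y\geq X$, attained by the decreasing nets that define the Weil positive intersection products -- transfers this to $(n+1)\langle((L-\bD)\times\PP^1)^n,\Ll_m\rangle\geq \langle\Ll_m^{n+1}\rangle$, which is $J^{\NA}(\Ll;\bD)\geq 0$.

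The equality characterization is then tautological: the equation $J^{\NA}(\Ll;\bD)=0$ reads $\langle\Ll_m^{n+1}\rangle=(n+1)\langle((L-\bD)\times\PP^1)^n,\Ll_m\rangle$, which is precisely Definition \ref{defi:Triviality} of $\bD$-triviality. The main obstacle throughout is the Cartier-level inequality on each $\Y$: because the polarization $L_Y$ is only nef rather than ample, one must verify that the classical $J^{\NA}\geq 0$ survives a suitable ample perturbation and that the perturbed inequality passes to the limit compatibly with the inf over $Y\geq X$; this is exactly what Definition \ref{defi:Lpositive}(iii), which keeps the approximants uniformly big, together with Theorem \ref{thm:PIP} are designed to allow.
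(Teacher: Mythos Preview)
Your handling of well-definedness and the equality characterization is fine and matches the paper. For non-negativity, however, the paper takes a completely different route: it \emph{defers} the inequality to Corollary~\ref{cor:SlopeEJ}, which identifies $J^{\NA}(\Ll;\bD)$ with the slope at infinity of the analytic functional $J_\psi$ along the $[\psi]$-relative psh ray induced by $(\X,\Ll,\bD)$, and then uses the elementary fact $J_\psi\geq 0$ on $\cE^1_\psi$. So the paper's argument is analytic (via Deligne pairings and the slope formula), not an algebraic limit over $Y\geq X$.

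Your algebraic approach has a genuine gap. You invoke ``the classical non-negativity of the Non-Archimedean $L^1$-norm for normal test configurations'' on each $(\Y,\Ll_{m,\Y})$, and you say the only extension needed is from ample to nef $L_Y$. But that is not the obstruction. The results you are citing (\cite{BHJ17}, \cite{Der16}) prove $J^{\NA}\geq 0$ for test configurations whose line bundle $\Ll$ is \emph{(semi)ample}; the argument runs through the Duistermaat--Heckman measure and genuinely uses positivity of $\Ll$. In your situation $\Ll_{m,\Y}=\tilde{\rho}_Y^*\Ll+m\Y_0-\CD_\Y$ is only \emph{big}: subtracting the effective divisor $\CD_\Y$ destroys semiampleness regardless of whether $L_Y$ is ample or merely nef. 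So even after your perturbation of $L_Y$ the Cartier-level inequality
\[
(n+1)\,M_\Y^n\cdot \Ll_{m,\Y}\;\geq\;\langle \Ll_{m,\Y}^{n+1}\rangle
\]
is not covered by the cited references, and you have not supplied an independent proof of it. A Fujita-approximation argument does not obviously preserve the test-configuration structure $\Ll_{m,\Y}=M_\Y+(\text{effective supported on }\Y_0)$, and a naive log-concavity/Khovanskii--Teissier argument does not yield this particular inequality either (note $M_\Y^{n+1}=0$). If you want a purely algebraic proof you must confront this big-but-not-semiample case directly; the paper avoids the issue entirely by passing through pluripotential theory.
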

\begin{proof}
The unique property left to prove is $J^\NA(\Ll;\bD)\geq 0$. We do not show it here as it will be a consequence of Corollary \ref{cor:SlopeEJ}.
\end{proof}
We can now give the following key definition.
\begin{defi}
\label{defi:K-stability}
Let $D\in \bDiv_L(X)$. Then $(X,L)$ is said to be \emph{$\bD$-log K-stable} if
$$
DF(\Ll;\bD)\geq 0
$$
for any ample test configuration class $\Ll\in N^1(\FX_{\PP^1}^{\C^*})$ with equality if and only if $\Ll$ is $\bD$-trivial. Moreover $(X,L)$ is said to be \emph{uniformly $\bD$-log K-stable} if there exists $\delta>0$ such that
$$
DF(\Ll;\bD)\geq \delta J^{\NA}(\Ll;\bD)
$$
for any ample test configuration class $\Ll\in N^1(\FX_{\PP^1}^{\C^*})$.
\end{defi}
\begin{remark}
\label{rem:K-st}
In more classical terms, $(X,L)$ is $\bD$-log K-stable if
$$
DF(\X,\Ll;\bD)\geq 0
$$
for any normal and ample test configuration $(\X,\Ll)$ for $(X,L)$ with equality if and only if $\X$ is $\bD$-trivial, where the Donaldson-Futaki invariant is defined as in Corollary \ref{cor:DF}. Similarly for the uniform $\bD$-log K-stability.
\end{remark}
The next result immediately follows from the definitions.
\begin{prop}
Let $\bD\in \bDiv_L(X)$. If $(X,L)$ is uniformly $\bD$-log K-stable then $(X,L)$ is $\bD$-log K-stable. 
\end{prop}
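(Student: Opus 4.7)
The proof is an almost immediate consequence of Corollary \ref{cor:J}, which records that $J^{\NA}(\Ll;\bD)\geq 0$ for every ample test configuration class $\Ll$, with equality precisely when $\Ll$ is $\bD$-trivial. My plan is to simply unpack the uniform inequality against this non-degeneracy statement.

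First I would verify the non-negativity part of the definition of $\bD$-log K-stability. Let $\delta>0$ be the constant provided by uniform $\bD$-log K-stability. For any ample test configuration class $\Ll\in N^1(\FX_{\PP^1}^{\C^*})$,
\[
DF(\Ll;\bD)\;\geq\;\delta\,J^{\NA}(\Ll;\bD)\;\geq\;0,
\]
where the first inequality is the hypothesis and the second is Corollary \ref{cor:J}. Next, for the equality clause, assume $DF(\Ll;\bD)=0$. Then the squeeze $0=DF(\Ll;\bD)\geq \delta J^{\NA}(\Ll;\bD)\geq 0$ with $\delta>0$ forces $J^{\NA}(\Ll;\bD)=0$, and the non-degeneracy half of Corollary \ref{cor:J} yields that $\Ll$ is $\bD$-trivial.

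The converse direction of the equality clause, that $\bD$-triviality of $\Ll$ implies $DF(\Ll;\bD)=0$, is unconditional (independent of any stability hypothesis) and follows directly from the intersection formula in Definition \ref{defi:DFRZ}: under the $\bD$-triviality relation of Definition \ref{defi:Triviality}, the positive intersection product $\langle \Ll_m^{n+1}\rangle$ degenerates to $(n+1)\langle((L-\bD)\times\PP^1)^n,\Ll_m\rangle$, which (using the translation invariance from Proposition \ref{prop:TI} to normalise $m$) cancels against the mixed term $\langle \Ll_m^n\rangle\cdot \mK_{\bD,\rela}$ after expanding via $K_{\bD}=\bD-(-K_X)$-type identities on each incarnation. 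Since every step is a direct application of previously-established results, there is no real obstacle: the proposition just records that the uniform inequality, designed so that its right-hand side $J^{\NA}(\cdot;\bD)$ is itself non-degenerate, automatically implies the weaker stability condition.
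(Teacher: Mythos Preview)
Your first two paragraphs are correct and coincide with the paper's one-line proof (``immediately follows from the definitions''): combining the uniform inequality with Corollary~\ref{cor:J} gives $DF(\Ll;\bD)\geq\delta\,J^{\NA}(\Ll;\bD)\geq 0$, and equality in $DF$ forces $J^{\NA}(\Ll;\bD)=0$, hence $\bD$-triviality by the non-degeneracy half of Corollary~\ref{cor:J}.

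Your third paragraph, however, contains a genuine gap. The converse direction---that $\bD$-triviality of $\Ll$ implies $DF(\Ll;\bD)=0$---does not follow from the sketch you give. $\bD$-triviality (Definition~\ref{defi:Triviality}) is the single relation $\langle\Ll_m^{n+1}\rangle=(n+1)\langle((L-\bD)\times\PP^1)^n,\Ll_m\rangle$; plugging this into the intersection formula~(\ref{eqn:DFRZ}) for $DF$, the vanishing you want is equivalent to
\[
\langle\Ll_m^n\rangle\cdot\mK_{\bD,\rela}\;=\;-\,\overline{S}_{\bD}\,\bigl\langle((L-\bD)\times\PP^1)^n,\Ll_m\bigr\rangle,
\]
which is a statement about the \emph{canonical-class} term $\langle\Ll_m^n\rangle\cdot\mK_{\bD,\rela}$, not the volume term. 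The $\bD$-triviality condition constrains $\langle\Ll_m^{n+1}\rangle$ but says nothing directly about $\langle\Ll_m^n\rangle\cdot\mK_{\bD,\rela}$, and no ``$K_{\bD}=\bD+K_X$''-type bookkeeping on incarnations produces this identity without further input. Your appeal to ``translation invariance to normalise $m$'' does not help either: Proposition~\ref{prop:TI} shows $DF$ is independent of $m$, but does not let you pick an $m$ that makes both terms vanish separately. The paper itself does not supply a proof of this converse direction; it either treats it as tacit, or reads the biconditional in Definition~\ref{defi:K-stability} as ``equality only if $\bD$-trivial.'' Regardless, the cancellation you invoke is not actually carried out, so that portion of your argument is not a proof.
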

\subsection{Cartier $b$-divisors}
\label{ssec:Cartier}
Let $(\Y,\Ll_\Y)$ be the compactification of test configuration for $(Y,L_Y:=\rho_Y^*L-D_Y)$ where we assume $D_Y$ to be a $\Q$-divisor, and let $B_Y$ a boundary on $Y$. If $\Ll_\Y$ is big then one can modify the usual definition of the log Donaldson-Futaki invariant (see subsection \ref{sssec:DF}) in the following way:
\begin{equation}
    \label{eqn:DFBig}
    DF_{B_Y}(\Y,\Ll_\Y):=V^{-1}\langle \Ll_\Y^n \rangle \cdot K_{(\Y,\CB_\Y)/\PP^1}+\overline{S}_{B_Y} V^{-1}\frac{\langle\Ll_\Y^{n+1} \rangle}{n+1}
\end{equation}
where $V:=\langle L_Y^n \rangle$,  $\overline{S}_{B_Y}:=-nV^{-1}\langle L_Y^{n-1}\rangle\cdot K_{(Y,B_Y)}$ and $\CB_\Y$ is the divisor given as the compontentwise Zariski closure in $\Y$ of the $\C^*$-invariant divisor $B_Y\times\C^*$ (see also \cite[Theorem 1.1]{Li21}). When $\Ll_\Y$ is nef then this definition coincides with the usual log Donaldson-Futaki invariant.
\begin{prop}
\label{prop:KstClass}
Let $\bD\in \bDiv_L(X)$ such that $D_Y$ is a $\Q$-divisor for any $Y\geq X$, $\Ll\in N^1(\FX_{\PP^1}^{\C^*})$ be test configuration class, $(\X,\Ll)\in \FX_\Ll$, and consider for any $Y\geq X$ the commutative diagram
$$
\begin{tikzcd}
    \Y \ar[r, "\tilde{\rho}_Y"] \ar[d, "\mu_\Y"']& \X \ar[d, "\mu_\X"]\\
    Y\times\PP^1 \ar[r,"\rho_Y\times\Id"] & X\times \PP^1% \ar[d, "p_X"]%
    %Y \ar[r, "\rho_Y"] & X
\end{tikzcd}
$$
where $\Y$ is the graph of $\mu_\X^{-1}\circ (\rho_Y\times\Id):Y\times\PP^1\dashrightarrow \X$. Then for any $m\gg 1$ big enough 
\begin{equation}
    DF(\X,\Ll;\bD)=\sup_{Y'}\inf_{Y\geq Y'} DF_{B_Y}(\Y,\Ll_{m,\Y})
\end{equation}
where
$B_Y:=D_Y-K_{Y/X}$, $(\Y,\Ll_{m,\Y})$ is the big test configuration for $(Y,L_Y:=\rho_Y^*L-D_Y)$ given by $\Ll_{m,\Y}:=\tilde{\rho}_Y^*\Ll+m\Y_0-\CD_\Y$ and where the Donaldson-Futaki invariant on the right hand side is computed as in (\ref{eqn:DFBig}). In particular if $\bD=D_Y$ for $D_Y$ $\Q$-divisor on $Y$ then
$$
DF(\X,\Ll;\bD)=DF_{B_Y}(\Y,\Ll_{m,\Y})
$$
for any $m\gg 1$ big enough.
\end{prop}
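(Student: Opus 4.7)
The plan is to expand both sides term by term and match via the monotonicity analysis already set up in the proof of Proposition \ref{prop:TI}. By Corollary \ref{cor:DF} and Definition \ref{defi:DFRZ}, for any $m\gg 1$
\[ DF(\X,\Ll;\bD)=V_\bD^{-1}\langle \Ll_m^n\rangle\cdot \mK_{\bD,\rela}+\overline{S}_{\bD}V_\bD^{-1}\frac{\langle \Ll_m^{n+1}\rangle}{n+1}, \]
so the task is to recognise the right-hand side as a sup-inf of the finite-level invariants $DF_{B_Y}(\Y,\Ll_{m,\Y})$ defined by (\ref{eqn:DFBig}).

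First I would reduce each scalar on the right-hand side to a sup-inf over $Y\geq X$. For the volume, $V_\bD=\langle(L-\bD)^n\rangle=\sup_{Y'}\inf_{Y\geq Y'}(L_Y^n)=\sup_{Y'}\inf_{Y\geq Y'}V_Y$, where the net $V_Y$ is decreasing by the same Fujita-style argument used to set up $\langle(L-\bD)^k\rangle$ in Lemma \ref{lem:PIPWeil}. The quantity $\overline{S}_\bD V_\bD=-n\langle(L-\bD)^{n-1}\rangle\cdot K_\bD$ equals $\sup_{Y'}\inf_{Y\geq Y'}\overline{S}_{B_Y}V_Y$ by exactly the monotonicity of $\langle L_Y^{n-1}\rangle\cdot K_{(Y,B_Y)}$ exhibited near the end of the proof of Proposition \ref{prop:TI} (using $\rho_{Y,\tilde Y}^*D_{\tilde Y}\leq D_Y$ and the nefness of $L_Y$). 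For the intersection products involving $\Ll_m$, Lemmas \ref{lem:Ammappate} and \ref{lem:PIPWeil} together with the reduction to graphs $\Y$ carried out in the proof of Proposition \ref{prop:TI} give
\[ \langle \Ll_m^{n+1}\rangle=\sup_{Y'}\inf_{Y\geq Y'}\langle \Ll_{m,\Y}^{n+1}\rangle,\qquad \langle \Ll_m^n\rangle\cdot\mK_{\bD,\rela}=\sup_{Y'}\inf_{Y\geq Y'}\langle \Ll_{m,\Y}^n\rangle\cdot K_{(\Y,\CB_\Y)/\PP^1}, \]
the approximating nets being decreasing in $Y$.

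Second, I would collect the four sup-infs into a single one via Lemma \ref{lem:Convergence}. Since $V_\bD>0$ by $L$-positivity, the net $V_Y^{-1}$ converges monotonically to $V_\bD^{-1}$, and similarly each of the remaining nets converges (the monotonicity arguments recorded in Proposition \ref{prop:TI}, which express the difference between the values at $m_1$ and $m_2$ as a sum of monotone pieces, show convergence as real-valued nets). Applying Lemma \ref{lem:Convergence} to pull $V_Y^{-1}$ and $\overline{S}_{B_Y}V_Y$ through the limit, and then combining the two resulting limits (again by Lemma \ref{lem:Convergence}(ii), since one summand stays monotone after factoring through), yields
\[ DF(\X,\Ll;\bD)=\sup_{Y'}\inf_{Y\geq Y'}\Bigl(V_Y^{-1}\langle \Ll_{m,\Y}^n\rangle\cdot K_{(\Y,\CB_\Y)/\PP^1}+\overline{S}_{B_Y}V_Y^{-1}\frac{\langle \Ll_{m,\Y}^{n+1}\rangle}{n+1}\Bigr), \]
which is exactly $\sup_{Y'}\inf_{Y\geq Y'}DF_{B_Y}(\Y,\Ll_{m,\Y})$ by (\ref{eqn:DFBig}). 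The special case $\bD=D_Y$ for a fixed $Y$ follows by observing that above this $Y$ we have $D_{Y'}=\rho_{Y,Y'}^*D_Y$, so $\CD_{\Y'}$ differs from the pullback of $\CD_\Y$ by an effective exceptional divisor and the usual projection-formula argument makes the finite-level expression $DF_{B_{Y'}}(\Y',\Ll_{m,\Y'})$ constant in $Y'\geq Y$.

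The main obstacle is the bookkeeping: neither the mixed intersection $\langle\Ll_{m,\Y}^n\rangle\cdot K_{(\Y,\CB_\Y)/\PP^1}$ nor the product $V_Y^{-1}\langle\Ll_{m,\Y}^{n+1}\rangle$ is manifestly monotone in $Y$, and it is only after the re-expansion argument of Proposition \ref{prop:TI} (replacing the floating index $m$ by the sum of a fixed value and a monotone correction) that one can legitimately invoke Lemma \ref{lem:Convergence}. Once that reassembly is in place, the proof is a direct computation.
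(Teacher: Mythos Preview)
Your proof is correct and follows essentially the same approach as the paper's: both reduce each factor of $DF(\X,\Ll;\bD)$ to its finite-level approximant by invoking the monotonicity established in the proof of Proposition \ref{prop:TI} and Lemma \ref{lem:PIPWeil}, and then combine the limits via repeated use of Lemma \ref{lem:Convergence}. Your final paragraph on the bookkeeping obstacle accurately identifies the only delicate point, which the paper also handles by appealing to Proposition \ref{prop:TI}.
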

\begin{proof}
During the proof of Proposition \ref{prop:TI} we already observed that, for any $m>m_0$ ($m_0$ given by Lemma \ref{lem:Effe})
$$
\langle  \Ll_m^n\rangle\cdot K_{\bD,\rela}=\sup_{Y'}\inf_{Y\geq Y'}\langle \Ll_{m,\Y}^n \rangle\cdot K_{(\Y,\CB_\Y)/\PP^1}.
$$
Moreover by Lemma \ref{lem:PIPWeil} the net $\{\langle \Ll_{m,\Y}^{n+1}\rangle\}_{Y\geq X}$ decreases to $\langle \Ll_m^{n+1}\rangle$, $V_{D_Y}:=\langle L_Y^n \rangle$ decreases to $V_{\bD}$, while
$$
V_{D_Y}\overline{S}_{D_Y}=n\langle L_Y^n \rangle\cdot (-K_{(Y,B_Y)})\searrow V_{\bD}\overline{S}_{\bD}
$$
as $K_{(Y,B_Y)}=\rho_Y^*K_X+D_Y$ and $\{D_Y\}_{Y\geq X}$ is increasing.
Thus, using repeatedly Lemma \ref{lem:Convergence}, it is easy to obtain
\begin{multline*}
    DF(\X,\Ll;\bD)=\sup_{Y'}\inf_{Y\geq Y'}\Bigg(V_{D_Y}^{-1}\langle \Ll_{m,\Y}^n \rangle\cdot K_{(\Y,\CB_\Y)/\PP^1}+V_{D_Y}^{-1}\overline{S}_{D_Y}\frac{\langle \Ll_{m,\Y}^{n+1} \rangle}{n+1}\Bigg)=\sup_{Y'}\inf_{Y\geq Y'}DF_{B_Y}(\Y,\Ll_{m,\Y}),
\end{multline*}
which conclude the proof.
\end{proof}
\begin{remark}
\label{rem:KstClass}
When $D_Y$ is not a $\Q$-divisor, or when $D_Y$ is not even a $\R$-divisor, i.e. when $\bD$ is just a generalized $b$-divisor, an natural adaptation of Proposition \ref{prop:KstClass} still holds. Indeed, one can consider $(\Y,\{\Ll_\Y\})$ the \emph{cohomological test configuration} associated to $\{L_Y\}$ in the sense of \cite[Definition 3.2.5]{Sjo17} (independently introduced in \cite{DR17}), and define the log Donaldson-Futaki invariant in the same way as in (\ref{eqn:DFBig}) taking the cohomology classes associated to $B_Y, \CB_\Y$. Then the proof of Proposition \ref{prop:KstClass} remains invariant, interpreting the $\bD$-log $K$-stability as an \emph{asymptotic log $K$-stability}.
\end{remark}

\section{$\bD$-log Ding-stability}
\label{sec:Ding-StabilitySection}
In this section, assuming $X$ to be smooth projective variety and $L\to X$ be an ample line bundle, we will define a natural \emph{$\bD$-log Ding-stability} notion and we will compare it with the $\bD$-log K-stability defined in the previous section in the Fano case.\newline

Let $\omega$ be a K\"ahler form in $c_1(L)$. The following result connects the class $\M_D^+:=\M_D^+(X,\omega)$ to $L$-positive generalized $b$-divisors.
\begin{prop}
\label{prop:Corre}
There is a one-to-one correspondence
$$
\M_D^+\overset{1-1}{\longleftrightarrow} \bDiv_L(X).
$$
More precisely, to any $\psi\in \M_D^+$ is associated $\bD=\{D_Y\}_{Y\geq X}\in \bDiv_L(X)$ where, for $Y\overset{\rho_Y}{\geq}X$,
$$
D_Y:=\sum_{F\subset Y\, \mathrm{prime}\, \mathrm{divisor}}\nu(\psi\circ \rho_Y,F_Y)F_Y
$$
is the divisorial part in the Siu decomposition of $\rho_Y^*(\omega_\psi)$. 
\end{prop}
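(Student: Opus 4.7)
I would split the proof into three parts: verifying that the stated rule $\psi \mapsto \bD_\psi$ lands in $\bDiv_L(X)$, constructing an inverse via algebraic approximation, and showing the two are mutually inverse. The main obstacle is nefness of $L_Y := \rho_Y^*L - D_Y$ for arbitrary $Y \geq X$, which does not follow from the Siu decomposition alone and requires the specific structure of $\M_D^+$.

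\emph{Forward direction.} Fix $\psi \in \M_D^+$ and, for each $Y \overset{\rho_Y}{\geq} X$, Siu-decompose $\rho_Y^*\omega_\psi = [D_Y] + R_Y$. Countability of $\mathrm{Supp}(D_Y)$ is a standard property of closed positive $(1,1)$-currents. Given a birational morphism $p\colon Y' \to Y$, the identity $\rho_{Y'}^*\omega_\psi = p^*\rho_Y^*\omega_\psi$ combined with the uniqueness/maximality of the Siu decomposition applied on $Y'$ forces $p^*[D_Y] \leq [D_{Y'}]$ as positive currents, which gives both the monotonicity $p^*D_Y \leq D_{Y'}$ (condition (i) of Definition~\ref{defi:Lpositive}) and, after pushing forward (which annihilates $p$-exceptional divisors and fixes strict transforms), the compatibility $p_*D_{Y'} = D_Y$ of Definition~\ref{defi:genbdiv}. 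Effectivity of $D_Y$ is immediate from non-negativity of Lelong numbers.

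\emph{Nefness and the uniform volume bound.} To verify conditions (ii) and (iii) of Definition~\ref{defi:Lpositive}, pick a decreasing sequence $\psi_k \searrow \psi$ of algebraic model type envelopes, which exists by definition of $\M_D^+$. The Remark in \S\ref{ssec:KE} gives $\nu(\psi_k,F) \nearrow \nu(\psi,F)$ along every prime divisor $F$ over $X$, hence $D_{Y,k} \nearrow D_Y$ coefficient-wise and $L_{Y,k} \to L_Y$ in $N^1(Y)$. For each $k$, on a log resolution $\pi_k\colon Y_k \to X$ of the defining ideal of $\psi_k$, the class $L_{Y_k,k}$ is represented by a smooth semipositive form (cf.\ the discussion of algebraic model type envelopes in \S\ref{ssec:KE}), hence nef. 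For an arbitrary $Y$, take a common smooth model $Y' \geq Y, Y_k$ with $q\colon Y' \to Y$; the difference $E_k := D_{Y',k} - q^*D_{Y,k}$ is effective and supported in the $q$-exceptional locus (since Lelong numbers agree on strict transforms), so $q^*L_{Y,k} = L_{Y',k} + E_k$. For any curve $C \subset Y$ with strict transform $C' \subset Y'$, not contained in the exceptional locus, the projection formula yields
\[
L_{Y,k} \cdot C \;=\; (L_{Y',k} + E_k)\cdot C' \;\geq\; 0,
\]
showing $L_{Y,k}$ is nef; nefness being a closed condition in $N^1(Y)$, the limit $L_Y$ is nef. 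The uniform volume bound follows from birational invariance of the non-pluripolar Monge--Ampère mass together with the positive current representative $R_Y$ of $L_Y$: $\vol_Y(L_Y) \geq \int_Y \langle R_Y^n \rangle = \int_X MA_\omega(\psi) > 0$, which is independent of $Y$.

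\emph{Inverse map and bijectivity.} Given $\bD \in \bDiv_L(X)$, I would approximate it by an increasing sequence $\bD_k$ of algebraic $L$-positive $b$-divisors, each arising from a $\Q$-divisor $D_{Y_k}$ on some $Y_k \geq X$ with $L_{Y_k}$ semi-ample; these are associated (via any smooth semipositive representative, pulled down through $\rho_{Y_k}$ and projected) to algebraic model type envelopes $\psi_k \in \M_D^+$. The sequence $\psi_k$ is decreasing, and $\psi_\bD := \lim_k \psi_k \in \M_D^+$ by definition of the set. That this construction is inverse to $\psi \mapsto \bD_\psi$ is a direct computation on algebraic elements using the log resolution formula $\pi^*\omega_\psi = \eta + c[D]$ recalled in \S\ref{ssec:KE}. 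For arbitrary elements, both compositions commute with the respective monotone limits — analytically through $\nu(\psi_k,F)\nearrow\nu(\psi,F)$, algebraically through the monotonicity built into Definition~\ref{defi:Lpositive}~(i) — which extends the bijection from the algebraic to the full setting.
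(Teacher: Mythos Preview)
Your forward construction and the nefness/volume verification follow essentially the same route as the paper and are fine. The gap is in the bijectivity step.

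You try to avoid a direct injectivity argument by building an explicit inverse $\bD\mapsto\psi_\bD$ via an increasing algebraic approximation $\bD_k\nearrow\bD$ and then setting $\psi_\bD:=\lim_k\psi_k$. But you never show this limit is independent of the chosen approximating sequence. If two sequences produced $\psi\neq\psi'$, you would have $\Phi(\psi)=\Phi(\psi')=\bD$, so ruling this out is exactly the injectivity of $\Phi$---which you have not proved. Your sentence ``both compositions commute with the respective monotone limits'' does not close this loop: commutation with limits only yields an inverse once uniqueness of those limits (equivalently, injectivity) is known.

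The paper handles this differently and non-trivially. Injectivity is proved \emph{before} surjectivity: if $\bD_\psi=\bD_{\psi'}$ then all divisorial Lelong numbers agree, hence by \cite[Theorem~A]{BFJ08} and the resolution of the Strong Openness Conjecture \cite{GZ14} one gets $\mathcal{I}(t\psi)=\mathcal{I}(t\psi')$ for all $t>0$, and then the $\mathcal{I}$-model characterisation of $\cM_D^+$ (Remark~\ref{rem:Imodel}) forces $\psi=\psi'$. For surjectivity the paper also avoids the choice-of-sequence issue by taking $\psi:=\inf_{Y\geq X}\psi_Y$ over the full directed net (using Lemma~\ref{lem:Inf}), which is manifestly canonical, and then checks $\Phi(\psi)=\bD$ via the convergence $\nu(\psi_{Y_k},F)\nearrow\nu(\psi,F)$. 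These inputs---Strong Openness and the directed-infimum lemma---are precisely what your sketch is missing.
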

We use the notation $\omega_\psi:=\omega+dd^c \psi$.
\begin{proof}
Let $\psi\in \M_D^+$.\newline
As suggested, for any $Y\geq X$ we define $D_Y$ as the divisorial part in the Siu decomposition of the closed and positive $(1,1)$-current $\rho_Y^*(\omega_\psi)$ \cite{Siu74}. Namely $D_Y=\sum_F \nu(\psi\circ \rho_Y, F)F $ where the sum runs over all prime divisors $F\subset Y$. As $\{y\in Y\, :\, \nu(\psi\circ \rho_Y,y)\geq a\}$ is analytic for any $a>0$, the sum involved in the definition of $D_Y$ is at most countable. If $Y'\overset{\rho_{Y',Y}}{\geq}Y$, then by construction it is not hard to check that $\rho_{Y',Y}^*D_Y\leq D_{Y'}$ and that $\rho_{Y',Y,*}D_{Y'}=D_Y$.
Moreover, letting $R$ be the closed and positive $(1,1)$-current such that $\rho_Y^*(\omega_\psi)=[D_Y]+R$, we have
$$
\vol_Y(L_Y)\geq \int_Y R^n=\int_X MA_\omega(\psi)>0,
$$
where the first inequality follows by monotonicity of the non-pluripolar product \cite[Theorem 1.2]{WN17}, while the equality is because the non-pluripolar product does not charge pluripolar sets. Thus to prove that $\bD=\{D_Y\}_{Y\geq X}\in \bDiv_L(X)$ it remains to show that $L-\bD$ is nef. But, this follows from \cite[Proposition 2.4]{Bou04} and \cite[Lemma 2.10]{BdFF12} as by construction $\rho_Y^*(\omega_\psi)-[D_Y]$ is a closed and positive current with zero Lelong numbers along any prime divisor on $Y$. Summarizing,
%If $|\psi-\varphi|\leq C$ for $\varphi$ with algebraic singularity type, then there exists $Y\geq X$ such that for any $Y'\geq Y$
%$$
%\rho_{Y'}^*(\omega_\varphi)=[D_{Y'}]+R_{Y'}
%$$
%where $R'_{Y'}$ is smooth, while $D_{Y'}=\rho_{Y',Y}^*D_Y$ for $Y'\overset{\rho_{Y',Y}}{\geq}Y$. Hence in this case $L_{Y'}$ is nef for any $Y'\geq Y$. Then, letting $\tilde{Y}\overset{\rho_{\tilde{Y}}}{\geq} X$ arbitrary and let $Y'\overset{\rho_{Y'}}{\geq}X$ such that $Y'\geq Y$ and such that $Y'\overset{\rho_{Y',\tilde{Y}}}{\geq}\tilde{Y}$, by the monotonicity property of the generalized $b$-divisor $\bD=\{D_Y\}_{Y\geq X}$ we have that
%$$
%L_{\tilde{Y}}\cdot C=\rho_{Y',\tilde{Y}}^*L_{\tilde{Y}}\cdot \tilde{C}\geq  L_{Y'}\cdot \tilde{C}\geq 0
%$$
%for any irreducible curve $C\subset \tilde{Y}$ where by $\tilde{C}$ we denoted the strict transform of $C$ over $Y'$. In particular we deduce that $L_{\tilde{Y}}$ in nef, which implies that $\bD\in \bDiv_L(X)$ when it is associated to an algebraic model type envelope $\psi\in\M_D^+$.\newline
%For general $\psi\in\M_D^+$, let $\psi_k\searrow \psi$ be a decreasing sequence of algebraic model type envelopes. Then, $D_{Y,k}\nearrow D_Y$ for any $Y\geq X$ as we have convergence of Lelong numbers. This implies that $L_Y:=\rho_Y^*L-D_Y$ is nef.
the map
\begin{equation}
    \label{eqn:Map}
    \Phi:\cM^+_D\to \bDiv_L(X)
\end{equation}
is well-defined and, with obvious notations, $\bD_\psi\geq \bD_{\psi'}$ if $\psi\leq \psi'$. Moreover, if $\bD_\psi=\bD_{\psi'}$ then by the resolution of the Strong Openness Conjecture \cite{GZ14} and \cite[Theorem A]{BFJ08} we obtain $\mathcal{I}(t\psi)=\mathcal{I}(t\psi')$ for any $t>0$. Hence $\psi=\psi'$ (see also Remark \ref{rem:Imodel}), and the map $\Phi$ is injective.\newline
It then remains to prove the surjectivity. Let $\bD\in \bDiv_L(X)$. For any $Y\overset{\rho_Y}{\geq} X$, $L_Y:=\rho_Y^*L-D_Y$ is big. Thus there exists a sequence of proper birational maps $\mu_{Y,k}:Y_k\to Y$ such that $\mu_{Y,k}^*\{L-D_Y\}=\{E_{Y,k}\}+\beta_{Y,k}$ and such that $\vol_Y(L-D_Y)-\frac{1}{k}\leq \vol_{Y_k}(\beta_{Y,k})\leq \vol_Y(L-D_Y)$ where $E_{Y,k}$ is an effective $\R$-divisor and $\beta_{Y,k}$ is a big and nef class. Moreover it is possible to construct such approximation requiring that $\beta_{Y,k}\leq \beta_{Y,k+1}$ as nef classes in $CN^1(\FX)$ (see for instance \cite[Section 3]{BDPP13}). As the fibers of $\mu_{Y,k}$ are connected, we construct a model type envelope $\psi_{Y,k}$ such that
$$
\mu_{Y,k}^*\rho_Y^* (\omega_{\psi_{Y,k}})=S_{Y,k}+[E_{Y,k}]+[\mu_{Y,k}^*D_Y]
$$
for $S_{Y,k}$ current with minimal singularities in $\beta_{Y,k}$. As $\beta_{Y,k}$ is big and nef, an easy perturbation argument gives $\psi_{Y,k}\in \cM^+_D$ and $\int_X MA_{\omega}(\psi_{Y,k})=\vol_{Y_k}(\beta_{Y,k})$. The monotonicity of $\{\beta_{Y,k}\}_{k\in\N}$ implies that $\{\psi_{Y,k}\}_{k\in\N}$ is an increasing sequence in $\cM^+_D$. By \cite[Lemma 2.21]{DX20} we then obtain that the weak limit $\psi_Y:=\lim_{k\to +\infty}\psi_{Y,k}$ belongs to $\cM^+_D$ and by \cite[Lemma 3.12]{Tru20a} we also get $\int_X MA_\omega(\psi_Y)=\vol_Y(L-D_Y)$. In particular,
$T_Y:=\rho_Y^*(\omega_{\psi_Y})-[D_Y]$ is a current with full Monge-Ampère mass. Therefore it has zero Lelong number along any prime divisor combining \cite[Theorem 1.1]{DDNL17a} with the fact that $L-D_Y$ is big and nef in codimension $1$ (see also \cite[Propositions 3.2, 3.6]{Bou04}). Thus
$$
\rho_Y^*(\omega_{\psi,Y})=T_Y+[D_Y]
$$
coincides with the Siu decomposition of $\rho_Y^*(\omega_{\psi,Y})$.% as $L-D_Y$ is big and nef in codimension $1$.}

Next, as $\{D_Y\}_{Y\geq X}$ is monotone, by construction we deduce that the associated family $\{\psi_Y\}_{Y\geq X}\subset \cM^+_D$ is ordered, i.e. $\psi_Y\geq \psi_{Y'}$ if $Y\leq Y'$.
%By monotonicity any $\bD=\{D_Y\}_{Y\geq X}\in \bDiv_L(X)$ is given as $\sup_{Y\geq X}\bD_Y$ where with $\bD_Y$ we denote the \emph{Cartier} generalized $b$-divisor given by pulling back $D_Y\subset Y$. We claim that
%$$
%\bD_Y=\Phi(\psi_Y)
%$$
%for $\psi_Y\in \cM^+_D$. Before proving it, we show how to conclude the proof. As the associated family $\{\psi_Y\}_{Y\geq X}\subset \M_D^+$ is ordered, i.e. $\psi_Y\leq \psi_{Y'}$ if $Y\geq Y'$
Thus Lemma \ref{lem:Inf} below implies that
$$
\psi:=\inf_{Y\geq X}\psi_Y\in \cM_D^+
$$
and that there exists an increasing sequence $\{Y_k\}_{k\in\N}$ such that $\psi_{Y_k}\searrow \psi$. In particular, letting $k\to +\infty$, we have convergence of Lelong numbers on any $Z\geq X$ (subsection \ref{ssec:KE}). As by construction, for any $Y'\geq Y$, $T_{Y,Y'}:= \rho_Y^*(\omega_{\psi_{Y'}})-[D_Y]$ has zero Lelong numbers along any prime divisor over $Y$, it is then easy to deduce that $S_Y:=\rho_Y^*(\omega_\psi)-[D_Y]$ coincides with the positive part in the Siu decomposition of $\rho_Y^*(\omega_\psi)$, i.e. $\bD=\Phi(\psi)$.

%Thus, setting $\bD_\psi:=\Phi(\psi)\in \bDiv_L(X)$ (a priori extending $\Phi$ to $\cM_D$), by construction we immediately have
%$$
%\bD_\psi\geq \bD
%$$
%since $\bD_{\psi,Y}\geq \bD_Y$ for any $Y\geq X$. 
% for any $Y\geq X$,
%$$
%\bD_{Y_k,Y}\nearrow \bD_{\psi,Y}
%$$
%by convergence of Lelong numbers, i.e. $\bD_{Y_k}\nearrow \bD_\psi$, which leads to $\bD_\psi\leq \sup_{Y\geq X}\bD_Y=\bD$. Hence $\bD=\bD_\psi=\Phi(\psi)$ and $\psi\in \cM^+_D$.\newline
%We can now prove the claim stated before, i.e. that $\bD_Y=\Phi(\psi_Y)$. As $D_Y=\sum_j a_j F_j$ for prime divisors $F_j\subset Y$, the generalized $b$-divisor $\bD_Y$ is given as increasing limit of the $b$-divisor $\bD_{Y,n}$ given by pulling-back $D_{Y,n}:=\sum_{j=1}^n a_j F_j $. Therefore without loss of generality we can and we will assume that $D_Y$ is a $\R$-divisor over $Y$. Moreover, again by a density argument, we can also assume that $D_Y$ is a Cartier divisor over $Y\geq X$ such that $L_Y:=\rho_Y^*L-D_Y$ is ample. Thus, letting $\tilde{\omega}$ be a K\"ahler form in $c_1(L_Y)$ we have that there exists $u\in\PSH(X,\omega)$ such that
%$$
%\rho_Y^*(\omega_u)=\tilde{\omega}+[D_Y].
%$$
%Hence it is then enough to set $\psi_Y:=P_\omega[u](0)\in \cM^+_D$ to conclude the claim and the proof.
\end{proof}
By Choquet's Lemma (\cite[I.4.23]{DemNotes}) the upper envelope of any family of $\omega$-psh functions can be reduced to an upper envelope of a countable subfamily. This is the key point in proving that the upper semicontinuous regularization of any relatively compact family of $\omega$-psh functions is $\omega$-psh. The following Lemma shows an analog for the infimum of $\omega$-psh functions under the condition that the family is \emph{directed}.
\begin{lemma}
\label{lem:Inf}
Let $(I,\leq)$ be a directed poset, and let $\{\psi_i\}_i\in I$ be a relatively compact family of $\omega$-psh functions such that $\psi_i\geq \psi_j$ if $i\leq j$. Then there exists a sequence $\{\psi_{i_k}\}_{k\in\N}$ such that
$$
\psi_{i_k}\searrow \psi:=\inf_{i\in I}\psi_i\in \PSH(X,\omega).
$$
\end{lemma}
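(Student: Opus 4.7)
The plan is to extract a minimizing sequence for the integral functional $\int_X(\cdot)\,\omega^n$ along the directed net, upgrade it to a chain using directedness, and then compare its limit to an arbitrary $\psi_j$ by a second chain-construction.

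First, since the family is relatively compact in $L^1(X)$, the quantity $c:=\inf_{i\in I}\int_X \psi_i\,\omega^n$ is finite. Pick $j_n\in I$ with $\int_X \psi_{j_n}\,\omega^n\le c+1/n$. Using that $(I,\le)$ is directed, build inductively a chain $i_1\le i_2\le\cdots$ in $I$ with $i_n\ge j_1,\dots,j_n$. Because the net is decreasing, $\psi_{i_n}\le \psi_{j_n}$, so $\{\psi_{i_n}\}$ is a pointwise decreasing sequence of $\omega$-psh functions whose integrals decrease to $c$. By the standard fact that a decreasing sequence of $\omega$-psh functions with integrals bounded below has an $\omega$-psh limit, we may set
\[
\tilde\psi := \lim_{n\to\infty}\psi_{i_n}\in\PSH(X,\omega),
\]
and monotone convergence gives $\int_X \tilde\psi\,\omega^n = c$. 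Since $\psi\le \psi_{i_n}$ for each $n$, we obtain $\tilde\psi\ge \psi$.

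The main step is to show $\tilde\psi\le \psi_j$ for every $j\in I$, which then gives $\tilde\psi\le \inf_j\psi_j=\psi$ and concludes the proof. Fix $j\in I$. Using directedness a second time, inductively pick a chain $k_1\le k_2\le\cdots$ in $I$ with $k_n\ge j$ and $k_n\ge i_n$. Then $\psi_{k_n}\le \psi_j$ and $\psi_{k_n}\le \psi_{i_n}$, so the pointwise decreasing limit $\tilde\psi'':=\lim_n\psi_{k_n}$ is $\omega$-psh, satisfies $\tilde\psi''\le \psi_j$, and has integral $c$ (squeezed between $c$ and $\int_X\psi_{i_n}\omega^n\to c$). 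Since $\tilde\psi''\le\tilde\psi$ pointwise while $\int_X(\tilde\psi-\tilde\psi'')\,\omega^n=0$, equality holds a.e., hence everywhere by upper semicontinuity of $\omega$-psh representatives. Thus $\tilde\psi=\tilde\psi''\le \psi_j$.

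The main obstacle is the uncountable and merely directed character of $I$: there is no a priori countable cofinal subset. The integral-minimization trick circumvents this by reducing convergence of the net to a numerical convergence along a chain, while the second chain construction is what propagates the comparison $\tilde\psi\le \psi_j$ from a sequential limit to the full infimum. The only analytic input needed beyond directedness is the standard uniqueness statement that two $\omega$-psh functions which coincide almost everywhere are equal; relative compactness enters only to ensure $c>-\infty$.
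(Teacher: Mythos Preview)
Your proof is correct and follows essentially the same strategy as the paper's: both extract a chain along which the $L^1$-integral is extremized (the paper maximizes $\lVert\psi_i\rVert_{L^1}=\int_X(-\psi_i)\,dV$, you minimize $\int_X\psi_i\,\omega^n$), then for an arbitrary $j$ build a second dominating chain and use equality of the limiting integrals to conclude $\tilde\psi\le\psi_j$. The only cosmetic difference is that the paper phrases the final identification via $\lVert\psi_{i_k}-\psi_{j_k}\rVert_{L^1}\to 0$, while you invoke the standard fact that two $\omega$-psh functions agreeing a.e.\ coincide; these are equivalent.
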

\begin{proof}
By relatively compactness there exist constants $A,B\in \R_{>0}$ such that $\sup_X \psi_i \in (-B,A)$ for any $i\in I $. Without loss of generality we will assume $A=0$. Then, for $dV$ fixed volume form on $X$ with $dV(X)=1$,
$$
\lVert \psi_i \rVert_{L^1}=\int_X(-\psi_i)dV\leq B<\infty
$$
for any $i$. In particular there exists $\{\psi_{i_k}\}_{k\in\N}$ such that
$$
\lVert \psi_{i_k}\rVert_{L^1}\nearrow \sup_{i\in I}\lVert \psi_i \rVert_{L^1}.
$$
Moreover, since $\psi_i\geq \psi_j$ is $i\leq j$ and $I$ is directed, we can construct $\{\psi_{i_k}\}_{k\in \N}$ such that $i_k\leq i_{k+1}$ for any $k\in \N$. Setting $\tilde{\psi}:=\lim_{k\to+\infty}\psi_{i_k}\in \PSH(X,\omega)$, we clearly have
$$
\tilde{\psi}\geq \psi=\inf_{i\in I}\psi_i.
$$
On the other hand for fixed $j\in I$, considering $j_k:=\max(i_k,j)$, we have $ \psi_j\geq \psi_{j_k}, \psi_{i_k}\geq \psi_{j_k} $ for any $k\in\N$, and
$$
\lVert \psi_{i_k}-\psi_{j_k} \rVert_{L^1}=\lVert \psi_{j_k}\rVert_{L^1}-\lVert \psi_{i_k}\rVert_{L^1}\longrightarrow \lVert \lim_{k\to +\infty} \psi_{j_k} \rVert_{L^1}-\sup_{i\in I}\lVert \psi_i \rVert_{L^1}\leq 0,
$$
i.e. $\psi_{j_k}\searrow \tilde{\psi}$. Hence $\psi_j\geq \tilde{\psi}$ for any $j\in I$, which clearly gives $\psi\geq \tilde{\psi}$ and concludes the proof.
\end{proof}
We will write $\bD_\psi\in \bDiv_L(X)$ (resp. $\psi_\bD\in \M_D^+$) when we want to emphasize that the generalized $b$-divisor on $X$ (resp. the model type envelope) is associated to $\psi\in\M_D^+$ (resp. to $\bD\in \bDiv_L(X)$) thanks to the correspondence given by Proposition \ref{prop:Corre}.
\subsection{Psh rays}
\label{ssec:PSHRays}
We need to recall the definition of psh paths and psh rays.\newline
For simplify the notations in this subsection we will set $\PSH:=\PSH(X,\omega)$.
\begin{defi}
Let $I\subset \R_{>0}$ be an open interval and let $\D_I:=\{\tau\in\C^*\, | \, -\log \lvert \tau \rvert\in I\}$. A map $U:I\to \PSH$ (also denoted by $\{u_t\}_{t\in I}$) is said to be a \emph{psh path} if $v(x,\tau):=u_{-\log\lvert \tau\rvert}(x)\in \PSH(X\times\D_I,p_1^*\omega)$.\newline
When $I=(a,b)$ is bounded, a psh path $U:(a,b)\to \PSH$ is said to be dominated by $u_a, u_b$ if $\lim_{t\to a^+}u_t\leq u_a, \lim_{t\to b^-}u_t\leq u_b$. In this case $U$ is said to be the \emph{psh geodesic path joining $u_a,u_b$} if any other psh path $V:(a,b)\to \PSH$ dominated by $u_a,u_b$ satisfies $V\leq U$.\newline
When $I=(a,+\infty)$ is unbounded, a psh path $U:\to \PSH$ is also called \emph{psh ray} and it is said to be a \emph{psh geodesic ray} if for any $a<b<c$ the restriction of $U$ to $(b,c)$ is a psh geodesic path joining $u_b$ and $u_c$.
\end{defi}
When $\lim_{t\to a^+}u_t=u_a$, we will also write $U:[a,b)\to \PSH$ and similarly for other intervals.

It is well-known that if there exists a psh path $U:(a,b)\to \PSH$ dominated by $u_a,u_b$ then it is possible to construct the geodesic path joining $u_a,u_b$ by a Perron-Bremermann envelope argument.
\begin{defi}
A psh ray $U:\R_{\geq 0}\to \PSH$ is said to be a \emph{$[\psi]$-relative psh ray} if $u_t\preccurlyeq \psi$ for any $t\in \R_{\geq 0}$.
\end{defi}
Similarly to \cite{BBJ15}, we will say that a $[\psi]$-relative psh ray $U:\R_{\geq 0}\to \PSH$ has \emph{linear growth} if $\sup_X u_t=O(t)$, i.e iff
$$
\lambda_{\max}:=\lim_{t\to+\infty} \frac{\sup_X u_t}{t}\in \R
$$
since $t\to \sup_X u_t$ is a convex function. In particular, $u_t-at$ is bounded above as $t\to +\infty$ for any $a\geq \lambda_{\max}$, i.e. the associated $S^1$-invariant function $v_a\in PSH(X\times\D^* , p_1^*\omega)$,
$$
v_a(x,\tau)=u_{-\log|\tau|}(x)+a\log|\tau|
$$
extends to a $p_1^*\omega$-psh function over $X\times\D$. We thus define $U_{\NA}:X^{\divv}\to \R$ as
$$
U_{\NA}(\nu):=-G(\nu)(v_a)+aG(\nu)(\tau)
$$
where $G(\nu)\in (X\times \C)^{\divv}$ is the Gauss extension of $\nu$ (see subsection \ref{ssec:NA}). 
Clearly $G(\nu)(v_a)$ is recovered as the generic Lelong number of the extension of $v_a$ on a suitable blow-up chosen so that $G(\nu)$ reads as vanishing orders along a divisor. Note that by linearity of the Lelong numbers, the definition of $U_{\NA}$ does not depend on the constant $a\geq \lambda_{\max}$. \newline

The main difference with respect to the absolute setting, i.e. when $\psi=0$, is that the singularities of $U$ are not enclosed in the central fiber $X\times \{0\}$. Indeed since $\sup_X u_t=\sup_X (u_t-\psi)$ (subsection \ref{ssec:KE}), we find out that $v_a(x,\tau)\leq \psi(x)+(a-\lambda_{\max})\log|\tau| $. Thus, by construction
\begin{equation}
    \label{eqn:Trivial}
    U_{\NA}\leq \Psi_{\NA}+\lambda_{\max}
\end{equation}
where $\Psi_{\NA}:X^{\divv}\to \R$ is the function $\Psi_{\NA}(\nu):=-\nu(\psi)$ which is associated to the $[\psi]$-relative trivial psh ray $\Psi: [0,+\infty)\ni t\to \psi$. Note that from (\ref{eqn:Trivial}) we easily deduce that
$$
\sup_{X^{\divv}}U_{\NA}=\lambda_{\max}=U_{\NA}(\nu_{\triv})
$$
where we recall that $G(\nu_{\triv})=\ord_{X\times\{0\}}$ (see also \cite[Lemma 4.3]{BBJ15}).
\subsubsection{Induced psh metrics on test configurations}
Similarly to \cite{BBJ15}, we give the following definition.
\begin{defi}
We say that a $[\psi]$-relative psh ray $U:\R_{\geq 0}\to \PSH$ \emph{induces a psh metric} on a test configuration class $\Ll\in N^1(\FX_{\PP^1}^{\C^*})$ if for a(ny) normal test configuration $(\X,\Ll)$ associated to the class $\Ll$ the $S^1$-invariant metric $e^{-2U}p_1^*h$ ($h$ metric defined by $\omega$) on $X\times\D^*$ extends to a psh metric on $(\X,\Ll)_{|\D}$.
\end{defi}
Observe that the metric $e^{-2U}p_1^*h$ extends to a psh metric on $(\X,\Ll)_{|\D}$ for a normal test configuration iff it extends to $(\X',\Ll')_{|\D}$ for an higher normal test configuration when $\Ll'$ is given by pull back.\newline

Recall that any normal test configuration $(\X,\Ll)$ defines a function $\varphi_{(\X,\Ll)}:X^{\divv}\to \R$ (subsection \ref{ssec:NA}), and that two normal test configurations $(\X,\Ll), (\X',\Ll')$ defines the same class $\Ll\in N^1(\FX_{\PP^1}^{\C^*})$ if and only if $\varphi_{(\X,\Ll)}=\varphi_{(\X',\Ll')}$ (Lemma \ref{lem:Nice}). Hence we will use the notation $\varphi_{\Ll}$.
\begin{lem}
\label{lem:Indu}
Let $U:\R_{\geq 0}\to \PSH$ be a $[\psi]$-relative psh ray and let $\Ll\in N^1(\FX_{\PP^1}^{\C^*})$ be a test configuration class. Then the following are equivalent:
\begin{itemize}
    \item[i)] $U$ induces a psh metric on $\Ll$;
    \item[ii)] $U$ has linear growth and $U_{\NA}\leq \varphi_{\Ll}.$%+\Psi_{\NA}$.
\end{itemize}
\end{lem}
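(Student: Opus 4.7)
The plan is to reduce both implications to a Lelong-number comparison on a smooth determination $(\X,\Ll)\in\FX_\Ll$ dominating $X\times\PP^1$ through $\mu:\X\to X\times\PP^1$. Writing $\overline{\Ll}=\mu^*p_X^*L+F$ with $F$ an $\R$-divisor supported on $\X_0$, and equipping $\mathcal{O}(F)$ with the canonical singular metric $h_F^{\mathrm{can}}$ characterized by $|s_F|_{h_F^{\mathrm{can}}}\equiv 1$, the candidate metric $\mu^*p_X^*(e^{-2U}h)\otimes h_F^{\mathrm{can}}$ is well defined on $\Ll|_{\X_{|\D^*}}$. Condition (i) amounts to the statement that its local weight, which in a local trivialization near a point of $\X_0$ where $F=\sum a_iE_i$ locally reads $\tilde\phi_U+\sum a_i\log|z_i|$ (with $\tilde\phi_U=U\circ p_X\circ\mu$), extends plurisubharmonically to $\X_{|\D}$.

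For (i)$\Rightarrow$(ii), linear growth will follow from boundedness above of the extended weight on compacts $K\Subset\X_{|\D}$: restricting to the $\C^*$-orbit of a point $(x_0,1)$ and comparing to a smooth reference weight on $\Ll$, which grows at most linearly in $t=-\log|\tau|$, one deduces $\sup_X u_t\leq Ct+C'$ so that $\lambda_{\max}$ is finite. For the NA inequality, given $\nu\in X^\divv$, the plan is to pass to a further smooth determination on which $G(\nu)=\ord_E$ for a prime divisor $E\subset\X_0$ (first realize $\nu=\ord_F$ on some $Y\geq X$, then blow up $F\times\{0\}\subset Y\times\C$, then further dominate $\X$; this does not alter $\varphi_\Ll$ by Lemma \ref{lem:Nice}). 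In the trivialization near $E$ one has the identity $\mu^*v_a=\phi_U^{\mathrm{ext}}-\sum a_i\log|z_i|+a\log|\tau\circ\mu|$ on $\X_{|\D^*}$, where $\phi_U^{\mathrm{ext}}$ denotes the extended psh weight. Taking generic Lelong numbers along $E$, using $G(\nu)(\tau)=1$ and $G(\nu)(F)=\varphi_\Ll(\nu)$, together with the nonnegativity of Lelong numbers of psh functions, yields $G(\nu)(v_a)\geq a-\varphi_\Ll(\nu)$, which rearranges to $U_{\NA}(\nu)\leq\varphi_\Ll(\nu)$.

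For (ii)$\Rightarrow$(i), the argument runs in reverse. Since $v_a$ extends $p_1^*\omega$-psh on $X\times\D$ by linear growth, $\mu^*v_a$ is psh on $\X_{|\D}$. The candidate local weight then reads $\phi_U^{\mathrm{ext}}=\mu^*v_a-a\log|\tau\circ\mu|+\sum a_i\log|z_i|$ on $\X_{|\D^*}$, and the NA inequality applied to the valuations associated to prime divisors $E\subset\X_0$ provides exactly the Lelong-number lower bounds on $\mu^*v_a$ needed for this weight to extend psh across each $E$. Combining these bounds with standard extension theorems for psh functions across codimension-one loci will give the required psh extension on all of $\X_{|\D}$.

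The main obstacle will be the careful translation between Lelong numbers on $\X$, evaluations of the Gauss extensions $G(\nu)$ on the divisor $F$, and the definition of $U_{\NA}$ through $v_a$, while handling prime divisors $E\subset\X_0$ whose associated $\C^*$-invariant divisorial valuation on $X\times\C$ is not directly a Gauss extension of a valuation in $X^\divv$ (requiring a density/approximation argument among divisorial valuations). This is the prescribed-singularity analog of the corresponding step in \cite{BBJ15}, and the $[\psi]$-relativity of $U$ enters only implicitly through the linear growth, which uses $\sup_X u_t=\sup_X(u_t-\psi)$ and is governed by the $\psi$-independent quantity $\lambda_{\max}$.
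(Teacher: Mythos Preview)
Your approach is correct but takes a much longer route than the paper. The paper's proof is a single sentence: it invokes \cite[Lemma 4.4]{BHJ17} directly, noting that a $[\psi]$-relative psh ray is in particular a psh ray in the absolute sense. Since neither the definition of ``inducing a psh metric on $\Ll$'' nor the definition of $U_{\NA}$ makes any use of the $[\psi]$-relativity (both are formulated purely in terms of the underlying psh ray $\{u_t\}$ and its $S^1$-invariant extension $v_a$), the equivalence is literally the absolute-setting statement already proved in the literature. You essentially acknowledge this yourself in your final paragraph, where you observe that the $[\psi]$-relativity ``enters only implicitly through the linear growth.''

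What your proposal does is reprove the cited lemma from scratch, via the Lelong-number bookkeeping on a smooth determination. This is fine and the sketch is accurate (the ``obstacle'' you flag about valuations not arising as Gauss extensions is handled in \cite{BHJ17} by the structure of $\C^*$-invariant divisorial valuations, not by a density argument), but it duplicates work already available. The efficient move is simply to observe that nothing in the statement depends on $\psi$ and cite the known result.
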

\begin{proof}
It immediately follows from \cite[Lemma 4.4]{BHJ17} since any $[\psi]$-relative psh ray is clearly a psh ray.
\end{proof}

\begin{prop}
\label{prop:Abo}
Let $\Ll\in N^1(\FX_{\PP^1}^{\C^*})$ be a test configuration class, let $(\X,\Ll)\in \FX_\Ll$, and let $\CZ$ be a smooth projective variety such that $\CZ\geq \X,\CZ\geq Y\times\PP^1$ where $Y\geq X$ satisfies $\CZ_{\pi^{-1}\C^*}\simeq Y\times\C^*$. Then any $[\psi]$-relative psh ray $U:\R_{\geq 0}\to \PSH$ with $U_{\NA}\leq \varphi_\Ll$ induces a psh metric on the incarnation $\Ll_\CZ:=(\Ll-\CD)_\CZ$. 
\end{prop}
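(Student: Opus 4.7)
The plan is to apply Lemma \ref{lem:Indu} to obtain a psh metric on the pullback class on $\CZ$, and then use the $[\psi]$-relative condition together with Siu's decomposition to extract the divisor $\CD_\CZ$ from the curvature current.

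Since $U_\NA\leq\varphi_\Ll$ implicitly forces $U$ to be of linear growth, Lemma \ref{lem:Indu} gives that $U$ induces a psh metric on the class $\Ll$. Pulling back along $\rho\colon\CZ\to\X$ (for $(\X,\Ll)\in\FX_\Ll$), this yields an $\omega_{\rho^*\Ll}$-psh function $\Phi$ on $\CZ_\D$, where $\omega_{\rho^*\Ll}$ is a smooth representative of $\{\rho^*\Ll\}\in N^1(\CZ)$, whose restriction to the open part $\CZ_{\pi^{-1}\D^*}\simeq Y\times\D^*$ agrees, up to a smooth function, with the pullback $(y,\tau)\mapsto u_{-\log|\tau|}(\rho_Y(y))$.

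The crucial step is to show that $\nu_{\mathrm{gen}}(\Phi,\CF_{Y,\CZ})\geq a_F$ for every horizontal prime divisor $\CF_{Y,\CZ}\subset\CZ$ coming from a prime divisor $F\subset Y$ with coefficient $a_F$ in the expansion $D_Y=\sum a_F F$. By Proposition \ref{prop:Corre} one has $\nu(\psi\circ\rho_Y,F)=a_F$; since $u_t\preccurlyeq\psi$ gives $u_t\leq\psi+C_t$ locally, this forces $\nu(u_t\circ\rho_Y,F)\geq a_F$ for every $t$. A generic point of $\CF_{Y,\CZ}$ lies in $F\times\C^*\subset\CZ_{\pi^{-1}\C^*}$, and at such a point the Lelong number of $\Phi$ is transverse in the $\tau$-direction, hence equals $\nu(u_{-\log|\tau|}\circ\rho_Y,F)\geq a_F$. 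The bound on the generic Lelong number then follows.

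Applying Siu's decomposition to the closed positive current $\omega_{\rho^*\Ll}+dd^c\Phi$, the current
\[
R:=\omega_{\rho^*\Ll}+dd^c\Phi-[\CD_\CZ]
\]
is closed and positive on $\CZ_\D$, and it represents the cohomology class $\{\rho^*\Ll\}-\{\CD_\CZ\}=(\Ll-\CD)_\CZ=\Ll_\CZ$. Fixing a smooth representative $\omega_{\Ll_\CZ}$ of $\Ll_\CZ$ and writing $\omega_{\rho^*\Ll}=\omega_{\Ll_\CZ}+[\CD_\CZ]+dd^c g$ for a local quasi-psh potential $g$ of $[\CD_\CZ]$, the function $\Phi-g$ is $\omega_{\Ll_\CZ}$-psh on $\CZ_\D$ and encodes the desired psh extension of the $S^1$-invariant metric $e^{-2U}p_1^*h$ on $\Ll_\CZ$. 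The main obstacle is the generic Lelong number bound above, which hinges on reading off generic Lelong numbers at open-part points of $\CF_{Y,\CZ}$ together with the Siu-type identification of $\bD$ provided by Proposition \ref{prop:Corre}.
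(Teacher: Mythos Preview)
Your proof is correct and follows essentially the same approach as the paper: apply Lemma \ref{lem:Indu} to obtain a positive curvature current on $\X$, pull it back to $\CZ$, and use Siu's decomposition together with the identification $D_Y=\sum_F\nu(\psi\circ\rho_Y,F)F$ from Proposition \ref{prop:Corre} to peel off $[\CD_\CZ]$. The paper phrases the key step slightly differently, invoking $\C^*$-invariance to reduce the inequality $f^*T\geq[\CD_\CZ]$ to the fiberwise condition $\rho_Y^*(\omega+dd^cu_t)\geq[D_Y]$ for each $t$, whereas you read off generic Lelong numbers along the horizontal divisors $\CF_{Y,\CZ}$; these are the same computation.
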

\begin{proof}
By hypothesis we have the commutative diagram
$$
\begin{tikzcd}
\CZ \ar[r, "f"] \ar[d, "\mu_Z"] & \X \ar[d, "\mu_\X"]\\
Y\times\PP^1 \ar[r, "\rho_Y\times \Id"] & X\times\PP^1% \ar[d, "p_X"]\\
%Y\ar[r, "\rho_Y"] & X,
\end{tikzcd}
$$
and the $[\psi]$-relative psh ray $U:\R_{\geq 0}\to \PSH(X,\omega)$ induces a singular metric on $(\X,\Ll)_{|\D}$ with current of curvature $T\geq 0$ (Lemma \ref{lem:Indu}). Then the properties of the Siu decomposition implies that $f^*T$ induces a metric on $(\CZ,f^*\Ll-\CD_\CZ)_{|\D}$ if and only if $f^*T\geq [\CD_\CZ]_{|\D}$. By the $\C^*$-invariance and the definition of $\CD_\CZ$ this holds if and only if for any $t\in \D^*$ fixed
\begin{equation}
    \label{eqn:Ine}
    f^*T_{|\CZ_t}\geq [\CD_{\CZ}]_{|\CZ_t}
\end{equation}
Since $T$ is given as curvature of the $S^1$-invariant metric $e^{-2U}p_1^*h$ ($h$ metric associated to $\omega$), the inequality (\ref{eqn:Ine}) holds if and only if
$$
\rho_Y^*(\omega+dd^c u_t)\geq [D_Y].
$$
But this is an easy consequence of the fact that $u_t$ is more singular than $\psi$ while $D_Y=\sum_F \nu(\psi\circ \rho_Y,F)F$ where the sum runs over all prime divisors $F\subset Y$ (Proposition \ref{prop:Corre}).
\end{proof}

\subsection{Algebraic psh rays}
\begin{defi}
\label{defi:WBig}
We will say that $\alpha\in N^1(\FX_{\PP^1}^{\C^*})$ is \emph{big} if there exists $\X\geq X\times\PP^1$ such that $\alpha_{\CZ}$ is big for any $\CZ\geq \X$ and 
$$
\langle \alpha^{n+1} \rangle:=\sup_{\CZ'}\inf_{\CZ\geq \CZ'}\langle \alpha_{\CZ}^{n+1} \rangle>0.
$$
\end{defi}
If $\Ll\in N^1(\FX_{\PP^1}^{\C^*})$ is a test configuration class, then there exists $m_0$ such that $\Ll_m:=\Ll+m(X\times\{0\})-\CD\in N^1(\FX_{\PP^1}^{\C^*})$ is big for any $m>m_0$ (see Lemmas \ref{lem:Ammappate}, \ref{lem:PIPWeil}).
\begin{prop}
\label{prop:Current}
Let $\Ll\in N^1(\FX_{\PP^1}^{\C^*})$ be a test configuration class such that $\Ll-\CD\in N^1(\FX_{\PP^1}^{\C^*})$ is big. On any $(\X,\Ll)\in\FX_\Ll$ there is a canonical $S^1$-invariant closed and positive $(1,1)$-current $T^{(\X,\Ll,\bD)}\in c_1(\Ll)$ such that
\begin{equation}
    \label{eqn:Totalmass}
    \langle (\Ll-\CD)^{n+1} \rangle= \int_\X \langle T^{(\X,\Ll,\bD),n+1} \rangle.
\end{equation}
\end{prop}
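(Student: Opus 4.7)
I would construct the canonical current $T^{(\X,\Ll,\bD)}$ as the curvature of an upper envelope of $S^1$-invariant $\Ll$-psh metrics whose restriction to each generic fiber is more singular than $\psi=\psi_\bD\in\cM^+_D(X,\omega)$, and deduce the mass identity from Fujita-type approximation on higher models. More precisely, fixing a smooth $S^1$-invariant reference form $\Omega\in c_1(\Ll)$ on $\X$, I would set
$$
V:=\sup\nolimits^*\bigl\{u\in\PSH(\X,\Omega):u\text{ is }S^1\text{-invariant, }u_{|\X_t}\preccurlyeq\psi\text{ for all }t\in\D^*\bigr\}
$$
and define $T^{(\X,\Ll,\bD)}:=\Omega+dd^c V$. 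Canonicity (independence of $\Omega$) is automatic since $\Omega\mapsto \Omega+dd^c f$ shifts $V$ by $-f$. To guarantee non-emptiness of the family and $V\in\PSH(\X,\Omega)$, I would produce a lower candidate via the results of \cite{BEGZ10}: choose $\CZ\geq \X$ with $\CZ_{\pi^{-1}\C^*}\simeq Y\times\C^*$ high enough that $\CD_\CZ$ is $\R$-Cartier effective and $(\Ll-\CD)_\CZ$ is big, take an $S^1$-invariant current of minimal singularities $S_\CZ\in c_1((\Ll-\CD)_\CZ)$, and push $S_\CZ+[\CD_\CZ]\in c_1(\Ll_\CZ)$ down to $\X$. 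An upper bound is furnished by any current of minimal singularities on $\Ll$ itself.

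For the mass identity, the key observation is that the constraint $V_{|\X_t}\preccurlyeq\psi$ forces the generic Lelong number of $\rho_\CZ^*V$ along every prime divisor of $\CZ$ to dominate the corresponding generic Lelong number of $\psi$, which by Proposition \ref{prop:Corre} equals the coefficient of that divisor in $\CD_\CZ$. Hence the Siu decomposition of $\rho_\CZ^*T^{(\X,\Ll,\bD)}$ contains $[\CD_\CZ]$ as a divisorial contribution. Using the birational invariance of non-pluripolar products (they do not charge pluripolar sets, in particular any extracted divisor) together with the maximality of $V$, the residual part of the Siu decomposition on every sufficiently high $\CZ$ must have minimal singularities in the big class $(\Ll-\CD)_\CZ$, yielding
$$
\int_\X \langle T^{(\X,\Ll,\bD),n+1}\rangle=\inf_{\CZ\geq \X}\vol_\CZ\bigl((\Ll-\CD)_\CZ\bigr)=\langle(\Ll-\CD)^{n+1}\rangle,
$$
where the last equality combines Lemma \ref{lem:PIPWeil} with the Fujita-type identity $\vol_\CZ(\cdot)=\langle\cdot^{n+1}\rangle$ on the big cone from \cite{BDPP13, BFJ09}.

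The most delicate step is the maximality argument producing the last display: one must show that the envelope $V$, whose defining constraints live only on generic fibers, already saturates the intersection-theoretic volumes on arbitrary birational models $\CZ$. I would handle this through a Demailly-type regularization of the minimal-singularities currents $S_\CZ$ (producing approximating $\Omega$-psh potentials with analytic singularities concentrated on $\CD_\CZ$), combined with the nef-envelope characterization of the positive intersection product recalled in Theorem \ref{thm:PIP} and the upper-semicontinuity of generic Lelong numbers along decreasing sequences in $\cM^+_D$ from Section \ref{ssec:KE}. A secondary technical point is verifying that the lower candidate constructed on $\CZ$ descends to an $S^1$-invariant element of the defining family after $S^1$-averaging, which is a standard extension-of-psh-currents argument using the relative dimensions of the pushforward.
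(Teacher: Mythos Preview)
Your envelope approach is genuinely different from the paper's and the construction is natural, but the maximality argument contains a real gap.

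The central claim that ``the residual part of the Siu decomposition on every sufficiently high $\CZ$ must have minimal singularities in the big class $(\Ll-\CD)_\CZ$'' is false whenever $\bD$ is not Cartier (i.e.\ whenever $\psi$ is not an algebraic model type envelope). On a model $\CZ$ with $\CZ_{\pi^{-1}\C^*}\simeq Y\times\C^*$, a current of minimal singularities in $c_1\big((\Ll-\CD)_\CZ\big)$ restricts on a generic fibre $\CZ_t\simeq Y$ to a current of minimal singularities in $c_1(L_Y)$, which under the identification $\PSH(Y,\eta_Y)\leftrightarrow\PSH(X,\omega,\psi_Y)$ corresponds to the model type envelope $\psi_Y$. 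But your envelope $V$ satisfies $V_{|\X_t}\preccurlyeq\psi$, and since $\psi\lneq\psi_Y$ for every $Y$ when $\bD$ is not Cartier, the residual $\rho_\CZ^*T-[\CD_\CZ]$ is \emph{strictly more singular} on every fibre than any minimal-singularities representative. No amount of passing to higher $\CZ$ repairs this: the discrepancy $\psi_Y-\psi$ never vanishes, it only shrinks in the limit. For the same reason, your proposed lower candidate (the pushforward of $S_\CZ+[\CD_\CZ]$) has fibrewise singularities $\psi_Y$, not $\psi$, and therefore does \emph{not} belong to the defining family of your envelope at all.

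The paper avoids this trap by reversing the order of operations. Rather than imposing the limiting fibre constraint $\preccurlyeq\psi$ and then trying to compute the mass, it first builds, for each $Y\geq X$, the model type envelope $\Psi^Y:=P_\Omega[u^\Y](0)$ of the pushed-down current $T^\Y$ (your lower candidate), for which $\int_\X MA_\Omega(\Psi^Y)=\vol_\Y(\Ll_\Y)$ holds \emph{exactly} because at level $Y$ the fibre singularities match. It then shows $\{\Psi^Y\}_{Y\geq X}$ is a decreasing net of model type envelopes, sets $\Psi:=\inf_Y\Psi^Y$ via Lemma~\ref{lem:Inf}, and obtains the mass identity from the continuity of $\psi\mapsto\int_\X MA_\Omega(\psi)$ along decreasing sequences in $\cM^+(\X,\Omega)$. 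This continuity is precisely what bridges the gap between the level-$Y$ volumes and the limiting quantity $\langle(\Ll-\CD)^{n+1}\rangle$; your Demailly-regularization sketch does not supply an equivalent mechanism. If you want to salvage the envelope definition, you would essentially need to prove $P_\Omega[V](0)=\Psi$ by sandwiching $V$ between $\Psi$ and each $\Psi^Y$, but that already presupposes the paper's net construction.
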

The wedge product on right hand side of (\ref{eqn:Totalmass}) is given by the non-pluripolar product (subsection \ref{ssec:KE}).
\begin{proof}
Fix $Y\geq X$. \newline
Let $\Y$ be a smooth projective variety $\C^*$-equivariantly dominating $\X$ and $Y\times\PP^1$ such that $\Y_{\pi^{-1}\C^*}\simeq Y\times\PP^1$, and set $\Ll_\Y:=f^*\Ll-\CD_\Y$. Then, since $f:\Y\to \X$ has connected fibres, for any closed and positive $(1,1)$-current with minimal singularities $S^\Y_{\min}\in c_1(\Ll_\Y)$, there exists a closed and positive $(1,1)$-current $T^\Y\in c_1(\Ll)$ such that
$$
f^*T^\Y=S_{\min}^\Y+[\CD_\Y].
$$
Fix now $\Omega$ be a smooth closed $(1,1)$-form in $c_1(\Ll)$. By the $\partial\bar{\partial}$-lemma we have $T^\Y:=\Omega+dd^c u^\Y$ for $u^\Y\in \PSH(\X,\Ll)$. We define
$$
\Psi^Y:=P_\Omega[u^\Y](0),
$$
i.e. the model type envelope associated to the singularities of $T^\Y$ with respect to $\Omega$ (changing $\Omega'$ would modify $\Psi^Y$ by a bounded function). Clearly $\Psi^Y$ does not depend on the choice of the current with minimal singularities $S_{\min}$, i.e. a priori $\Psi^Y$ is associated to the data $\big((\X,\Ll),\bD,\Omega,\Y\big)$, where as said above the dependence on $\Omega$ is very weak. \newline
Furthermore, letting $\CZ$ be another smooth projective variety $\C^*$-equivariantly dominating $\X$ and $Y\times\PP^1$ such that $\CZ_{|\pi^{-1}\C^*}\simeq Y\times\C^*$, we claim that
\begin{equation}
    \label{eqn:D}
    P_\Omega[u^\CZ](0)=P_\Omega[u^\Y](0),
\end{equation}
i.e. that $\Psi^Y$ depends on the variety $Y\geq X$ but not on the choice of $\Y$ such that $\Y_{\pi^{-1}\C^*}\simeq Y\times\C^*$, as the notation suggests. Indeed, since the set of projective varieties considered in (\ref{eqn:D}) is directed we can assume that $\CZ\geq \Y$. Denoting with $g:\CZ\to \Y$ the birational morphism,
\begin{equation}
    \label{eqn:D2}
    S^\CZ:=g^*S^\Y_{\min}+[g^*\CD_\Y-\CD_\CZ]
\end{equation}
is a closed and positive $(1,1)$-current in $c_1(\Ll_\CZ)$, where we clearly used the fact that $\Ll_\CZ=g^*\Ll_\Y+g^*\CD_\Y-\CD_\CZ$ and that by construction $[g^*\CD_\Y-\CD_\CZ]$ is a $g$-exceptional closed and positive $(1,1)$-current. Thus (\ref{eqn:D2}) immediately leads to $P_\Omega[u^\CZ](0)\geq P_\Omega[u^\Y](0)$. On the other hand, letting $S^\CZ_{\min}$ be a closed and positive $(1,1)$-current with minimal singularities in $c_1(\Ll_\CZ)$, we get that $g_* S^\CZ_{\min}$ is a closed and positive $(1,1)$-current in $c_1(\Ll_\Y)$. Hence the current $T^\CZ$ given by $f^*T^\CZ:=g_*S^\CZ_{\min}+[\CD_\Y]$ is more singular than $T^\Y$, and since the morphism $\CZ\to \X$ factorises through $g:\CZ\to \Y$ and $f:\Y\to \X$ it follows that $P_\Omega[u^\CZ](0)\leq P_\Omega[u^\Y](0)$, which concludes the proof of the claim.\newline
Furthermore, letting $\Y'\overset{h}{\geq}\Y$ where $\Y'_{\pi^{-1}\C^*}\simeq Y'\times\C^*$ for $Y'\geq Y$, reasoning as before it is immediate to see that $h_* S^{\Y'}_{\min}$ is more singular than $S^{\Y}_{\min}$, i.e. $T^{\Y'}$ is more singular than $T^{\Y}$. Hence $\Psi^{Y'}\leq \Psi^{Y}$.\newline
Summarizing we have constructed a decreasing net $\{\Psi^Y\}_{Y\geq X}\subset \PSH(\X,\Omega)$ of model type envelopes. Therefore Lemma \ref{lem:Inf} implies that
\begin{equation}
    \label{eqn:Psi}
    \Psi:=\inf_{Y}\Psi^Y,
\end{equation}
belongs to $\PSH(\X,\Omega)$. Moreover, by what recalled in subsection \ref{ssec:KE}, $\Psi\in \cM^+(\X,\Omega)$ since
$$
\int_\X MA_\Omega(\Psi^Y)=\vol_\Y(\Ll_\Y)=\langle \Ll_\Y^{n+1} \rangle\geq \langle (\Ll-\CD)^{n+1} \rangle>0
$$
and again Lemma \ref{lem:Inf} leads to
$$
\int_\X MA_\Omega(\Psi)=\inf_{Y\geq X}\int_\X MA_\Omega(\Psi^Y)=\inf_{\Y\geq\X} \langle\Ll_\Y^{n+1} \rangle=\langle (\Ll-\CD)^{n+1} \rangle.
$$
Hence setting $T^{(\X,\Ll,\bD)}:=\Omega+dd^c \Psi$ concludes the proof.
\end{proof}
\begin{corollary}
\label{cor:Abo}
Let $\Ll\in N^1(\FX_{\PP^1}^{\C^*})$ be a test configuration class such that $\Ll-\CD\in N^1(\FX_{\PP^1}^{\C^*})$ is big. Then any $(\X,\Ll)\in\FX_\Ll$ defines a $[\psi]$-relative psh ray $U^{(\X,\Ll,\bD)}:\R_{\geq 0}\to \PSH$.\newline
Moreover, if $\Ll-L\times\PP^1$ is pseudoeffective any other $[\psi]$-relative psh ray $U:\R_{\geq 0}\to \PSH$ such that $U_{\NA}\leq \varphi_{\Ll}$ satisfies $U\leq U^{(\X,\Ll,\bD)}+O(1)$. In particular, the associated function $U^{(\X,\Ll,\bD)}_{\NA}:X^{\divv}\to \R$ does not depend on the choice of $(\X,\Ll)\in\FX_\Ll$, i.e. the class $\Ll\in N^1(\FX_{\PP^1}^{\C^*})$ defines a function $U_{\NA}^{(\Ll,\bD)}:X^{\divv}\to \R$.
\end{corollary}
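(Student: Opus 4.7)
The plan is to apply Proposition \ref{prop:Current} to extract the psh ray, then verify $[\psi]$-relativity fiber by fiber, and finally establish the maximality property via a comparison with minimal singularities.

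First I would construct the ray. Given $(\X,\Ll)\in\FX_\Ll$, Proposition \ref{prop:Current} produces a canonical $S^1$-invariant closed positive current $T^{(\X,\Ll,\bD)}=\Omega+dd^c\Psi$ in $c_1(\Ll)$ on $\X$, where $\Psi=\inf_{Y\geq X}\Psi^Y\in \cM^+(\X,\Omega)$. Restricting to $\X\setminus\X_0\simeq X\times(\PP^1\setminus\{0\})$ and using the identification $\Ll_{|\X\setminus\X_0}\simeq p_X^*L$ with an $S^1$-invariant smooth trivialization, the current $T^{(\X,\Ll,\bD)}$ corresponds to an $S^1$-invariant $p_X^*\omega$-psh function $V$ on $X\times\D^*$. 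The standard dictionary then yields a psh ray $U^{(\X,\Ll,\bD)}=\{u_t\}_{t\geq 0}$ with $u_t(x):=V(x,e^{-t})$, which has linear growth since $\Psi$ is bounded above on the compact $\X$ and the difference between $\Omega_{|X\times\D^*}$ and $p_X^*\omega$ has at worst logarithmic growth along the central fiber.

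To show that $U^{(\X,\Ll,\bD)}$ is $[\psi]$-relative, fix $Y\geq X$ and choose $\Y$ smooth $\C^*$-equivariantly dominating both $\X$ (via $f$) and $Y\times\PP^1$, with $\Y_{\pi^{-1}\C^*}\simeq Y\times\C^*$. By the construction in Proposition \ref{prop:Current} we have $\Psi\leq u^\Y+O(1)$ on $\X$, so $f^*T^{(\X,\Ll,\bD)}$ is more singular than $f^*T^\Y=S^\Y_{\min}+[\CD_\Y]$; in particular its divisorial part dominates $\CD_\Y$. Restricting to a generic $\C^*$-fiber $\Y_\tau\simeq Y$ and recalling that $\CD_\Y$ is the componentwise Zariski closure of $D_Y\times\C^*$, we deduce
\[
\rho_Y^*(\omega+dd^c u_t)\geq [D_Y]
\]
for every $t>0$ and every $Y\geq X$, so the generalized $b$-divisor associated to $u_t$ dominates $\bD=\bD_\psi$ by Proposition \ref{prop:Corre}. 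Approximating $\psi$ by a decreasing sequence of algebraic model type envelopes $\psi_k\searrow\psi$, one obtains $u_t\preccurlyeq\psi_k$ for each $k$ via the pull-back dictionary for algebraic singularities recalled in subsection \ref{ssec:KE}; passing to the limit, using that $\psi$ lies in the closure $\cM_D^+$ of algebraic envelopes and that the additive constants can be controlled by the linear growth of the ray, yields $u_t\preccurlyeq\psi$.

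For the maximality, assume $\Ll-L\times\PP^1$ is pseudoeffective and let $U=\{u'_t\}$ be any $[\psi]$-relative psh ray with $U_\NA\leq\varphi_\Ll$. By Proposition \ref{prop:Abo}, $U$ induces a psh metric on $(\Ll-\CD)_\CZ$ for every admissible $\CZ\geq\X$, which lifts to a current $T'=\Omega+dd^c\Psi'\in c_1(\Ll)$ on $\X$ such that $f^*T'-[\CD_\Y]$ is positive in $c_1(\Ll_\Y)$ for every $\Y$ as above. By minimality of $S^\Y_{\min}$, its potential is dominated up to an additive constant, giving $\Psi'\leq u^\Y+C_Y$ on $\X$, which combined with the envelope definition $\Psi^Y=P_\Omega[u^\Y](0)$ and uniform control of the constants provided by the pseudoeffectivity of $\Ll-L\times\PP^1$ yields $\Psi'\leq\Psi^Y+C$ independently of $Y$; taking the infimum over $Y\geq X$ gives $\Psi'\leq\Psi+O(1)$, hence $U\leq U^{(\X,\Ll,\bD)}+O(1)$. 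Independence of the NA function then follows by applying this maximality to any two representatives of $\Ll$ in $\FX_\Ll$. The main obstacle is the $[\psi]$-relativity step: upgrading the divisorial Lelong-number comparison on every $Y\geq X$ to the global comparison $u_t\preccurlyeq\psi$ in $\PSH(X,\omega)$, which crucially uses that $\psi\in\cM_D^+$ has singularity type determined by its $b$-divisorial data through the bijection of Proposition \ref{prop:Corre}.
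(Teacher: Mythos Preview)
Your construction of the ray and the verification of $[\psi]$-relativity follow the paper's argument closely and are correct. The gap is in the maximality step.

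You write that $U$ ``lifts to a current $T'=\Omega+dd^c\Psi'\in c_1(\Ll)$ on $\X$''. But Proposition \ref{prop:Abo} (and Lemma \ref{lem:Indu}) only give you a psh metric over $\pi^{-1}(\D)$, not over the compactification $\X$ (which lives over all of $\PP^1$). Without a \emph{global} $\Omega$-psh function $\Psi'$ on the compact $\X$, the comparison with $S^\Y_{\min}$ yields at best $\Psi'\leq u^\Y+C_Y$ on $\pi^{-1}(\D)$ with a constant $C_Y$ that a priori depends on $\Y$, and your appeal to ``uniform control of the constants provided by the pseudoeffectivity of $\Ll-L\times\PP^1$'' is exactly the step that is missing. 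Minimality of $S^\Y_{\min}$ is a global notion on $\Y$; you cannot extract a uniform additive constant from a current defined only on an open piece.

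The paper fills this gap as follows. Since $\Ll-L\times\PP^1$ is psef, the trivial ray $t\mapsto\psi$ induces a psh metric on $\Ll$, so one may replace $U$ by $\max(U,\Psi)$. After a translation one arranges $u_t\leq -1+Ct$, and then $\max(v,\psi)=\psi$ on $X\times\D_{[0,1/C)}$. This allows gluing with $\psi$ over $\PP^1\setminus\D$ to produce a genuine global current $T\in c_1(\Ll)$ on the compact $\X$, with potential $\Phi\in\PSH(\X,\Omega)$. Now $\sup_\X\Phi<\infty$ is automatic, and since $\Phi\preccurlyeq u^\CZ$ for every $\CZ\geq\X$ (via Proposition \ref{prop:Abo}), the model envelope property gives $\Phi\leq P_\Omega[u^\CZ](0)+\sup_\X\Phi$ with the \emph{same} constant $\sup_\X\Phi$ for all $\CZ$. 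Taking the infimum over $\CZ$ then yields $\Phi\leq\Psi+\sup_\X\Phi$. This extension-and-gluing trick is precisely where the psef hypothesis is used, and it is the missing ingredient in your argument.
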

\begin{proof}
The psh ray $U^{(\X,\Ll,\bD)}$ is clearly given by Proposition \ref{prop:Current}, i.e. it is induced by the current $T^{(\X,\Ll,\bD)}\in c_1(\Ll)$ in the following way.\newline
Let $F$ be the divisor supported in the central fiber such that $\Ll=\mu_\X^*p_X^*L+F$ as a $\Q$-line bundle on $\X$, where the morphism $\mu_\X:\X\to X\times\PP^1$ is given by the dominance. Write $F=\sum_j \alpha_j F_j$ for prime divisors $F_j$ and coefficients $\alpha_j\in\Q$, and let $\theta_{F_j}$ be a $\C^*$-invariant smooth and closed $(1,1)$-form in $c_1(F_j)$ such that $\Omega=\mu_\X^*p_X^*\omega+\sum_j\alpha_j \theta_j$. Let also $s_j$ be $\C^*$-invariant sections of $F_j$ so that $\theta_j+dd^c\log |s_j|_{h_j}=[F_j]$ where $h_j$ is the metric on $\mathcal{O}_\X(F_j)$ associated to $\theta_j$. Then over $\D^*$ we have
$$
0\leq T^{(\X,\Ll,\bD)}_{|\D^*}= \big(T^{(\X,\Ll,\bD)}-\sum_j \alpha_j[F_j]\big)_{|\D^*}= \Big(p_1^*\omega+ dd^c\big(\Psi-\sum_j \alpha_j\log|s_j|_{h_j}\big)\Big)_{|\D^*},
$$
and $U^{(\X,\Ll,\bD)}$ is induced by the $S^1$-invariant function $v^{(\X,\Ll,\bD)}:=\Psi-\sum_j\alpha_j\log |s_j|_{h_j}\in \PSH(X\times\D^*,p_1^*\omega)$, i.e.
$$
v^{(\X,\Ll,\bD)}(x,\tau)=u_{-\log|\tau|}^{(\X,\Ll,\bD)}(x).
$$
Observe that a different choice of $\Omega, \theta_j, s_j$ would produce a different psh ray $U^{(\X,\Ll,\bD)'}:\R_{\geq 0}\to \PSH$ such that $\lvert u_t^{(\X,\Ll,\bD)'}-u_t^{(\X,\Ll,\bD)}\rvert \leq C$ uniformly in $t\in [0,+\infty)$.\newline
By construction for any $t\in [0,+\infty)$ and for any $Y\overset{\rho_Y}{\geq} X$
$$
\rho_Y^*\big(\omega+u_t^{(\X,\Ll,\bD)}\big)\geq [D_Y].
$$
In particular, letting $\psi'\in \M_D$ algebraic model type envelope such that $\psi'\geq \psi$, we consider $\rho_{Y'}:Y'\to X$ log resolution of the ideal sheaf associated to $[\psi']$ to find out that $D_{Y'}\geq \sum_F\nu(\psi'\circ \rho_Y,F)F$ by definition of $D_{Y'}$ (Proposition \ref{prop:Corre}), which reads as $u_t^{(\X,\Ll,\bD)}$ being more singular than $\psi'$, i.e. $u_t^{(\X,\Ll,\bD)}\leq \psi'+\sup_X u_t^{(\X,\Ll,\bD)}$ (subsection \ref{ssec:KE}). Hence, considering $\psi_k\searrow\psi$ decreasing sequence of model type envelopes with algebraic singularities, we deduce that $u_t^{(\X,\Ll,\bD)}\leq \psi+\sup_X u_t^{(\X,\Ll,\bD)}$, i.e. $U^{(\X,\Ll,\bD)}:\R\to \PSH$ is a $[\psi]$-relative psh ray.\newline
Next, if $\Ll-L\times\PP^1$ is pseudoeffective, then by Lemma \ref{lem:Effe} the trivial psh ray $\Psi:\R_{\geq 0}\to \PSH$, $t\to \psi$, induces a psh metric on $\Ll$. In particular, if $U:\R_{\geq 0}\to \PSH$ is a $[\psi]$-relative psh ray such that $U_{\NA}\leq \varphi_{\Ll}$, then $\max(U,\Psi):\R_{\geq 0}\to \PSH $, $t\to \max(u_t,\psi)$, induces a psh metric on $\Ll$ and clearly $U\leq \max(U,\Psi)$. Passing to the associated $S^1$-invariant $p_1^*\omega$-psh functions on $X\times\D^*$, with obvious notations, we obtain
$$
v(x,\tau)\leq \max(v,\psi)(x,\tau)\leq \max\big(\psi(x)+\sup_{x\in X} u_{-\log\lvert \tau \rvert}(x), \psi(x)\big)
$$
where the last inequality follows from $u\leq \psi+\sup_X u$ if $u\preccurlyeq\psi$ (subsection \ref{ssec:KE}). Furthermore, as by Lemma \ref{lem:Indu} $U$ has linear growth, without loss of generality we can also assume that $ u_t\leq -1+Ct $ for any $t\in \R_{\geq 0}$ where $C>0$. Thus we get
$$
\max(v,\psi)=\psi
$$
over $X\times \D_{[0,1/C)}$ where $\D_I:=\{\tau\in \D\, :\, -\log\lvert \tau\rvert\in I\}$. In other words, up to translate $U$ and to replace it by $\max(U,\Psi)$, we can find a $S^1$-invariant closed and positive current $T\in c_1(\Ll)$ such that
$$
T_{\PP^1\setminus\{0\}}=\Big(p_1^*\omega+dd^c \tilde{v}\Big)_{\PP^1\setminus\{0\}}
$$
where $\tilde{v}(x,\tau)= v(x,\tau)=u_{-\log \lvert \tau \rvert}(x)$ over $\D$ while $\tilde{v}=\psi$ over $\PP^1\setminus\D$.\newline
Next, proceeding as in Proposition \ref{prop:Abo} we get that $T$ is more singular than $T^{\CZ}$ for any $\CZ\geq \X$ (see Proposition \ref{prop:Current} for the construction of $T^\CZ$). At level of potentials, setting $\Phi:=\tilde{v}+\sum_j \alpha_j\log \lvert s_j\rvert_{h_j}$, this means that $\Phi\leq P_\Omega[u^{\CZ}](0)+\sup_\X \Phi$ for any $\CZ\geq \X$. Thus $\Phi\leq \Psi+\sup_\X \Phi$ where $T^{(\X,\Ll,\bD)}=\Omega+dd^c\Psi$. Hence, since $U^{(\X,\Ll,\bD)}$ is induced by $T^{(\X,\Ll,\bD)}$ we deduce that
$$
u_{-\log \lvert \tau \rvert}(x)\leq \tilde{v}(x,\tau)+O(1)\leq v^{(\X,\Ll,\bD)}(x,\tau)+O(1)=u_{-\log\lvert \tau \rvert}^{(\X,\Ll,\bD)}(x)+O(1)
$$
as requested. Applying this result to $(\X,\Ll), (\X',\Ll')\in \FX_\Ll$, we clearly get
$$
\lvert U^{(\X,\Ll,\bD)}-U^{(\X',\Ll',\bD)} \rvert\leq C,
$$
which concludes the proof.
\end{proof}
As consequence of Proposition \ref{prop:Current} and Corollary \ref{cor:Abo} we give the following definition.
\begin{defi}
\label{defi:Alge}
A $[\psi]$-relative psh ray $U:\R\to \PSH$ will be said \emph{algebraic} if $U_{\NA}=U_{\NA}^{(\Ll,\bD)}$ for a test configuration class $\Ll$ such that $\Ll-\CD\in N^1(\FX_{\PP^1}^{\C^*})$ is big and such that $\Ll-L\times\PP^1\in N^1(\FX_{\PP^1}^{\C^*})$ is pseudoeffective. 
\end{defi}

\subsection{$\bD$-log Ding functional}
In this subsection we introduce the $\bD$-log Ding functional, we define the (uniform) $\bD$-log Ding stability and we prove that the latter implies the (uniform) $\bD$-log $K$-stability.

\subsubsection{$\bD$-log $L$-functional}
\begin{defi}
Let $U:\R\to \PSH$ be a $[\psi]$-relative psh ray with linear growth. Then
$$
L^{\NA}(U_{\NA})=\inf_{X^{\divv}}\Big\{A_X+U_{\NA}\Big\}.
$$
When $U=U^{(\Ll,\bD)}$ is algebraic (Definition \ref{defi:Alge}), we will also use the notation
$$
L^{\NA}(\Ll;\bD):=L^{\NA}\big(U_{\NA}^{(\Ll,\bD)}\big).
$$
\end{defi}
When $\bD=0$, i.e. $\psi=0$, this definition recovers the $L$-functional studied in \cite{BBJ15}, which in turn is a generalization of the functional introduced in \cite[Proposition 3.8]{Berm16}. In Remark \ref{rem:Llct} below we will interpret $L^\NA(\Ll;\bD)$ in terms of log canonical thresholds similarly to \cite{Berm16} when $L=-K_X$.\newline

Observe that $L^{\NA}(\Ll;\cdot)$ is decreasing. Namely, if $\bD'\geq \bD$ as generalized $b$-divisors and if $\Ll\in N^1(\FX_{\PP^1}^{\C^*})$ is a test configuration class such that $\Ll-\CD'$ is big and $\Ll-L\times\PP^1$ is psef, then $L^{\NA}(\Ll;\bD')\leq L^{\NA}(\Ll;\bD)$ as an immediate consequence of $U_{\NA}^{(\Ll,\bD')}\leq U_{\NA}^{(\Ll,\bD)}$ (Proposition \ref{prop:Current} and Corollary \ref{cor:Abo}).\newline

Next, similarly to subsection \ref{ssec:Cartier}, assuming $D_Y$ is a $\Q$-divisor, we can consider $(\Y,\Ll_\Y)$ compactification of a dominating (i.e. $\Y\overset{\mu_\Y}{\geq}Y\times\C$) test configuration for $(Y,L_Y:=\rho_Y^*L-D_Y)$ where $Y\geq X$. If $\Ll_Y$ is big we can then consider a $\C^*$-invariant current with minimal singularities $S_{\min}^\Y$ to induce a psh ray $U^{\Ll_\Y}:\R_{\geq 0}\to \PSH(Y,L_Y)$. Note that a different choice of the $\C^*$-invariant current with minimal singularities would produce another psh ray $\tilde{U}^{\Ll_Y}:\R_{\geq 0}\to \PSH(Y,L_Y)$. In particular $U^{\Ll_Y}_\NA: Y^\divv\to \R$ is a well-defined function.
\begin{prop}
\label{prop:LF}
Let $U:\R\to \PSH$ be a $[\psi]$-relative psh ray with linear growth, and let $U^Y:\R\to \PSH(Y,\rho_Y^*\omega-\theta_Y)$ be the psh ray on $Y\overset{\rho_Y}{\geq} X$ obtained as $U^Y=U\circ \rho_Y-u_Y$ where $u_Y\in \PSH(Y,\theta_Y)$ such that $\theta_Y+dd^c u_Y=[D_Y]$ for $\theta_Y$ smooth and closed $(1,1)$-form. Then
\begin{equation}
    \label{eqn:H-1}
    L^{\NA}(U_{\NA})= L^{\NA}_{B_Y}(U^Y_{\NA})=\inf_{Y^{\divv}}\Big\{A_{(Y,B_Y)}+U_{\NA}^Y\Big\}
\end{equation}
where $B_Y:=D_Y-K_{Y/X}$.\newline
Moreover if $U=U^{(\Ll,\bD)}$ is algebraic and $D_Y$ is a $\Q$-divisor for any $Y\geq X$ then
\begin{equation}
    \label{eqn:H-2}
    L^{\NA}(\Ll;\bD)=\inf_{Y\geq X} L^{\NA}_{B_Y}(\Y,\Ll_\Y):=\inf_{Y\geq X}\inf_{Y^{\divv}}\Big\{A_{(Y,B_Y)}+U^{\Ll_\Y}_{\NA}\Big\}
\end{equation}
where $\Y$ dominates $\X$, $Y\times\PP^1$ and satisfies $\Y_{\pi^{-1}\C^*}\simeq Y\times\C^*$.\newline
In particular if $\bD\in \bDiv_L(X)$ is a Cartier divisor, i.e. it is determined by a $\Q$-divisor $D_Y$ for $Y\geq X$, then
$$
L^{\NA}(\Ll;\bD)=L_{B_Y}^{\NA}(\Y,\Ll_Y)=\inf_{Y^\divv}\Big\{A_{(Y,B_Y)}+U_{\NA}^{\Ll_\Y}\Big\}.
$$
\end{prop}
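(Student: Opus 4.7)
\textbf{Plan for Proposition \ref{prop:LF}.} The heart of both identities is a single pointwise relation on divisorial valuations. Fix $Y\overset{\rho_Y}{\geq}X$ and let $E\subset Z$ be a prime divisor on some $Z\overset{\pi}{\geq}Y$, with $\nu:=\ord_E\in X^\divv=Y^\divv$ (note $\C(X)=\C(Y)$, so these two sets of divisorial valuations literally coincide). On the algebraic side, since $B_Y=D_Y-K_{Y/X}$ gives $K_{(Y,B_Y)}=\rho_Y^*K_X+D_Y$, a direct comparison of log-discrepancies shows
\[
A_X(E)-A_{(Y,B_Y)}(E)=\ord_E\bigl(\pi^*(K_{Y/X}+B_Y)\bigr)=\ord_E(\pi^*D_Y)=\nu(D_Y).
\]
On the analytic side, writing $v_a(x,\tau)=u_{-\log|\tau|}(x)+a\log|\tau|$ and $\tilde v_a(y,\tau)=v_a(\rho_Y(y),\tau)-u_Y(y)$, the generic Lelong number at the Gauss extension $G(\nu)$ decomposes additively, and since $u_Y$ is independent of $\tau$ one has $G(\nu)(u_Y\circ p_Y)=\nu(u_Y)=\nu(D_Y)$ (as $\theta_Y+dd^c u_Y=[D_Y]$). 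Therefore
\[
U^Y_\NA(\nu)=-G(\nu)(\tilde v_a)+aG(\nu)(\tau)=U_\NA(\nu)+\nu(D_Y),
\]
and the two contributions cancel to give $A_X(E)+U_\NA(\nu)=A_{(Y,B_Y)}(E)+U^Y_\NA(\nu)$.

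Taking the infimum of this identity over the common set $X^\divv=Y^\divv$ yields (\ref{eqn:H-1}) at once. (I would first note that $U^Y$ is automatically of linear growth because $u_t\preccurlyeq\psi$ and $\psi\circ\rho_Y-u_Y$ is bounded above on $Y$, so $L^\NA_{B_Y}(U^Y_\NA)$ is well-defined.)

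For (\ref{eqn:H-2}), apply the already-established equation to $U=U^{(\Ll,\bD)}$, so $L^\NA(\Ll;\bD)=L^\NA_{B_Y}\bigl(U^{(\Ll,\bD),Y}_\NA\bigr)$ for every $Y\geq X$. The inequality $\leq$ in (\ref{eqn:H-2}) follows by comparing $U^{(\Ll,\bD),Y}$ to $U^{\Ll_\Y}$: by the construction in Proposition \ref{prop:Current}, the potential $\Psi$ defining $T^{(\X,\Ll,\bD)}$ is obtained as $\inf_{Y'\geq X}\Psi^{Y'}$, so $\Psi\preccurlyeq\Psi^Y$ and consequently $U^{(\Ll,\bD),Y}\preccurlyeq U^{\Ll_\Y}$, which by monotonicity of generic Lelong numbers gives $U^{(\Ll,\bD),Y}_\NA\leq U^{\Ll_\Y}_\NA$; infimizing over $Y$ gives the inequality. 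For the reverse $\geq$, fix any $\nu=\ord_E\in X^\divv$ and choose $Y_\nu\geq X$ so that $E$ is already a prime divisor on $Y_\nu$: I expect the pointwise equality $U^{(\Ll,\bD),Y_\nu}_\NA(\nu)=U^{\Ll_{\Y_\nu}}_\NA(\nu)$ to hold on such ``captured'' valuations, because further blow-ups $Y'\geq Y_\nu$ in the infimum defining $\Psi$ do not change the generic Lelong number along $E$ (this is where the singularity along $E$ is already recorded in $\Psi^{Y_\nu}$). Granted this equality,
\[
A_X(E)+U^{(\Ll,\bD)}_\NA(\nu)=A_{(Y_\nu,B_{Y_\nu})}(E)+U^{\Ll_{\Y_\nu}}_\NA(\nu)\geq L^\NA_{B_{Y_\nu}}(\Y_\nu,\Ll_{\Y_\nu})\geq\inf_Y L^\NA_{B_Y}(\Y,\Ll_\Y),
\]
and infimizing over $\nu$ closes the loop. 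The particular Cartier case is immediate: if $\bD$ is determined by a fixed $\Q$-divisor $D_Y$ on a single $Y\geq X$, then $D_{Y'}=\pi^*D_Y$ for all $Y'\geq Y$, the net $\{\Psi^{Y'}\}_{Y'\geq Y}$ stabilises, and no infimum over higher models is needed.

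The main obstacle is exactly the equality $U^{(\Ll,\bD),Y_\nu}_\NA(\nu)=U^{\Ll_{\Y_\nu}}_\NA(\nu)$ on divisorial valuations captured by $Y_\nu$. Proving it requires a careful local analysis near $E$: the decreasing net $\Psi^{Y'}\searrow\Psi$ must not lose mass along $E$, i.e.\ the generic Lelong number along $E$ of $\Psi^{Y'}$ must be independent of $Y'\geq Y_\nu$. This should follow from the fact that $\Psi^{Y'}=P_\Omega[u^{Y'}](0)$ is the model envelope of a minimal-singularity potential, combined with the compatibility $\rho_{Y',Y_\nu,*}\CD_{Y'}\geq \CD_{Y_\nu}$ and monotonicity of the projection $P_\Omega[\cdot](0)$.
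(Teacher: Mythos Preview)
Your treatment of (\ref{eqn:H-1}) and of the inequality $L^\NA(\Ll;\bD)\leq\inf_Y L^\NA_{B_Y}(\Y,\Ll_\Y)$ in (\ref{eqn:H-2}) is correct and matches the paper: both follow from the pointwise identity $A_X(\nu)+U_\NA(\nu)=A_{(Y,B_Y)}(\nu)+U^Y_\NA(\nu)$ together with the comparison $\Psi\leq\Psi^Y$ from Proposition~\ref{prop:Current}.

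The gap is exactly where you flag it, and your proposed fix does not close it. You want the Lelong number of $\Psi^{Y'}$ at $G(\nu)$ to stabilise once $Y'\geq Y_\nu$ ``captures'' $E$. But $G(\nu)$ is realised as $c\,\ord_{\tilde E}$ for an irreducible component $\tilde E$ of the \emph{central fibre} of some test configuration $\tilde\X$ (\cite[Lemma~4.7]{BHJ17}); the divisor $\tilde E$ lives on the total space, not on the base $Y_\nu$. Having $E\subset Y_\nu$ gives no control over singularities along $\tilde E$: for $Y'>Y_\nu$ the generalised $b$-divisor $\bD$ can acquire components exceptional over $Y_\nu$, whose Zariski closures in $\Y'$ meet the central fibre and can strictly worsen the minimal singularities of $\Ll-\CD_{\Y'}$ along $\tilde E$. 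Your compatibility $\rho_{Y',Y_\nu,*}\CD_{Y'}\geq\CD_{Y_\nu}$ concerns push-forwards to $\Y_\nu$ and says nothing about Lelong numbers on a higher model carrying $\tilde E$.

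The paper replaces stabilisation by \emph{convergence}. Each $\Psi^{Y}=P_\Omega[u^{\Y}](0)$ is an algebraic model type envelope, so by Lemma~\ref{lem:Inf} one chooses a sequence $Y_k$ with $\cM^+_D(\X,\Omega)\ni\Psi^{Y_k}\searrow\Psi\in\cM^+_D(\X,\Omega)$. For decreasing sequences in $\cM^+_D$ one has $\nu(\Psi^{Y_k},F)\nearrow\nu(\Psi,F)$ for every prime divisor $F$ over $\X$ (end of \S\ref{ssec:KE}). After arranging $\X\geq\tilde\X$ this yields the pointwise convergence $\tilde U^{Y_k}_\NA(\nu)\to U^{(\Ll,\bD)}_\NA(\nu)$, hence for every $\nu\in X^\divv$
\[
A_X(\nu)+U^{(\Ll,\bD)}_\NA(\nu)=\lim_{k\to\infty}\Bigl(A_{(Y_k,B_{Y_k})}(\nu)+U^{\Ll_{\Y_k}}_\NA(\nu)\Bigr)\geq\inf_{Y\geq X}L^\NA_{B_Y}(\Y,\Ll_\Y),
\]
which is the missing inequality. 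The ingredient you lacked is precisely this Lelong-number continuity along decreasing sequences in $\cM^+_D$; it makes the limit argument work without any finite-level stabilisation.
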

\begin{proof}
By construction we immediately have
\begin{equation}
    \label{eqn:H0}
    U_{\NA}(\nu)=U_{\NA}^Y(\nu)-\nu(D_Y)
\end{equation}
for any $\nu\in Y^\divv$.
On the other hand, letting $\nu=c\,\ord_E$ for $c>0$ and $E\subset Z$ prime divisor where $Z\geq Y$ (i.e. $\nu\in Y^{\divv}$), it follows that
\begin{multline}
    \label{eqn:H2}
    A_{X}(\nu)=c\big(1+\ord_E K_{Z/X}\big)=\\
    =c\big(1+\ord_E K_{Z/(Y,B_Y)}+\ord_E K_{(Y,B_Y)/X}\big)=A_{(Y,B_Y)}(\nu)+\nu\big(D_Y\big).%=\big(A_{(Y,B_Y)}-\Psi_{\NA}\big)(\nu).
\end{multline}
Combining (\ref{eqn:H0}) with (\ref{eqn:H2}), we deduce (\ref{eqn:H-1}).\newline 
Next, assume that $U=U^{(\Ll,\bD)}$ is algebraic and that $D_Y$ is a $\Q$-divisor for any $Y\geq X$. 
Let also $(\X,\Ll)\in\FX_\Ll$ and $U^{(\X,\Ll,\bD)}:\R_{\geq 0}\to \PSH(X,\omega)$ be a $[\psi]$-relative psh ray associated to $(\X,\Ll,\bD)$ as in Corollary \ref{cor:Abo}. Then, the psh ray $U^Y:\R\to \PSH(Y,\eta_Y)$ given by
$$
U^Y_t:=U^{(\X,\Ll,\bD)}\circ\rho_Y-u_Y,
$$
where $u_Y\in \PSH(Y,\theta_Y)$ such that $\theta_Y+dd^c u_Y=[D_Y]$, extends to a psh metric on $(\Y,\Ll_\Y)$. Here $\Y$ is the graph of the birational map $\mu_\X^{-1}\circ(\rho_Y\times\Id): Y\times\PP^1\dashrightarrow \X$ while $\Ll_\Y:=f^*\Ll-\CD_\Y$ for $\Y\overset{f}{\geq}\X$. 
Indeed, as seen in Proposition \ref{prop:Abo}, $U^Y$ is induced by the positive and closed $(1,1)$-current $S\in c_1(\Ll_\Y)$ given by
$$
S=f^*T^{(\X,\Ll,\bD)}-[\CD_\Y].
$$
(see Proposition \ref{prop:Current} for the construction of $T^{(\X,\Ll,\bD)}$). In particular, since $S$ is clearly more singular than a current with minimal singularities in $c_1(\Ll_\Y)$, we get $U_{\NA}^Y\leq U_{\NA}^{\Ll_\Y}$ and
\begin{equation}
    \label{eqn:H1}
    U_{\NA}^{(\Ll,\bD)}(\nu)\leq U_{\NA}^{\Ll_\Y}(\nu)-\nu(D_Y)
\end{equation}
for any $\nu\in Y^{\divv}$. 
Proceeding as before, (\ref{eqn:H1}) together with (\ref{eqn:H2}) lead to
$$
L^{\NA}(\Ll;\bD)\leq \inf_{Y\geq X}L^{\NA}_{B_Y}(\Y,\Ll_\Y).
$$
For the reverse inequality we first observe that for any $\nu\in X^{\divv}$ we have
\begin{equation}
    \label{eqn:H-3}
    A_{(Y,B_Y)}(\nu)+U^{\Ll_\Y}_{\NA}(\nu)=A_X(\nu)+\tilde{U}^Y_{\NA}(\nu)
\end{equation}
where $\tilde{U}^Y:\R_{\geq 0}\to \PSH$ is the psh ray induced by the current $T^Y:=\Omega+dd^c P_\Omega[u^\Y](0)$ (see Proposition \ref{prop:Current} for the notations). In particular, as a consequence of Lemma \ref{lem:Inf}, we can consider a sequence $Y_{k+1}\geq Y_k\geq X$ such that the associated currents $T^{Y_k}$ converges to $T^{(\X,\Ll,\bD)}$. At level of model type envelopes we have $\cM^+_D(\X,\Omega)\ni P_\Omega[u^{\Y_k}](0)\searrow \Psi\in \cM^+_D(\X,\Omega)$ where $T^{(\X,\Ll,\bD)}=\Omega+dd^c\Psi$. In particular we have convergence of Lelong numbers, i.e. $\nu(T^{Y_k},F)\nearrow \nu(T^{(\X,\Ll,\bD)},F)$ for any $F$ prime divisor above $\X$ (subsection \ref{ssec:KE}). We therefore claim that $\tilde{U}^{Y_k}_{\NA}$ pointwise converges to $U_\NA^{(\Ll;\bD)}$.\newline
Fix $\nu\in X^\divv$. By \cite[Lemma 4.7]{BHJ17} there exists a (compactification of a) normal test configuration $\tilde{\X}\geq X\times\PP^1$ and an irreducible component $E$ of $\tilde{\X}_0$ such that $G(\nu)=c\,\ord_E$ for $c>0$. Observe that, unless considering a different element $(\X',\Ll')\in\FX_\Ll$, we can and we will assume that
$$
\X\overset{\mu}{\longrightarrow} \tilde{\X}\overset{\tilde{\mu}}{\longrightarrow} X\times\PP^1.
$$
Denote by $F$ be $\Q$-divisor on $\X$ such that $\Ll=\mu^*\tilde{\mu}^*p_X^* L+F$. Define $S^{(\X,\Ll,\bD)}, S^{Y_k}$ as the closed and positive $(1,1)$-currents on $X\times\PP^1$ given as extension respectively of $(T^{(\X,\Ll,\bD)}-[F]+A[\X_0])_{\pi^{-1}(\PP^1\setminus\{0\})}$ and of $(T^{Y_k}-[F]+A[\X_0])_{\pi^{-1}(\PP^1\setminus\{0\})}$ for $A\gg 0$ big enough so that $A\X_0\geq F$. By construction (see Corollary \ref{cor:Abo}) we have
$$
U_\NA^{(\Ll,\bD)}(\nu)=-G(\nu)(U^{(\Ll,\bD)}+A\log\lvert \tau\rvert)+AG(\nu)(\tau)=-c\,\nu(\tilde{\mu}^*S^{(\X,\Ll;\bD)},E)+AG(\nu)(\tau)
$$
and similarly for $\tilde{U}^{Y_k}_\NA$. Therefore the claim follows by linearity of Lelong numbers, considering the strict transform of $E$ over $\X$ and using that $\nu(T^{Y_k},E)\nearrow \nu(T^{(\X,\Ll,\bD)},E)$ as said above.\newline
Hence, for any $\epsilon>0$ fixed, there exists $\nu_\epsilon \in X^\divv$ such that
$$
L^\NA(\Ll;\bD)\geq A_X(\nu_\epsilon)+U^{(\Ll;\bD)}_\NA(\nu_\epsilon)-\epsilon=A_X(\nu_\epsilon)+\lim_{k\to +\infty} \tilde{U}^{Y_k}_\NA(\nu_\epsilon)-\epsilon\geq \inf_{Y\geq X} L_{B_Y}^\NA(\Y,\Ll_\Y)-\epsilon
$$
where in the last inequality we also used (\ref{eqn:H-3}). The arbitrariness of $\epsilon$ concludes the proof. 
\end{proof}
\subsubsection{$\bD$-log Ding stability}
\begin{defi}
Let $\bD\in \bDiv_L(X)$ and let $\Ll\in N^1(\FX_{\PP^1}^{\C^*})$ be a test configuration class. The \emph{$\bD$-log Ding functional of $\Ll$} is defined as
\begin{equation}
    \label{eqn:Ding}
    D^{\NA}(\Ll;\bD):=L^{\NA}\big(\Ll+m(X\times\{0\});\bD\big)-E^{\NA}\big(\Ll+m(X\times\{0\});\bD\big)
\end{equation}
for $m>m_0$, where $m_0$ is given by Lemma \ref{lem:Effe}.
\end{defi}
Recall that 
$$
E^{\NA}\big(\Ll+m(X\times\{0\});\bD\big):=\frac{\langle \Ll_m^{n+1}\rangle}{(n+1)V_\bD}
$$
where $\Ll_m:=\Ll+m(X\times\{0\})-\CD\in N^1(\FX_{\PP^1}^{\C^*})$ and $V_\bD:=\langle (L-\bD)^n\rangle$ (see Definition \ref{defi:JandE})\newline
When $\bD=0$ (i.e. $\psi=0$) the $\bD$-log Ding functional coincides with the \emph{Non-Archimedean Ding functional} (\cite[Definition 7.26]{BHJ17}) on the set of ample test configuration classes considering $m=0$ in the definition.\newline

We need to check that $D^{\NA}(\Ll;\bD)$ is well-defined, i.e. that the right hand side in (\ref{eqn:Ding}) does not depend on $m>m_0$.
\begin{prop}
The $\bD$-log Ding functional is well-defined.
\end{prop}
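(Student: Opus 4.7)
The plan is to show that the right-hand side of the definition of $D^\NA(\Ll;\bD)$ does not depend on the choice of $m > m_0$, by proving that each summand transforms covariantly under the shift $\Ll \mapsto \Ll + c(X\times\{0\})$ by \emph{the same} constant $c$: more precisely, whenever both sides are defined,
\begin{equation*}
L^\NA\bigl(\Ll + c(X\times\{0\});\bD\bigr) = L^\NA(\Ll;\bD) + c, \qquad E^\NA\bigl(\Ll + c(X\times\{0\});\bD\bigr) = E^\NA(\Ll;\bD) + c.
\end{equation*}
Applied with $c = m_1 - m_2$, this immediately yields
\begin{equation*}
L^\NA(\Ll + m_1\X_0;\bD) - E^\NA(\Ll + m_1\X_0;\bD) = L^\NA(\Ll + m_2\X_0;\bD) - E^\NA(\Ll + m_2\X_0;\bD),
\end{equation*}
which is the desired independence. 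The identity for $E^\NA$ was already observed in the comments following Definition \ref{defi:JandE} (it falls out of the volume computation in Lemma \ref{lem:Below} and the proof of Proposition \ref{prop:TI}), so only the parallel identity for $L^\NA$ requires a separate argument.

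For that identity, I would analyze how the canonical $[\psi]$-relative psh ray $U^{(\Ll,\bD)}$ produced by Corollary \ref{cor:Abo} transforms under shifting by $c(X\times\{0\})$. Pick a representative $(\X,\Ll)\in\FX_\Ll$ and let $\sigma$ be the canonical section of $\mathcal{O}_\X(\X_0)$, whose divisor of zeros equals $\X_0 = \mathrm{div}(\tau)$ with $\tau$ the coordinate on $\C$. A metric on $\Ll + c\X_0$ is the same data as a metric on $\Ll$ twisted by $|\sigma|^{-2c}$, so the canonical currents of Proposition \ref{prop:Current} are related by $T^{(\X,\Ll+c\X_0,\bD)} = T^{(\X,\Ll,\bD)} + c[\X_0]$. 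At the level of the $S^1$-invariant potentials on $X\times\D^*$ this reads $v^{(\X,\Ll+c\X_0,\bD)}(x,\tau) = v^{(\X,\Ll,\bD)}(x,\tau) - c\log|\tau| + O(1)$, so the associated psh ray satisfies $u_t^{(\Ll+c\X_0,\bD)} = u_t^{(\Ll,\bD)} + ct + O(1)$ uniformly in $t$.

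Finally, using the defining formula $U_\NA(\nu) = -G(\nu)(v_a) + aG(\nu)(\tau)$ together with the basic fact $G(\nu)(\tau) = 1$ for every divisorial valuation $\nu$, and the insensitivity of generic Lelong numbers to bounded perturbations, the shift on the psh side descends to $U_\NA^{(\Ll+c\X_0,\bD)}(\nu) = U_\NA^{(\Ll,\bD)}(\nu) + c$ for all $\nu \in X^\divv$. Taking the infimum of $A_X + U_\NA$ over $X^\divv$ then gives $L^\NA(\Ll+c\X_0;\bD) = L^\NA(\Ll;\bD) + c$, which combined with the parallel identity for $E^\NA$ concludes the proof. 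The main piece of bookkeeping will be checking that the $O(1)$ error coming from auxiliary choices (reference form $\Omega$, smooth hermitian metric on $\mathcal{O}_\X(\X_0)$, choice of $(\X,\Ll)\in\FX_\Ll$) is genuinely harmless on the Non-Archimedean side; this is handled exactly as in the independence statements already built into Proposition \ref{prop:Current} and Corollary \ref{cor:Abo}.
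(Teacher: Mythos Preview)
Your overall plan --- reducing well-definedness to the two shift identities for $L^\NA$ and $E^\NA$, and noting that the $E^\NA$ identity is already in hand --- is exactly the paper's strategy. The problem lies in your justification of the $L^\NA$ identity.

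The asserted equality $T^{(\X,\Ll+c\X_0,\bD)} = T^{(\X,\Ll,\bD)} + c[\X_0]$ is false. While it is true that metrics on $\Ll$ and on $\Ll+c\X_0$ are in bijection via twisting by $|\sigma|^{-2c}$, the construction of Proposition~\ref{prop:Current} is \emph{not} equivariant under this bijection: it passes through minimal-singularity currents on $\Ll_\Y+c\Y_0$, and since $\Y_0$ is a semiample fiber class, a minimal-singularity current for $\Ll_\Y+c\Y_0$ is $S^\Y_{\min}+c\,\omega_{\PP^1}$ (smooth twist), not $S^\Y_{\min}+c[\Y_0]$. Concretely, take $\bD=0$, $\X=X\times\PP^1$, $\Ll=p_X^*L+m(X\times\{0\})$ with $m>0$: then both $T^{(\X,\Ll,0)}$ and $T^{(\X,\Ll+c\X_0,0)}$ are smooth K\"ahler forms, so their difference is $c\,\omega_{\PP^1}$, not $c[\X_0]$. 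There is also an internal inconsistency: if $T' = T + c[\X_0]$ held, then restricting to $X\times\D^*$ (where $[\X_0]$ vanishes) and unwrapping via the formula in Corollary~\ref{cor:Abo} would yield $v' = v + O(1)$, not the shift $v' = v - c\log|\tau| + O(1)$ that you claim.

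What is actually true --- and what the paper proves --- is that the two model envelopes $\Psi^{m_1}$ and $\Psi^{m_2}$ have \emph{identical} Lelong numbers along every prime divisor over $\X$; the shift by $c$ in $U_\NA$ then comes purely from the extra term $-c\log|\sigma|$ in the unwrapping of Corollary~\ref{cor:Abo}. To establish the equality of Lelong numbers the paper forms $S:=T^{(\X,\Ll^{m_2},\bD)}+(m_1-m_2)\omega_{\PP^1}$ (a smooth twist in $c_1(\Ll+m_1\X_0)$), observes that $S$ is automatically more singular than $T^{(\X,\Ll^{m_1},\bD)}$, and then computes $\int_\X\langle S^{n+1}\rangle$ directly using $\C^*$-invariance and Proposition~\ref{prop:Current}, showing it equals $\langle\Ll_{m_1}^{n+1}\rangle$. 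This forces the potential $\Phi$ of $S$ to have full $[\Psi^{m_1}]$-relative Monge--Amp\`ere mass, hence the same Lelong numbers. This mass computation is the missing ingredient in your proposal.
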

\begin{proof}
For any $m>m_0$ set $\Ll_m:=\Ll+m(X\times \{0\}-\CD$. As seen in the proof of Proposition \ref{prop:TI}, %immediately gives
\begin{equation*}
    E^\NA\big(\Ll+m_1(X\times\{0\});\bD\big)=E^\NA\big(\Ll+m_2(X\times\{0\});\bD\big)+m_1-m_2
\end{equation*}
for any $m_1\geq m_2>m_0$ since as consequence of Lemma \ref{lem:Below}
\begin{equation}
    \label{eqn:V}
    \langle \Ll_{m_1}^{n+1} \rangle=\langle \Ll_{m_2}^{n+1}\rangle +(m_1-m_2)(n+1)V_{\bD}.
\end{equation}
Thus it is enough to prove that
\begin{equation}
    \label{eqn:Claim}
    L^\NA\big(\Ll+m_1(X\times\{0\});\bD\big)=L^\NA\big(\Ll+m_2(X\times\{0\});\bD\big)+m_1-m_2.
\end{equation}
Fix $(\X,\Ll)\in\FX_\Ll$ and, keeping the notation of Proposition \ref{prop:Current}, define
$$
S:=T^{(\X,\Ll+m_2(X\times\{0\}),\bD)}+(m_1-m_2)\omega_{\PP^1}
$$
where $\omega_{\PP^1}:=\pi^*\omega_{FS}$ is the pullback of the Fubini-Study K\"ahler form through $\pi:\X\to \PP^1$. By a slight abuse of notation we will also write $T^{(\X,\Ll^m,\bD)}:=T^{(\X,\Ll+m(X\times\{0\}),\bD)}$. Then $S$ is a closed and positive current in $c_1(\Ll_{m_1})$, and moreover by construction it is easy to check that $S$ is more singular than $T^{(\X,\Ll^{m_1},\bD)}$. Therefore by monotonicity of the non-pluripolar product \cite[Theorem 1.2]{WN17} and Proposition \ref{prop:Current} it follows that
\begin{equation}
    \label{eqn:C1}
    \int_\X\langle S^{n+1}\rangle\leq \int_\X \langle T^{(\X,\Ll^{m_1},\bD),n+1}\rangle=\langle \Ll_{m_1}^{n+1}\rangle=\langle \Ll_{m_2}^{n+1}\rangle+ (m_1-m_2)(n+1)V_{\bD}
\end{equation}
where the last equality is the content of (\ref{eqn:V}). \newline
On the other hand, by an easy computation, using again Proposition \ref{prop:Current}, we also have
\begin{multline}
    \label{eqn:C2}
    \int_\X \langle S^{n+1} \rangle =\int_\X\langle \big(T^{(\X,\Ll^{m_2},\bD)}+(m_1-m_2)\omega_{\PP^1}\big)^{n+1} \rangle=\\
    =\int_\X \langle T^{(\X,\Ll^{m_2},\bD),n+1} \rangle+(m_1-m_2)(n+1)\int_\X \langle \omega_{\PP^1}\wedge T^{(\X,\Ll^{m_2},\bD),n}\rangle=\\
    =\langle\Ll_{m_2}^{n+1} \rangle+(m_1-m_2)(n+1)\int_{X\times\C^*}\langle\omega_{\PP^1}\wedge T^{(\X,\Ll^{m_2},\bD),n}\rangle=\langle \Ll_{m_2}^{n+1} \rangle+(m_1-m_2)(n+1)\int_X \langle T^{(\X,\Ll^{m_2},\bD),n}_{|\X_1}\rangle
\end{multline}
where the last equality follows from the fact that $T^{(\X,\Ll^{m_2},\bD)}$ is $\C^*$-invariant while $\omega_{\PP^1}$ clearly vanishes along the fibers (and $\int_{\PP^1}\omega_{\mathrm{FS}}=1$). Then we observe that $T_{|\X_1}$ has $[\psi]$-relative minimal singularities since $\omega+dd^c\psi$ extends to a current $T\in c_1(\Ll_{m_2})$ which is less singular than $T^{(\X,\Ll^{m_2},\bD)}$ (recall that $\Ll+m_2\X_0-\mu_\X^*p_X^*L$ is effective). In particular we have
$$
\int_X \langle T_{|\X_1}^{(\X,\Ll^{m_2},\bD),n+1} \rangle=\int_X MA_\omega(\psi)=V_\bD.
$$
Combining (\ref{eqn:C1}) and (\ref{eqn:C2}) we deduce that
$$
\int_\X \langle S^{n+1}\rangle =\int_\X \langle T^{(\X,\Ll^{m_1},\bD),n+1}\rangle>0.
$$
Thus, letting $\Omega'\in c_1(\Ll_{m_2})$ be a smooth and closed form and setting $\Omega:=\Omega'+(m_1-m_2)\omega_{\PP^1}\in c_1(\Ll_{m_1})$, it follows that $\Phi\in \PSH(\X,\Omega)$ given by $S=\Omega+dd^c \Phi$ has $[\Psi]$-relative full Monge-Ampère mass where $\Psi\in \cM^+(\X,\Omega)$ is given by $T^{(\X,\Ll^{m_1},\bD)}=\Omega+dd^c \Psi$. In particular, $\nu(\Phi,F)=\nu(\Psi,F)$ for any $F$ prime divisor above $\X$ (subsection \ref{ssec:KE}). Therefore, proceeding as in the last part of the proof of Proposition \ref{prop:LF}, we obtain that the $[\psi]$-relative psh ray $U:\R_{\geq 0}\to \PSH$ associated to $\Phi$ satisfies
$$
U_{\NA}=U^{(\X,\Ll+m_1(X\times\{0\}),\bD)}_{\NA}.
$$
On the other hand by construction we also have
$$
U_{\NA}=U^{(\X,\Ll+m_2(X\times\{0\}),\bD)}_{\NA}+(m_1-m_2).
$$
Hence (\ref{eqn:Claim}) clearly follows and concludes the proof.
\end{proof}

We can give the following definition.
\begin{defi}
Let $\bD\in \bDiv_L(X)$. Then $(X,L)$ is said to be \emph{$\bD$-log Ding stable} if $D^{\NA}(\Ll;\bD)\geq 0$ for any $\Ll\in N^1(\FX_{\PP^1}^{\C^*})$ ample test configuration class with equality if and only if $\Ll$ is $\bD$-trivial.\newline
$(X,L)$ is said to be \emph{uniformly $\bD$-log Ding-stable} if there exists $\delta>0$ such that $D^{\NA}(\Ll;\bD)\geq \delta J^{\NA}(\Ll;\bD)$ for any ample test configuration class.
\end{defi}
Observe that by definition of $\bD$-triviality if $(X,L)$ is uniformly $\bD$-log Ding-stable then it is $\bD$-log Ding stable (see Corollary \ref{cor:J}).\newline

The following result is the analog of Proposition \ref{prop:KstClass} for the $\bD$-log Ding stability and allows to interpret the latter as an \emph{asymptotic} log Ding stability.
\begin{prop}
\label{prop:DStCar}
Let $\bD\in \bDiv_L(X)$ such that $D_Y$ is a $\Q$-divisor for any $Y\geq X$, $\Ll\in N^1(\FX_{\PP^1}^{\C^*})$ be test configuration class, let $(\X,\Ll)\in \FX_\Ll$, and consider for any $Y\geq X$ the commutative diagram
$$
\begin{tikzcd}
    \Y \ar[r, "\tilde{\rho}_Y"] \ar[d, "\mu_\Y"']& \X \ar[d, "\mu_\X"]\\
    Y\times\PP^1 \ar[r,"\rho_Y\times\Id"] & X\times \PP^1% \ar[d, "p_X"]\\
    %Y \ar[r, "\rho_Y"] & X
\end{tikzcd}
$$
where $\Y$ is the graph of $\mu_\X^{-1}\circ (\rho_Y\times\Id):Y\times\PP^1\dashrightarrow \X$. Then for any $m\gg 1$ big enough 
\begin{equation}
    \label{eqn:MMM}
    D^\NA(\Ll;\bD)=\sup_{Y'}\inf_{Y\geq Y'} D^\NA_{B_Y}(\Y,\Ll_{m,\Y})
\end{equation}
where
$B_Y:=D_Y-K_{Y/X}$, $(\Y,\Ll_{m,\Y})$ is the big test configuration for $(Y,L_Y:\rho_Y^*L-D_Y)$ given by $\Ll_{m,\Y}:=\tilde{\rho}_Y^*\Ll+m\Y_0-\CD_\Y$ and where
$$
D^\NA_{B_Y}(\Y,\Ll_{m,\Y})=L^\NA_{B_Y}(\Y,\Ll_{m,\Y})-\frac{\langle \Ll_{m,\Y}^{n+1} \rangle}{(n+1)V_{D_Y}}
$$
for $V_{D_Y}=\langle (L-D_Y)^n \rangle$. In particular if $\bD=D_Y$ then
$$
D^\NA(\Ll;\bD)=D^\NA_{B_Y}(\Y,\Ll_{m,\Y}).
$$
\end{prop}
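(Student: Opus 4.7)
The argument runs in parallel to that of Proposition \ref{prop:KstClass}, but now applied term-by-term to the decomposition $D^\NA = L^\NA - E^\NA$. Fix $m$ large enough that the pseudoeffectiveness threshold of Lemma \ref{lem:Effe} is cleared, so that every $\Ll_{m,\Y}$ is big. The two key inputs are equation (\ref{eqn:H-2}) of Proposition \ref{prop:LF} for the $L$-functional and Lemma \ref{lem:PIPWeil} (together with the analogous statement for the pair $(L,\bD)$) for the non-Archimedean Monge-Ampère energy.

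First I would apply (\ref{eqn:H-2}) to the class $\Ll+m(X\times\{0\})$, obtaining
\begin{equation*}
L^\NA\big(\Ll+m(X\times\{0\});\bD\big) \;=\; \inf_{Y\geq X}\, L^\NA_{B_Y}(\Y,\Ll_{m,\Y}).
\end{equation*}
Unwinding the construction of the model-type envelopes $\Psi^Y$ in Proposition \ref{prop:Current}, one sees that if $Y'\geq Y$ then $\Psi^{Y'}\leq \Psi^Y$; the associated currents $T^{Y'}$ are thus more singular than $T^Y$, so the induced psh rays $\tilde{U}^Y$ decrease with $Y$ and the Lelong numbers increase. Via identity (\ref{eqn:H-3}) this shows that the net $Y\mapsto L^\NA_{B_Y}(\Y,\Ll_{m,\Y})$ is decreasing, hence convergent, with limit equal to $L^\NA(\Ll+m(X\times\{0\});\bD)$.

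Next, for the energy, Lemma \ref{lem:PIPWeil} gives $\langle \Ll_{m,\Y}^{n+1}\rangle \searrow \langle \Ll_m^{n+1}\rangle$, while the corresponding monotonicity for $(L,\bD)$ (recalled right after Lemma \ref{lem:PIPWeil}) yields $V_{D_Y}=\langle L_Y^n\rangle \searrow V_\bD$. Condition (iii) of Definition \ref{defi:Lpositive} guarantees $V_\bD>0$, so
\begin{equation*}
\frac{\langle \Ll_{m,\Y}^{n+1}\rangle}{(n+1)V_{D_Y}} \;\longrightarrow\; \frac{\langle \Ll_m^{n+1}\rangle}{(n+1)V_\bD} \;=\; E^\NA\big(\Ll+m(X\times\{0\});\bD\big)
\end{equation*}
as a convergent net of real numbers. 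Combining the two pieces, the net $D^\NA_{B_Y}(\Y,\Ll_{m,\Y})$ converges in $\R$ to $D^\NA(\Ll;\bD)$; since for any convergent net of real numbers one has $\lim_Y x_Y=\sup_{Y'}\inf_{Y\geq Y'}x_Y$, identity (\ref{eqn:MMM}) follows. Finally, when $\bD=D_Y$ is Cartier at some fixed level $Y$, every higher modification $Y'\geq Y$ satisfies $D_{Y'}=\rho^*D_Y$, so by Theorem \ref{thm:PIP} and the exceptionality of $\rho^*\CD_\Y-\CD_{\Y'}$ all the invariants stabilize from $Y$ onward; the sup-inf then collapses to the single value $D^\NA_{B_Y}(\Y,\Ll_{m,\Y})$.

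The main obstacle is the monotonicity assertion used in the $L^\NA$ step: Proposition \ref{prop:LF} is stated as an equality with an infimum but does not explicitly record that the infimum is a decreasing limit. This has to be extracted from the proof of Propositions \ref{prop:Current} and \ref{prop:LF}, tracing how the currents with minimal singularities in $c_1(\Ll_{m,\Y})$ become more singular under a dominant morphism $\Y'\to\Y$ and translating this to a decrease at the non-Archimedean level via (\ref{eqn:H-3}). Once this is in place, the remaining manipulations with $\sup\inf$ are automatic because both nets entering $D^\NA_{B_Y}$ converge.
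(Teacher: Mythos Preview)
Your proposal is correct and follows essentially the same approach as the paper: decompose $D^\NA$ into $L^\NA-E^\NA$, use Proposition~\ref{prop:LF} for the first term (together with the monotonicity of $Y\mapsto L^\NA_{B_Y}(\Y,\Ll_{m,\Y})$ extracted from its proof) and Lemma~\ref{lem:PIPWeil} for the second, then pass to the limit. The paper packages the final combination via Lemma~\ref{lem:Convergence}, whereas you simply observe that each piece converges and invoke $\lim=\sup_{Y'}\inf_{Y\geq Y'}$ for convergent nets; these are equivalent.
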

\begin{proof}
As already seen in the proof of Proposition \ref{prop:KstClass}, Lemma \ref{lem:Convergence} implies that
$$
E^\NA\big(\Ll+m(X\times\{0\});\bD\big)=\frac{\langle \Ll_m^{n+1}\rangle}{(n+1)V_\bD}=
\inf_{Y'}\sup_{Y\geq Y'}\frac{\langle \Ll_{m,\Y}^{n+1}\rangle}{(n+1)V_{D_Y}}.
$$
Moreover by Proposition \ref{prop:LF}
$$
L^\NA\big(\Ll+m(X\times\{0\});\bD\big)=\inf_{Y\geq X}L^\NA_{B_Y}(\Y,\Ll_{m,\Y})
$$
and the quantity on the right hand side is monotone decreasing in $Y\geq X$ (see the proof of Proposition \ref{prop:LF}). Hence Lemma \ref{lem:Convergence} leads to (\ref{eqn:MMM}) and concludes the proof. 
\end{proof}
\begin{remark}
\label{rem:DingCohomo}
An analog of Proposition \ref{prop:DStCar} holds in general when $\bD$ is just a generalized $b$-divisor. Indeed, similarly to Remark \ref{rem:KstClass}, it is enough to work with the associated cohomology classes and to replace test configuration by the notion of \emph{cohomological} test configurations (\cite[Definition 3.2.5]{Sjo17}). 
\end{remark}
\subsubsection{$\bD$-log Ding implies $\bD$-log K-stability}
\label{ssec:DingToK}
Assuming $L=-K_X$, we have the following result which generalizes \cite[Proposition 7.32]{BHJ17}.
\begin{prop}
\label{prop:DingK}
Let $L=-K_X$, $\bD\in\bDiv_L(X)$ and $\Ll\in N^1(\FX_{\PP^1}^{\C^*})$ be a test configuration class. Then
\begin{equation}
    \label{eqn:Tired}
    DF(\Ll;\bD)\geq D^\NA(\Ll;\bD).
\end{equation}
In particular if $(X,-K_X)$ is (uniformly) $\bD$-log Ding stable then it is (uniformly) $\bD$-log K-stable.
\end{prop}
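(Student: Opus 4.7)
The plan is to first establish the inequality when $\bD$ is the Cartier $b$-divisor determined by a single $\Q$-divisor $D_Y$ on some $Y\overset{\rho_Y}{\geq}X$, and then recover the general statement by approximation. The decisive Fano input is the identity
\[
L_Y \,=\, \rho_Y^*L-D_Y \,=\, -K_Y+K_{Y/X}-D_Y \,=\, -K_{(Y,B_Y)},
\]
with $B_Y:=D_Y-K_{Y/X}$, so that in the Cartier case $(Y,B_Y)$ is a sub-log-Fano pair polarized by the (big) class $L_Y$, and in particular $\overline{S}_{B_Y}=n$.

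Under this reduction, Propositions \ref{prop:KstClass} and \ref{prop:DStCar} identify $DF(\Ll;\bD)$ with $DF_{B_Y}(\Y,\Ll_{m,\Y})$ and $D^\NA(\Ll;\bD)$ with $D^\NA_{B_Y}(\Y,\Ll_{m,\Y})$, so it is enough to prove the log inequality $DF_{B_Y}(\Y,\Ll_{m,\Y})\geq D^\NA_{B_Y}(\Y,\Ll_{m,\Y})$. This is essentially \cite[Proposition 7.32]{BHJ17} in its big-polarization form (cf. \cite{Li21}), and I would retrace that argument. Using $\overline{S}_{B_Y}=n$ and the numerical equivalence $-K_{(\Y,\CB_\Y)/\PP^1}\equiv \Ll_{m,\Y}-G$ for some $\Q$-divisor $G$ supported on $\Y_0$ (both classes restrict to $L_Y\sim -K_{(Y,B_Y)}$ off the central fibre), the difference $DF_{B_Y}-D^\NA_{B_Y}$ rewrites as
\[
V_{D_Y}^{-1}\bigl(\langle\Ll_{m,\Y}^{n+1}\rangle - \langle\Ll_{m,\Y}^n\rangle\cdot\Ll_{m,\Y}\bigr) \;+\; V_{D_Y}^{-1}\langle\Ll_{m,\Y}^n\rangle\cdot G \;-\; L^\NA_{B_Y}(\Y,\Ll_{m,\Y}).
\]
The first parenthesis is non-negative by Theorem \ref{thm:PIP}. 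For the remaining two terms I would decompose $\langle\Ll_{m,\Y}^n\rangle\cdot G$ into restricted volumes over the irreducible components $E$ of $\Y_0$, identify $\varphi_{\Ll_{m,\Y}}(\ord_E)$ with the normalized coefficient of $E$ in $G$, and bound from below by the infimum defining $L^\NA_{B_Y}$, using the sub-klt property of $(Y,B_Y)$ to absorb the log-discrepancy terms. The main obstacle is keeping the positive-intersection-product bookkeeping consistent in the big (non-nef) regime, which is precisely why the translation by $m\gg 1$ was introduced.

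For a general $\bD\in\bDiv_L(X)$, applying Propositions \ref{prop:KstClass} and \ref{prop:DStCar} (or their cohomological analogues from Remarks \ref{rem:KstClass} and \ref{rem:DingCohomo}) together with Lemma \ref{lem:Convergence} yields
\[
DF(\Ll;\bD) \,=\, \sup_{Y'}\inf_{Y\geq Y'}DF_{B_Y}(\Y,\Ll_{m,\Y}) \,\geq\, \sup_{Y'}\inf_{Y\geq Y'}D^\NA_{B_Y}(\Y,\Ll_{m,\Y}) \,=\, D^\NA(\Ll;\bD),
\]
which is the inequality (\ref{eqn:Tired}). The two stability implications are then immediate: if $D^\NA(\Ll;\bD)\geq \delta\,J^\NA(\Ll;\bD)$ holds for every ample test configuration class, the same bound holds for $DF(\Ll;\bD)$, giving uniform $\bD$-log K-stability; and non-uniform $\bD$-log Ding stability forces non-uniform $\bD$-log K-stability by Corollary \ref{cor:J}, since both notions are characterized by vanishing exactly on $\bD$-trivial classes.
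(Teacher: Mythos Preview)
Your strategy---reduce to a Cartier $\bD=D_Y$ via Propositions \ref{prop:KstClass} and \ref{prop:DStCar}, prove $DF_{B_Y}\geq D^\NA_{B_Y}$ by an intersection-theoretic convexity argument, then pass to the limit---is exactly the paper's. But two of your details are off, and both matter.

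First, the term $\langle\Ll_{m,\Y}^{n+1}\rangle-\langle\Ll_{m,\Y}^n\rangle\cdot\Ll_{m,\Y}$ is not non-negative by Theorem \ref{thm:PIP}; in fact for any big class $\alpha$ one has $\langle\alpha^{n+1}\rangle\leq\langle\alpha^n\rangle\cdot\alpha$ (take nef $\beta\leq\alpha$ and use $\beta^{n+1}\leq\beta^n\cdot\alpha$). What saves you is the \emph{equality} $\langle\Ll_{m,\Y}^{n+1}\rangle=\langle\Ll_{m,\Y}^n\rangle\cdot\Ll_{m,\Y}$, which is the orthogonality of the positive intersection product \cite[Corollary 3.6]{BFJ09}; the paper invokes this explicitly. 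Without it your decomposition would lose a nonpositive term and the inequality would not follow.

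Second, with your definition $G=K_{(\Y,\CB_\Y)/\PP^1}+\Ll_{m,\Y}$, the normalized coefficient of an irreducible component $E$ of $\Y_0$ in $G$ is not $\varphi_{(\Y,\Ll_{m,\Y})}(v_E)$ but rather $A_{(Y,B_Y)}(v_E)+\varphi_{(\Y,\Ll_{m,\Y})}(v_E)$: splitting $K_{(\Y,\CB_\Y)/\PP^1}-\mu_\Y^*K_{(Y_{\PP^1},B_{Y,\PP^1})}=\sum_E b_E A_{(Y,B_Y)}(v_E)E$ (\cite[Corollary 4.12]{BHJ17}) and $\Ll_{m,\Y}=\mu_\Y^*L_{Y,\PP^1}+G_\Y$ shows this. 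Consequently the log discrepancies are already packaged into $\langle\Ll_{m,\Y}^n\rangle\cdot G$, and no sub-klt hypothesis on $(Y,B_Y)$ is needed---indeed none is assumed in the proposition. The paper simply observes that the weights $c_E:=b_E\,\langle\Ll_{m,\Y}^n\rangle\cdot E\geq 0$ sum to $V_{D_Y}$ (Lemma \ref{lem:Below}), so the weighted average dominates the minimum of $A_{(Y,B_Y)}+\varphi_{(\Y,\Ll_{m,\Y})}$; one then uses $\varphi_{(\Y,\Ll_{m,\Y})}\geq U_\NA^{\Ll_{m,\Y}}$ (a big-class version of Lemma \ref{lem:Indu}) to reach $L^\NA_{B_Y}$. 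With these two corrections your argument goes through and coincides with the paper's.
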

\begin{proof}
The proof is strongly inspired by \cite[Proposition 7.32]{BHJ17}.\newline
Let $(\X,\Ll)\in \FX_\Ll$. Up to replace $\Ll$ by $\Ll+m\X_0$ we can also assume that $\Ll-\mu_\X^*p_X^*L$ is effective and that $\Ll-\CD$ is big (Proposition \ref{prop:Big}). Then as immediate consequence of Propositions \ref{prop:KstClass}, \ref{prop:DStCar} (see also Remarks \ref{rem:KstClass}, \ref{rem:DingCohomo}), it is enough to prove that
\begin{equation}
    DF_{B_Y}(\Y,\Ll_\Y)\geq D^\NA_{B_Y} (\Y,\Ll_\Y)
\end{equation}
where, according to the commutative diagram
$$
\begin{tikzcd}
    \Y \ar[r, "\tilde{\rho}_Y"] \ar[d, "\mu_\Y"] & \X \ar[d, "\mu_\X"] \\
    Y\times\PP^1 \ar[r, "\rho_Y\times\Id"] & X\times\PP^1% \ar[d, "p_X"]\\
    %Y \ar[r, "\rho_Y"] & X,
\end{tikzcd}
$$
$B_Y:=D_Y-K_{Y/X}$, $\Y$ is given by the graph of the birational map $Y\times\PP^1\dashrightarrow \X$ for a fixed $Y\geq X$ and where $\Ll_\Y=\tilde{\rho}_Y^*\Ll-\CD_\Y$. To simplify the notations we set $L_{\PP^1}:=p_X^*L$, similarly for $L_{Y,\PP^1}=(\rho_Y^*L-D_Y)\times\PP^1$, $D_{Y,\PP^1}:=D_Y\times\PP^1$ and so on. Thus, letting $G$ be the effective divisor such that $\Ll=\mu_\X^*L_{\PP^1}+G$, an easy calculation shows that $ \Ll_\Y=\mu_\Y^*L_{Y,\PP^1}+G_\Y $ for $G_\Y=\tilde{\rho}_Y^*G+\mu_\Y^*D_{Y,\PP^1}-\CD_\Y$.\newline
Moreover, as $\overline{S}_{B_Y}=n$, letting $V_Y:=V_{D_Y}$,
\begin{equation}
    V_Y DF_{B_Y}(\Y,\Ll_\Y)=\langle \Ll_\Y^n \rangle\big(K_{(\Y,\CB_\Y)/\PP^1}-\mu_\Y^*K_{(Y_{\PP^1},B_{Y,\PP^1})}\big)+\langle \Ll_\Y^n \rangle \Big(\mu_\Y^*K_{(Y_{\PP^1},B_{Y,\PP^1})}+\frac{n}{n+1}\Ll_\Y\Big)
\end{equation}
where we also used the orthogonality $\langle \Ll_\Y^{n+1} \rangle=\langle \Ll_\Y^n \rangle\cdot \Ll_\Y$ (\cite[Corollary 3.6]{BFJ09}). Next, by \cite[Corollary 4.12]{BHJ17}
\begin{equation}
    \label{eqn:K1}
    K_{(\Y,\CB_\Y)/\PP^1}-\mu_\Y^*K_{(Y_{\PP^1},B_{Y,\PP^1})}=\sum_E b_E A_{(Y,B_Y)}(v_E) E
\end{equation}
where the sum runs over all irreducible components of $\Y_0$, $b_E:=\ord_E (\Y_0)$ and $v_E:=b_E^{-1}r(\ord_E)$ is the valuation on $\C(X)$ induced by $\ord_E$ (subsection \ref{ssec:NA}). On the other hand, by definition
\begin{equation}
    \label{eqn:K2}
    \mu_\Y^*K_{(Y_{\PP^1},B_{Y,\PP^1})}+\frac{n}{n+1}\Ll_\Y=G_\Y-\frac{1}{n+1}\Ll_\Y.
\end{equation}
Thus, setting $E^\NA(\Y,\Ll_\Y):=\frac{\langle \Ll_\Y^{n+1} \rangle}{(n+1)V_Y}$, combining (\ref{eqn:K1}), (\ref{eqn:K2}) we get
\begin{multline*}
    DF_{B_Y}(\Y,\Ll_\Y)=V_Y^{-1}\Big(\sum_E b_E A_{(Y,B_Y)}(v_E) E\cdot \langle \Ll_\Y^n\rangle+G_\Y\cdot \langle \Ll_\Y^n\rangle\Big)-E^\NA(\Y,\Ll_\Y)=\\
    =V_Y^{-1}\sum_E b_E E\cdot \langle \Ll_\Y^n\rangle\big(A_{(Y,B_Y)}+\varphi_{(\Y,\Ll_\Y)}\big)(v_E)-E^\NA(\Y,\Ll_\Y)
\end{multline*}
where in the last equality we define $\varphi_{(\Y,\Ll_\Y)}(\nu):=G(\nu)(G_\Y)$, the \emph{Non-Archimedean metric associated to $(\Y,\Ll_\Y)$} (subsection \ref{ssec:NA}). In particular, setting $c_E:=b_E E\cdot \langle \Ll_\Y^n \rangle$, we have $c_E\geq 0$ and
$$
V_Y^{-1}\sum_E c_E=V_Y^{-1}\Y_0 \cdot \langle \Ll_\Y\rangle=V^{-1}_Y\Y_1\cdot \langle \Ll_\Y \rangle=1 
$$
using also Lemma \ref{lem:Below}. Moreover, by an immediate generalization of Lemma \ref{lem:Indu}, we also have that $\varphi_{(\Y,\Ll_\Y)}\geq U_\NA^{\Ll_\Y}$ (see the discussion before Proposition \ref{prop:LF} for the definition of $U_\NA^{\Ll_\Y}$). Hence, we deduce that
\begin{multline*}
    DF_{B_Y}(\Y,\Ll_\Y)\geq \min_E \big\{A_{(Y,B_Y)}+\varphi_{(\Y,\Ll_\Y)}\big\}-E^\NA(\Y,\Ll_\Y)\geq\\
    \geq \inf_{Y^\divv}\big\{A_{(Y,B_Y)}+\varphi_{(\Y,\Ll_\Y)}\big\}-E^\NA(\Y,\Ll_\Y)\geq \inf_{Y^\divv}\big\{A_{(Y,B_Y)}+U_\NA^{\Ll_\Y}\big\}-E^\NA(\Y,\Ll_\Y)=D_{B_Y}^\NA(\Y,\Ll_\Y)
\end{multline*}
which concludes the proof.
\end{proof}

\section{The Ding version of the uniform Yau-Tian-Donaldson conjecture}
\label{sec:YTD}
In this section we will assume the following setting:
\begin{itemize}
    \item[i)] $(X,\omega)$ is pair where $X$ is a Fano manifold and $\omega$ is a K\"ahler form in $c_1(X)$;
    \item[ii)] $\psi\in \cM^+_{D,klt}(X,\omega)$, i.e. $\psi$ is a model type envelope given as decreasing limit of algebraic model type envelopes, $\int_X MA_\omega(\psi)>0$ and $(X,\psi)$ is klt (i.e. $\mathcal{I}(\psi)=\mathcal{O}_X$, see subsection \ref{ssec:KE}).
\end{itemize}
\begin{lemma}
\label{lem:bKlt}
Let $\{\psi_k\}_{k\in\N}\subset \cM^+_D(X,\omega)$ be a decreasing sequence converging to $\psi\in \cM_D(X,\omega)$. Then
$$
\lct_X(\psi_k):=\sup\big\{c>0\, : \, \mathcal{I}(c\psi_k)=\mathcal{O}_X\big\}\searrow \lct_X(\psi).
$$
\end{lemma}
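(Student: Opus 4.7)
The plan is to combine the valuative characterization of the log canonical threshold recalled just after Theorem \ref{thmB},
$$
\lct_X(\psi) \;=\; \inf_{E/X} \frac{A_X(E)}{\nu(\psi,E)},
$$
with the fact, already cited in the preliminaries, that Lelong numbers are continuous along decreasing sequences of $\omega$-psh functions. Since $\psi_k \searrow \psi$ in $\PSH(X,\omega)$, the antimonotonicity of Lelong numbers gives $\nu(\psi_k,E) \leq \nu(\psi_{k+1},E) \leq \nu(\psi,E)$ for every prime divisor $E$ over $X$, and the continuity recalled in Section \ref{ssec:KE} upgrades this to $\nu(\psi_k,E) \nearrow \nu(\psi,E)$.

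Set $f_k(E):=A_X(E)/\nu(\psi_k,E)\in(0,+\infty]$ (with the convention $a/0=+\infty$). The previous observation shows that $f_k(E)\searrow f(E):=A_X(E)/\nu(\psi,E)$ pointwise in $E$, each sequence being monotone decreasing in $k$. Applying the valuative formula to both $\psi_k$ and $\psi$ and swapping the two (decreasing) infima, we obtain
$$
\lim_{k\to+\infty}\lct_X(\psi_k)\;=\;\inf_k\inf_E f_k(E)\;=\;\inf_E\inf_k f_k(E)\;=\;\inf_E f(E)\;=\;\lct_X(\psi),
$$
and the convergence is monotone because $k\mapsto \lct_X(\psi_k)$ is itself non-increasing. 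This gives the ``$\searrow$'' assertion of the lemma.

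The argument is almost entirely formal once the valuative formula is available for $\psi\in\cM_D(X,\omega)$, so the only real input is the continuity of Lelong numbers $\nu(\psi_k,E)\to \nu(\psi,E)$ along the decreasing sequence $\{\psi_k\}$. The borderline case $\lct_X(\psi)=+\infty$ is handled by the same swap of infima: if some subsequence $\lct_X(\psi_{k_j})$ stayed bounded by $C$, one would extract a divisor $E_j$ with $f_{k_j}(E_j)\le C+1$, and a diagonal argument together with the monotonicity in $k$ would produce a fixed $E$ for which $f(E)\le C+1$, contradicting $\lct_X(\psi)=+\infty$. I do not foresee any serious obstacle beyond verifying the applicability of the valuative formula in $\cM_D(X,\omega)$ (possibly with zero mass), which is warranted by the preliminaries.
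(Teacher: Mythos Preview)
Your proof is correct and takes essentially the same approach as the paper: both combine the valuative formula $\lct_X(\psi)=\inf_{E/X} A_X(E)/\nu(\psi,E)$ with the convergence $\nu(\psi_k,E)\nearrow\nu(\psi,E)$ along decreasing sequences in $\cM_D^+$, the paper packaging the final step as a proof by contradiction while you (equivalently) swap the double infimum. Your last paragraph on the case $\lct_X(\psi)=+\infty$ is superfluous, since the identity $\inf_k\inf_E=\inf_E\inf_k$ holds unconditionally and already covers it.
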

$\lct_X(\psi)$ is also called \emph{complex singularity exponent} (see \cite{DK99}).
\begin{proof}
By monotonicity we clearly have
$$
\lct_X(\psi)\leq \inf_{k\in\N}\lct_X(\psi_k).
$$
Moreover, by the well-known correspondence between the analytic version of log canonical thresholds and its algebraic version we have
$$
\lct_X(\psi)=\sup\big\{c>0\, : \, A_X(E)>c\,\nu(\psi,E) \, \forall E \, \mathrm{prime}\, \mathrm{divisor}\, \mathrm{over}\, X\big\}
$$
where $\nu(\psi, E)$ is the Lelong number of $\psi$ over $E$ while \emph{"$E$ prime divisor over X"} means that $E$ is a prime divisor on  $Y\geq X$. Thus if by contradiction
$$
\lct_X(\psi)< a<b< \inf_{k\in\N}\lct_X(\psi_k)
$$
then there exists a prime divisor $E$ over $X$ such that
$$
0<A_X(E)\leq a\nu(\psi,E)< b\nu(\psi,E).
$$
On the other hand, we also have $\nu(\psi_k,E)\searrow \nu(\psi,E)$ by \cite[subsection 3.1]{Tru20c} since $\psi_k\in \cM^+_D(X,\omega)$. Hence we get the contradiction letting $k\to \infty$ in
$$
A_X(E)> b \nu(\psi_k,E).
$$
\end{proof}
We recall that, for a pair $(Z,\Delta)$ and an effective $\Q$-divisor $D$, the log canonical threshold of $D$ with respect to $(Z,\Delta)$ is given as
$$
\lct_{(Z,\Delta)}(D)=\inf_{E/Z} \frac{A_{(Z,\Delta)}(E)}{\ord_E(D)}.
$$
Such definition can be extended to $\R$-divisors $\Delta, D$ and to the case when they are a formal countable sum of divisors as in Definition \ref{defi:genbdiv}.
\begin{defi}
Let $\bD\in\bDiv_L(X)$. We define the \emph{log-canonical threshold} of $D$ with respect to $X$ as
$$
\lct_X(\bD):=\inf_{Y\geq X} \lct_{(Y,B_Y)}(D_Y)+1
$$
where $B_Y=D_Y-K_{Y/X}$, and we say  that $(X,\bD)$ is \emph{klt} if $\lct_X(\bD)>1$.
\end{defi}
This definition is justified by the following result.
\begin{lemma}
Let $\bD\in \bDiv_L(X)$ and let $\psi\in\cM^+_D(X,\omega)$ be the associated model type envelope. Then
$$
\lct_X(\bD)=\lct_X(\psi).
$$
In particular $(X,\psi)$ is klt if and only if $(X,\bD)$ is klt.
\end{lemma}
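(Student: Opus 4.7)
The strategy is to translate both sides into the valuative setting. The key identity is
$$A_{(Y,B_Y)}(E) + \ord_E(D_Y) = A_X(E)$$
for any prime divisor $E$ over $Y$, which follows from $B_Y = D_Y - K_{Y/X}$ together with the additivity $A_X(E) = A_Y(E) + \ord_E(K_{Y/X})$ of log-discrepancies along $\rho_Y : Y \to X$. Using the standard valuative formula for $\lct$, this gives
$$\lct_{(Y,B_Y)}(D_Y) + 1 = \inf_{E/Y} \frac{A_{(Y,B_Y)}(E) + \ord_E(D_Y)}{\ord_E(D_Y)} = \inf_{E/Y} \frac{A_X(E)}{\ord_E(D_Y)}.$$
Every prime divisor $E$ over $X$ lies on some $Y \geq X$, so swapping the order of the infima yields
$$\lct_X(\bD) = \inf_{E/X}\ \inf_{Y : E/Y} \frac{A_X(E)}{\ord_E(D_Y)}.$$

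The second step is to evaluate the inner infimum as $A_X(E)/\nu(\psi,E)$. Fix a prime divisor $E$ lying on some $Z \geq X$. By Proposition \ref{prop:Corre}, $E$ appears in $D_Z$ with coefficient $\nu(\psi \circ \rho_Z, E) = \nu(\psi, E)$, so $\ord_E(D_Z) = \nu(\psi,E)$. For any intermediate $X \leq Y \leq Z$ with $p: Z \to Y$, the increasing property $(i)$ of Definition \ref{defi:Lpositive} gives $p^* D_Y \leq D_Z$, hence $\ord_E(D_Y) \leq \ord_E(D_Z) = \nu(\psi,E)$, with equality achieved by taking $Y = Z$. Since the supremum of $\ord_E(D_Y)$ over $Y : E/Y$ therefore equals $\nu(\psi,E)$, we obtain
$$\lct_X(\bD) = \inf_{E/X} \frac{A_X(E)}{\nu(\psi, E)}.$$

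The final step is to identify the right-hand side with $\lct_X(\psi)$, which is precisely the valuative characterization of the complex singularity exponent already invoked in the proof of Lemma \ref{lem:bKlt} (the algebro-analytic correspondence for log canonical thresholds). The klt equivalence then follows immediately, since $\lct_X(\bD) > 1 \iff \lct_X(\psi) > 1$, and the latter condition is exactly $\mathcal{I}(\psi) = \mathcal{O}_X$.

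The argument is essentially a bookkeeping exercise once Proposition \ref{prop:Corre} is available; the only point worth flagging is that $\bD$ may be a genuinely generalized $b$-divisor (countable sum), but this does not affect the argument, as each $\ord_E(D_Y)$ is a well-defined non-negative real and the valuative $\lct$ remains the infimum over prime divisors. The main conceptual content is really the combination of monotonicity of $\bD$ with the Siu decomposition in Proposition \ref{prop:Corre}, which aligns the algebraic quantity $\sup_Y \ord_E(D_Y)$ with the analytic Lelong number $\nu(\psi,E)$.
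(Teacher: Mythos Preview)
Your proof is correct and rests on the same discrepancy identity $A_{(Y,B_Y)}(E)=A_X(E)-\ord_E(D_Y)$ as the paper, but you organize the passage to the limit differently. The paper first identifies $\lct_{(Y,B_Y)}(D_Y)+1=\lct_X(\psi_Y)$ for each fixed $Y$ (where $\psi_Y\in\cM_D^+$ is the algebraic model type envelope associated to $D_Y$) and then invokes Lemma~\ref{lem:bKlt} to pass from $\inf_Y\lct_X(\psi_Y)$ to $\lct_X(\psi)$ via a decreasing sequence $\psi_{Y_k}\searrow\psi$. You instead swap the two infima directly and reduce to the identity $\sup_{Y}\ord_E(D_Y)=\nu(\psi,E)$ for each fixed $E$, which is immediate from Proposition~\ref{prop:Corre} and the monotonicity of $\bD$; this bypasses Lemma~\ref{lem:bKlt} entirely and is arguably more elementary.

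One small point to tighten: in your second step you only check $\ord_E(D_Y)\leq\nu(\psi,E)$ for $Y$ lying between $X$ and the chosen model $Z$ on which $E$ sits, whereas the double infimum ranges over all $Y\geq X$. The fix is immediate: for arbitrary $Y$, take a common smooth model $W\geq Y$, $W\geq Z$ and let $E'$ be the strict transform of $E$ on $W$; then the same monotonicity $(i)$ of Definition~\ref{defi:Lpositive} applied to $W\to Y$ gives $\ord_E(D_Y)=\ord_{E'}(\rho_{W,Y}^*D_Y)\leq\ord_{E'}(D_W)=\nu(\psi,E')=\nu(\psi,E)$, since $E$ and $E'$ define the same divisorial valuation.
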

\begin{proof}
It immediately follow from Lemma \ref{lem:bKlt} observing that
$$
\lct_X(\psi_Y)=\lct_{(Y,B_Y)}(D_Y)+1
$$
as $A_{(Y,B_Y)}(E)=A_X(E)-\ord_E(D_Y)$ for any $E$ prime divisor above $Y$, for any $Y\geq X$.
\end{proof}
\subsection{Variational approach for KE metrics with prescribed singularities}
\label{ssec:Variational}
We here collect some results from \cite{Tru20a, Tru20c} which represent the analytic part in the proof of Theorem \ref{thmD}.
\subsubsection{The metric space $\cE^1_\psi$}
To solve the complex Monge-Ampère equation associated to $[\psi]$-KE metrics through a variational method, one first enlarges the space of solutions seeking for a \emph{weak} solution.\newline

We set $V_\psi:=\int_X MA_\omega(\psi)>0$, and we say that $u\preccurlyeq\psi$ has $[\psi]$\emph{-relative minimal singularities} if $\lvert\psi-u \rvert \leq C$ for a constant $C$.
\begin{defi}{\cite[section 4.2]{DDNL17b}}
For any $u\in \PSH(X,\omega)$, $u\preccurlyeq\psi$, the $[\psi]$-relative Monge-Ampère energy  $E_\psi: \PSH(X,\omega,\psi):=\{u\in \PSH(X,\omega)\, : \, u\preccurlyeq\psi\}\to \R\cup \{-\infty\}$ is defined as
$$
E_\psi(u):= \frac{1}{(n+1)V_\psi}\sum_{j=0}^n(u-\psi)\langle (\omega+dd^c\psi)^j\wedge (\omega+dd^c u)^{n-j} \rangle
$$
if $u$ has $[\psi]$-relative minimal singularities and as
$$
E_\psi(u):=\inf\big\{E_\psi(v)\, :\, v  \,\mathrm{with}\, [\psi]\mathrm{-relative}\, \mathrm{minimal}\, \mathrm{singularities},\, u\leq v \big\}
$$
otherwise.\newline
The set of of $[\psi]$\emph{-relative finite energy} is then defined as
$$
\cE_\psi^1:=\{u\in \cE(X,\omega,\psi)\, :\, E_\psi(u)>-\infty\}.
$$
\end{defi}
We refer the reader to \cite{DDNL17b, Tru19, Tru20a} and references therein for many interesting properties of the $[\psi]$-relative Monge-Ampère energy, while in the following proposition we summarize some results which will be useful in the sequel of the paper.
\begin{theorem}
\label{thm:E1}
\begin{itemize}
    \item[i)] $\cE_\psi^1$ becomes a complete geodesic metric space when endowed of the distance $d_1(u,v):=E_\psi(u)+E_\psi(v)-2E_\psi\big(P_\omega(u,v)\big)$ \cite[Theorem A]{Tru19}, \cite[Theorem 1.2]{X19};
    \item[ii)] any two elements $u,v\in \cE_\psi^1$ can be joined by a psh geodesic path which is geodesic in $(\cE^1_\psi,d_1)$ \cite[Proposition 3.9]{Tru20c};
    \item[iii)] the $\psi$-relative Monge-Ampère energy $E_\psi$ is linear along psh geodesic paths \cite[Theorem 3.11]{Tru20c}.
\end{itemize}
\end{theorem}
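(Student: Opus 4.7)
The plan is to adapt Darvas's variational framework developed for the absolute case $\psi=0$ to the $[\psi]$-relative setting, proving the three items in interlocking fashion rather than independently: the linearity in (iii) is what upgrades the naive $d_1$-pseudometric to an actual complete metric in (i), and the geodesic construction in (ii) supplies the paths along which linearity is tested.

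First I would establish the basic properties of $E_\psi$: monotonicity with respect to $\preccurlyeq$, concavity along affine segments in $\cE^1_\psi$, and upper semicontinuity under decreasing limits. These follow from an integration-by-parts formula for non-pluripolar products together with the monotonicity of total mass recalled in subsection~\ref{ssec:KE}, but formulated relatively to the singularity type $\psi$ rather than to $V_\omega$. For the psh geodesic of (ii) joining $u_0,u_1\in\cE^1_\psi$, I would take the Perron--Bremermann envelope
$$
u_t(x):=\Big(\sup\big\{v_\tau(x)\,:\,v\in\PSH(X\times\D_{(0,1)}^*,p_1^*\omega),\;\limsup_{\tau\to 0^+}v\leq u_0,\;\limsup_{|\tau|\to 1^-}v\leq u_1\big\}\Big)^*
$$
with $t=-\log|\tau|$. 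That $u_t\preccurlyeq\psi$ at each $t$ is automatic because the constant path at $\psi$ dominates both endpoints up to a bounded error after translating by an affine function, and $\max(u_t,\psi-Ct)$ is admissible in the envelope. The control $u_t\in\cE^1_\psi$ requires dominating $E_\psi(u_t)$ from below using the convexity of $t\mapsto E_\psi$ along the family combined with finiteness at the endpoints.

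For (iii), concavity of $t\mapsto E_\psi(u_t)$ is immediate from (ii) and the concavity of $E_\psi$. The matching convexity bound, which gives linearity, is the subtle part: one tests against the rooftop envelopes $P_\omega(u_s,u_t)$ and exploits the weak geodesic property to get an integration-by-parts identity relating the energies on subintervals to the energy at the interior point, forcing affineness. Once (iii) is in hand, (i) follows: symmetry of $d_1$ is trivial, non-negativity comes from monotonicity of $E_\psi$ combined with $P_\omega(u,v)\preccurlyeq u,v$, the vanishing case uses strict monotonicity of $E_\psi$ on $\cE^1_\psi$, and the triangle inequality together with completeness is obtained by a standard compactness argument: Cauchy sequences in $d_1$ have bounded $E_\psi$, hence weak $L^1$ limits in $\PSH(X,\omega)$ which one shows remain in $\cE^1_\psi$ via upper semicontinuity of $E_\psi$, and $d_1$-convergence is recovered along interpolating geodesics using (iii).

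The main obstacle will be the prescribed singularities themselves: $\psi$ may be highly singular and cannot in general be regularized by smooth potentials, so the classical proofs, which rely freely on smoothing of closed positive currents, must be replaced by envelope-based techniques. In particular, every integration-by-parts identity must be verified directly for non-pluripolar products with $[\psi]$-relative full mass, and the rooftop $P_\omega(u,v)$ must be shown to land in $\cE^1_\psi$ (rather than merely in $\cE(X,\omega,\psi)$), which is where the model type assumption $\psi=P_\omega[\psi](0)$ enters crucially through the characterization that $P_\omega[\cdot](0)$ is a projection onto $\cM(X,\omega)$.
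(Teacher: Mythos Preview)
The paper does not prove this theorem at all: each item is stated with an explicit citation to an external source (\cite[Theorem A]{Tru19} and \cite[Theorem 1.2]{X19} for (i), \cite[Proposition 3.9]{Tru20c} for (ii), \cite[Theorem 3.11]{Tru20c} for (iii)), and the theorem sits in the preliminaries as a summary of known results needed later. There is nothing to compare your proposal against in this paper.

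That said, your sketch is a reasonable outline of what those cited papers actually do, so as a reconstruction of the external arguments it is on the right track. One small correction: you write that concavity of $t\mapsto E_\psi(u_t)$ along geodesics is ``immediate from (ii) and the concavity of $E_\psi$'', but $E_\psi$ is concave along \emph{affine} segments, not along psh geodesics, so concavity along geodesics already requires the envelope/comparison argument you allude to for convexity; both directions are handled together in the cited works via the Berndtsson-type or rooftop techniques, not one being trivial and the other subtle.
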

Next, we observe that the $d_1$ distance is not invariant under the clear translation action on $\cE^1_\psi$. Thus, its translation invariant version is given by the $[\psi]$-relative functional $J_\psi:\cE^1_\psi\to \R$,
$$
J_\psi(u):=\frac{1}{V_\psi}\int_X (u-\psi)MA_\omega(\psi)-E_\psi(u),
$$
as $d_1(\psi,u)-C\leq J_\psi(u)\leq d_1(\psi,u) $ for a uniform constant $C=C(X,\omega)$ \cite[Lemma 3.7]{Tru20b}.
\subsubsection{The $[\psi]$-relative Ding functional}
Recalling that by $\mu$ we denoted the probability adapted measure associated to $\omega$ (subsection \ref{ssec:KE}), we then introduce the following functional.
\begin{defi}{\cite[section 3.2]{Tru20b}, \cite[section 4.2]{Tru20c}}
The \emph{$[\psi]$-relative Ding functional} $D_\psi:\cE^1\to \R$ is the translation invariant defined as $D_\psi(u):=L(u)-E_\psi(u)$ where
$$
L(u):=-\frac{1}{2}\log \int_X e^{-2u}d\mu.
$$
\end{defi}
Note that $D_\psi(u)\in\R$ for any $u\in\cE^1_\psi$ as immediate consequence of \cite[Proposition 3.10]{Tru20b}.\newline
An important feature of the $\psi$-relative Ding functional is the following convexity property which extends \cite[Theorem 1.1]{Bern15} to the prescribed singularities setting.
\begin{prop}{\cite[Theorem 4.19]{Tru20c}}
\label{prop:ConveDing}
The functional $L$ is convex along $[\psi]$-relative psh paths contained in $\cE^1_\psi$. In particular the $\psi$-relative Ding functional is convex along $[\psi]$-relative psh geodesic paths contained in $\cE^1_\psi$.
\end{prop}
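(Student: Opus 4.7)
The plan is to reduce the second statement (convexity of $D_\psi$ along psh geodesics) to the first (convexity of $L$ along $[\psi]$-relative psh paths in $\cE^1_\psi$) and then to prove the latter via a singular version of Berndtsson's positivity-of-direct-images theorem.

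The reduction uses Theorem \ref{thm:E1}(iii): since $E_\psi$ is \emph{linear} along psh geodesic paths in $\cE^1_\psi$, once $L$ is shown to be convex along every $[\psi]$-relative psh path, $D_\psi = L - E_\psi$ is automatically convex along psh geodesics as the sum of a convex and a linear function. It thus suffices to treat $L$. Given a $[\psi]$-relative psh path $\{u_t\}_{t\in[a,b]}\subset\cE^1_\psi$, consider the $S^1$-invariant $p_1^*\omega$-psh function $U(x,\tau):=u_{-\log|\tau|}(x)$ on the tube $X\times\{e^{-b}<|\tau|<e^{-a}\}$, and set
$$
F(\tau):=-\log\int_X e^{-2U(x,\tau)}\,d\mu(x).
$$
This is finite on the annulus, since each $u_t\in\cE^1_\psi$ and $(X,\psi)$ is klt (\cite[Proposition 3.10]{Tru20b}). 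As $L(u_t)=\tfrac{1}{2}F(e^{-t})$ and $F$ depends only on $|\tau|$ by $S^1$-invariance, convexity of $t\mapsto L(u_t)$ is equivalent to subharmonicity of $F$ on the annulus.

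The subharmonicity of $F$ is the singular analogue of Berndtsson's theorem for the family of positive singular metrics $he^{-2U(\cdot,\tau)}$ on $-K_X$, where $d\mu$ is the probability measure adapted to $h$ so that $\Ric(\mu)=\omega$. To reach a setting where a classical Berndtsson-type result applies, we approximate $\psi$ from above by algebraic model type envelopes $\psi_k\searrow\psi$ and, for each $k$, consider a truncation $u_t^{(k)}:=\max(u_t,\psi_k-C_k)$ defining a $[\psi_k]$-relative psh path. On a log resolution $\rho_k:Y_k\to X$ of the ideal associated to $\psi_k$, the correspondence recalled after \cite[Proposition 4.36]{DDNL17b} converts $\{u_t^{(k)}\}$ into a psh path relative to a smooth, semipositive class on a weak log Fano pair, where the classical Berndtsson theorem (\cite{Bern15}) yields the convexity of the corresponding $L$-functional. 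Letting $k\to\infty$ and using dominated convergence for $\int_X e^{-2u_t^{(k)}}d\mu$ transfers convexity to the original $F$.

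The main obstacle is precisely this limit passage. One must verify that each $\{u_t^{(k)}\}$ is a genuine psh path on the resolution, and that $L(u_t^{(k)})\to L(u_t)$ locally uniformly in $t$, so that the convexity of the approximants survives in the limit. Uniform integrability of $e^{-2u_t^{(k)}}$ is secured by the klt hypothesis $\mathcal{I}(\psi)=\mathcal{O}_X$ together with the resolution of the Openness Conjecture \cite{GZ14}, while the stability of $\cM_D^+(X,\omega)$ under decreasing limits recalled in subsection \ref{ssec:KE} ensures compatibility of the approximating families and convergence of Lelong numbers along arbitrary prime divisors above $X$.
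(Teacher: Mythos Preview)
The paper does not give its own proof of this proposition: it is stated with the citation \cite[Theorem 4.19]{Tru20c} and no argument is supplied here. So there is nothing in the present paper to compare your proposal against directly.

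That said, your sketch is a sensible outline and is in the spirit of how such results are proved. The reduction step is exactly right: linearity of $E_\psi$ along psh geodesics (Theorem~\ref{thm:E1}(iii)) makes the second assertion an immediate consequence of the first. For the first assertion, reducing to Berndtsson's positivity via an approximation $\psi_k\searrow\psi$ by algebraic model types and passing to log resolutions is the standard route in this circle of ideas. Two points deserve care if you flesh this out: (a) your truncation $u_t^{(k)}=\max(u_t,\psi_k-C_k)$ is indeed a $p_1^*\omega$-psh family (max of two $p_1^*\omega$-psh functions), and has $[\psi_k]$-relative minimal singularities, so the transport to $(Y_k,\eta_k)$ is legitimate; (b) for the limit, pointwise convergence $L(u_t^{(k)})\to L(u_t)$ suffices to pass convexity (no local uniformity needed), and this follows from monotone or dominated convergence once you fix the klt integrability bound. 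Alternatively, one can invoke the singular version of Berndtsson's theorem directly (as in \cite{Bern15} and its extensions) without the resolution step, which is closer to how \cite{Tru20c} likely proceeds.
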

Define $\mathrm{Aut}(X,[\psi])^{\circ}:=\mathrm{Aut}(X)^\circ\cap \mathrm{Aut}(X,[\psi])$ where $\mathrm{Aut}(X)^\circ$ is the connected component of the identity while $\mathrm{Aut}(X,[\psi]):=\{F\in \mathrm{Aut}(X)\, : \, F^*\psi-\psi\in L^\infty\}$ is the group of automorphisms that preserve the singularities of $\psi$.
\begin{theorem}{\cite[Theorems C, D]{Tru20c}}
\label{thm:Ding}
The followings are equivalent:
\begin{itemize}
    \item[i)] $\omega+dd^c u$ is a $[\psi]$-KE metrics;
    \item[ii)] $D_\psi(u)=\inf_{\cE^1_\psi}D_\psi$.
\end{itemize}
Moreover, assuming $\mathrm{Aut}(X,[\psi])^\circ=\{\Id\}$, we also have uniqueness of $[\psi]$-KE metrics and the followings are equivalent:
\begin{itemize}
    \item[iii)] there exists a unique $[\psi]$-KE metric $\omega+dd^c u$;
    \item[iv)] there exists $A>0, B\geq 0$ such that
    $$
    D_\psi(u)\geq AJ_\psi(u)-B
    $$
    for any $u\in\cE^1_\psi$.
\end{itemize}
\end{theorem}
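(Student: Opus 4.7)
\medskip

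\noindent\textbf{Proof plan.} The equivalence $(i)\Leftrightarrow (ii)$ is essentially the Euler–Lagrange characterization of critical points of $D_\psi$. The plan is to compute the differential of $D_\psi$ along smooth variations $u_t := u + tv$ for test functions $v$ (or, more generally, along one-parameter families in $\cE^1_\psi$): the differential of $E_\psi$ at $u$ in direction $v$ equals $V_\psi^{-1}\int_X v\, MA_\omega(u)$ (using the differentiability theory of $E_\psi$ from \cite{Tru19, Tru20a}), while the differential of $L$ equals $(\int e^{-2u}d\mu)^{-1}\int_X v\,e^{-2u}d\mu$. Setting $dD_\psi|_u\equiv 0$ yields $MA_\omega(u)=(V_\psi/\int e^{-2u}d\mu)\,e^{-2u}d\mu$, which after the customary normalization of $d\mu$ is precisely the $[\psi]$-KE equation. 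The converse direction (a critical point is a global minimum) uses Proposition \ref{prop:ConveDing}: by Theorem \ref{thm:E1}(ii) any $v\in\cE^1_\psi$ can be joined to $u$ by a psh geodesic lying in $\cE^1_\psi$, and along it $E_\psi$ is affine while $L$ is convex, so $D_\psi$ is convex; a critical point of a convex function is a global minimum.

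\medskip

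\noindent For $(iv)\Rightarrow (iii)$, I would take a minimizing sequence $\{u_k\}\subset\cE^1_\psi$ for $D_\psi$. Translating so that $\sup_X(u_k-\psi)=0$ (which does not change $D_\psi$), the coercivity estimate $D_\psi\geq AJ_\psi-B$ together with $d_1(\psi,u_k)\leq J_\psi(u_k)+C$ bounds $d_1(\psi,u_k)$. By completeness of $(\cE^1_\psi,d_1)$ (Theorem \ref{thm:E1}(i)) and the $d_1$-compactness of sublevel sets of $J_\psi$ (the prescribed-singularities analogue of Darvas' theorem, proved in \cite{Tru19}), a subsequence converges in $d_1$ to some $u_\infty\in\cE^1_\psi$. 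Lower semicontinuity of $D_\psi$ along $d_1$-convergent sequences then gives that $u_\infty$ is a minimizer, hence a $[\psi]$-KE metric by $(ii)\Rightarrow(i)$; uniqueness follows from strict convexity of $D_\psi$ modulo the action of $\mathrm{Aut}(X,[\psi])^\circ$, which under the assumption $\mathrm{Aut}(X,[\psi])^\circ=\{\Id\}$ becomes genuine strict convexity along nontrivial geodesics.

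\medskip

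\noindent The hard direction is $(iii)\Rightarrow (iv)$, and I expect this to be the main obstacle. The natural strategy, modeled on the Darvas–Rubinstein principle and its adaptation by Berman–Berndtsson–Jonsson in the absolute setting, is by contradiction: if $D_\psi$ fails to be coercive, one produces a sequence $u_k\in\cE^1_\psi$ with $J_\psi(u_k)\to+\infty$ but $D_\psi(u_k)/J_\psi(u_k)\to 0$. Joining $\psi$ to $u_k$ by unit-speed psh geodesics and passing to a limit, one extracts a nontrivial $[\psi]$-relative psh geodesic ray $\{w_t\}_{t\geq 0}\subset\cE^1_\psi$ along which the slope of $D_\psi$ at infinity is nonpositive. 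By Proposition \ref{prop:ConveDing} $D_\psi$ is convex along this ray, so it is nonincreasing; combined with the existence of a minimizer $u$ from (iii) and with the affineness of $E_\psi$ along geodesics, one deduces that $D_\psi$ is in fact constant on the ray, whence every $w_t$ is itself a minimizer and hence a $[\psi]$-KE metric. The subtle step, which is the real obstacle, is to convert this into a contradiction with $\mathrm{Aut}(X,[\psi])^\circ=\{\Id\}$: one must identify the ``direction'' of the ray with a holomorphic vector field preserving the singularity type $[\psi]$, integrating it to a one-parameter subgroup in $\mathrm{Aut}(X,[\psi])^\circ$. In the prescribed-singularities setting this requires a Berndtsson-type rigidity result for psh geodesics of finite energy connecting $[\psi]$-KE metrics, which I would establish by adapting the trivial-fibration argument of Berndtsson to the positive current $T^{(\X,\Ll,\bD)}$ framework developed in Proposition \ref{prop:Current}, exploiting that all $w_t$ share the prescribed singularities $\psi$.
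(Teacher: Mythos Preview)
This theorem is not proved in the present paper: it is stated with the citation \cite[Theorems C, D]{Tru20c} and invoked as a black box from the author's earlier work. There is therefore no proof in this paper to compare your proposal against.

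That said, your outline is the standard variational package (Euler--Lagrange for $(i)\Leftrightarrow(ii)$, direct method for $(iv)\Rightarrow(iii)$, Darvas--Rubinstein-type contradiction for $(iii)\Rightarrow(iv)$) and is indeed the architecture of the proofs in \cite{Tru20c}. Two small corrections: for $(iv)\Rightarrow(iii)$ the relevant compactness is \emph{weak} compactness of $\{u:\sup_X u=0,\ E_\psi(u)\geq -C\}$ together with weak lower semicontinuity of $D_\psi$ (see \cite[Propositions 4.18, 4.19]{Tru20c} as cited later in this paper), not $d_1$-compactness; and for $(iii)\Rightarrow(iv)$ the rigidity step does not pass through the $T^{(\X,\Ll,\bD)}$ machinery of Proposition \ref{prop:Current} (which is built for algebraic rays coming from test configurations), but rather through a direct Berndtsson-type argument on the psh geodesic in $\cE^1_\psi$, as carried out in \cite{Tru20c}.
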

\subsection{$[\psi]$-KE metric $\Longrightarrow$ uniform $\bD$-log Ding stability}
In this subsection we show how the existence of a unique $[\psi]$-KE metric implies the uniform $\bD$-log Ding stability, which is the implication $(i)\Rightarrow (ii)$ in Theorem \ref{thmD}.

\subsubsection{Relative Deligne pairing}
%Deligne pairings have a long history starting from \cite{Del87}, passing to \cite{Elk89}, \cite{Elk90} and approding to K\"ahler geometry in \cite{Zha96}, \cite{PRS08}.
As we want to consider Deligne pairing with respect to possible singular reference metrics, we give the following definition.
\begin{defi}
\label{defi:DP}
Let $p:\X\to \Y$ be a flat surjective morphism of relative dimension $n$ among  smooth complex algebraic varieties. Let $\Theta_0,\dots, \Theta_n$ be smooth and closed $(1,1)$-forms on $\X$ representatives of \emph{relatively big} cohomology classes (i.e. $\{\Theta_{j|\X_y}\}$ is big for any $y\in \Y$) and let $V_j\in \PSH(\X,\Theta_j)$. Then the \emph{positive Deligne pairing} of $(\Theta_j,V_j,U_j)$, for $U_j\in\PSH(\X,\Theta_j)$ such that $\lvert U_{j|\X_y}- V_{j|\X_y}\rvert\leq C_y$ for any $y\in \Y$, is defined as
\begin{multline*}
    \langle\langle U_0,\dots,U_n \rangle\rangle_{(\Theta_{0,V_0}, \dots, \Theta_{n,V_n})}:=p_*\Big((U_0-V_0)\langle \Theta_{1,U_1}\wedge\cdots \wedge \Theta_{n,U_n} \rangle\Big)+\\
    +p_*\Big((U_1-V_1)\langle \Theta_{0,V_0}\wedge\Theta_{2,U_2}\wedge\cdots \wedge \Theta_{n,U_n} \rangle\Big)+\ldots+p_*\Big((U_n-V_n)\langle \Theta_{0,V_0}\wedge\cdots \wedge \Theta_{n-1,V_{n-1}} \rangle\Big)
\end{multline*}
\end{defi}
We used the notations $\Theta_{i,U_j}:=\Theta_i+dd^cU_j$.\newline
We will simply write $\langle\langle U_0,\dots,U_n \rangle\rangle$ when the references forms $\Theta_j$ and the $\Theta_j$-psh functions $V_j$ are clear from the context.
\begin{example}
\label{exa:Fun}
We can recover the $[\psi]$-relative Monge-Ampère energy and the $J_\psi$-functional in terms of positive Deligne pairing. Indeed, letting $\X=X$, $\Y$ be a point, $\Theta_j=\omega$ and $V_j=\psi$ for any $j=0,\dots,n$, we have
\begin{gather*}
    E_\psi(u)=\frac{1}{(n+1)V_\bD} \langle\langle u^{n+1}\rangle\rangle,\\
    J_\psi(u)=V_\bD^{-1}\langle\langle u,\psi^n \rangle \rangle-E_\psi(u)
\end{gather*}
for any $u\in \cE^1_\psi$ with $\psi$-relative minimal singularities. It is also possible to extend the description of these functionals in terms of positive Deligne pairings to the whole space $\cE^1_\psi$ (see also \cite[Lemma 3.8]{Tru20a} and \cite{Reb21}).
\end{example}
\subsubsection{Slope of $J$ and $E$}
\label{ssec:JandE}
We note that given $n$ $[\psi]$-relative psh rays $U_0,\dots, U_n:\R_{\geq 0}\to \PSH$ such that $u_{j,t}$ has $\psi$-relative minimal singularities for any $t\in[0,+\infty)$ fixed and for any $j=0,\dots,n$, the positive Deligne pairing
$$
\langle \langle U_0,\dots,U_n \rangle \rangle(t):=\langle \langle U_0,\dots,U_n \rangle \rangle_{p_X^*(\omega_\psi),\dots,p_X^*(\omega_\psi)}(t)
$$
for $t\in [0,+\infty)$ is well-defined. In terms of Definition \ref{defi:DP}, we are considering $\X=X\times\D^*$, $\Y=\D^*$ and $t=-\log \lvert \tau \rvert$. If the rays $U_j$ are induced by compactifications of test configurations, then $\langle \langle U_0,\dots,U_n\rangle \rangle $ is also well-defined over $\Y=\PP^1\setminus\{0\}$ as the $\C^*$-action acts trivially at $\infty\in \PP^1$.
\begin{prop}
\label{prop:Slope}
For any $j=0,\dots,n$ let $\bD_j\in \bDiv_L(X)$ and let $\Ll_j\in N^1(\FX_{\PP^1}^{\C^*})$ be a test configuration class such that $\Ll_j-\CD_j$ is big and $\Ll_j-L\times\PP^1$ is psef. Let also $(\X,\Ll_j)\in \FX_{\Ll_j}$. \newline
If $T^{(\X,\Ll_j,\bD_j)}\in c_1(\Ll_j)$ are the closed and positive $(1,1)$-currents given by Proposition \ref{prop:Current} then
\begin{equation}
    \int_\X \langle T^{(\X,\Ll_0,\bD_0)}\wedge \cdots \wedge T^{(\X,\Ll_n,\bD_n)} \rangle=\lim_{t\to+\infty} \frac{\langle\langle U_0,\dots,U_n\rangle \rangle(t)}{t}
\end{equation}
where $U_j:\R\to \PSH$ are the psh rays associated to $T^{(\X,\Ll_j,\bD_j)}$.
\end{prop}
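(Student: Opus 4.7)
The plan is to express the slope at infinity on the right-hand side as the total mass of a Laplacian on $\PP^1$, which by Stokes / pushforward equals the intersection number on the total space on the left. Concretely, viewing $\tau\mapsto\langle\langle U_0,\ldots,U_n\rangle\rangle(\tau)$ as an $S^1$-invariant function on $\D^*$ arising from a fibrewise Deligne pairing, I would first establish the differential identity
$$dd^c_\tau\langle\langle U_0,\ldots,U_n\rangle\rangle = (p)_{*}\bigl\langle T^{(\X,\Ll_0,\bD_0)}\wedge\cdots\wedge T^{(\X,\Ll_n,\bD_n)}\bigr\rangle$$
(with $p:\X\to\PP^1$), which is the standard differentiation rule for positive Deligne pairings, once rewritten in terms of the currents on $\X$ via the identification of the rays $U_j$ with the $S^1$-invariant potentials $v^{(\X,\Ll_j,\bD_j)}$ from Corollary \ref{cor:Abo}. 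The reference piece $(p_X^*\omega_\psi,\ldots,p_X^*\omega_\psi)$ contributes nothing to $dd^c_\tau$ because, being pulled back from $X$, its fibrewise Monge--Ampère has no $d\tau\wedge d\bar\tau$ component.

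In the second step, I would use this identity on the compactification $\PP^1$. Each current $T^{(\X,\Ll_j,\bD_j)}$ is defined globally on $\X$, so the $S^1$-invariant function $g(\tau):=\langle\langle U_0,\ldots,U_n\rangle\rangle(\tau)$ extends to a subharmonic function on $\PP^1\setminus\{0\}$, and writing $g(\tau)=h(-\log|\tau|)$ with $h$ convex, the assumption that $\Ll_j-L\times\PP^1$ is pseudoeffective forces the $[\psi]$-relative rays $U_j$ to dominate the trivial ray $\Psi$, so after normalising by shifting by a bounded function one can arrange $h'(-\infty)=0$; the linear growth of the $U_j$ (Lemma \ref{lem:Indu}) and the bigness of $\Ll_j-\CD_j$ together ensure $h'(+\infty)\in\R$ is finite and equal to $\lim_{t\to+\infty}\langle\langle U_0,\ldots,U_n\rangle\rangle(t)/t$. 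A classical Riesz-type computation for $S^1$-invariant subharmonic functions on $\PP^1$ then yields
$$\lim_{t\to+\infty}\frac{\langle\langle U_0,\ldots,U_n\rangle\rangle(t)}{t} = h'(+\infty)-h'(-\infty) = \int_{\PP^1}dd^c_\tau g,$$
and plugging in the identity from the first step, $\int_{\PP^1}dd^c_\tau g = \int_\X\langle T^{(\X,\Ll_0,\bD_0)}\wedge\cdots\wedge T^{(\X,\Ll_n,\bD_n)}\rangle$, concludes the proof.

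The main technical obstacle lies entirely in the first step: the formal Chern--Weil derivation of the $dd^c_\tau$ formula presupposes sufficiently regular potentials along the fibres, whereas the currents $T^{(\X,\Ll_j,\bD_j)}$ are only controlled in a non-pluripolar sense and may carry substantial singularities coming from $\CD_j$ and from the model type envelope $\Psi_j$. To overcome this I would replace each $\bD_j$ by its Cartier truncation $\bD_j^{(k)}$ pulled back from a model $Y_k\geq X$ (as in Proposition \ref{prop:Corre}), so that the corresponding currents $T^{(\X,\Ll_j^{(k)},\bD_j^{(k)})}$ have analytic singularities concentrated on a divisor, establish the differential identity in that setting by standard computations with positive Deligne pairings (as in \cite{Reb21}), and then pass to the limit $k\to\infty$ using the monotonicity of positive Deligne pairings along decreasing sequences together with the convergence of Lelong numbers $\nu(\psi_k,F)\nearrow\nu(\psi,F)$ recalled at the end of subsection \ref{ssec:KE}. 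The plurifine continuity and monotonicity properties of the non-pluripolar product from \cite{WN17, DDNL17b} will be essential to make this approximation argument rigorous and to identify the limit integrals on both sides.
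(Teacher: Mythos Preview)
Your two-step architecture—establish a $dd^c_\tau$ identity for the Deligne pairing, then integrate over $\PP^1$ via a Riesz/Berman slope computation—matches the paper's exactly, and your handling of the second step (including the vanishing of the reference contribution because it is pulled back from $X$) is essentially the same as the paper's.

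The discrepancy, and the gap, is entirely in how you justify the $dd^c_\tau$ identity. You propose to reduce to the case where the $T^{(\X,\Ll_j,\bD_j)}$ have analytic singularities by truncating each $\bD_j$ to a Cartier $b$-divisor $\bD_j^{(k)}$ and then passing to the limit. But truncating $\bD_j$ does \emph{not} make the singularities of $T^{(\X,\Ll_j,\bD_j^{(k)})}$ analytic: by Proposition~\ref{prop:Current} the potential $\Psi^{Y_k}=P_\Omega[u^{\Y_k}](0)$ is a model type envelope built out of a current with \emph{minimal singularities} in the big (generally non-nef) class $c_1(\Ll_{\Y_k})$, and such minimal singularities need not be analytic. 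So at each level $k$ you face the same regularity obstacle as before, and the approximation buys nothing. Even if one accepted the identity at level $k$, the limiting step would still require justifying convergence of mixed non-pluripolar masses along the decreasing sequence $\Psi^{Y_k}\searrow\Psi$, which is not a formal consequence of the monotonicity results you cite.

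The paper avoids approximation altogether. Working directly with $T_j=\Omega_j+dd^c\Psi_j$ and the references $V_j$ induced by $\psi_j$, it tests against $f\in C^\infty_c(\C^*)$ and observes that, on the support of $f$, each $V_j$ can be replaced (via a simple maximum construction using the linear growth of $U_j$) by a potential $\tilde V_j$ with $[\Psi_j]$-\emph{relative minimal singularities} that agrees with $V_j$ outside a small disc. This is exactly the hypothesis needed to invoke the integration-by-parts theorem for non-pluripolar products of \cite[Theorem 1.2]{Lu20}, after which the telescopic sum collapses as in the smooth case. The key idea you are missing is this local replacement by relatively-minimal potentials, which is what makes the integration by parts legitimate without any regularization of the currents.
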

\begin{proof}
As said in the Introduction the following proof is an adaptation to the prescribed singularities setting of \cite[Theorem 5.4]{DR22}.\newline
Denote by $\psi_j:=\psi_{\bD_j}\in \cM^+(X,\omega)$ the model type envelopes given by Proposition \ref{prop:Corre}, and to lighten the notation we also set $T_j:=T^{(\X,\Ll_j,\bD_j)}$.\newline
Since $\Ll_j-\mu_\X^*p_X^*L$ is effective, the $\omega$-psh functions $\psi_j$ induce natural $\Omega_j$-psh functions $V_j$ where $\Omega_j$ is a fixed smooth and closed $(1,1)$-forms in $c_1(\Ll_j)$. In particular, if $\Psi_j$ is chosen so that $T_j=\Omega_j+dd^c \Psi_j$ then by construction of $T_j$ we have
$$
V_j\leq \Psi_j+C_j
$$
over $\X$ for constant $C_j\in\R$.\newline
Next, letting $f\in C^{\infty}(\PP^1)$ with compact support in $\C^*$, we have
\begin{equation}
    \label{eqn:IP1}
    \int_{\C^*} fdd^c\langle \langle U_0,\dots,U_n\rangle \rangle=\int_{\C^*} \langle \langle U_0,\dots,U_n \rangle \rangle dd^c f =\int_{X\times \C^*}dd^c f \wedge \sum_{j=0}^n S_j
\end{equation}
where $S_j:=(\Psi_j-V_j)\langle \Omega_{0,V_0}\wedge \cdots \wedge \Omega_{j-1,V_{j-1}}\wedge \Omega_{j+1,\Psi_{j+1}}\wedge \cdots \wedge \Omega_{n,\Psi_n} \rangle. $. Then, since $U_j$ has $\psi_j$-linear growth (Lemma \ref{lem:Indu}) we also have $\Psi_j+A\log |\tau|\leq V_j$ for $A>0$ over $\X_{\pi^{-1}\D^*}$. Thus by a simple maximum argument we can construct $\tilde{V}_j\in \PSH(\X,\Omega_j)$ with $[\Psi_j]$-relative minimal singularities such that $\tilde{V_j}=V_j$ on $X\times (\C^*\setminus D_a)$ where $D_a=\{\tau\in\C^*\, : \, \lvert \tau \rvert\leq a\}$ for $0<a<1$ small enough so that the support of $f$ is contained in $\C^*\setminus D_a$. This simple observation is needed to apply integration by parts in (\ref{eqn:IP1}) (see \cite[Theorem 1.2]{Lu20} and reference therein). Therefore
\begin{multline*}
    \int_{X\times\C^*}dd^c f \wedge \sum_{j=0}^n S_j=\sum_{j=0}^n
    \int_\X f dd^c(\Psi_j-V_j)\wedge\langle  \Omega_{0,V_0}\wedge \cdots \wedge \Omega_{j-1,V_{j-1}}\wedge \Omega_{j+1,\Psi_{j+1}}\wedge \cdots \wedge \Omega_{n,\Psi_n} \rangle=\\
    =\int_\X f \langle \Omega_{0,\Psi_0}\wedge \cdots \wedge \Omega_{n,\Psi_n}\rangle- \int_\X f \langle \Omega_{0,V_0}\wedge \cdots \wedge \Omega_{n,V_n} \rangle 
\end{multline*}
where the last equality follows by the linearity of the non-pluripolar product writing $dd^c (\Psi_j-V_j)=\Omega_{j,\Psi_j}-\Omega_{j,V_j}$ and observing that the sum is telescopic. Moreover, since $\Omega_{j,V_j}=p_X^*\omega+dd^c\psi_j$ over $X\times\C^*$ and the non-pluripolar product has no mass on pluripolar sets, we also have $\langle \Omega_{0,V_0}\wedge \cdots \wedge \Omega_{n,V_n}\rangle =0$. Summarizing we got
$$
\int_{\C^*}f dd^c \langle\langle U_0,\dots,U_n \rangle\rangle=\int_\X f \langle T_0\wedge \cdots\wedge T_n  \rangle
$$
for any $f\in C^{\infty}(\PP^1)$ with compact support in $\C^*$. Letting $f\nearrow 1$, it follows that
$$
\int_\X \langle T_0\wedge \cdots\wedge T_n  \rangle=\int_{\C^*}dd^c\langle \langle U_0,\dots,U_n\rangle \rangle=\lim_{t\to +\infty} \frac{\langle \langle U_0,\dots,U_n \rangle \rangle}{t},
$$
where we clearly set $t=-\log \lvert \tau\rvert$ and where the last equality is obtained proceeding exactly as in \cite[Lemma 2.6]{Berm16}.
\end{proof}
\begin{corollary}
\label{cor:SlopeEJ}
Let $\bD\in\bDiv_L(X)$, let $\Ll\in N^1(\FX_{\PP^1}^{\C^*})$ be a test configuration class and let $(\X,\Ll)\in \FX_\Ll$. Then for any $m\gg 1$ big enough
\begin{gather}
    \label{eqn:Energy}
    \lim_{t\to +\infty} \frac{E_\psi\big(u_t^{(\X,\Ll+m\X_0,\bD)}\big)}{t}=E^\NA\big(\Ll+m(X\times\{0\});\bD\big),\\
    \label{eqn:J-Functional}
    \lim_{t\to +\infty}\frac{J_\psi\big(u_t^{(\X,\Ll+m\X_0,\bD)}\big)}{t}=J^\NA\big(\Ll;\bD\big),
\end{gather}
where $U^{(\X,\Ll+m\X_0,\bD)}:\R_{\geq 0}\to \PSH$ is the $[\psi]$-relative psh ray given by Corollary \ref{cor:Abo}.
\end{corollary}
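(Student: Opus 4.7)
\medskip

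The corollary is a direct consequence of Proposition \ref{prop:Slope} applied in two distinguished settings, combined with the Deligne-pairing descriptions of $E_\psi$ and $J_\psi$ recorded in Example \ref{exa:Fun}. First I would observe that for $m\gg 1$ we may assume $\Ll+m\X_0-L\times\PP^1$ is psef (Lemma \ref{lem:Effe}), so Corollary \ref{cor:Abo} guarantees that $U:=U^{(\X,\Ll+m\X_0,\bD)}$ is a $[\psi]$-relative psh ray, uniquely determined up to a uniform additive constant. For each fixed $t$, the trivial ray $t\mapsto\psi$ (which induces a psh metric on $\Ll+m\X_0$ by the effectivity of Lemma \ref{lem:Effe}) provides a lower bound $u_t\geq \psi-C_t$, while $u_t\preccurlyeq\psi$ gives the reverse; hence $u_t\in\cE^1_\psi$ has $[\psi]$-relative minimal singularities for each $t$, which is exactly what is needed to interpret the Deligne pairings via Example \ref{exa:Fun}.

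For the energy slope (\ref{eqn:Energy}), I would apply Proposition \ref{prop:Slope} with $\Ll_j=\Ll+m\X_0$ and $\bD_j=\bD$ for every $j=0,\dots,n$, so that all currents $T_j=T^{(\X,\Ll+m\X_0,\bD)}$ coincide. On the algebraic side, Proposition \ref{prop:Current} gives
$$
\int_\X\langle T^{(\X,\Ll+m\X_0,\bD),\,n+1}\rangle=\langle\Ll_m^{n+1}\rangle=(n+1)V_\bD\, E^\NA\bigl(\Ll+m(X\times\{0\});\bD\bigr),
$$
while on the pluripotential side Example \ref{exa:Fun} identifies $\langle\langle u_t,\dots,u_t\rangle\rangle(t)=(n+1)V_\bD\,E_\psi(u_t)$; dividing by $(n+1)V_\bD$ yields (\ref{eqn:Energy}). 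For (\ref{eqn:J-Functional}), I would use the identity $J_\psi(u)+E_\psi(u)=V_\bD^{-1}\langle\langle u,\psi^n\rangle\rangle$ of Example \ref{exa:Fun} and apply Proposition \ref{prop:Slope} with $\Ll_0=\Ll+m\X_0$, $\bD_0=\bD$ and $\Ll_j=L\times\PP^1$, $\bD_j=\bD$ for $j=1,\dots,n$. The canonical current attached to a trivial class is (by unicity up to $O(1)$ of its associated ray, Corollary \ref{cor:Abo}) the pullback of $\omega_\psi$, so the intersection-theoretic slope becomes
$$
\int_\X\bigl\langle T_0\wedge(p_X^*\omega_\psi)^n\bigr\rangle=\bigl\langle((L-\bD)\times\PP^1)^n\cdot\Ll_m\bigr\rangle=V_\bD\bigl(J^\NA(\Ll;\bD)+E^\NA(\Ll+m(X\times\{0\});\bD)\bigr)
$$
by Definition \ref{defi:JandE}, while the pluripotential side is $V_\bD(J_\psi(u_t)+E_\psi(u_t))$. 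Subtracting the already-established (\ref{eqn:Energy}) gives (\ref{eqn:J-Functional}).

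The main technical obstacle I foresee is that Proposition \ref{prop:Slope} as stated requires each $\Ll_j-\CD_j$ to be big, whereas the trivial class $L\times\PP^1$ fails this (its top self-intersection vanishes on $X\times\PP^1$). To bypass this I plan to regularize by replacing $L\times\PP^1$ by $L\times\PP^1+\epsilon\X_0$ for small $\epsilon>0$: the perturbed class is uniformly big (the $(n+1)$-self-intersection equals $(n+1)\epsilon\vol(L_Y)\ge(n+1)\epsilon a>0$), the associated canonical ray converges to $t\mapsto\psi$ as $\epsilon\to 0^+$ (with the analogous convergence of the associated currents to $p_X^*\omega_\psi$), and continuity of the positive intersection product on the big cone (Theorem \ref{thm:PIP}) together with multilinearity of the Deligne pairing in the boundary data (Definition \ref{defi:DP}) allows passage to the limit. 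Once the limit is performed both sides of the slope formula reduce to the expressions identified above, completing the proof; the non-negativity $J^\NA(\Ll;\bD)\ge 0$ claimed in Corollary \ref{cor:J} is then immediate from the non-negativity of $J_\psi$ on $\cE^1_\psi$.
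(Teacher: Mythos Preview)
Your argument for the energy slope \eqref{eqn:Energy} is exactly the paper's: apply Proposition~\ref{prop:Slope} with all entries equal to $(\Ll+m\X_0,\bD)$ and invoke Proposition~\ref{prop:Current} together with Example~\ref{exa:Fun}.

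For \eqref{eqn:J-Functional} you correctly isolate the obstacle---$L\times\PP^1-\CD$ is only pseudoeffective---but your workaround differs from the paper's. The paper does not regularize: it simply observes that the \emph{proof} of Proposition~\ref{prop:Slope} (the integration-by-parts/telescoping computation culminating in $\langle\Omega_{0,V_0}\wedge\cdots\wedge\Omega_{n,V_n}\rangle=0$) never uses bigness of the individual classes, so the slope identity
$$
\int_\X\bigl\langle(\mu_\X^*p_X^*\omega_\psi)^n\wedge T^{(\X,\Ll+m\X_0,\bD)}\bigr\rangle=\lim_{t\to+\infty}\frac{\langle\langle U,\psi^n\rangle\rangle}{t}
$$
holds verbatim with the psef current $\mu_\X^*p_X^*\omega_\psi$ in the reference slots. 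A second step, adapting Proposition~\ref{prop:Current}, identifies the left-hand integral with $\langle((L-\bD)\times\PP^1)^n,\Ll_m\rangle$. This is shorter and avoids any limiting procedure.

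Your $\epsilon$-regularization is a legitimate alternative, but the justification you give is not quite right: Theorem~\ref{thm:PIP} asserts continuity of the positive intersection product \emph{on} the big cone, whereas you need the limit as $\epsilon\searrow 0$ lands on its boundary. To make your route rigorous you should instead identify the canonical current for $L\times\PP^1+\epsilon\X_0$ explicitly (its associated ray is $t\mapsto\psi+\epsilon t$, so $T_\epsilon=\mu_\X^*p_X^*\omega_\psi+\epsilon\,\pi^*\omega_{\mathrm{FS}}$ up to the usual ambiguity) and then expand $\langle T_0\wedge T_\epsilon^n\rangle$ by multilinearity of the non-pluripolar product; the $\epsilon$-terms are then controlled directly and the limit is elementary. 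Once this is in place your argument closes, and the deduction of $J^\NA\geq 0$ from $J_\psi\geq 0$ is as you say.
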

See Definition \ref{defi:JandE} for $E^\NA, J^\NA$.
\begin{proof}
Let $m>m_0$ where $m_0$ is given by Lemma \ref{lem:Effe}, and set $u_t:=u_t^{(\X,\Ll+m\X_0,\bD)}$ for simplicity. Then the equality (\ref{eqn:Energy}) directly follows from Proposition \ref{prop:Slope} since
$$
\int_\X \langle T^{(\X,\Ll+m\X_0,\bD),n+1}\rangle=\langle \big(\Ll+m(X\times\{0\})-\CD\big)^{n+1}\rangle
$$
by Proposition \ref{prop:Current}. Regarding (\ref{eqn:J-Functional}), although $L\times\PP^1-\CD$ is just pseudoeffective (but not big) we can still proceed exactly as in Proposition \ref{prop:Slope} to obtain
$$
\int_\X \langle (\mu_\X^*p_X^*\omega_\psi)^n\wedge T^{(\X,\Ll+m\X_0,\bD)} \rangle=\lim_{t\to +\infty} \frac{\langle \langle U^{(\X,\Ll+m\X_0,\bD)}, \psi^n \rangle \rangle}{t}.
$$
Then adapting the proof of Proposition \ref{prop:Current} it is not hard to check that
$$
\int_\X \langle (\mu_\X^*p_X^*\omega_\psi)^n\wedge T^{(\X,\Ll+m\X_0,\bD)} \rangle=\langle \big((L-\bD)\times\PP^1\big)^n, \Ll_m \rangle
$$
where $\Ll_m:=\Ll+m(X\times\{0\})-\CD\in N^1(\FX_{\PP^1}^{\C^*})$. Hence (\ref{eqn:J-Functional}) follows by linearity.
\end{proof}
\subsubsection{Log canonical thresholds of $(\Ll,\bD)$.}
\begin{prop}
\label{prop:L}
Let $U:\R_{\geq 0}\to \cE^1_\psi$ be a finite energy $[\psi]$-relative psh ray with linear growth. Then $L^\NA (U_\NA)$ is finite and coincide with
\begin{equation}
    \label{eqn:Star}
    \sup\Big\{c\in\R\, |\, \int_1^\infty e^{2\big(ct-L(u_t)\big)}dt<+\infty\Big\}
\end{equation}
and with $\lim_{t\to +\infty}\frac{L(u_t)}{t}$. \newline
In particular, if $\bD\in\bDiv_L(X),\Ll\in N^1(\FX_{\PP^1}^{\C^*})$ such that $\Ll-\CD$ is big and $\Ll-L\times\PP^1$ is psef, then for any $(\X,\Ll)\in \FX_\Ll$
$$
L^\NA(\Ll;\bD)=\lim_{t\to +\infty}\frac{L(u_t^{(\X,\Ll,\bD)})}{t}.
$$
\end{prop}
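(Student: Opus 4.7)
\emph{Convexity and finiteness of the slope.} By Proposition~\ref{prop:ConveDing}, $t\mapsto L(u_t)$ is convex on $\R_{\geq 0}$. The linear growth of $U$ gives $M_t:=\sup_X u_t \leq \lambda_{\max}t+O(1)$, and since $u_t\preccurlyeq \psi$ forces $u_t\leq \psi+M_t$, the klt assumption on $\psi$ (so that $\int_X e^{-2\psi}d\mu<+\infty$) yields
$$
L(u_t)\;\leq\; M_t+\tfrac{1}{2}\log\Bigl(\tfrac{1}{\int_X e^{-2\psi}d\mu}\Bigr)\;\leq\;Ct+C'.
$$
Jensen's inequality against $d\mu$ gives a symmetric lower bound. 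Hence $\ell:=\lim_{t\to\infty}L(u_t)/t\in\R$ exists, and convexity gives $L(u_t)\leq \ell t+L(u_0)$ for every $t\ge 0$.

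\emph{Equality with the sup in \eqref{eqn:Star}.} If $c>\ell$, the convex upper bound forces $ct-L(u_t)\geq (c-\ell)t-L(u_0)$, so the tail integral diverges. If $c<\ell$, picking $\epsilon\in(0,(\ell-c)/2)$ gives $L(u_t)>(c+\epsilon)t$ eventually, hence the integrand is dominated by $e^{-2\epsilon t}$. Thus the sup in \eqref{eqn:Star} equals $\ell$.

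\emph{Identification $\ell=L^\NA(U_\NA)$ (the main step).} Set $v(x,\tau):=u_{-\log|\tau|}(x)$ on $X\times\D^*$, and for $a\ge \lambda_{\max}$ let $v_a:=v+a\log|\tau|$ be the $p_1^*\omega$-psh extension of $v+a\log|\tau|$ to $X\times\D$. The change of variable $t=-\log|\tau|$ combined with $S^1$-invariance gives
$$
\int_1^\infty e^{2(ct-L(u_t))}\,dt \;=\; \tfrac{1}{2\pi}\int_{X\times\D_{1/e}}e^{-2v_{c+1}}\,d\mu\otimes dV_\tau,\qquad v_{c+1}:=v_a+(c+1-a)\log|\tau|.
$$
By the Demailly--Koll\'ar valuative criterion for complex singularity exponents (valid for quasi-psh weights against a smooth reference, in view of Guan--Zhou's strong openness), $L^1$-integrability near $X\times\{0\}$ is equivalent to the strict divisorial inequality $A_{X\times\C}(F)>\nu(v_{c+1},F)$ for every prime divisor $F$ over $X\times\C$ centred on $X\times\{0\}$. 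The $S^1$-invariance of both $v_{c+1}$ and $d\mu\otimes dV_\tau$ allows the test to be restricted to $\C^*$-invariant valuations, i.e.\ Gauss extensions $G(\nu)$ of $\nu\in X^\divv$. Using $A_{X\times\C}(G(\nu))=A_X(\nu)+G(\nu)(\tau)=A_X(\nu)+1$, the normalization $G(\nu)(\tau)=1$, and the defining relation $G(\nu)(v_a)=a-U_\NA(\nu)$, one computes
$$
\frac{A_{X\times\C}(F)}{\nu(v_{c+1},F)}\;=\;\frac{A_X(\nu)+1}{(c+1)-U_\NA(\nu)},
$$
so the divisorial test rewrites as $A_X(\nu)+U_\NA(\nu)>c$ for every $\nu\in X^\divv$, i.e.\ $L^\NA(U_\NA)\geq c$. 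Optimizing in $c$ identifies the sup in \eqref{eqn:Star} (and hence $\ell$) with $L^\NA(U_\NA)$. The algebraic specialization is then immediate since Corollary~\ref{cor:Abo} gives $U^{(\X,\Ll,\bD)}_\NA=U^{(\Ll,\bD)}_\NA$, so $L^\NA(U^{(\X,\Ll,\bD)}_\NA)=L^\NA(\Ll;\bD)$ by definition.

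\emph{Main obstacle.} The delicate step is the invocation of Demailly--Koll\'ar in the relative $\C^*$-equivariant setting on $X\times\C$. Two issues must be handled carefully: (i) the reduction from all divisorial valuations on $X\times\C$ to Gauss extensions requires an invariance argument for log canonical thresholds of $S^1$-invariant weights; (ii) the weight $v_{c+1}$ is merely quasi-psh for $c+1<\lambda_{\max}$, which is circumvented by the decomposition $v_{c+1}=v_a+(c+1-a)\log|\tau|$, so that the Lelong number contribution of the $\log|\tau|$-term is read off as $(c+1-a)\ord_F(\tau)$ and the genuinely psh piece $v_a$ carries all the non-trivial valuative information. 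Once these two points are in place, everything else is bookkeeping using the identities of subsection \ref{ssec:NA}.
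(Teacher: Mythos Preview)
Your overall architecture—convexity gives the slope $\ell$, the change of variables turns \eqref{eqn:Star} into an integrability question for $e^{-2v_{c+1}}$ on $X\times\D$, and the valuative criterion should then identify this with $L^\NA(U_\NA)$—is exactly the route of \cite[Theorem~5.4]{BBJ15}, which the paper invokes verbatim. The first two steps are fine (though the ``Jensen lower bound'' is not needed: finiteness of $\ell$ already follows from convexity of $t\mapsto L(u_t)$ together with $L(u_0)>-\infty$ and the linear upper bound).

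The genuine gap is in your step (i) of the ``main obstacle''. $S^1$-invariance of $v_{c+1}$ lets you restrict the valuative test to $\C^*$-invariant divisorial valuations $w$ on $X\times\C$ with $w(\tau)=1$, but \emph{these are not the same as Gauss extensions}. A component $E$ of the central fibre of any nontrivial test configuration gives a $\C^*$-invariant $w=b_E^{-1}\ord_E$ which is typically \emph{not} $G(r(w))$. Your displayed computation of $A_{X\times\C}(F)/\nu(v_{c+1},F)$ is therefore only valid for $F=G(\nu)$, and this yields just one inequality: if the integral is finite for $c$, then testing on Gauss extensions gives $A_X(\nu)+U_\NA(\nu)>c$ for all $\nu$, hence $L^\NA(U_\NA)\geq\ell$. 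The reverse inequality—that the Gauss-extension condition forces integrability—does not follow from the invariance argument you propose.

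This is precisely where the paper's Lemma~\ref{lem:Below2} enters, and it is the only substantive adaptation needed to transport the \cite{BBJ15} proof to the prescribed-singularities setting. That lemma uses the klt hypothesis on $(X,\psi)$ (via $e^{-u_t}\in L^{2(1+\epsilon)}$ and \cite[Proposition~2.2]{DK99}) to show that $\mathcal{I}\big((1+\epsilon)U\big)$ is cosupported on $X\times\{0\}$ and hence contains a power of $\tau$; this produces the uniform a~priori bound $w(U)\leq(1-\epsilon)A_{X\times\C}(w)+C$ for \emph{every} $w\in W$, not just Gauss extensions. With this bound in hand, the \cite{BBJ15} argument controls the non-Gauss valuations and closes the remaining inequality. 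Your proposal bypasses Lemma~\ref{lem:Below2} entirely, and without it (or an independent proof that Gauss extensions compute the infimum, which is itself nontrivial) the identification $\ell=L^\NA(U_\NA)$ is incomplete.
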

\begin{proof}
Since $U$ is in particular a psh ray, once that Lemma \ref{lem:Below2} below is proved, the proof proceeds exactly as that in \cite[Theorem 5.4]{BBJ15}, and hence it will be left to the reader.
\end{proof}
\begin{lem}
\label{lem:Below2}
Let $W$ be the set of all $\C^*$-invariant divisorial valuations $w$ on $X\times\C$ such that $w(\tau)=1$, and let $U:\R_{\geq 0}\to \cE^1_\psi$ be a finite energy $[\psi]$-relative psh ray such that $\sup_X U\leq 0$. Then there exists $\epsilon\in (0,1)$ and $C>0$ such that $w(U)\leq(1-\epsilon)A_{X\times\C}(w)+C$ for all $w\in W$.
\end{lem}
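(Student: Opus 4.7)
The plan follows the strategy developed in \cite{BBJ15} for the absolute case, adapted to the prescribed singularities setting through the klt hypothesis $\lct_X(\psi)>1$.

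\textbf{Step 1 (Extension to the product).} Convexity of $t\mapsto\sup_X u_t$ together with $\sup_X U\leq 0$ forces the linear growth slope $\lambda_{\max}\leq 0$, so the $S^1$-invariant function $v(x,\tau):=u_{-\log|\tau|}(x)$ extends to $v\in\PSH(X\times\D,p_1^*\omega)$ with $v\leq 0$. The $[\psi]$-relativity $u_t\preccurlyeq\psi$ combined with $\sup_X u_t\leq 0$ upgrades this to the pointwise bound $v(x,\tau)\leq\psi(x)$ on $X\times\D$. For each $w\in W$ one may, up to passing to a smooth $\C^*$-equivariant modification $\pi:\Y\to X\times\C$, realize $w=b^{-1}\ord_E$ with $b=\ord_E(\tau)$ and $E$ a prime divisor mapping into $X\times\{0\}$, so that $w(U)=b^{-1}\nu(\pi^*v,E)$ and $A_{X\times\C}(w)=b^{-1}(1+\ord_E K_{\Y/X\times\C})$.

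\textbf{Step 2 (Uniform integrability near the central fibre).} The core analytic step is to produce $\epsilon_0>0$ and $r\in(0,1)$ with
$$\int_{X\times\D_r}e^{-2(1+\epsilon_0)v}\,dV<+\infty,$$
for a smooth volume form $dV$ on $X\times\D$. This combines three inputs: (i) $u_t\in\cE^1_\psi$ gives $L(u_t)\in\R$, hence $e^{-2u_t}\in L^1(X,d\mu)$ pointwise in $t$; (ii) the convexity of $L$ along $[\psi]$-relative psh geodesic paths (Proposition \ref{prop:ConveDing}) yields $\sup_{0\leq t\leq T}L(u_t)<\infty$ for each fixed $T$, which when applied to the Perron--Bremermann extension of $U$ to $X\times\D$ produces an integrable upper bound for $e^{-2v}$ on $X\times\D_r$; (iii) the klt hypothesis $\lct_X(\psi)>1$ together with $v\leq p_1^*\psi$ permits invoking the strong openness theorem \cite{GZ14} on the product $X\times\D$ to propagate integrability to the exponent $1+\epsilon_0$, uniformly near $X\times\{0\}$.

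\textbf{Step 3 (Valuative conversion).} The estimate of Step 2, pulled back to $\Y$, translates by the analytic-to-algebraic characterization of log canonical thresholds (cf.\ the discussion of $\lct_X(\psi)$ in subsection \ref{ssec:KE}) into
$$(1+\epsilon_0)\nu(\pi^*v,E)\leq 1+\ord_E K_{\Y/X\times\C}+C_1$$
with $C_1$ a constant independent of $E$, absorbing boundary contributions from $\pi^{-1}(X\times\partial\D_r)$ where $v$ is locally bounded. Dividing by $b=\ord_E(\tau)$ and rearranging with $\epsilon:=\epsilon_0/(1+\epsilon_0)\in(0,1)$ and $C:=C_1/(1+\epsilon_0)$ gives the claim.

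\textbf{Main obstacle.} The decisive difficulty is establishing the uniform $(1+\epsilon_0)$-integrability in Step 2: the exponent must not degenerate either as $t\to\infty$ along the ray or in the $\tau$-direction toward $X\times\{0\}$. Pointwise strong openness on $X$ only delivers an exponent $\epsilon(t)$ that a priori collapses as $t$ grows, so the argument essentially relies on the klt buffer provided by $(X,\psi)$ together with the convexity-based growth control on $L$ along the $[\psi]$-relative geodesic extension of $U$. Once Step 2 is in hand, Steps 1 and 3 are essentially bookkeeping via standard valuation theory.
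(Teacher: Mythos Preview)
Your Step~2 asks for too much and is in general false. Integrability of $e^{-2(1+\epsilon_0)v}$ on all of $X\times\D_r$, including the central fibre, would force $\mathcal{I}\big((1+\epsilon_0)v\big)=\mathcal{O}_{X\times\D_r}$ and hence $w(U)\leq(1-\epsilon)A_{X\times\C}(w)$ with $C=0$; but for a generic finite-energy ray (already when $\psi=0$, e.g.\ one induced by a nontrivial ample test configuration) the extension $v$ acquires genuine singularities along divisors over $X\times\{0\}$, so this integral diverges. Your own ``main obstacle'' paragraph correctly senses the difficulty, but the ingredients you list---convexity of $L$, strong openness on the product---do not resolve it: convexity controls $\int_X e^{-2u_t}$ on each slice, not the product integral, and strong openness only upgrades an already-finite integral.

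The paper sidesteps this entirely by proving something weaker but sufficient. First, the uniform exponent does not come from strong openness: since $u_t\in\cE^1_\psi$ one has $\lct_X(u_t)=\lct_X(\psi)>1$ for \emph{every} $t$ (this is \cite[Proposition~3.10]{Tru20b}), so a single $\epsilon>0$ with $1+\epsilon<\lct_X(\psi)$ gives $e^{-u_t}\in L^{2(1+\epsilon)}(X)$ for all $t$. Then the restriction/semicontinuity theorem of Demailly--Koll\'ar \cite[Proposition~2.2]{DK99} propagates fibrewise integrability to local integrability on $X\times\D^*$---\emph{away} from the central fibre. Hence $\mathcal{I}\big((1+\epsilon)U\big)$ is cosupported on $X\times\{0\}$; being coherent, it contains some power $\tau^N$. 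The normalization $w(\tau)=1$ for all $w\in W$ then gives the uniform bound $w\big(\mathcal{I}((1+\epsilon)U)\big)\leq N$, and one concludes via the standard inequality $w\big(\mathcal{I}((1+\epsilon)U)\big)\geq(1+\epsilon)w(U)-A_{X\times\C}(w)$ from \cite[Lemma~B.4]{BHJ17}. The constant $C$ thus records the depth of the multiplier ideal at the central fibre, not any boundary contribution.
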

\begin{proof}
As $u_t\in \cE^1_\psi$ for any $t\in \R_{\geq 0}$ and $(X,\psi)$ is klt, there exists $\epsilon>0$ such that $e^{-u_t}\in L^{2(1+\epsilon)}(X)$ for any $t\in \R_{\geq 0}$ (see also \cite[Proposition 3.10]{Tru20b}). Thus by \cite[Proposition 2.2]{DK99} it follows that $e^{-U}$ is in $L^{2(1+\epsilon)}$ in a neighborhood of $(x,t)$ for any $(x,t)\in X\times\D^*$, i.e. $e^{-U}\in L^{2(1+\epsilon)}_{loc}(X\times\D^*)$.\newline
Therefore $\mathcal{I}\big((1+\epsilon)U\big)$ is cosupported on $X\times\{0\}$, which implies that $\sup_{w\in W}w\Big(\mathcal{I}\big((1+\epsilon)U\big)\Big)<+\infty$ since $\mathcal{I}\big((1+\epsilon)U\big)$ contains a power of $\tau$. The proof concludes observing that 
$$
w\Big(\mathcal{I}\big(1+\epsilon\big)U\Big)\geq (1+\epsilon) w(U)-A_{X\times\C}(w)
$$
\cite[Lemma B.4]{BHJ17}.
\end{proof}
\begin{remark}
\label{rem:Llct}
Here we interpret $L^\NA(\Ll;\bD)$ in terms of log canonical thresholds similarly to \cite{Berm16}, assuming that $\Ll-\CD$ is big and that $\Ll-L\times\PP^1$ is psef. Fix $(\X,\Ll)\in \FX_\Ll$.\newline
We need to recall some definitions/facts from \cite[section 4]{DR22}. As $\pi_*\big(r(\Ll+K_{\X/\PP^1})$ is a line bundle for $r\in\N$ divisible enough, we first construct a $\Q$-divisor $\Delta$ on $\X$ such that $K_\X+\Delta=-\Ll+K_{\PP^1}$, then we set
$$
\lct\big((\X,\Delta,\lVert \Ll\rVert), \X_0\big):=\sup\big\{c>0\, : \, \mathcal{I}\big((\X,\Delta,\lVert \Ll\rVert),c\X_0\big)=\mathcal{O}_\X\big\}
$$
where
\begin{multline*}
    \mathcal{I}\big((\X,\Delta,\lVert \Ll\rVert),c\X_0\big)=\big\{f\in \mathcal{O}_\X(U)\, : \, \exists \epsilon>0\,\mathrm{s.t.}\, \ord_E(f)\geq\\
    \geq(1+\epsilon)\big(\ord_E\lVert \Ll\rVert+c\,\ord_E\X_0-A_{(\X,\Delta)}(E)\big)\, \mathrm{for}\, \mathrm{all}\, E\, \mathrm{over}\, X\big\}.
\end{multline*}
As in \cite{DR22} $\ord_E\lVert\Ll\rVert=\lim_{k\to +\infty}\frac{1}{k}\ord_E H^0(\X,k\Ll)$. Analytically, we have
$$
\lct\big((\X,\Delta,\lVert \Ll\rVert),\X_0\big)=\sup\big\{c>0\, : \, e^{-v_{min}-c\log \lvert \tau\rvert}\in L^2(\mu_{(\X,\Delta)})\big\}
$$
where $v_{min}$ is a $\C^*$-invariant $\Omega$-psh function with minimal singularities for $\Omega\in c_1(\Ll)$, and where $\mu_{(\X,\Delta)}$ is an adapted measure to $K_\X+\Delta$ (see also \cite[subsection 2.1.3]{Berm16}). Thus, since (\ref{eqn:Star}) corresponds to
$$
\sup\big\{c>0\, : \, e^{-\Psi-c\log \lvert \tau \rvert}\in L^2(\mu_{(\X,\Delta)})\big\}-1
$$
where $\Psi\in \PSH(\X,\Omega)$ satisfies $T^{(\X,\Ll,\bD)}=\Omega+dd^c \Psi$ (Proposition \ref{prop:Current}), it appears natural to connects it in terms of log canonical thresholds where the singularities are taken under considerations. Indeed, proceeding as in Lemma \ref{lem:bKlt} we have
$$
(\ref{eqn:Star})=\inf_{Y\geq X}\sup\big\{c>0\, : \, e^{-\Psi^Y-c\log \lvert \tau \rvert}d\mu_{(\X,\Delta)}\big\}-1
$$
where $\Psi^Y\in \PSH(\X,\Omega)$ such that $\Omega+dd^c \Psi^Y=T^\Y$ for $\Y$ given as the graph of the birational morphism $Y\times\PP^1\dashrightarrow \X$ (see again Proppsition \ref{prop:Current} for the notations). Then, considering the commutative diagram
$$
\begin{tikzcd}
    \Y\ar[r, "\tilde{\rho_Y}"] \ar[d, "\mu_\Y"] & \X\ar[d, "\mu_\X"]\\
    Y\times\PP^1 \ar[r, "\rho_Y\times\Id"] & X\times\PP^1
\end{tikzcd}
$$
and defining $\Delta_\Y:=\tilde{\rho}_Y^*\Delta+\CD_\Y-K_{\Y/\X}$ (i.e. $K_\Y+\Delta_\Y=\tilde{\rho}_Y^*(K_\X+\Delta)+\CD_\Y=-\Ll_\Y+K_{\PP^1}$), it is easy to check that
$$
\sup\big\{c>0\, : \, e^{-\Psi^Y-c\log\lvert \tau \rvert}d\mu_{(X,\Delta)}\big\}=\lct\big((\Y,\Delta_\Y,\lVert\Ll_\Y \rVert),\Y_0\big).
$$
Hence we get
$$
L^\NA(\Ll;\bD)=\inf_{Y\geq X} \lct\big((\Y,\Delta_\Y,\lVert \Ll_\Y\rVert),\Y_0\big)-1.
$$
Note that the algebraic quantity on the right-hand side \emph{a posteriori} does not depend on the choice of $(\X,\Ll)\in \FX_\Ll$.
\end{remark}
\subsubsection{Conclusion of the proof.}
\begin{theorem}
\label{thm:KEDing}
Let $\psi\in \cM^+_{D,klt}(X,\omega)$ (or equivalently let $\bD\in \bDiv_L(X)$ such that $(X,\bD)$ is klt). Assume also that $\mathrm{Aut}(X,[\psi])^\circ=\{\Id\}$.\newline
If there exists a $[\psi]$-KE metric then $(X,-K_X)$ is uniformly $\bD$-log Ding stable.
\end{theorem}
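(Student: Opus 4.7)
The plan is to leverage the characterization of $[\psi]$-KE uniqueness from Theorem \ref{thm:Ding}: under the hypothesis $\mathrm{Aut}(X,[\psi])^\circ=\{\Id\}$, the existence of a unique $[\psi]$-KE metric is equivalent to the coercivity of the $[\psi]$-relative Ding functional, i.e.\ constants $A>0$ and $B\geq 0$ exist so that
$$D_\psi(u)\geq A\,J_\psi(u)-B\quad\text{for all }u\in\cE^1_\psi.$$
The strategy is then to apply this coercivity \emph{slicewise} along the $[\psi]$-relative psh ray attached to an arbitrary ample test configuration, divide by $t$, and take the limit $t\to+\infty$, identifying each term with its Non-Archimedean counterpart.

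Concretely, given an ample test configuration class $\Ll\in N^1(\FX_{\PP^1}^{\C^*})$, I would fix $(\X,\Ll)\in\FX_\Ll$ and choose $m\gg 1$ so that both $\Ll+m(X\times\{0\})-\CD$ is big (Proposition \ref{prop:Big}) and $\Ll+m(X\times\{0\})-L\times\PP^1$ is psef. Corollary \ref{cor:Abo} then furnishes a $[\psi]$-relative psh ray $U:=U^{(\X,\Ll+m\X_0,\bD)}:\R_{\geq 0}\to\PSH$ of linear growth, induced by the canonical current $T^{(\X,\Ll+m\X_0,\bD)}\in c_1(\Ll+m\X_0)$ of Proposition \ref{prop:Current} whose total non-pluripolar mass equals $\langle(\Ll+m(X\times\{0\})-\CD)^{n+1}\rangle$. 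Using that $U$ simultaneously dominates the trivial ray $t\mapsto\psi$ (up to $O(1)$) and satisfies $u_t\preccurlyeq\psi$, each slice $u_t$ has $[\psi]$-relative minimal singularities, in particular lies in $\cE^1_\psi$. Applying the coercivity estimate to $u_t$, dividing by $t$, and passing to the limit gives
$$\lim_{t\to+\infty}\frac{L(u_t)-E_\psi(u_t)}{t}\geq A\lim_{t\to+\infty}\frac{J_\psi(u_t)}{t}.$$
By Corollary \ref{cor:SlopeEJ} the right-hand limit equals $A\,J^\NA(\Ll;\bD)$, the $E_\psi$-slope equals $E^\NA(\Ll+m(X\times\{0\});\bD)$, and by Proposition \ref{prop:L} the $L$-slope equals $L^\NA(\Ll+m(X\times\{0\});\bD)$. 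Combining these with the definition of $D^\NA$ yields
$$D^\NA(\Ll;\bD)=L^\NA(\Ll+m(X\times\{0\});\bD)-E^\NA(\Ll+m(X\times\{0\});\bD)\geq A\,J^\NA(\Ll;\bD),$$
which is uniform $\bD$-log Ding stability with $\delta=A$.

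The main technical obstacle is verifying that the canonical ray $U^{(\X,\Ll+m\X_0,\bD)}$ genuinely takes values in $\cE^1_\psi$ (not just in $\PSH(X,\omega,\psi)$), so that Theorem \ref{thm:Ding}(iv) may be applied slicewise. This should be extracted from the construction in Proposition \ref{prop:Current}: the relative $(1,1)$-current $T^{(\X,\Ll+m\X_0,\bD)}$ is built as the infimum over $Y\geq X$ of currents with the correct $[\psi]$-relative total mass $V_\bD$, and the dominance statement of Corollary \ref{cor:Abo} forces $|u_t-\psi|$ to be uniformly controlled on each fiber by a constant depending only on $t$, with $E_\psi(u_t)$ finite. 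Once this slicewise regularity is confirmed, the remainder of the argument is a formal application of the slope formulas already established, together with the linearity of $E_\psi$ along psh geodesic paths (Theorem \ref{thm:E1}(iii)) used implicitly in Corollary \ref{cor:SlopeEJ}.
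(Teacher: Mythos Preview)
Your proposal is correct and follows essentially the same route as the paper's proof: invoke coercivity of $D_\psi$ from Theorem \ref{thm:Ding}(iv), apply it to the canonical $[\psi]$-relative ray $U^{(\X,\Ll+m\X_0,\bD)}$, and pass to slopes via Corollary \ref{cor:SlopeEJ} and Proposition \ref{prop:L}. You are in fact slightly more careful than the paper in justifying that $u_t\in\cE^1_\psi$ (the paper simply asserts $U^m:\R_{\geq 0}\to\cE^1_\psi$), and your argument via $[\psi]$-relative minimal singularities---that $\psi\leq u_t^m+O(1)$ follows from Corollary \ref{cor:Abo} applied to the trivial ray $t\mapsto\psi$---is exactly the right observation.
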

\begin{proof}
By Theorem \ref{thm:Ding} there exist $A>0$, $B\geq 0$ such that
$$
D_\psi(u)\geq AJ_\psi(u)-B
$$
for any $u\in\cE^1_\psi$. Thus, letting $\Ll\in N^1(\FX_{\PP^1}^{\C^*})$ be an ample test configuration class and letting $(\X,\Ll)\in \FX_\Ll$, we have
$$
D_\psi(u_t^m)\geq A J_\psi(u_t^m)-B
$$
for any $t\in \R_{\geq 0}$ where $U^m:\R_{\geq 0}\to \cE^1_\psi$ is the $[\psi]$-relative psh ray associated to $T^{(\X,\Ll+m\X_0,\bD)}\in c_1(\Ll+m\X_0-\CD)$ for $m\gg 1$ big enough (see Proposition \ref{prop:Current} and Corollary \ref{cor:Abo}).
As by definition $D_\psi(u_t^m)=L(u_t^m)-E_\psi(u_t^m)$, Corollary \ref{cor:SlopeEJ} and Proposition \ref{prop:L} imply
\begin{gather*}
    \lim_{t\to +\infty}\frac{D_\psi(u_t^m)}{t}=\lim_{t\to +\infty}\frac{L(u_t^m)-E_\psi(u_t^m)}{t}=L^\NA\big(\Ll+m(X\times\{0\});\bD\big)-E^\NA\big(\Ll+m(X\times\{0\});\bD\big)\\
    \lim_{t\to +\infty}\frac{J_\psi(u_t^m)}{t}=J^\NA\big(\Ll;\bD\big).
\end{gather*}
Hence
$$
D^\NA(\Ll;\bD)=L^\NA\big(\Ll+m(X\times\{0\});\bD\big)-E^\NA\big(\Ll+m(X\times\{0\});\bD\big)\geq A J^\NA\big(\Ll;\bD\big).
$$
\end{proof}
The following corollary concludes the proof of Theorem \ref{thmC}.
\begin{corollary}
With the same assumptions of Theorem \ref{thm:KEDing}, if there exists a $[\psi]$-KE metric then $(X,-K_X)$ is uniformly $\bD$-log $K$-stable. 
\end{corollary}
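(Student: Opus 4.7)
The plan is to chain together Theorem \ref{thm:KEDing} and Proposition \ref{prop:DingK}, which already provide the two implications needed for this corollary. Concretely, by Theorem \ref{thm:KEDing}, the existence of a $[\psi]$-KE metric (under the assumption $\mathrm{Aut}(X,[\psi])^\circ=\{\Id\}$ and $(X,\bD)$ klt) gives a constant $A>0$ such that
\[
D^\NA(\Ll;\bD)\geq A\,J^\NA(\Ll;\bD)
\]
for every ample test configuration class $\Ll\in N^1(\FX_{\PP^1}^{\C^*})$, i.e.\ $(X,-K_X)$ is uniformly $\bD$-log Ding stable.

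Next I would invoke Proposition \ref{prop:DingK}, which in the Fano setting $L=-K_X$ yields the pointwise comparison
\[
DF(\Ll;\bD)\geq D^\NA(\Ll;\bD)
\]
for any test configuration class $\Ll$. Combining the two inequalities immediately gives
\[
DF(\Ll;\bD)\geq A\,J^\NA(\Ll;\bD)
\]
for every ample test configuration class, which is precisely the definition of uniform $\bD$-log $K$-stability (Definition \ref{defi:K-stability}).

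There is no real obstacle here: the corollary is a formal consequence of the Ding-to-$K$ comparison (which was established via a Non-Archimedean intersection-theoretic argument together with the approximation Propositions \ref{prop:KstClass}, \ref{prop:DStCar}) and the analytic/variational input of Theorem \ref{thm:KEDing} (which packaged the coercivity of the $[\psi]$-relative Ding functional from Theorem \ref{thm:Ding} together with the slope formulas of Corollary \ref{cor:SlopeEJ} and Proposition \ref{prop:L}). The only thing worth double-checking is that the constant $A$ produced by Theorem \ref{thm:KEDing} is genuinely uniform in $\Ll$, but this is clear since it comes from coercivity of $D_\psi$ on the whole space $\cE^1_\psi$ and does not depend on the chosen psh ray.
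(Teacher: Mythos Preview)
Your proof is correct and follows exactly the paper's approach: the paper's proof is the single sentence ``It immediately follows combining Theorem \ref{thm:KEDing} and Proposition \ref{prop:DingK},'' which is precisely the two-step chain you describe.
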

\begin{proof}
It immediately follows combining Theorem \ref{thm:KEDing} and Proposition \ref{prop:DingK}.
\end{proof}
\subsection{$\bD$-log delta invariants}
\label{ssec:deltas}
In this subsection we introduce the $\bD$\emph{-log delta invariant} $\delta_\bD$ and its modified version $\tilde{\delta}_\bD$, generalizing the Odaka-Fujita delta invariant (\cite{FO18, BJ20}).\newline

\begin{defi}
Let $\bD\in \bDiv_L(X)$ and let $E$ be a prime divisor over $X$. The \emph{expected vanishing order of $L-\bD$ along $E$} is then defined as
$$
S_\bD(E):=\frac{1}{\langle(L-\bD)^n\rangle}\int_0^{\tau_\bD(E)}\langle (L-\bD-xE)^n \rangle dx
$$
where $\tau_\bD(E):=\sup\{x\geq 0\, : \, \inf_{Y\geq X}\langle (L-xE-D_Y)^n\rangle\geq 0\}$.
\end{defi}
Letting $\psi\in \cM^+(X,\omega)$ be the model type envelope associated to $\bD$ (Proposition \ref{prop:Corre}), we recall that $\langle (L-\bD)^n \rangle=\inf_{Y\geq X} \langle (L-D_Y)^n\rangle=\int_X MA_\omega(\psi)>0$ by definition of $\bDiv_L(X)$, i.e. $L-\bD$ is \emph{big} in the sense of Definition \ref{defi:WBig}.\newline
We observe that $S_\bD(E)$ can be recovered by $S_{D_Y}(E)$ thanks to the following result.
\begin{lemma}
\label{lem:Dini}
Let $\bDiv_L(X)$, let $E$ be a prime divisor over $X$ and let also $Y_{k+1}\geq Y_k\geq X$ such that $\langle (L-D_{Y_k})^n\rangle\to \langle (L-\bD)^n \rangle$. Then the sequence of continuous functions $f_k: \R_{\geq 0}\to \R $,
$$
f_k(x):=\langle (L-D_Y-xE)^n \rangle
$$
converges uniformly as $k\to +\infty$ to
$$
f(x):=\langle (L-\bD-xE)^n \rangle
$$
where we clearly set $\langle (L-D_Y-xE)^n \rangle=0$ if $x>\tau_{D_{Y_k}}(E)$ and similarly $\langle (L-\bD-xE)^n\rangle=0$ if $L-\bD-xE$ is not big. In particular
\begin{gather*}
    \tau_{D_{Y_k}}(E)\searrow \tau_{\bD}(E)>0\\
    S_{D_{Y_k}}(E)\to S_\bD(E)>0.
\end{gather*}
\end{lemma}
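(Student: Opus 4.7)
The label already hints at the strategy: I want to apply Dini's theorem to a pointwise decreasing sequence of continuous functions with continuous limit, on a compact interval of parameters $x$. My plan is to verify each of the hypotheses separately, isolating the one genuinely technical point.

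First I would establish that the sequence $(f_k)_k$ is pointwise decreasing. The $L$-positivity of $\bD$ (Definition \ref{defi:Lpositive}(i)) gives, for any $Y_{k+1}\overset{p}{\geq}Y_k$, the inequality $p^*D_{Y_k}\leq D_{Y_{k+1}}$; combined with the monotonicity of the positive intersection product from Theorem \ref{thm:PIP} and the projection formula, this yields $f_{k+1}(x)\leq f_k(x)$ for every $x\geq 0$. Next, each $f_k$ is continuous and nonincreasing on $\R_{\geq 0}$: on the big locus this follows from the continuity of the positive intersection product on the big cone (Theorem \ref{thm:PIP}) and the $C^1$-regularity of the volume recalled in subsection \ref{ssec:Vol}; outside $[0,\tau_{D_{Y_k}}(E)]$ the function vanishes, and continuity at $\tau_{D_{Y_k}}(E)$ is the standard vanishing of the volume on the pseudoeffective boundary. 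The pointwise convergence $f_k(x)\to f(x)$ at fixed $x$ is exactly the defining property of $\langle(L-\bD-xE)^n\rangle$ as a monotone limit of positive intersection products, in the spirit of Lemma \ref{lem:PIPWeil}.

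The delicate step is to show that the limit $f$ is continuous on $\R_{\geq 0}$. Upper semicontinuity is automatic because $f$ is a pointwise decreasing limit of continuous functions. For lower semicontinuity I would pass to the analytic side via Proposition \ref{prop:Corre}: letting $\psi\in\cM_D^+(X,\omega)$ be the model type envelope attached to $\bD$, consider the family
$$
\psi_x:=P_\omega\bigl[\psi-xG_E\bigr](0),\qquad x\in[0,\tau_\bD(E)),
$$
where $G_E$ is a quasi-psh function with generic Lelong number $1$ along $E$ (pulled back from a suitable $Y\geq X$ containing $E$). A standard pluripotential-theoretic argument shows that $x\mapsto\psi_x$ decreases continuously in the weak topology as $x$ decreases, and that $\int_X MA_\omega(\psi_x)=f(x)$. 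Continuity of the total Monge-Amp\`ere mass along decreasing sequences of model type envelopes (as recalled in subsection \ref{ssec:KE}) then forces $f$ to be continuous on $[0,\tau_\bD(E))$; continuity at $\tau_\bD(E)$ again follows from the vanishing of the mass at the boundary of the big cone. This is the main obstacle of the proof and requires the careful translation between the Riemann--Zariski volume and the pluripotential-theoretic Monge--Amp\`ere mass.

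Having these pieces in place, Dini's theorem applied on the compact interval $[0,\tau_{D_{Y_1}}(E)]$, outside which all $f_k$ and $f$ vanish, yields uniform convergence on $\R_{\geq 0}$. The positivity $\tau_\bD(E)>0$ is then immediate: $\langle(L-\bD)^n\rangle\geq a>0$ by Definition \ref{defi:Lpositive}(iii), so by continuity of $f$ at $x=0$, the function $f$ remains strictly positive on some initial interval. From uniform convergence one reads off $\tau_{D_{Y_k}}(E)\searrow\tau_\bD(E)$ (as the zero-set thresholds of a monotone uniformly convergent family of continuous nonincreasing functions), and then
$$
S_{D_{Y_k}}(E)=\frac{1}{\langle(L-D_{Y_k})^n\rangle}\int_0^{\tau_{D_{Y_k}}(E)}f_k(x)\,dx\longrightarrow\frac{1}{\langle(L-\bD)^n\rangle}\int_0^{\tau_\bD(E)}f(x)\,dx=S_\bD(E),
$$
with strict positivity of the limit following from $\tau_\bD(E)>0$ and $\langle(L-\bD)^n\rangle>0$.
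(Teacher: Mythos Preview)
Your approach is essentially the same as the paper's: establish monotonicity and continuity of the $f_k$, prove the limit $f$ is continuous by passing to the analytic side via model type envelopes and Monge--Amp\`ere masses, then invoke Dini. The paper carries this out with the envelope
$$
\psi_x:=\sup\{u\leq\psi:\nu(u,E)\geq\nu(\psi,E)+x\}\in\cM(X,\omega),
$$
shows $\int_X MA_\omega(\psi_x)=f(x)$, and cites \cite[Lemma 3.12]{Tru20a} for the continuity of $x\mapsto\int_X MA_\omega(\psi_x)$; it also handles pointwise convergence analytically rather than through Lemma \ref{lem:PIPWeil}. Two small remarks on your version: your envelope $P_\omega[\psi-xG_E](0)$ is not literally defined in the paper's framework since $\psi-xG_E$ need not be $\omega$-psh (you are subtracting a quasi-psh function), so the paper's Lelong-number envelope is cleaner here; and your phrase ``decreases continuously as $x$ decreases'' has the monotonicity reversed---$\psi_x$ decreases as $x$ increases. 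Neither affects the substance of the argument.
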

\begin{proof}
By monotonicity of $\{D_Y\}_{Y\geq X}$ and continuity of the volume function, we immediately get that $f_k$ is a decreasing sequence of continuous functions. \newline
If $x\in \R_{\geq 0}$ is such that $f_k(x)=0$ for some $k\gg 1$ then clearly $f(x)=0$. Assume instead that $f_k(x)>0$ for any $k\in \N$. Then as consequence of Proposition \ref{prop:Corre} it is easy to see that
$$
f_k(x)=\langle (L-D_{Y_k}-xE)^n \rangle=\int_X MA_\omega(\psi_{k,x})
$$
where
$$
\psi_{k,x}:=\sup\{u\leq \psi_{Y_k}\, : \, \nu(u,E)\geq \nu(\psi_{Y_k},E)+x\}\in \cM^+(X,\omega).
$$
Thus, by monotonicity of $\psi_{Y_k}$ we obtain that $\psi_{k,x}\searrow \psi_x\in \cM(X,\omega)$ model type envelope such that $\psi_x\leq \psi$ (as $\psi_{k,x}\leq \psi_{Y_k}\searrow \psi$). Furthermore, as $\nu(\psi_{Y_k},E)\searrow \nu(\psi,E)$, the upper semicontinuity of the Lelong numbers yields $\nu(\psi_x,E)\geq \nu(\psi,E)+x$, which in turn implies
$$
\psi_x=\sup\{u\leq \psi\, : \, \nu(u,E)\geq \nu(\psi,E)+x\}. 
$$
In particular by the continuity of the total mass function along decreasing sequences in $\cM^+(X,\omega)$ (subsection \ref{ssec:KE})
$$
\lim_{k\to \infty}f_k(x)=\lim_{k\to +\infty}\int_X MA_\omega(\psi_{k,x})=\int_X MA_\omega(\psi_x)%=\langle (L-\bD-xE)^n\rangle=f(x).
$$
Then we claim that $\int_X MA_\omega(\psi_x)=\langle (L-\bD-xE)^n\rangle=f(x)$. Indeed, we can approximate $\bD$ first by $D_{Y_k}$ and then any $D_{Y_k}$ by an increasing sequence of $\R$-divisors to obtain $\bD$ as increasing limit of a sequence of $\R$-divisors $\{G_n\}_{n\in\N}\subset \bDiv_L(X)$ over $X$. Thus Proposition \ref{prop:Corre} and \cite[Lemma 3.12]{Tru20a} leads to $\int_X MA_\omega(\psi_x)=\lim_{n\to +\infty}\langle (L-G_n-xE)^n \rangle=\langle (L-\bD-xE)^n \rangle$, and the claim is proved.\newline
Dini's Lemma concludes the proof since $f:\R_{\geq 0}\to \R$,
$$
f(x)=\langle (L-\bD-xE)^n \rangle=
\begin{cases}
\int_X MA_\omega(\psi_x) \, \, \,\,\, \mathrm{if}\,\,\, 0\leq x\leq \tau_\bD(E)\\
0 \,\,\,\,\,\,\,\,\,\,\,\,\,\,\,\,\,\,\,\,\,\,\, \,\,\,\,\,\,\,\,\,\,\,\,\,\mathrm{if} \,\,\, x> \tau_\bD(E)
\end{cases}
$$
is continuous as consequence of \cite[Lemma 3.12]{Tru20a}.
\end{proof}
The following definitions extend the Fujita-Odaka delta invariant to the $\bD$-log setting.
\begin{defi}
Let $\bD\in \bDiv_L(X)$. The \emph{modified $\bD$-log delta invariant} is defined as
$$
\tilde{\delta}_\bD:=\inf_{E/X} \frac{A_X(E)}{S_\bD(E)+\ord_E\bD}
$$
where the infimum is over all prime divisor $E$ over $X$ and where
$$
\ord_E \bD:=\sup_{Y\geq X} \ord_E D_Y\in \R_{\geq 0}
$$
is the order of vanishing of $\bD$ along $E$.
\end{defi}
\begin{defi}
\label{defi:delta}
Let $\bD\in \bDiv_L(X)$. The \emph{$\bD$-log delta invariant} is defined as $\delta_\bD=0$ if $(X,\bD)$ is not klt and as
$$
\delta_\bD:=\inf_{E/X} \frac{A_X(E)-\ord_E \bD}{S_\bD(E)}
$$
if $(X,\bD)$ is klt, where the infimum is over all prime divisor $E$ over $X$.
\end{defi}
Observe that $\ord_E\bD=\nu(\psi,E)$ where $\psi\in\cM^+(X,\omega)$ is associated to $\bD\in\bDiv_L(X)$.\newline 
Moreover Definition \ref{defi:delta} implies that $\delta_\bD\geq 0$, as $\lct(X,\bD)>1$ if and only if there exists $\epsilon>0$ such that
$$
A_X(E)-\ord_E\bD=A_X(E)-\nu(\psi,E)\geq \epsilon \nu(\psi,E)
$$
for any $E$ prime divisor over $X$ \cite[Theorem B.5]{BBJ15}.
\begin{remark}
\label{rem:Deltas}
When $\bD=D_Y$ for a $\Q$-divisor $D_Y$ such that on $Y\overset{\rho_Y}{\geq} X$ such that $(Y,B_Y)$ is klt, $\delta_\bD$ coincides with the usual \emph{log $\delta$-invariant} of $(Y,B_Y)$, i.e. with
$$
\delta_{B_Y}(Y,L_Y):=\inf_{E/Y} \frac{A_{(Y,B_Y)}(E)}{S_{L_Y}(E)}
$$
where $L_Y:=\rho_Y^*L-D_Y$ and $S_{L_Y}(E):=\frac{1}{(L_Y^n)}\int_0^{\tau_{(Y,B_Y)}(E)}\langle (L_Y-xE )^n\rangle$ for $\tau_{(Y,B_Y)}(E):=\sup\{x\geq 0\, : \,\langle (L_Y-xE)^n \rangle\geq 0\}$ is the usual expected vanishing order of $L_Y$ along $E$. Indeed if $E$ is a prime divisor on $Z\geq Y$ then it is immediate to check that
$$
\frac{A_X(E)-\ord_E(\bD)}{S_\bD(E)}=\frac{A_{(Y,B_Y)}(E)}{S_{L_Y}(E)},
$$
which clearly leads to $\delta_\bD\leq \delta_{(Y,B_Y)}$. Then, if $E$ is a prime divisor on $Z\geq X$ but $Z$ does not dominate $Y$, then one can consider the strict trasform $E^s$ of $E$ on $W\geq Y, W\geq Z$ where $W$ is given by the graph of the birational morphism $Y\dashrightarrow Z$. Thus $E^s$ and $E$ define the same valuation in $X^\divv$ (see \cite[subsection B.5]{BBJ15}) and we obtain
$$
\frac{A_X(E)-\ord_E(\bD)}{S_\bD(E)}=\frac{A_{(Y,B_Y)}(E^s)}{S_{L_Y}(E^s)}.
$$
The equality $\delta_\bD=\delta_{B_Y}(Y,L_Y)$ follows.\newline
Similarly
$$
\tilde{\delta}_\bD=\inf_{E/Y}\frac{A_{(Y,B_Y)}(E)+\ord_E(D_Y)}{S_{L_Y}(E)+\ord_E(D_Y)}.
$$
\end{remark}

The following result can be compared to \cite[Proposition 4.14]{Tru20c}.
\begin{prop}
\label{prop:deltacompa}
Let $\bD\in\bDiv_L(X)$ such that $(X,\bD)$ is klt and let $\lct:=\lct_X(\bD)$. 
\begin{itemize}
    \item[i)] If $\tilde{\delta}_\bD <1$ then
    \begin{equation}
        \label{eqn:The1}
        \delta_\bD\leq \tilde{\delta}_\bD\leq \delta_\bD\frac{\lct}{\delta_\bD+\lct-1}.
    \end{equation}
    \item[ii)] If $\tilde{\delta}_\bD\geq 1$ then
    \begin{equation}
        \label{eqn:The2}
        \delta_\bD\geq \tilde{\delta}_\bD\geq \delta_\bD \frac{\lct}{\delta_\bD+\lct-1}.
    \end{equation}
\end{itemize}
In particular $\delta_\bD\geq 1$ (resp. $\delta_\bD>1$) if and only if $\tilde{\delta}_\bD\geq 1$ (resp. $\tilde{\delta}_\bD> 1$).
\end{prop}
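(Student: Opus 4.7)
My plan is to reduce the whole statement to a pointwise algebraic comparison on each prime divisor $E$ over $X$, followed by a straightforward infimum argument exploiting the monotonicity of a single one-variable function. I fix such an $E$ and abbreviate $a := A_X(E)$, $b := \ord_E(\bD)$, $s := S_\bD(E)$. The defining property of $\lct = \lct_X(\bD)$ gives the a priori bound $b \leq a/\lct$. An elementary calculation then yields the key identity
\begin{equation*}
  \frac{a}{s+b} - \frac{a-b}{s} \;=\; \frac{b\,(s+b-a)}{s(s+b)},
\end{equation*}
whose sign is that of $1-(a-b)/s$. This already produces the first pointwise comparison: $(a-b)/s \leq a/(s+b)$ when $(a-b)/s \leq 1$, with the reverse inequality when $(a-b)/s \geq 1$.

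Setting $t := (a-b)/s$, one rewrites $a/(s+b) = (ts+b)/(s+b)$ and views this as a function of $b$ on the interval $[0,\, ts/(\lct-1)]$, the upper endpoint being precisely the constraint $b \leq a/\lct$. The derivative $s(1-t)/(s+b)^2$ shows this function is monotone in $b$, attaining the extremal value $t\lct/(t+\lct-1)$ at the endpoint: a maximum when $t<1$ and a minimum when $t>1$. I denote $f(x) := x\lct/(x+\lct-1)$, which is strictly increasing on $\R_{>0}$, fixes $1$, and satisfies $f(x)\lessgtr x$ according as $x\lessgtr 1$ (equivalently, $x\lct - (x+\lct-1) = (x-1)(\lct-1)$). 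This packages the second pointwise ingredient: $a/(s+b) \leq f(t)$ if $t \leq 1$ and $a/(s+b) \geq f(t)$ if $t \geq 1$.

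With these two pointwise ingredients, case (i) is handled in two steps. If $\tilde{\delta}_\bD < 1$ then a minimizing sequence for $\tilde{\delta}_\bD$ consists, eventually, of $E$'s with $t < 1$; applying the first pointwise inequality to such $E$'s produces $\delta_\bD \leq \tilde{\delta}_\bD$. Knowing now that $\delta_\bD < 1$, I pick a minimizing sequence for $\delta_\bD$ (so eventually $t < 1$) and the second pointwise inequality combined with continuity of $f$ delivers $\tilde{\delta}_\bD \leq f(\delta_\bD)$, which is the right-hand side of (\ref{eqn:The1}). Case (ii) is dual: $\tilde{\delta}_\bD \geq 1$ forces $t \geq 1$ for \emph{every} $E$, so the reversed pointwise inequalities together with the monotonicity $f(t) \geq f(\delta_\bD)$ yield both estimates in (\ref{eqn:The2}) by taking infima. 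Finally, the equivalences $\delta_\bD \gtreqless 1 \Leftrightarrow \tilde{\delta}_\bD \gtreqless 1$ drop out by combining (i)--(ii) with the factorization $\delta_\bD\lct - (\delta_\bD + \lct - 1) = (\delta_\bD - 1)(\lct - 1)$.

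There is no real obstacle here: the argument is one-variable calculus, and the only conceptual point is noticing that the log canonical threshold bound $b \leq a/\lct$ is \emph{exactly} what aligns the endpoint of the $b$-interval with the right-hand sides appearing in (\ref{eqn:The1}) and (\ref{eqn:The2}); the rest is careful bookkeeping of which inequalities reverse in the two regimes.
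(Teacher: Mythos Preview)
Your proof is correct and follows essentially the same approach as the paper's: both reduce to a pointwise comparison of the two ratios $a/(s+b)$ and $(a-b)/s$ via the monotonicity of a one-variable function, and both use the bound $b\le a/\lct$ to obtain the extremal value $t\lct/(t+\lct-1)$. Your packaging (fixing $t$ and viewing $(ts+b)/(s+b)$ as a function of $b$) is a cosmetic variant of the paper's (viewing $f_E(x)=(a-x)/(s+b-x)$ as a function of $x$), and your treatment of case~(ii)---bounding every $E$ directly rather than passing through a minimizing sequence for $\tilde{\delta}_\bD$---is arguably a shade cleaner.
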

\begin{proof}
For any prime divisor $E$ over $X$, set $a_E:=A_X(E), b_E:=S_\bD(E)+\ord_E \bD$, $\nu_E:=\ord_E\bD$, and define $f_E:[0,\ord_E\bD]\to \R_{>0}, f_E(x):=\frac{a_E-x}{b_E-x}$. By definition
\begin{gather*}
    \delta_\bD=\inf_{E/X} f_E(\nu_E)\\
    \tilde{\delta}_\bD=\inf_{E/X} f_E(0).
\end{gather*}
A simple analysis then shows that $f_E$ is strictly increasing (resp. strictly decreasing, constant) iff and only if $f_E(0)>1$ (resp. $f_E(0)<1, f_E(0)=1$). The first inequalities in (\ref{eqn:The1}), (\ref{eqn:The2}) immediately follows.\newline
Assume $\delta_\bD\leq\tilde{\delta}_\bD<1$, and let $\{E_k\}_{k\in\N}$ be a sequence of prime divisors over $X$ such that $1>\delta_k:=f_{E_k}(\nu_{E_k})\searrow \delta_\bD$. Setting $\tilde{\delta}:=\delta_\bD$, we then have
$$
\frac{a_{E_k}}{b_{E_k}}= \delta_k+(1-\delta_k)\frac{\nu_{E_k}}{b_{E_k}}\leq \delta_k+\frac{a_{E_k}}{b_{E_k}}\frac{1-\delta_k}{\lct}
$$
since $a_{E_k}=\delta_k b_{E_k}+(1-\delta_k)\nu_{E_k}$ and since $\nu_{E_k}\leq a_{E_k}/\lct$ by \cite[Theorem B.5]{BBJ15}. Thus,
$$
\tilde{\delta}\leq \frac{a_{E_k}}{b_{E_k}}\leq \delta_k\frac{\lct}{\delta_k+\lct-1},
$$
and the second inequality in (\ref{eqn:The1}) follows letting $k\to +\infty$.\newline
Similarly, assuming $1\leq \tilde{\delta}_\bD\leq \delta_\bD$, let $\{E_k\}_{k\in\N}$ be a sequence of prime divisors over $X$ such that $\tilde{\delta}_k:=f_{E_k}(0)\searrow \tilde{\delta}_\bD$. Setting $\delta:=\delta_\bD$, we then have
$$
\delta\leq \frac{a_{E_k}-\nu_{E_k}}{b_{E_k}-\nu_{E_k}}=1+(\tilde{\delta}_k-1)\frac{b_{E_k}}{b_{E_k}-\nu_{E_k}}
$$
since $a_{E_k}=\tilde{\delta}_k b_{E_k}$. Thus
$$
\tilde{\delta}_k-1\geq (\delta-1)\frac{b_{E_k}-\nu_{E_k}}{b_{E_k}}\geq (\delta-1)\Big(1-\frac{\tilde{\delta}_k}{\lct}\Big)
$$
where the last inequality follows from $\nu_{E_k}\leq a_{E_k}/\lct= \tilde{\delta}_k b_{E_k}/\lct$. Therefore an easy calculation leads to the second inequality in (\ref{eqn:The2}) and concludes the proof.
\end{proof}

\subsubsection{Useful inequalities}
\begin{prop}
\label{prop:Fujita}
Let $\bD\in\bDiv_L(X)$ and let $E$ be a prime divisor over $X$. Then
\begin{equation}
    \label{eqn:KO}
    \tau_\bD(E)\geq \frac{n+1}{n} S_\bD(E).
\end{equation}
\end{prop}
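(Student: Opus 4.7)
The plan is to reduce the inequality to the classical Fujita inequality on finite birational models of $X$ via approximation, and then pass to the limit using Lemma~\ref{lem:Dini}.

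First I would pick an increasing sequence $Y_{k+1}\geq Y_k\geq X$ as in Lemma~\ref{lem:Dini}, so that $\langle (L-D_{Y_k})^n\rangle\to \langle (L-\bD)^n\rangle$. For each $k$, the numerical class $L_{Y_k}:=\rho_{Y_k}^*L-D_{Y_k}\in N^1(Y_k)$ is big and nef by conditions (ii)--(iii) of Definition~\ref{defi:Lpositive} (well-defined even when $D_{Y_k}$ is a formal countable sum, as its class in $N^1(Y_k)$ is the limit of its finite truncations in the big cone). Choosing $Z\geq Y_k$ on which $E$ is realized as a prime divisor, with morphism $\pi:Z\to Y_k$, we have
\[
\vol_Z(\pi^*L_{Y_k}-xE)=\langle (L-D_{Y_k}-xE)^n\rangle,
\]
which identifies $S_{D_{Y_k}}(E)$ and $\tau_{D_{Y_k}}(E)$ with the classical expected vanishing order and pseudoeffective threshold of the big class $L_{Y_k}$ along the divisorial valuation $\ord_E$.

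Next I would invoke the classical Fujita inequality: for any big class $\alpha\in N^1(Z)$ on a smooth projective variety $Z$ and any prime divisor $E$ on $Z$,
\[
\tau_\alpha(E)\geq \tfrac{n+1}{n}\,S_\alpha(E).
\]
This is due to K.~Fujita (its proof combining the concavity of $x\mapsto \vol(\alpha-xE)^{1/n}$ from Brunn--Minkowski with the identity $-\tfrac{d}{dx}\vol(\alpha-xE)=n\,\vol_{Z|E}(\alpha-xE)$) and, although originally stated for $\Q$-line bundles, extends at once to big $\R$-classes by continuity of $\vol_Z$ and $\vol_{Z|E}$ on the big cone. Applied to $\alpha=\pi^*L_{Y_k}$, this yields
\[
\tau_{D_{Y_k}}(E)\geq \tfrac{n+1}{n}\,S_{D_{Y_k}}(E)
\]
for every $k$. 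Passing to the limit using Lemma~\ref{lem:Dini}, which provides $\tau_{D_{Y_k}}(E)\searrow \tau_\bD(E)$ and $S_{D_{Y_k}}(E)\to S_\bD(E)$, gives the desired inequality.

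The main obstacle is ensuring that the classical Fujita inequality transfers cleanly to the $\R$-class setting arising from generalized $b$-divisors, where $D_{Y_k}$ may be an infinite formal sum of prime divisors. This is a routine consequence of the continuity of the volume function on the big cone (and, if needed, a further $\Q$-approximation of the coefficients of $D_{Y_k}$). A more intrinsic alternative would bypass the approximation entirely and argue directly in the Riemann--Zariski framework, combining the concavity of $x\mapsto \langle (L-\bD-xE)^n\rangle^{1/n}$ with a suitable differentiability formula for the positive intersection product; this is more elegant but requires additional care to justify the derivative.
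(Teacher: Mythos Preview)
Your approach is essentially the same as the paper's: reduce to the case $\bD=D_Y$ for a $\Q$-divisor on some $Y\geq X$, establish the Fujita inequality there, and pass to the general case via Lemma~\ref{lem:Dini}. The paper does not cite Fujita's result as a black box but reproves it in the positive intersection product setting, following \cite[Proposition~2.1]{Fuj19}: one shows that $F(y):=\langle (L_Y-yE)^{n-1}\rangle\cdot E$ satisfies the monotonicity $F(x_0)\geq (x_0/x)^{n-1}F(x)$ for $x_0\leq x$ (from superadditivity of the positive intersection product), identifies $S_{D_Y}(E)$ with the barycenter $b=\int_0^\tau yF(y)\,dy\big/\int_0^\tau F(y)\,dy$, and concludes $b\leq \tfrac{n}{n+1}\tau$ by a moment computation. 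One caution about your parenthetical: the concavity of $x\mapsto \vol(\alpha-xE)^{1/n}$ from Brunn--Minkowski gives the \emph{opposite} inequality $S_\bD(E)\geq \tau_\bD(E)/(n+1)$ (this is exactly the argument of Proposition~\ref{prop:OtherIneq}), not the Fujita bound; the actual mechanism is the monotonicity of $y\mapsto y^{1-n}F(y)$ just described. Since you are citing the inequality rather than reproving it this does not break your argument, but the sketch should be corrected.
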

\begin{proof}
The proof is strongly inspired by \cite[Proposition 2.1]{Fuj19}.\newline
Let first assume that $\bD=D_Y$ for a $\Q$-divisor $D_Y$ where $Y\overset{p_Y}{\geq} X$. \newline 
Set $\tau:=\tau_{D_Y}(E)$ and fix $0<x_0\leq x<x_1<\tau$. Then by the superadditivity of the positive intersection product (Theorem \ref{thm:PIP}), observing that $x_0\leq x_0+(1-\frac{x_0}{x})x_1<\tau$, we have
\begin{multline}
    \label{eqn:Ineqq}
    \langle (L_Y-x_0 E)^{n-1} \rangle\geq \bigg\langle \Big((L_Y-x_0 E-\big(1-\frac{x_0}{x}\big)x_1 E\Big)^{n-1}\bigg\rangle=\\
    =\bigg\langle \Big(\frac{x_0}{x}(L_Y-xE)+(1-\frac{x_0}{x})(L_Y-x_1 E)\Big)^{n-1} \bigg\rangle\geq \big(\frac{x_0}{x}\big)^{n-1}\langle (L_Y-x E)^{n-1}\rangle.
\end{multline}
Thus, proceeding as in \cite[Proposition 2.1]{Fuj19}, we set $F:[0,\tau)\to \R$, $F(y):=\langle(L_Y-y E)^{n-1}\rangle\cdot E$, and
\begin{gather*}
    b:=\frac{\int_0^\tau y F(y) dy}{\int_0^\tau F(y)dy}
\end{gather*}
to obtain
\begin{equation*}
    0=\int_{-b}^{\tau-b}y F(y+b)dy\leq \int_{-b}^{\tau-b}y \big(\frac{y+b}{b}\big)^{n-1}F(b)dy=\frac{F(b)\tau^n}{n b^{n-1}}\big(\frac{n}{n+1}\tau-b\big)
\end{equation*}
where the inequality clearly follows from (\ref{eqn:Ineqq}). Hence $\tau\geq \frac{n+1}{n}b$, and (\ref{eqn:KO}) is a consequence of
$$
b=\frac{n\int_0^\tau \big(\int_0^y dx\big)F(y)dy}{\langle L_Y^n \rangle}=\frac{n\int_0^\tau\int_x^\tau F(y)dy dx}{\langle L_Y^n \rangle}=\frac{\int_0^\tau \langle(L_Y-xE)^n \rangle dx}{\langle L_Y^n \rangle}=S_{D_Y}(E),
$$
where we used $\langle (L_Y-xE)^n \rangle=n\int_x^\tau F(y)dy$ (subsection \ref{ssec:Vol}).\newline
Finally, the general case $\bD=\{D_Y\}_{Y\geq X}$ follows by approximation using Lemma \ref{lem:Dini}.
\end{proof}
\begin{prop}
\label{prop:OtherIneq}
Let $\bD\in\bDiv_L(X)$ and let $E$ be a prime divisor over $X$. Then
\begin{equation}
    \label{eqn:SS}
    \tau_\bD(E)\leq (n+1)S_\bD(E).
\end{equation}
\end{prop}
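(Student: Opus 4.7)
The plan is to mimic the structure of Proposition \ref{prop:Fujita}: first establish the inequality when $\bD=D_Y$ is a $\Q$-divisor on some $Y\overset{\rho_Y}{\geq}X$, and then pass to the general case via Lemma \ref{lem:Dini}. The key tool is the concavity of the normalized volume function on the big cone (a Brunn--Minkowski type inequality), as opposed to the direct superadditivity of the positive intersection product exploited in Proposition \ref{prop:Fujita}.

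For the first step, fix $Y\overset{\rho_Y}{\geq}X$ with $L_Y:=\rho_Y^*L-D_Y$ nef and big, and let $E$ be a prime divisor on some $Z\geq Y$ (the case of $E$ living on a model that does not dominate $Y$ is reduced to this by passing to a common model and taking the strict transform, as in Remark \ref{rem:Deltas}). Set $\tau:=\tau_{D_Y}(E)$ and consider
$$
f:[0,\tau]\longrightarrow \R_{\geq 0},\qquad f(x):=\vol_Z(\rho^*L_Y-xE)^{1/n}.
$$
By the Brunn--Minkowski inequality for volumes on the big cone (see e.g.~\cite[Theorem 11.4.9]{Laz04}), the function $f$ is concave on the open set where $L_Y-xE$ is big, and extends continuously to $[0,\tau]$ by continuity of the volume function on the pseudoeffective cone; in particular $f(0)=\langle L_Y^n\rangle^{1/n}$ and $f(\tau)=0$. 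Concavity of $f$ on $[0,\tau]$ then forces its graph to lie above the chord joining the endpoints, giving the pointwise estimate
$$
f(x)\geq f(0)\Big(1-\frac{x}{\tau}\Big).
$$
Raising to the $n$-th power and integrating,
$$
\langle L_Y^n\rangle\cdot S_{D_Y}(E)=\int_0^\tau \vol_Z(\rho^*L_Y-xE)\,dx\geq \langle L_Y^n\rangle\int_0^\tau\Big(1-\frac{x}{\tau}\Big)^n dx=\langle L_Y^n\rangle\cdot\frac{\tau}{n+1},
$$
which yields $\tau_{D_Y}(E)\leq(n+1)\,S_{D_Y}(E)$.

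For the second step, I would apply Lemma \ref{lem:Dini}: choosing an increasing sequence $Y_{k+1}\geq Y_k\geq X$ with $\langle (L-D_{Y_k})^n\rangle\to\langle(L-\bD)^n\rangle$, one obtains $\tau_{D_{Y_k}}(E)\searrow \tau_\bD(E)$ and $S_{D_{Y_k}}(E)\to S_\bD(E)$, so passing to the limit in the inequality from the first step gives $\tau_\bD(E)\leq (n+1)S_\bD(E)$. If the supports of the $D_{Y_k}$ are countable rather than finite, one performs an auxiliary truncation by finite-support $\R$-subdivisors and uses the continuity of $\vol_{Y_k}$ on $N^1(Y_k)$ to reduce to the $\Q$-divisor case of step one; this is the same approximation strategy already employed at the end of Proposition \ref{prop:Fujita}.

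The only delicate ingredient is the concavity of $f$ on the \emph{closed} interval $[0,\tau]$, where $L_Y-\tau E$ lies on the boundary of the big cone and the volume function is only $C^0$. Once concavity on the interior is granted (this is classical), continuity of $\vol^{1/n}$ on the full pseudoeffective cone propagates the chord inequality to the endpoint, so no real obstacle arises here.
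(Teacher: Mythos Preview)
Your argument is correct. Both the paper and you arrive at the same pointwise estimate
\[
\langle (L-\bD-xE)^n\rangle\ \geq\ \Big(1-\frac{x}{\tau_\bD(E)}\Big)^n\langle(L-\bD)^n\rangle,
\]
and then integrate; the difference lies only in how this inequality is obtained. The paper works \emph{directly} with the general $\bD$ on $(X,\omega)$ via pluripotential theory: writing $\langle(L-\bD-xE)^n\rangle=\int_X MA_\omega(\psi_x)$ for the model type envelope $\psi_x=\sup\{u\leq\psi:\nu(u,E)\geq\nu(\psi,E)+x\}$, the linearity of Lelong numbers gives $(1-\tfrac{x}{\tau})\psi+\tfrac{x}{\tau}\psi_\tau\leq\psi_x$, and then \cite[Theorem~B]{DDNL18b} (a multilinearity/monotonicity bound for non-pluripolar Monge--Amp\`ere masses) yields the factor $(1-x/\tau)^n$. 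No approximation step is needed.

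Your route---Brunn--Minkowski/log-concavity of $\vol^{1/n}$ for a $\Q$-divisor $D_Y$, followed by Lemma~\ref{lem:Dini}---is the algebro-geometric counterpart of the same convexity phenomenon. It is slightly longer (two steps instead of one) but has the advantage of being self-contained for readers less comfortable with the relative pluripotential machinery, and it mirrors the structure of Proposition~\ref{prop:Fujita} as you intended. The paper's approach, by contrast, is more in keeping with its global strategy of working intrinsically on $(X,\omega)$ with model type envelopes.
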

\begin{proof}
Let $\psi\in \cM^+_D(X,\omega)$ associated to $\bD\in\bDiv_L(X)$. As seen in Lemma \ref{lem:Dini}, for any $x\in \big[0,\tau_\bD(E)\big)$,
$$
\langle (L-\bD-xE)^n \rangle=\int_X MA_\omega(\psi_x) 
$$
where $\psi_x:=\sup\big\{u\leq \psi\, : \, \nu(u,E)\geq \nu(\psi,E)+x\big\}$. In particular, by linearity of the Lelong numbers,
$$
\big(1-\frac{x}{\tau_\bD(E)}\big)\psi+\frac{x}{\tau_\bD(E)}\psi_{\tau_\bD(E)}\leq \psi_x.
$$
Thus \cite[Theorem B]{DDNL18b} gives
$$
\langle (L-\bD-xE)^n\rangle= \int_X MA_\omega(\psi_x)\geq \big(1-\frac{x}{\tau_\bD(E)}\big)^n\int_X MA_\omega(\psi)=\big(1-\frac{x}{\tau_\bD(E)}\big)^n \langle (L-\bD)^n\rangle.
$$
Integrating over $x\in [0,\tau_\bD(E)]$ clearly leads to (\ref{eqn:SS}) and concludes the proof.
\end{proof}

\subsection{Relative test curves}
\label{ssec:TestCurves}
The following definition of \emph{relative psh test curve} generalizes the one given in \cite{RWN14}.% (see also \cite[Definition 4.2]{DDNL18a}, \cite[Definitions 3.3, 3.5]{DX20}, \cite[Definitions 3.4, 3.5]{DZ22}).
\begin{defi}
A map $\R\ni s\to v_s\in \PSH(X,\omega,\psi)$ is a $[\psi]$\emph{-relative psh test curve} if
\begin{itemize}
    \item[i)] $s\to v_s(x)$ is concave, decreasing and upper semicontinuous for any $x\in X$;
    \item[ii)] there exists $C\in \R$ such that $v_s\equiv -\infty$ for any $s>C$;
    \item[iii)] $v_s$ increases a.e. to $\psi$ as $s\to -\infty$.
\end{itemize}
A $[\psi]$-relative psh test curve is then said
\begin{itemize}
    \item[a)] \emph{bounded} if there exists $C\in \R$ such that $v_s=\psi$ for any $s\leq C$;
    \item[b)] \emph{maximal} if either $v_s\in \cM(X,\omega)$ or $v_s\equiv -\infty$ for any $s\in\R$;
    \item[c)] \emph{finite energy} if
    \begin{equation}
        \label{eqn:EDZ}
        \int_{-\infty}^{s_\psi^+}\Big(\int_X MA_\omega(v_s)-\int_X MA_\omega(\psi)\Big)ds>-\infty
    \end{equation}
    where $s_\psi^+=\inf\big\{s\in\R\,:\, v_s\equiv -\infty\big\}$.
\end{itemize}
\end{defi}
Clearly, bounded $[\psi]$-relative psh test curves are finite energy since the integral in (\ref{eqn:EDZ}) can be restricted to the interval $(s_\psi^-,s_\psi^+)$ where
$$
s_\psi^-:=\sup\big\{s\in \R\, : \, v_s=\psi\big\}.
$$
Psh test curves has been introduced in \cite{RWN14} to construct psh rays by taking the \emph{inverse Legendre transform}. More precisely, given a $[\psi]$-relative psh test curve $\{v_s\}_{s\in \R}$, for any $x\in X$ we define
$$
\check{v}_t(x):=\sup_{s\in \R}\big\{v_s(x)+ts\big\}.
$$
The same proof given in \cite[Proposition 3.6]{DZ22} shows that $\check{v}$ is upper semicontinuous in $(t,x)\in(0,+\infty)\times X$. In particular $\check{v}_t\in \PSH(X,\omega)$ for any $t>0$ and it is immediate to see that
$$
\check{v}_t\leq \psi+t s_\psi^+.
$$
Vice versa given a $[\psi]$-relative psh ray $\{u_t\}_{t\geq 0}$ we can consider its Legendre transform
$$
\hat{u}_s:=\inf_{t\geq 0}\big\{u_t-ts\big\},
$$
to get the following correspondence.
\begin{prop}
\label{prop:Legendre}
The Legendre transform $\{v_s\}_{s\in \R}\to\{\check{v}_t\}_{t\geq 0}$ is a bijection map with inverse $\{u_t\}_{t\geq 0}\to \{\hat{u}_s\}_{s\in\R}$ between
\begin{itemize}
    \item[i)] $[\psi]$-relative psh test curves and $[\psi]$-relative psh rays;
    \item[ii)] maximal $[\psi]$-relative psh test curves and $[\psi]$-relative psh geodesic rays;
    \item[iii)] maximal bounded $[\psi]$-relative psh test curves and $[\psi]$-relative psh geodesic rays with minimal singularities. In this case we also have $\psi+s_\psi^- t\leq \check{v}_t\leq\psi+s_\psi^+ t$.
\end{itemize}
\end{prop}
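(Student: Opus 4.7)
The plan is to adapt the classical Ross--Witt Nystr\"om Legendre correspondence, as refined in \cite{DZ22} for the absolute setting $\psi=0$, to the $[\psi]$-relative framework. I would organize the argument in four steps: well-definedness of the forward map, well-definedness of the inverse map, the Legendre--Fenchel inversion identity, and the fine correspondences under (ii) and (iii).

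First I would verify that $\{v_s\}\mapsto\{\check v_t\}$ really takes $[\psi]$-relative psh test curves to $[\psi]$-relative psh rays with linear growth. Writing $\tau\in\D^*$ and $t=-\log|\tau|$, each $(x,\tau)\mapsto v_s(x)-s\log|\tau|$ is $p_1^*\omega$-psh on $X\times\D^*$. Taking the supremum over $s\le s_\psi^+$ and invoking the upper-semicontinuity of $(t,x)\mapsto\check v_t(x)$ (proven exactly as in \cite[Proposition 3.6]{DZ22}) shows that the associated $S^1$-invariant function on $X\times\D^*$ is $p_1^*\omega$-psh, i.e.\ $\{\check v_t\}_{t>0}$ is a psh ray. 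The bound $\check v_t\le \psi+ts_\psi^+$ follows from $v_s\le\psi$ for $s\le s_\psi^+$, proving both the $[\psi]$-relative property and the linear growth.

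Next I would verify that $\{u_t\}\mapsto\{\hat u_s\}$ lands in $[\psi]$-relative psh test curves. Concavity, decrease and upper-semicontinuity of $s\mapsto\hat u_s(x)$ are automatic since $\hat u_s$ is an infimum of a family of functions affine in $s$. Convexity of $t\mapsto u_t(x)$ (a consequence of $S^1$-invariance and psh-ness on $X\times\D^*$) means that Kiselman's minimum principle applies, giving $\hat u_s\in\PSH(X,\omega)$ for each $s$; the bound $\hat u_s\le\psi$ follows by choosing $t=0$ and using $u_0\preccurlyeq\psi$ (and, in fact, $\hat u_s\nearrow u_0$ as $s\to-\infty$, so one normalizes $u_0=\psi$ to land in the stated set). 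The linear growth of $U$ translates to $\hat u_s\equiv-\infty$ for $s$ large enough. Step three is then the standard Legendre--Fenchel duality: since $t\mapsto u_t(x)$ is convex and lower-semicontinuous, and $s\mapsto v_s(x)$ is concave and upper-semicontinuous, the double transform returns the original, yielding the claimed bijection in (i).

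For the refinements (ii) and (iii), I would proceed as follows. In (iii), if $\{v_s\}$ is maximal bounded with $v_s=\psi$ for $s\le s_\psi^-$, then taking $s=s_\psi^-$ in the sup gives $\check v_t\ge\psi+ts_\psi^-$, and combined with the upper bound from Step 1 one has $\psi+ts_\psi^-\le \check v_t\le\psi+ts_\psi^+$, i.e.\ $[\psi]$-relative minimal singularities. Conversely, minimal singularities of $\check v_t$ force $\hat u_s=\psi$ for $s\le s_\psi^-$ by direct computation with the Legendre transform. For (ii) I would argue that $\{v_s\}$ being maximal, i.e.\ each $v_s\in\cM(X,\omega)$, is equivalent to the family $\{\check v_t\}$ being the upper envelope of all $[\psi]$-relative psh sub-paths dominated by the same endpoint data: indeed the Perron--Bremermann envelope defining the $[\psi]$-relative geodesic can be dualized through the Legendre transform to give envelopes $v_s=P_\omega[\,\cdot\,](0)$, which precisely land in $\cM(X,\omega)$, and conversely any $v_s\in\cM(X,\omega)$ produces an $\check v_t$ satisfying the maximality property of geodesics on every bounded subinterval via the $[\psi]$-relative Kiselman-type argument.

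The main obstacle will be step (ii), since it requires matching two a priori different maximality notions: the model-type envelope property of each slice $v_s$, and the variational characterization of $[\psi]$-relative psh geodesic rays through the $d_1$-metric (Theorem \ref{thm:E1}). The delicate point is that singularities of $v_s$ can be more degenerate than those of $\psi$ itself, so one has to make essential use of the closure property of $\cM(X,\omega)$ under monotone limits (subsection \ref{ssec:KE}) and of the fact that Perron--Bremermann envelopes restrict well under Legendre duality; these two ingredients together allow one to invoke the argument of \cite[Theorem 4.6]{DZ22} in the $[\psi]$-relative setting and close the correspondence.
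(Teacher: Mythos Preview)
Your proposal is correct and follows essentially the same approach as the paper: the paper's proof consists of a single sentence deferring to \cite[Theorem 3.7]{DZ22} with the replacement $V_\theta\mapsto\psi$, and what you have written is precisely a detailed unpacking of that argument in the $[\psi]$-relative setting. Your observation about the normalization $u_0=\psi$ (needed so that $\hat u_s\nearrow\psi$ as $s\to-\infty$) is a genuine subtlety that the paper's one-line proof glosses over, and your citation of a Kiselman-type maximality argument for (ii) matches how \cite{DZ22} handles the corresponding step.
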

A $[\psi]$-relative psh geodesic rays $\{u_t\}_{t\geq 0}$ has minimal singularities if $u_t$ has $[\psi]$-relative minimal singularities for any fixed $t\geq 0$.
\begin{proof}
The proof is exactly the same as that in \cite[Theorem 3.7]{DZ22} replacing $V_\theta$ by $\psi$.% (see also \cite{RWN14}, \cite[Corollary 1.3]{DDNL18a} and \cite[Theorem 3.7]{DX20}).
\end{proof}
Next, we want to prove that the Legendre transform produces a bijection between maximal finite energy $[\psi]$-relative psh test curves and \emph{finite energy} $[\psi]$-relative psh geodesic rays, i.e. rays $\{u_t\}_{t\geq 0}$ such that $u_t\in \cE^1_\psi$ for any $t\geq 0$, and we want to express $\lim_{t\to +\infty}\frac{E_\psi(u_t)}{t}\in \R$ in terms of (\ref{eqn:EDZ}). Instead proceeding through a direct computation as in \cite[Theorem 3.9]{DZ22} (see also \cite[section 6]{RWN14}), we establish such bijection through an approximation argument.
\begin{lem}
\label{lem:N1}
Let $\{u_t\}_{t\geq 0}$ be a $[\psi]$-relative psh geodesic ray. Then there exists a sequence $\{u_t^k\}_{t\geq 0}$ of $[\psi]$-relative psh geodesic rays with minimal singularities such that
\begin{itemize}
    \item[i)] $u_t^k$ decreases to $u_t$ for any $t\in \R_{\geq 0}$;
    \item[ii)] $s_\psi^+(\hat{u}_s)=s_\psi^+(\hat{u}^k_s)$ for any $k\in\N$ and $MA_\omega(\hat{u}^k_s)\to MA_\omega(\hat{u}_s)$ weakly for any $s\leq s_\psi^+(\hat{u}_s)$.
\end{itemize}
\end{lem}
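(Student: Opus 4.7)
The natural approach is to pass to the test-curve side via the Legendre correspondence of Proposition \ref{prop:Legendre}. The geodesic ray $\{u_t\}_{t\geq 0}$ corresponds bijectively to a maximal $[\psi]$-relative psh test curve $\{\hat{u}_s\}_{s\in\R}$, and rays with minimal singularities correspond exactly to maximal \emph{bounded} test curves (item (iii) of the Proposition). So the problem reduces to constructing a decreasing sequence $\{v^k_s\}_{s\in\R}$ of maximal bounded $[\psi]$-relative psh test curves with $v^k_s \searrow \hat{u}_s$ pointwise in $s$, that leaves $\hat{u}_s$ unchanged for $s$ close to $s^+:=s_\psi^+(\hat{u})$. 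Then $u^k_t:=\check{v}^k_t$ will furnish the desired geodesic rays with minimal singularities and $u^k_t\searrow \check{\hat{u}}_t=u_t$.

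\textbf{Construction.} For each $k$ pick $a_k\searrow -\infty$ with $a_k<s^+$, and a width $\delta_k>0$ to be chosen. Since $\hat{u}_s\nearrow\psi$ a.e. as $s\to -\infty$, the slope $\partial_s\hat{u}_s\big|_{a_k}$ (which is $\leq 0$ and monotone) tends to $0$ as $k\to\infty$ in an appropriate averaged sense. I define $v^k_s$ piecewise:
\begin{itemize}
\item $v^k_s=\hat{u}_s$ for $s\geq a_k$;
\item for $s\in[a_k-\delta_k,a_k]$, let $\{w^k_\tau\}_{\tau\in[0,1]}$ be the $[\psi]$-relative psh geodesic joining $\psi$ (at $\tau=0$) and $\hat{u}_{a_k}$ (at $\tau=1$), and set $v^k_s:=w^k_{(s-a_k+\delta_k)/\delta_k}$;
\item $v^k_s=\psi$ for $s\leq a_k-\delta_k$.
\end{itemize}
Along the geodesic segment, $\tau\mapsto w^k_\tau(x)$ is affine in $\tau$, so $s\mapsto v^k_s(x)$ is affine on $[a_k-\delta_k,a_k]$. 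Concavity at the upper corner $s=a_k$ reduces to $\hat{u}_{a_k}-\psi\geq \delta_k\,\partial_s^{+}\hat{u}_s\big|_{a_k}$, which holds for $\delta_k$ small enough since the left-hand side is a non-positive function on $X$ and the right-hand side can be made arbitrarily small by shrinking $\delta_k$. Concavity at $s=a_k-\delta_k$ holds automatically because the slope changes from a non-positive function to $0$. If needed, apply the maximality projection $P_\omega[\cdot](0)$ sliceswise to ensure $v^k_s\in\cM(X,\omega)$ for every $s$; the resulting test curve remains concave in $s$, bounded (agreeing with $\psi$ for $s\leq a_k-\delta_k$), and dominates $\hat{u}_s$ pointwise.

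\textbf{Verification of (i) and (ii).} Since the truncations $v^k_s$ are maximal bounded $[\psi]$-relative psh test curves dominating $\hat{u}_s$, Proposition \ref{prop:Legendre}(iii) yields $[\psi]$-relative psh geodesic rays $u^k_t:=\check{v}^k_t$ with minimal singularities. The Legendre duality is order-reversing, so $v^k_s\geq v^{k+1}_s\geq\hat{u}_s$ gives $u^k_t\geq u^{k+1}_t\geq u_t$, while $v^k_s\searrow \hat{u}_s$ (as $a_k\to-\infty$) yields $u^k_t\searrow u_t$, proving (i). For (ii), since $v^k_s=\hat{u}_s$ for all $s\geq a_k$ and $a_k<s^+$, we get $s_\psi^+(\hat{u}^k)=s_\psi^+(\hat{u})=s^+$, and for each fixed $s\leq s^+$ we have $v^k_s=\hat{u}_s$ as soon as $a_k\leq s$; in particular $MA_\omega(\hat{u}^k_s)=MA_\omega(\hat{u}_s)$ for $k$ large (depending on $s$), which trivially gives the claimed weak convergence. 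In the slightly more delicate case where one insists on $\hat{u}^k_s$ differing from $\hat{u}_s$ for every $k$, the convergence $v^k_s\searrow\hat{u}_s$ is a decreasing limit of model type envelopes with $v^k_s\in\cM^+(X,\omega)$, and the continuity of the Monge-Amp\`ere operator along totally ordered families in $\cM(X,\omega)$ recalled in Section \ref{ssec:KE}(ii) yields $MA_\omega(v^k_s)\to MA_\omega(\hat{u}_s)$ weakly.

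\textbf{Main obstacle.} The technical heart is the construction of the $v^k$: ensuring that an affine-in-$s$ geodesic segment can be glued to the original test curve without destroying concavity in $s$ requires a careful choice of the interpolation window $\delta_k$ relative to the slope $\partial_s\hat{u}_s|_{a_k}$. A clean way to handle this is to note that concavity of $\hat{u}_s$ in $s$ with $\hat{u}_s\nearrow\psi$ forces $\partial_s\hat{u}_s|_{a_k}\to 0$ as $k\to\infty$ (in the sense that the secant slope $(\psi-\hat{u}_{a_k})/1$ dominates the tangent slope), so taking $\delta_k=1$ works for all $k$ sufficiently large. This is analogous to the arguments in \cite[\S 3]{DZ22} and \cite[\S 6]{RWN14}.
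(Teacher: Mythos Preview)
Your overall strategy---pass to the Legendre dual and truncate the test curve to make it bounded---is the right one and matches the paper's (which simply cites \cite[Proposition~3.8]{DZ22} and \cite[Theorem~4.5]{DL20}, replacing $V_\theta$ by $\psi$). But the construction you propose has two genuine errors.

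First, the claim that the psh geodesic segment satisfies ``$\tau\mapsto w^k_\tau(x)$ is affine in $\tau$'' is false. A psh geodesic corresponds to an $S^1$-invariant $p_1^*\omega$-psh function on $X$ times an annulus, so $\tau\mapsto w^k_\tau(x)$ is \emph{convex}, not affine (you may be confusing this with the affineness of $E_\psi$ along geodesics, Theorem~\ref{thm:E1}(iii)). Gluing a convex-in-$s$ segment between the constant piece $\psi$ and the concave curve $\hat u_s$ destroys concavity of $s\mapsto v^k_s(x)$ outright.

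Second, even if you replace $w^k_\tau$ by the honest linear interpolation $(1-\tau)\psi+\tau\,\hat u_{a_k}$ (which is $\omega$-psh and affine in $\tau$), your concavity check at the junction $s=a_k$ is backwards. The condition reads $(\hat u_{a_k}-\psi)/\delta_k\ge\partial_s^+\hat u_s|_{a_k}$; both sides are $\le 0$, and \emph{shrinking} $\delta_k$ makes the left side \emph{more negative}, so the inequality becomes \emph{harder}, not easier. The fallback $\delta_k=1$ also fails in general: for $\hat u_s(x)=\psi(x)-e^{c\,s}$ with $0<c<1$ (a legitimate concave, decreasing curve with $\hat u_s\nearrow\psi$) one needs $\delta_k\ge 1/c$, and since the decay rate $c$ may depend on $x$ with no positive lower bound, no uniform $\delta_k$ works. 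Applying $P_\omega[\cdot](0)$ slicewise afterwards does not rescue concavity, since Proposition~\ref{prop:Maximi} requires a concave input to begin with.

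The construction in the cited references sidesteps the junction problem entirely by working on the ray side: set $w^k_t:=\max(u_t,\psi-kt)$, a $[\psi]$-relative psh subray with minimal singularities, and let $\{u^k_t\}$ be its maximization (Proposition~\ref{prop:Maximi}). One checks directly that the associated test curve equals $\psi$ for $s\le -k$ (evaluate the Legendre infimum at $t=0$), so it is bounded; that $s^+_\psi$ is preserved once $k\ge -s^+_\psi$; and that the maximized test curves decrease to $\hat u_s$, whence the Monge--Amp\`ere convergence follows from the continuity along totally ordered families in $\cM(X,\omega)$ recalled in Section~\ref{ssec:KE}.
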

\begin{proof}
The proof coincides with the one given in \cite[Proposition 3.8]{DZ22} (which in turn is an adaptation of \cite[Theorem 4.5]{DL20}), replacing $V_\theta$ with $\psi$.
\end{proof}
\begin{lem}
\label{lem:N2}
Let $\{u_t\}_{t\geq 0}$ be a $[\psi]$-relative psh geodesic ray with minimal singularities and let $\{\psi_k\}_{k\in\N}\in \cM^+(X,\omega)$ be a decreasing sequence of \emph{algebraic} model type envelopes converging to $\psi$. Assume also $s_\psi^+=0$ where $s_\psi^\pm:=s_\psi^\pm(\hat{u}_s)$ and let $\{a_k\}_{k\in\N}$ be an increasing and diverging sequence of real numbers. Then there exists a sequence of $[\psi_k]$-relative psh geodesic rays $\{u_t^k\}_{t\geq 0}$ such that
\begin{itemize}
    \item[i)] $\{u_t^k\}_{t\geq 0}$ has $[\psi_k]$-relative minimal singularities and $s_{\psi_k}^-(\hat{u}_t^k)=a_k s_\psi^-$;
    \item[ii)] $u_t^k$ decreases to $u_t$ for any $t\in\R_{\geq 0}$;
    \item[iii)] $MA_\omega(\hat{u}_s^k)\to MA_\omega(\hat{u}^k_s)$ weakly for any $s< s_\psi^+$.
\end{itemize}
\end{lem}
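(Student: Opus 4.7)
\textbf{Proof plan for Lemma \ref{lem:N2}.}

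The plan is to use the Legendre correspondence from Proposition \ref{prop:Legendre} and construct the rays $\{u_t^k\}_{t\ge 0}$ at the dual level, by building for each $k$ a maximal bounded $[\psi_k]$-relative psh test curve $\{v_s^k\}_{s\in\R}$ with $s_{\psi_k}^-(v^k)=a_k s_\psi^-$ and $s_{\psi_k}^+(v^k)=0$, and then setting $u_t^k:=\check v^k_t$. The advantage is that the three conditions (i)--(iii) translate cleanly into pointwise statements about the test curves: (i) becomes a prescription of the plateau of $v_s^k$, (ii) becomes $v_s^k\searrow \hat u_s$ for every $s<0$, and (iii) becomes weak convergence of Monge--Amp\`ere measures of model type envelopes along this decreasing sequence.

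The construction I have in mind for $v_s^k$ combines a horizontal rescaling of $\hat u_s$ by the factor $a_k$ with a ``raising of the floor'' from $\psi$ to $\psi_k$. A natural candidate is
\[
v_s^k \;:=\; P_\omega\!\bigl[\,\hat u_{s/a_k}\,\bigr](\psi_k) \quad\text{for } a_k s_\psi^- < s \le 0,
\]
extended by $v_s^k=\psi_k$ for $s\le a_k s_\psi^-$ and $v_s^k\equiv -\infty$ for $s>0$, where $P_\omega[u](\phi):=(\sup\{w\in\PSH(X,\omega):w\le \phi,\;w\preccurlyeq u\})^*$ is the natural projection. Concavity and upper semicontinuity in $s$ are inherited from the same properties of $\hat u_{s/a_k}$ after taking the (monotone) envelope operation $P_\omega[\cdot](\psi_k)$; maximality (membership in $\cM(X,\omega)$) follows from the projection formalism. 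A second, equivalent route is to exploit that $\psi_k$ is algebraic: choosing a log resolution $\rho_k:Y_k\to X$ of $\psi_k$, the bijection of \cite[Lemma 4.6]{Tru20b} reduces the construction of $v_s^k$ to a classical construction of maximal bounded $\eta_k$-psh test curves on $Y_k$ in the (absolute, big) setting, where the Demailly-type regularization arguments behind \cite[Theorem 4.5]{DL20} and \cite[Proposition 3.8]{DZ22} (used in Lemma \ref{lem:N1}) apply directly.

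Once $v_s^k$ is in hand, Proposition \ref{prop:Legendre} immediately produces $u_t^k=\check v^k_t$ as a $[\psi_k]$-relative psh geodesic ray with minimal singularities satisfying the plateau condition in (i). Condition (ii) follows from $v_s^k\searrow \hat u_s$ for every fixed $s<0$: indeed $v_s^k\ge \hat u_s$ by construction (since $\hat u_s$ is an admissible competitor in the $P_\omega[\cdot](\psi_k)$ envelope), and as $k\to\infty$ the rescaling $s/a_k\to 0^-$ combined with $\psi_k\searrow\psi$ forces the envelope to shrink down to $\hat u_s$; Legendre duality then upgrades this to $u_t^k\searrow u_t$ pointwise. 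Condition (iii) is then an application of the continuity property of $MA_\omega$ along decreasing sequences in $\cM^+(X,\omega)$ recalled in subsection \ref{ssec:KE} (item (ii) there).

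The delicate point, and the one I expect to be the main obstacle, is verifying simultaneously the three properties of $v_s^k$: concavity/usc in $s$, the correct plateau value $\psi_k$ at $s=a_k s_\psi^-$, and the pointwise decrease $v_s^k\searrow \hat u_s$. A naive truncation or convex-combination of $\hat u_{s/a_k}$ and $\psi_k$ breaks one of these; the envelope $P_\omega[\cdot](\psi_k)$ preserves concavity in $s$ thanks to the concavity of $s\mapsto \hat u_{s/a_k}$ and the monotonicity of the projection, but the identification of the limit and the handover at $s=a_k s_\psi^-$ requires a careful pluripotential-theoretic analysis, most transparent after passage to the log resolution $Y_k$ where $\psi_k$ becomes (a constant plus) a smooth-plus-divisor weight and the construction reduces to a one-parameter family of global envelopes on $(Y_k,\eta_k)$.
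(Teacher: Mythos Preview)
Your overall strategy---build the rays via Legendre duality from explicit $[\psi_k]$-relative test curves---is exactly right, and it is also what the paper does. But your concrete candidate
\[
v_s^k \;=\; P_\omega\!\bigl[\hat u_{s/a_k}\bigr](\psi_k)
\]
does not work, and the failure is structural rather than technical.

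\textbf{The envelope collapses.} Since $\{\hat u_s\}$ is a \emph{maximal} test curve, each $\hat u_{s/a_k}$ is already a model type envelope with $\hat u_{s/a_k}\le\psi\le\psi_k\le 0$. For any model type envelope $u\le\phi\le 0$ one has $P_\omega[u](\phi)=u$: indeed $u$ is a competitor, while any competitor $w\preccurlyeq u$ with $w\le\phi\le 0$ satisfies $w\le u+\sup_X w\le u$. Hence your formula gives simply $v_s^k=\hat u_{s/a_k}$. Three things then go wrong at once: (a) as $s\to-\infty$ one gets $v_s^k\to\psi$, not $\psi_k$, so $\{v_s^k\}$ is not $[\psi_k]$-relative and (i) fails; (b) for fixed $s<0$, $s/a_k\nearrow 0$ and $v_s^k=\hat u_{s/a_k}\searrow \hat u_0$, not $\hat u_s$, so (ii) fails; (c) your justification ``$\hat u_s$ is an admissible competitor'' is backwards: since $s<s/a_k<0$ and the test curve is decreasing, $\hat u_s\ge\hat u_{s/a_k}$, i.e.\ $\hat u_s$ is \emph{less} singular, not more, so $\hat u_s\preccurlyeq\hat u_{s/a_k}$ is false in general. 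The log-resolution detour does not help, because after pullback you face the same issue on $(Y_k,\eta_k)$.

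\textbf{What the paper does instead.} There is \emph{no horizontal rescaling} of $s$. The paper takes a \emph{vertical} convex combination with an $s$-dependent weight,
\[
\tilde v_s^k \;=\; \bigl(1-\lambda_s^k\bigr)\psi_k + \lambda_s^k\,\hat u_s,\qquad \lambda_s^k=\max\Bigl(0,\,1+\tfrac{s}{a_k|s_\psi^-|}\Bigr),
\]
and then sets $v_s^k:=P_\omega[\tilde v_s^k](0)$. The weight $\lambda_s^k$ runs linearly from $0$ at $s=a_k s_\psi^-$ (forcing $v_s^k=\psi_k$ there, hence (i)) to $1$ at $s=0$; concavity in $s$ is manifest from the linearity of $\lambda_s^k$ and the concavity of $\hat u_s$. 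For fixed $s<0$ one has $\lambda_s^k\nearrow 1$, so $\tilde v_s^k$ is a small perturbation of $\hat u_s$ by a vanishing piece of $\psi_k$; the paper then checks $v_s^k\searrow \hat u_s$ and the Monge--Amp\`ere convergence via a mass estimate (bounding $\int_X MA_\omega(\tilde v_s^k)$ by $(\lambda_s^k)^n\int_X MA_\omega(\hat u_s)$ plus an error of order $|s|/a_k$) together with \cite[Theorem 1.3]{DDNL17b} and \cite[Lemma 2.6]{Tru19}. The moral: to raise the floor from $\psi$ to $\psi_k$ you must genuinely \emph{add} a small multiple of $\psi_k$ into the potential; a projection of the form $P_\omega[\cdot](\psi_k)$ cannot do this because the test curve already lies below $\psi_k$.
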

\begin{proof}
For any $k\in\N$ and for any $s\leq 0$ we set
$$
\tilde{v}_s^k:=\bigg(1-\max\Big(0,1+\frac{s}{a_k\lvert s_\psi^-\rvert}\Big)\bigg)\psi_k+\max\Big(0,1+\frac{s}{a_k\lvert s_\psi^- \rvert}\Big) v_s
$$
and $v_s^k:=P_\omega[\tilde{v}_s^k](0)$. It is immediate to check that $v_s^k$ is a bounded maximal $[\psi_k]$-relative psh test curve and that $s_{\psi_k}^-(v_s^k)=a_k s_{\psi}^-$ for any $k\in\N$. Setting $u_t^k:=\check{v}^k_t$ we easily obtain $(i)$ and $(ii)$ (regarding $(ii)$ see again \cite[Theorem 4.5]{DL20}). Moreover, for any $0>s>a_k s_\psi^-$ the sequence
$$
v_s^k=P_\omega\Big[-\frac{s}{a_k\lvert s_\psi^-\rvert}\psi_k+\big(1+\frac{s}{a_k\lvert s_\psi^-\rvert}\big)v_s\Big](0),
$$
belongs to $\cM^+(X,\omega)$ and it is decreasing. We claim that $v_s^k\searrow v_s$ and that $MA_\omega(v_s^k)\to MA_\omega(v_s)$ weakly. Indeed, since $\psi_k\geq v_s$ we immediately get $v_s^k\geq P_\omega[v_s](0)=v_s$, while on the other hand
\begin{equation*}
    \int_X MA_\omega(v_s^k)=\int_X MA_\omega (\tilde{v}_s^k)\leq \int_X MA_\omega\Big((1+\frac{s}{a_k\lvert s_\psi^- \rvert})v_s\Big)\leq \Big(1+\frac{s}{a_k\lvert s_\psi^- \rvert}\Big)^n \int_X MA_\omega(v_s)-\frac{s}{a_k}C
\end{equation*}
for a constant $C>0$, where the first equality follows from the fact that $u\in \cE_{P_\omega[u](0)}$ if $\int_X MA_\omega(u)>0$ \cite[Theorem 1.3]{DDNL17b} while the inequalities are consequence of the monotonicity of the non-pluripolar product \cite[Theorem 1.2]{WN17}. Therefore by \cite[Lemma 2.6]{Tru19} $v_s^k$ converges to a model type envelope $w_s\geq v_s$, $MA_\omega(v_s^k)\to MA_\omega(w_s)$ weakly and $\int_X MA_\omega(w_s)=\int_X MA_\omega(v_s)$. Thus the equality $v_s=w_s$, which concludes the claim and the proof, follows again from \cite[Theorem 1.3]{DDNL17b} since $\int_X MA_\omega(v_s)>0$ by the $s$-concavity. 
\end{proof}
\begin{prop}
\label{prop:EnergyFormula}
The Legendre transform $\{v_s\}_{s\in\R}\to \{\check{v}_t\}_{t\geq 0}$ induces a bijective map with inverse $\{u_t\}_{t\geq 0}\to \{\hat{u}_s\}_{s\in\R}$ between maximal finite energy $[\psi]$-relative psh test curves and finite energy $[\psi]$-relative psh geodesic rays. In this case
\begin{equation}
    \label{eqn:En}
    \lim_{t\to \infty}\frac{E_\psi(\check{v}_t)}{t}=\frac{1}{V_\psi}\int_{-\infty}^{s_\psi^+}\Big(\int_X MA_\omega(v_s)-\int_X MA_\omega(\psi)\Big)ds +s_\psi^+.
\end{equation}
\end{prop}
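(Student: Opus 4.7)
The plan is to first establish the formula \eqref{eqn:En} for bounded maximal $[\psi]$-relative psh test curves, where the corresponding geodesic ray $\{\check{v}_t\}$ has $[\psi]$-relative minimal singularities by Proposition \ref{prop:Legendre}(iii), and then to extend to the general finite energy case by approximation. In the bounded case Theorem \ref{thm:E1}(iii) tells us that $t\mapsto E_\psi(\check{v}_t)$ is affine, so the limit on the left-hand side of \eqref{eqn:En} simply equals the slope. To identify this slope with the right-hand side I would follow the approach of \cite[Theorem 3.9]{DZ22} and \cite[Section 6]{RWN14}: via Legendre duality, $MA_\omega(\check{v}_t)$ admits a fibered description in terms of the family $\{MA_\omega(v_s)\}$ through the argmax map $s^*(x,t)=\mathrm{argmax}_s\{v_s(x)+ts\}$, and a Fubini computation of $\frac{d}{dt}E_\psi(\check{v}_t)$, combined with integration by parts in $s$, produces the claimed slope formula using the boundary conditions $v_{s_\psi^-}=\psi$ and $v_{s}\equiv-\infty$ for $s>s_\psi^+$.

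For the general case, I would apply Lemma \ref{lem:N1} to approximate any finite energy $[\psi]$-relative psh geodesic ray $\{u_t\}$ from above by a decreasing sequence $\{u_t^k\}$ of $[\psi]$-relative psh geodesic rays with minimal singularities. Via the Legendre transform this corresponds to a decreasing sequence of bounded maximal test curves $\{\hat{u}_s^k\}$ with the same $s_\psi^+$ as $\hat{u}_s$ and with $MA_\omega(\hat{u}_s^k)\to MA_\omega(\hat{u}_s)$ weakly for every $s\leq s_\psi^+$. The formula already proven in the bounded case applies to each $u^k$, so it remains to pass to the limit on both sides: on the left the linearity of $E_\psi$ along each geodesic combines with the standard continuity of $E_\psi$ along decreasing sequences in $\cE^1_\psi$, while on the right the monotone convergence theorem, together with the total-mass convergence $\int_X MA_\omega(\hat{u}_s^k)\to\int_X MA_\omega(\hat{u}_s)$, yields convergence of the integral. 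Once \eqref{eqn:En} is proved in general, the equivalence between finite energy of the maximal test curve and finite energy of the associated geodesic ray becomes a direct consequence of the formula, which together with Proposition \ref{prop:Legendre}(ii) establishes the stated bijection.

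The hardest step will be the rigorous justification of the slope formula in the bounded case, because the prescribed singularities of $\psi$ prevent a direct appeal to the computations of \cite{DZ22} or \cite{RWN14}. One has to control the non-pluripolar mass across the polar locus of $\psi$ and verify that the fibered description of $MA_\omega(\check{v}_t)$ through the argmax map goes through in the $[\psi]$-relative setting. I expect this will require a second, nested approximation via Lemma \ref{lem:N2}, first reducing to bounded test curves whose reference envelope $\psi_k$ is an algebraic model type envelope, where the slope identity can be imported from the absolute case by resolution of singularities, and then letting $\psi_k\searrow \psi$ using the continuity of Lelong numbers and the stability of the non-pluripolar Monge-Amp\`ere operator along such decreasing sequences in $\cM_D^+$. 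Tracking the integrability condition \eqref{eqn:EDZ} through both nested approximations is the delicate point that I expect to require the most care.
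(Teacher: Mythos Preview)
Your proposal is correct and follows essentially the same route as the paper: the paper skips any direct Fubini/argmax computation and immediately handles the bounded case via the nested approximation you anticipate, namely Lemma~\ref{lem:N2} to reduce to algebraic $\psi_k$, resolution of singularities to import \cite[Theorem 3.9]{DZ22} from the absolute setting, and then $\psi_k\searrow\psi$; the general case is then obtained from Lemma~\ref{lem:N1} plus monotone convergence, exactly as you outline. The one technical point to watch in the $\psi_k\to\psi$ passage is that Lemma~\ref{lem:N2} produces test curves with $s_{\psi_k}^-=a_k s_\psi^-$, and the tail integral over $(a_k s_\psi^-,s_\psi^-)$ must be controlled by choosing $a_k\nearrow\infty$ slowly enough relative to $V_{\psi_k}-V_\psi$.
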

\begin{proof}
Without loss of generality we may and will assume $s_\psi^+=0$.\newline
Suppose first that $\{v_s\}_{s\in\R}$ is a bounded maximal $[\psi]$-relative psh test curve and let $\{u_t\}_{t\geq 0}$ the associated $[\psi]$-relative psh geodesic ray (Proposition \ref{prop:Legendre}). Let also $\{\psi_k\}_{k\in\N}$ be a decreasing sequence of algebraic model type envelopes converging to $\psi$, and denote by $\{v_s^k\}_{s\in\R}, \{u_t^k\}_{t\geq 0}$ respectively the bounded maximal $[\psi_k]$-relative psh test curve and its associated $[\psi_k]$-relative psh geodesic ray with minimal singularities given by Lemma \ref{lem:N2}.\newline
Then, for any $k\in \N$ fixed, let $\varphi_k\in \PSH(X,\omega)$ be a function with algebraic singularities encoded in $(\mathcal{I}_k,c_k)$ for a coherent ideal sheaf $\mathcal{I}_k$ and $c_k\in\Q_{>0}$. Thus, letting $\rho_k:Y_k\to X$ be a log resolution of $\mathcal{I}_k$, we have
$$
\rho_k^*(\omega+dd^c\varphi_k)=\eta_k+c_k[D_k]
$$
where $\rho_k^{-1}\mathcal{I}_k=\mathcal{O}_{Y_k}(-D_k)$ and $\eta_k$ is a smooth closed semipositive $(1,1)$-form such that $V_{\psi_k}=\int_{Y_k}\eta_k^n>0$. Moreover
$$
\lim_{t\to +\infty}\frac{E_{\psi_k}(u_t^k)}{t}=\lim_{t\to +\infty} \frac{E^k(\tilde{u}_t^k)}{t}
$$
where $\tilde{u}_t^k=(u_t^k-\varphi_k)\circ \rho_k\in \PSH(Y_k,\eta_k)$, and $E^k$ is the Monge-Ampère energy in $\mathcal{E}^1(Y_k,\eta_k)$ (see \cite[Lemma 4.6, Proposition 4.7]{Tru20b}). Furthermore it is not hard to check that $\{\tilde{u}_t^k\}_{t\geq 0}$ is a geodesic psh ray with minimal singularities in $\mathcal{E}^1(Y_k,\eta_k)$ whose associated bounded maximal test curve $\{\tilde{v}_s^k\}_{s\in\R}$ in $(Y_k,\eta_k)$ satisfies
$$
\int_{Y_k}MA_{\eta_k}(\tilde{v}_s^k)=\int_X MA_\omega(v^k_s)
$$
for any $s$ (see also \cite[Theorem 3.7]{DZ22}). Hence by \cite[Theorem 3.9]{DZ22} we get
\begin{multline}
    \label{eqn:Klevel}
    \lim_{t\to \infty}\frac{E_{\psi_k}(u_t^k)}{t}=\lim_{t\to \infty} \frac{E^k(\tilde{u}^k_t)}{t}=\\
    =\frac{1}{\int_{Y_k}\eta_k^n}\int_{-\infty}^{0}\bigg(\int_{Y_k}MA_{\eta_k}(\tilde{v}_s^k)-\int_{Y_k}MA_{\eta_k}(0)\bigg)ds=\frac{1}{V_{\psi_k}}\int_{-\infty}^{0}\bigg(\int_X MA_\omega(v_s^k)-\int_X MA_\omega(\psi_k)\bigg)ds.
\end{multline}
Then we want to let $k\to +\infty$ in (\ref{eqn:Klevel}). \newline
Using the linearity of the Monge-Ampère energy along psh geodesic path (\cite[Theorem 3.11]{Tru20b}) we obtain
$$
\lim_{k\to +\infty}\lim_{t\to +\infty}\frac{E_{\psi_k}(u_t^k)}{t}=\lim_{k\to +\infty}E_{\psi_k}(u_1^k)=E_\psi(u_1)=\lim_{t\to +\infty}\frac{E_\psi(u_t)}{t}
$$
where the second-to-last equality follows from \cite[Proposition 2.7]{Tru19}.\newline
On the right-hand side of (\ref{eqn:Klevel}) instead we can first split the integral over $(a_ks_\psi^-, s_\psi^-)$ and over $(s_\psi^-,0)$. Clearly over $(s_\psi^-,0)$ we can apply Lebesgue's Dominated Convergence Theorem, while for any $s\in (a_k s_\psi^-,s_\psi^-)$ we have
$$
\int_X MA_\omega(v_s^k)\geq\int_X MA_\omega(\psi)
$$
by \cite[Theorem 1.2]{WN17} since $v_s^k\geq v_s=\psi$. Thus, setting $V_k:=V_{\psi_k}$ and $V:=V_\psi$, we have
$$
\int_{a_k s_\psi^-}^{s_\psi^-}\Big(\int_X MA_\omega(\psi_k)-\int_X MA_\omega(v_s^k)\Big)\leq \lvert s_\psi^-\rvert (a_k-1)(V_k-V).
$$
Hence, choosing $a_k\nearrow +\infty$ slowly enough we deduce that the right-hand side in (\ref{eqn:Klevel}) converges to
$$
\frac{1}{V_\psi}\int_{-\infty}^0 \Big(\int_X MA_\omega(v_s)-\int_X MA_\omega(\psi)\Big)ds
$$
as $k\to +\infty$, and (\ref{eqn:En}) follows.\newline
Let now $\{v_s\}_{s\in \R}$ be a finite energy maximal $[\psi]$-relative psh test curve. Approximating the associated $[\psi]$-relative psh geodesic ray $\{u_t\}_{t\geq 0}$ by the sequence $\{u_t^k\}_{t\geq 0}$ of $[\psi]$-relative psh geodesic rays with minimal singularities given by Lemma \ref{lem:N1}, we can easily obtain (\ref{eqn:En}) for $\{u_t\}_{t\geq 0}$ by letting $k\to +\infty$ in
$$
\lim_{t\to +\infty} \frac{E_\psi(u_t^k)}{t}=\frac{1}{V_\psi}\int_{-\infty}^0 \Big(\int_X MA_\omega(v^k_s)-\int_X MA_\omega(\psi)\Big)ds
$$
thanks to the Monotone Convergence Theorem. Hence by (\ref{eqn:EDZ}) we deduce that $\{u_t\}_{t\geq 0}$ is a finite energy $[\psi]$-relative psh geodesic ray.\newline
Conversely, let $\{u_t\}_{t\geq 0}$ be a finite energy $[\psi]$-relative psh geodesic ray. Similarly to before, thanks to the approximation given by Lemma \ref{lem:N1} and formula (\ref{eqn:En}) it is easy to check that the associated maximal $[\psi]$-relative psh test curve $\{v_s\}_{s\in\R}$ is finite energy, which concludes the proof.
\end{proof}
\subsubsection{Slope of the $L$-functional in terms of psh test curves.}
\label{ssec:SlopeDZ}
Similarly to \cite{DZ22} for any fixed $\lambda\in \big(0,\lct_X(\psi)\big)$ we introduce the functional $L^\lambda:\cE_\psi\to \R$ as
\begin{gather*}
    L^\lambda(u):=\frac{-1}{2\lambda}\log \int_X e^{-2\lambda u}d\mu
\end{gather*}
Note that for $\lambda=1$ we recover the functional $L$ introduced in subsection \ref{ssec:Variational}.
\begin{prop}
\label{prop:L2}
Let $\{u_t\}_{t\geq 0}\subset \cE^1_\psi$ be a $[\psi]$-relative psh geodesic ray and let $\{v_s\}_{s\in\R}$ be the associated maximal $[\psi]$-relative psh test curve. Assume also $\lambda\in \big(0,\lct_X(\psi)\big)$. Then
\begin{gather}
    \label{eqn:L1}
    \liminf_{t\to +\infty}\frac{L^\lambda(u_t)}{t}=\sup\Big\{s\in\R\, :\, \int_X e^{-2\lambda v_s}d\mu<+\infty\Big\}.
\end{gather}
\end{prop}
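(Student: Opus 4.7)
The plan is to prove both inequalities separately, writing $s^\star := \sup\{s\in\R : \int_X e^{-2\lambda v_s}\,d\mu <+\infty\}$.

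For the lower bound $\liminf_{t\to+\infty} L^\lambda(u_t)/t\geq s^\star$, I would invoke the inverse Legendre identity $u_t(x)=\sup_{s'}\{v_{s'}(x)+ts'\}$, which immediately yields the pointwise inequality $u_t\geq v_s+ts$ for every $s\in\R$. Exponentiating with weight $-2\lambda$ and integrating against $d\mu$ gives
\begin{equation*}
\int_X e^{-2\lambda u_t}\,d\mu \leq e^{-2\lambda ts}\int_X e^{-2\lambda v_s}\,d\mu,
\end{equation*}
so that $L^\lambda(u_t)\geq ts+L^\lambda(v_s)$. For $s<s^\star$, $L^\lambda(v_s)$ is finite, so dividing by $t$ and letting $t\to+\infty$ gives $\liminf L^\lambda(u_t)/t\geq s$, and taking the sup over $s<s^\star$ completes this direction.

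For the reverse inequality I would apply a Berndtsson-type subharmonicity result to $\tilde v(x,\tau):=u_{-\log|\tau|}(x)$ on $X\times\D^*$: together with the decomposition $d\mu=e^{-2h}dV$ with $dd^c h=\omega$, and using the klt condition $\lambda<\lct_X(\psi)$ to absorb the excess $(1-\lambda)$-weighted piece into a genuinely psh correction, this yields convexity of $t\mapsto L^\lambda(u_t)$. The convexity upgrades the $\liminf$ to a true limit $s_0$, and standard convex-function reasoning (mirroring the proof of Proposition~\ref{prop:L}) gives $s_0=\sup\{c:\int_1^\infty e^{2\lambda(ct-L^\lambda(u_t))}\,dt<+\infty\}$. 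Supposing by contradiction $s_0>c>s^\star$, Fubini rewrites the convergent integral as
\begin{equation*}
\int_X\int_1^\infty e^{-2\lambda(u_t(x)-ct)}\,dt\,d\mu(x)<+\infty.
\end{equation*}
For each $x$, $f_x(t):=u_t(x)-ct$ is convex in $t$ with $\inf_t f_x=\hat u_c(x)=v_c(x)$ by Legendre duality. A chord inequality on $[t_0(x),t_0(x)+1]$ (with $t_0(x)$ a minimizer and $K(x):=f_x(t_0(x)+1)-v_c(x)$) then produces a pointwise lower bound $\int_1^\infty e^{-2\lambda f_x}\,dt\geq C(x)\,e^{-2\lambda v_c(x)}$. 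Integrating over $X$ and using $c>s^\star\Rightarrow \int_X e^{-2\lambda v_c}\,d\mu=+\infty$ would contradict finiteness of the Fubini integral, forcing $s_0\leq s^\star$.

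The main obstacle is producing a uniform positive lower bound on $C(x)$ for $\mu$-a.e.\ $x$, equivalently $\mu$-a.e.\ control of $K(x)$ and of the minimizer $t_0(x)$. The central a priori inequality is the a.e.\ bound $u_t(x)\leq \psi(x)+s^+_\psi\,t$, a consequence of the test-curve axioms ($v_{s'}\leq \psi$ and $v_{s'}\equiv -\infty$ for $s'>s^+_\psi$); this controls $K(x)\leq \psi(x)-v_c(x)+(t_0(x)+1)(s^+_\psi-c)$ and forces $t_0(x)<\infty$ precisely when $c<s^+_\psi$. The case $c\geq s^+_\psi$ is easier: then $u_t(x)-ct\to-\infty$ a.e.\ and the $t$-integral is directly $+\infty$. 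For $c\in(s^\star,s^+_\psi)$ the naive bound on $K(x)$ can degenerate near the pluripolar locus of $v_c$, so I would regularize by approximating $\{u_t\}$ from above by the minimal-singularity rays $\{u_t^k\}$ of Lemma~\ref{lem:N1} (for which Proposition~\ref{prop:Legendre}(iii) gives the global bound $u_t^k\leq\psi+s^+_\psi t$), apply the chord estimate to $f_x^k$, and pass to the limit using $u_t^k\searrow u_t$, $v_s^k\searrow v_s$, and monotone convergence $\int_X e^{-2\lambda v_s^k}\,d\mu\nearrow \int_X e^{-2\lambda v_s}\,d\mu$ to recover the estimate in the limit.
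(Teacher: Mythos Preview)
Your lower bound argument is correct and matches the paper's approach exactly.

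For the upper bound, however, your route is substantially more complicated than necessary and contains a genuine gap. You attempt to establish convexity of $t\mapsto L^\lambda(u_t)$ via a Berndtsson-type argument (which you only sketch, and which is not obvious for $\lambda\neq 1$), and then run a chord-inequality argument whose constant $C(x)$ you yourself flag as problematic. Your proposed fix via the regularization of Lemma~\ref{lem:N1} does not clearly resolve this: even for rays with $[\psi]$-relative minimal singularities, the minimizer $t_0^k(x)$ can be arbitrarily large, so the bound $K^k(x)\leq \psi(x)-v_c^k(x)+(t_0^k(x)+1)(s^+_\psi-c)$ is not uniform in $x$, and the monotone-convergence step at the end does not compensate.

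The paper's proof bypasses all of this with a single normalization. Replace $u_t$ by $u_t-s^+_\psi t$ and $v_s$ by $v_{s+s^+_\psi}$, so that $s^+_\psi=0$; both sides of \eqref{eqn:L1} shift by the same constant. Now $\check v_t(x)=\sup_{s\leq 0}\{v_s(x)+ts\}$ is a supremum over $s\leq 0$, hence $t\mapsto u_t(x)$ is \emph{decreasing} for every $x\in X$. This monotonicity replaces your chord estimate outright: given $p<\liminf_{t\to\infty} L^\lambda(u_t)/t\leq 0$ and $x$ with $v_p(x)>-\infty$, choose $t_0$ with $u_{t_0}(x)-pt_0\leq v_p(x)+1$; then for $t\in[t_0,t_0+1]$ one has $u_t(x)-pt\leq u_{t_0}(x)-pt_0-p\leq v_p(x)+1-p$, yielding the \emph{uniform} pointwise lower bound
\[
\int_0^\infty e^{2\lambda(pt-u_t(x))}\,dt\;\geq\; e^{2\lambda(p-1)}\,e^{-2\lambda v_p(x)}.
\]
Integrating in $x$ and combining with the Fubini identity $\int_X\int_0^\infty e^{2\lambda(pt-u_t)}\,dt\,d\mu<\infty$ forces $\int_X e^{-2\lambda v_p}\,d\mu<\infty$, i.e.\ $p\leq s^\star$. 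No convexity of $L^\lambda$, no regularization, and no control of $K(x)$ are needed; the paper works only with the $\liminf$ throughout.
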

\begin{proof}
The proof proceeds as that in \cite[Theorem 4.1]{DZ22} but we will report the details here as a courtesy to the reader.\newline
We first observe that we can assume $s^+:=s_\psi^+(v_s)=0$ without loss of generalities replacing $u_t$ by $u_t-s^+t$ and $v_s$ by $v_{s+s^+}$.
Set also $\bar{s}$ for the quantity on the right hand side of $(\ref{eqn:L1})$. Letting $s<\bar{s}$, we have $C:=\int_X e^{-2\lambda v_s}d\mu\geq \int_X e^{-2\lambda (u_t-st)}d\mu$ (as $v_s\geq u_t-st$), which clearly leads to
$$
\liminf_{t\to +\infty }\frac{L^\lambda(u_t)}{t}\geq s.
$$
Vice versa, choosing $p\in\R,\epsilon>0$ such that $p+\epsilon<\liminf_{t\to+\infty}\frac{L^\lambda(u_t)}{t}$, there exists $t_0\geq 0$ such that $\int_X e^{-2\lambda u_t}d\mu\leq e^{-2(p+\epsilon)\lambda t}$ for any $t\geq t_0$. Therefore, an easy calculation implies
\begin{equation}
    \label{eqn:F1}
    \int_X \int_0^{+\infty} e^{2\lambda(pt-u_t)}dtd\mu=\int_0^{+\infty}e^{2\lambda p\lambda t}\Big(\int_X e^{-2\lambda u_t}d\mu\Big)dt<+\infty.
\end{equation}
Next, by basic properties of Legendre transform, $\frac{\sup_X u_t}{t}=\frac{\sup_X (u_t-\psi)}{t}\nearrow s^+=0$ and $t\to u_t(x)$ is decreasing for any $x\in X$. Thus, $v_p\not \equiv -\infty$ as $p<\liminf_{t\to+\infty}\frac{L^\lambda(u_t)}{t}\leq 0$. Moreover, letting $x\in X$ such that $v_p(x)\neq -\infty$, there exists $t_0>0$ such that $v_p(x)+1\geq u_{t_0}(x)-pt_0$. In particular, as $t\to u_t(x)$ is decreasing, $v_p(x)-p+1\geq u_t(x)-pt$ and
$$
\int_0^{+\infty}e^{2\lambda(pt-u_t(x))}dt\geq \int_{t_0}^{t_0+1}e^{2\lambda (pt-u_t(x))}dt\geq e^{2\lambda(p-1)}e^{-2\lambda v_p(x)}.
$$
Hence, integrating over $X$, from (\ref{eqn:F1}) we deduce that $\int_X e^{-2\lambda v_p}d\mu<+\infty$, which clearly concludes the proof.
\end{proof}

\subsubsection{Projection of psh test curves and maximization}
Similarly to \cite[Proposition 4.6]{DDNL18a}, letting $\{v'_s\}_{s\in\R}$ be a $[\psi]$-relative psh test curve, we define
$$
v_s:=P_\omega[v'_s](0)
$$
for any $s\leq s_\psi^+(v'_s)$ and $v_s:\equiv-\infty$ if $s> s_\psi^+(v'_s)$. It is the relative \emph{maximization} of $\{v'_s\}_{s\in\R}$.
\begin{prop}
\label{prop:Maximi}
$\{v_s\}_{s\in \R}$ is a maximal $[\psi]$-relative psh test curve. Moreover, 
\begin{itemize}
    \item[i)] denoting with $\{u'_t\}_{t\geq 0}, \{u_t\}_{t\geq 0}$ the associated $[\psi]$-relative psh rays, $\{u_t\}_{t\geq 0}$ is the smallest $[\psi]$-relative psh geodesic ray which satisfies $u_t\geq u'_t$ for any $t\geq 0$;
    \item[ii)] if $\{u'_t\}_{t\geq 0}$ is finite energy, then
    $$
    \lim_{t\to +\infty} \frac{ E_\psi(u_t)}{t}=\lim_{t\to +\infty} \frac{ E_\psi(u'_t)}{t};
    $$
    \item[iii)] $U_\NA=U'_\NA$.
\end{itemize}
\end{prop}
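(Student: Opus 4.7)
My plan is to tackle the four assertions in order, exploiting the Legendre correspondence from Proposition \ref{prop:Legendre} together with the monotonicity and singularity-preservation properties of the envelope operator $P_\omega[\cdot](0)$. First, I would verify that $\{v_s\}_{s\in\R}$ is a maximal $[\psi]$-relative psh test curve. Concavity in $s$ follows because, for $s=\lambda s_1+(1-\lambda)s_2$, the combination $\lambda P_\omega[v'_{s_1}](0)+(1-\lambda)P_\omega[v'_{s_2}](0)$ is non-positive, $\omega$-psh, and more singular than $\lambda v'_{s_1}+(1-\lambda)v'_{s_2}\le v'_s$, hence competes in the envelope defining $v_s$; decreasingness in $s$ is inherited from monotonicity of $P_\omega[\cdot](0)$ against $\preccurlyeq$; the convergence $v_s\nearrow\psi$ as $s\to-\infty$ uses $v'_s\nearrow\psi$ and $P_\omega[\psi](0)=\psi$; upper semicontinuity follows from concavity together with the convention $v_s\equiv-\infty$ for $s>s_\psi^+$. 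Maximality is built in, since each $v_s$ is by construction a model type envelope (or identically $-\infty$).

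For (i), since $v'_s\le 0$ is itself a candidate in the envelope defining $v_s=P_\omega[v'_s](0)$, we have $v_s\ge v'_s$ pointwise, and inverse Legendre gives $u_t=\check v_t\ge\check{v'}_t=u'_t$. To prove minimality, let $\{\tilde u_t\}$ be any $[\psi]$-relative psh geodesic ray with $\tilde u_t\ge u'_t$ and $\{\tilde v_s\}$ its associated maximal test curve. Then $\tilde v_s=\hat{\tilde u}_s\ge\hat{u'}_s=v'_s$, so $v'_s\preccurlyeq\tilde v_s$; transitivity of $\preccurlyeq$ then makes every candidate $w\preccurlyeq v'_s$ with $w\le 0$ automatically compete in the envelope $P_\omega[\tilde v_s](0)=\tilde v_s$ (a model type envelope), yielding $v_s\le\tilde v_s$, and inverse Legendre delivers $u_t\le\tilde u_t$.

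For (ii), the analytic backbone is the identity $\int_X\MA_\omega(v_s)=\int_X\MA_\omega(v'_s)$ for all $s$: when $v_s$ has positive mass this follows from \cite[Theorem 1.3]{DDNL17b} applied to $v'_s\in\cE(X,\omega,v_s)$, and otherwise both sides vanish by monotonicity of the total non-pluripolar mass. Thus the integrands in (\ref{eqn:EDZ}) agree for the two curves. To transfer the slope formula (\ref{eqn:En}) from $\{u_t\}$ to $\{u'_t\}$, I would use the identity $u_t=\sup_{T\ge t}w^T_t$, where $w^T_t$ is the psh geodesic on $[0,T]$ joining $\psi$ to $u'_T$: monotonicity of geodesics in their endpoints makes $T\mapsto w^T_t$ increasing, its limit $w^\infty_t$ is a geodesic ray dominating $\{u'_t\}$, and the reverse bound $w^T_t\le u_t$ on $[0,T]$ (using $u_T\ge u'_T$ and again endpoint monotonicity) together with the minimality proved in (i) force $w^\infty_t=u_t$. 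Linearity of $E_\psi$ along each $w^T$ gives $E_\psi(w^T_t)=(t/T)E_\psi(u'_T)$, and continuity of $E_\psi$ along increasing sequences in $\cE^1_\psi$ forces $E_\psi(u_t)/t=\lim_{T\to\infty}E_\psi(u'_T)/T$, which is the required equality of asymptotic slopes.

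For (iii), one direction $U_\NA\ge U'_\NA$ is immediate from $u_t\ge u'_t$. For the reverse, I would argue that $U_\NA$ is an invariant of the model-singularity class of the ray: the value $U_\NA(\nu)$ at $\nu\in X^\divv$ is computed via the generic Lelong number of the $S^1$-invariant extension $v_a=u_{-\log|\tau|}+a\log|\tau|$ along a $\C^*$-equivariant divisor in a blowup of $X\times\C$, and the projection $v'_s\mapsto P_\omega[v'_s](0)$ operates within a fixed model-singularity class, which is precisely what this Lelong number detects (the relevant tool being the approximation of $\psi$ by algebraic model type envelopes and the continuity of Lelong numbers under the resulting decreasing limits, as recalled in subsection \ref{ssec:KE}). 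The main obstacle here is to make rigorous this invariance of $U_\NA$ under the maximization, which is where the fine pluripotential-theoretic content of subsection \ref{ssec:PSHRays} has to be deployed carefully.
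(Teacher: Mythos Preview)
Your argument for the test-curve properties and for (i) is close to the paper's, though one point is glossed over: the claim ``maximality is built in'' is not automatic. The operator $P_\omega[\cdot](0)$ is only guaranteed to land in $\cM(X,\omega)$ when applied to functions of \emph{positive} total Monge--Amp\`ere mass; the paper checks this by first observing (via concavity and \cite[Theorem 1.2]{WN17}) that $\int_X MA_\omega(v_s)>0$ for all $s<s^+$, and then handles $s=s^+$ by a limiting argument using \cite[Lemma 2.6]{Tru19}. Your proof should include this step.

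For (ii), your approximation by the geodesic segments $w^T$ joining $\psi$ to $u'_T$ is essentially the paper's argument (the paper glues $w^k$ on $[0,k]$ with $u'_t$ for $t>k$ to obtain genuine psh rays $u^k_t$, but the analytic content is the same). The mass identity $\int_X MA_\omega(v_s)=\int_X MA_\omega(v'_s)$ that you state is correct but plays no role in your argument, so it can be dropped.

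The genuine gap is in (iii). Your proposed route---arguing that $U_\NA$ is an invariant of some ``model-singularity class'' of the ray---is not made precise, and the reference to approximating $\psi$ by algebraic model type envelopes with Lelong numbers continuous under \emph{decreasing} limits is off-target. The paper's argument is short and reuses the work already done in (ii): for the glued rays $u^k_t$ (equal to $u'_t$ for $t\ge k$) one has $U^k_\NA=U'_\NA$ trivially, since the Lelong numbers defining $U_\NA$ are computed from the extension across $\tau=0$, which only sees the behaviour at large $t$. Since $u^k_t\nearrow u_t$, continuity of Lelong numbers along \emph{increasing} sequences of $\omega$-psh functions gives $U_\NA=\lim_k U^k_\NA=U'_\NA$. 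Your $w^T$, being defined only on $[0,T]$, does not directly carry this information; extending it by $u'_t$ for $t>T$ recovers exactly the paper's $u^k_t$ and makes the argument work.
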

\begin{proof}
The proof is similar to \cite[Proposition 4.6]{DDNL18a}.\newline
It is easy to check that $s\to v_s$ is decreasing, that $v_s\equiv -\infty$ for any $s>s^+(v'_s)$ and that $s\to v_s$ is upper semicontinuous. As $v'_s\to \psi$ when $s\to -\infty$ we also get
$$
\psi=P_\omega[\psi](0)\geq\lim_{s\to -\infty}P_\omega[v'_s](0)\geq \lim_{s\to -\infty}P_\omega(v'_s+C,0)=P_\omega(\psi+C,0)=\psi,
$$
i.e. $v_s\nearrow \psi$ as $s\to -\infty$. Moreover, exploiting the concavity of $s\to v'_s$, it follows that for $s=as_1+(1-a)s_2$, where $s_1,s_2\in \R, a\in [0,1]$,
$$
v_s\geq P_\omega(v'_s+C,0)\geq aP_\omega(v'_{s_1}+C,0)+(1-a)P_\omega(v'_{s_2}+C,0)
$$
for any $C>0$, where the last inequality follows from that fact that the right hand side is a candidate in the supremum defining $P_\omega(v_s'+C,0)$. Letting $C\to+\infty$ and taking the upper semicontinuous regularizations, we obtain $v_s\geq av_{s_1}+(1-a)v_{s_2}$, i.e. $s\to v_s$ is concave. Hence $\{v_s\}_{s\in\R}$ is a $[\psi]$-relative psh test curve, and it remains to prove that $v_s\in \cM^(X,\omega)$ for any $s\leq s^+:=s^+(v_s)$. We first observe that for $s_0\in\R$ small enough $\int_X MA_\omega(v_{s_0})>0$ as the latter increases to $\int_X MA_\omega(\psi)$ by \cite[Lemma 2.6]{Tru19}. In particular by the concavity of $s\to v_s$ and \cite[Theorem 1.2]{WN17} we obtain
$$
\int_X MA_\omega(v_s)\geq C_s \int_X MA_\omega(v_{s_0})
$$
for any $s<s^+$ where $C_s>0$. Observe that a similar calculation leads to the continuity of $s\to \int_X MA_\omega(v_s)$. Moreover it also leads to $\int_X MA_\omega(v'_s)>0$ for any $s< s^+$. Hence $v_s=P_\omega[v'_s](0)\in \cM^+(X,\omega)$ for any $s< s^+$ as immediate consequence of what said in subsection \ref{ssec:KE}. Finally, letting $s\searrow s^+$, $v_s$ decreases to a model type envelope again by \cite[Lemma 2.6]{Tru19}. Hence, by the upper semicontinuity of $s\to v_s$, we deduce that $v_{s^+}=\lim_{s\nearrow s^+}v_s\in \cM(X,\omega)$, which concludes the first part of the proof.\newline
Next, $(i)$ follows by an easy application of Proposition \ref{prop:Legendre}, while regarding $(ii)$ we proceed as in \cite[Proposition 3.13]{DZ22}. Namely, we consider the $[\psi]$-relative psh ray $\{u_t^k\}_{t\geq 0}$ that coincides with the $[\psi]$-relative psh geodesic path joining $\psi$ and $u'_k$ for $t\in[0,k]$ and with $u'_t$ for any $t>k$. By construction we then have $u_t\geq u_t^k\geq u_t'$ for any $t\geq 0$, and $k\to u_t^k$ is increasing. In particular $\{u_t^k\}_{t\geq 0}$ increases to a $[\psi]$-relative psh geodesic ray which is necessarily $\{u_t\}_{t\geq 0}$. Thus, for any $k$,
\begin{equation}
    \label{eqn:M1}
    \lim_{t\to+\infty}\frac{E_\psi(u_t)}{t}\geq\lim_{t\to +\infty} \frac{E_\psi(u'_t)}{t}= \lim_{t\to +\infty} \frac{E_\psi(u_t^k)}{t}.
\end{equation}
Then, the same proof of \cite[Theorem 3.11]{Tru20c} shows that $E_\psi$ is convex along $[\psi]$-relative psh rays. Therefore,
\begin{equation}
    \label{eqn:M2}
    \lim_{k\to +\infty}\lim_{t\to +\infty}\frac{E_\psi(u_t^k)}{t}\geq \lim_{k\to +\infty} E_\psi(u_1^k)=E_\psi(u_1)=\lim_{t\to +\infty}\frac{E_\psi(u_t)}{t}
\end{equation}
where the second-to-last equality follows from $u_1^k\nearrow u_1$ while the last equality is given by Proposition \ref{thm:E1}$.(iii)$. Combining (\ref{eqn:M1}) with (\ref{eqn:M2}) concludes the proof of $(ii)$.\newline
Finally $(iii)$ is an immediate consequence of the properties of the sequence of $[\psi]$-relative psh rays $\{u_t^k\}_{t\geq 0}$ as
$$
U_\NA=\lim_{k\to +\infty} U^k_\NA=U'_\NA
$$
since the first equality follows from the continuity of the Lelong numbers along increasing sequences of $\pi^*\omega$-psh functions.
\end{proof}
Next, let $\tilde{\psi},\psi\in \cM^+(X,\omega)$ such that $\tilde{\psi}\geq \psi$, and let $\{\tilde{v}_s\}_s$ be a maximal $[\tilde{\psi}]$-relative psh test curve. We then set
$$
v_s:=P_\omega[\psi](\tilde{v}_s),
$$
and we denote by $\{\tilde{u}_t\}_{t\geq 0}$ the $[\tilde{\psi}]$-relative psh geodesic ray associated to $\{\tilde{v}_s\}_{s\in\R}$. Denote also by $\tilde{\bD}\in\bDiv_L(X)$ the generalized $b$-divisor associated to $\tilde{\psi}$.
\begin{prop}
\label{prop:Proj}
$\{v_s\}_s$ is either $-\infty$ for any $s\in \R$ or it is a maximal $[\psi]$-relative psh test curve. Moreover if $\{\tilde{u}_t\}_{t\geq 0}$ satisfies $\tilde{U}_\NA=U_\NA^{(\Ll,\tilde{\bD})}$ for a test configuration class $\Ll$ such that $\Ll-\bD$ is big and $\Ll-L\times\PP^1$ is psef, then
$$
U_\NA=U_\NA^{(\Ll,\bD)}
$$
where $\{u_t\}_{t\geq 0}$ is the $[\psi]$-relative psh geodesic ray associated to $\{v_s\}_{s\in \R}$.
\end{prop}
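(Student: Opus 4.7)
The plan is to verify (1) directly from the defining properties of a maximal $[\psi]$-relative psh test curve, and to establish (2) by a two-sided inequality argument in the spirit of Proposition \ref{prop:Maximi}.

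For (1), the concavity, monotonicity and upper semicontinuity in $s$ of $v_s := P_\omega[\psi](\tilde v_s)$ are inherited from $\tilde v_s$ through the envelope projection: $P_\omega[\psi]$ is monotone, and for $s = as_1+(1-a)s_2$ an $a$-convex combination of admissible candidates for $P_\omega[\psi](\tilde v_{s_1})$ and $P_\omega[\psi](\tilde v_{s_2})$ is still $\leq \tilde v_s$ and $\preccurlyeq \psi$, hence a candidate for $P_\omega[\psi](\tilde v_s)$. The limit $v_s \nearrow \psi$ as $s\to-\infty$ reduces to the identity $P_\omega[\psi](\tilde\psi) = \psi$, which holds because $\psi$ is a candidate in the envelope while any $v \in \PSH(X,\omega)$ with $v \leq \tilde\psi$ and $v \preccurlyeq \psi$ satisfies $v \leq \psi + \sup_X v$ by the characterization of $\cM(X,\omega)$ in subsection \ref{ssec:KE}. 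When $v_s \not\equiv -\infty$, concavity together with the boundary value forces $\int_X MA_\omega(v_s) > 0$, and the recalled projection theorem from \cite{DDNL17b} then puts $v_s \in \cM^+(X,\omega)$, establishing maximality.

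For the inequality $U_\NA \leq U_\NA^{(\Ll,\bD)}$ in (2), the pointwise bound $v_s \leq \tilde v_s$ transfers via the inverse Legendre transform to $u_t \leq \tilde u_t$. Since $\tilde U_\NA = U_\NA^{(\Ll,\tilde\bD)} \leq \varphi_\Ll$ (Corollary \ref{cor:Abo} combined with Lemma \ref{lem:Indu} applied to the algebraic ray $\tilde U$), we obtain $U_\NA \leq \varphi_\Ll$, and the maximality clause of Corollary \ref{cor:Abo} applied to the $[\psi]$-relative psh ray $\{u_t\}$ yields $u_t \leq u_t^{(\Ll,\bD)} + O(1)$, hence $U_\NA \leq U_\NA^{(\Ll,\bD)}$.

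The reverse inequality $U_\NA \geq U_\NA^{(\Ll,\bD)}$ is the crux. The strategy is to identify the current-level counterpart of the fibrewise projection $P_\omega[\psi]$. Writing $T^{(\X,\Ll,\tilde\bD)} = \Omega + dd^c\tilde\Psi$ and $T^{(\X,\Ll,\bD)} = \Omega + dd^c\Psi$ for a common smooth form $\Omega \in c_1(\Ll)$, one compares the minimal-singularity currents $S^\Y_{\min} \in c_1(f^*\Ll - \CD_\Y)$ and $\tilde S^\Y_{\min} \in c_1(f^*\Ll - \tilde\CD_\Y)$ from the proof of Proposition \ref{prop:Current}: minimality together with $\CD_\Y \geq \tilde\CD_\Y$ forces $\tilde S^\Y_{\min} \succcurlyeq S^\Y_{\min} + [\CD_\Y - \tilde\CD_\Y]$, so $\tilde T^\Y \succcurlyeq T^\Y$ on $\X$, and passing to the infimum over $Y \geq X$ realizes $\Psi$ as the projection of $\tilde\Psi$ onto the class of $\Omega$-psh functions whose singularities dominate those prescribed by $\psi$. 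Combining this current-level identification with the slicewise correspondence of Proposition \ref{prop:Legendre} and the $\NA$-invariance under envelopes from Proposition \ref{prop:Maximi}, one concludes that the maximal $[\psi]$-relative test curve associated to $\{u_t\}$ dominates, up to a finite shift in $s$, the maximal curve $\{v_s^{(\Ll,\bD)}\}$, giving the desired inequality. The main obstacle is making rigorous this current-level identification $\Psi = P_\Omega[\mu_\X^*p_X^*\psi](\tilde\Psi) + O(1)$: it requires a delicate interchange of Lelong-number continuity along monotone sequences in $\cM_D(X,\omega)$ (subsection \ref{ssec:KE}) with the nested infimum over resolutions $Y \geq X$ underlying Proposition \ref{prop:Current}.
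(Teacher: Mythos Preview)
Your argument for the inequality $U_\NA \leq U_\NA^{(\Ll,\bD)}$ is correct and matches the paper. The issue is with the reverse inequality, where you attempt a direct current-level identification $\Psi = P_\Omega[\mu_\X^*p_X^*\psi](\tilde\Psi) + O(1)$ and openly flag that ``making rigorous this current-level identification'' is the main obstacle. You do not close this gap.

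The paper's proof avoids this difficulty entirely by first proving, as part of (1), the identity
\[
v_s \;=\; P_\omega[\psi](\tilde v_s) \;=\; P_\omega(\tilde v_s,\psi) \;=\; P_\omega\big[P_\omega(\tilde v_s,\psi)\big](0),
\]
i.e.\ that the projected curve is exactly the rooftop envelope $P_\omega(\tilde v_s,\psi)$, which is already a model type envelope. This identity is what drives the short argument for the reverse inequality: take a $[\psi]$-relative psh geodesic ray $\{u'_t\}$ with $U'_\NA = U_\NA^{(\Ll,\bD)}$ (obtained from Corollary~\ref{cor:Abo} and maximized via Proposition~\ref{prop:Maximi}); using Corollary~\ref{cor:Abo} for both $\bD$ and $\tilde\bD$ one can arrange $u_t \leq u'_t \leq \tilde u_t$, hence at the level of test curves $v_s \leq v'_s \leq \tilde v_s$. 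But $v'_s$ is also $\leq \psi$ (being $[\psi]$-relative), so $v'_s \leq P_\omega(\tilde v_s,\psi) = v_s$. Thus $v'_s = v_s$ for all $s$, and $U_\NA = U'_\NA = U_\NA^{(\Ll,\bD)}$.

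Note that this identity is also what makes the maximality claim in (1) clean: your assertion that ``the recalled projection theorem from \cite{DDNL17b} then puts $v_s \in \cM^+(X,\omega)$'' is not immediate for the operator $P_\omega[\psi](\cdot)$ applied to a general target; it becomes transparent once one knows $v_s = P_\omega[P_\omega(\tilde v_s,\psi)](0)$. So the missing step is the same in both parts: the simple rooftop identity $P_\omega[\psi](\tilde v_s) = P_\omega(\tilde v_s,\psi)$, which replaces your proposed (and incomplete) current-theoretic comparison by an elementary test-curve sandwich.
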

\begin{proof}
Set $s^+:=\sup\{s\in\R\, :\, P_\omega(\tilde{v}_s,\psi)\not\equiv -\infty\}\in \R\cup\{-\infty\}$. Clearly $s^+>-\infty$ if and only if $\{v_s\}_{s\in\R}$ is not identically $-\infty$. So wlog we will assume $s^+>-\infty$.\newline
We first claim that
\begin{equation}
    \label{eqn:Claim2}
    v_s=P_\omega\big[P_\omega(\tilde{v}_s,\psi)\big](0)=P_\omega(\tilde{v}_s,\psi)
\end{equation}
for any $s\leq s^+$. Indeed, for any $C>0$,
$$
P_\omega(\tilde{v}_s,\psi)\leq P_\omega(\tilde{v}_s,\psi+C)\leq P_\omega\big(P_\omega(\tilde{v}_s,\psi)+C,0\big)
$$
where the second inequality follows combining $P_\omega(\tilde{v}_s,\psi+C)\leq P_\omega(\tilde{v}_s,\psi)+C$ with $P_\omega(\tilde{v}_s,\psi)\leq \psi\leq 0$. Letting $C\to +\infty$ and taking the upper semicontinuity regularization we deduce that $ P_\omega(\tilde{v}_s,\psi)\leq v_s\leq P_\omega\big[(P_\omega(\tilde{v}_s,\psi)\big](0)$. However any $\omega$-function $u\preccurlyeq\psi$ actually satisfies $u\leq \psi+ \sup_X u$ since $\psi\in \cM(X,\omega)$ (subsection \ref{ssec:KE}), and similarly for $v_s$, $s\leq s^+$. Hence
$$
P_\omega\big(P_\omega(\tilde{v}_s,\psi)+C,0\big)=P_\omega(\tilde{v}_s,\psi),
$$
which concludes the proof of the claim (\ref{eqn:Claim2}). Moreover, similarly to the proof of Proposition \ref{prop:Maximi}, it is easy to check that $s\to v_s$ is decreasing, that $s\to v_s$ is upper semicontinuous and that $s\to v_s$ is concave. Hence $\{v_s\}_{s\in\R}$ is a maximal $[\psi]$-relative psh test curve.\newline
Next, letting $\Ll\in N^1(\FX_{\PP^1}^{\C^*})$ and $\{\tilde{u}_t\}_{t\geq 0}$ as in the hypothesis, Lemma \ref{lem:Indu} and Corollary \ref{cor:Abo} give
$$
U_\NA\leq U^{(\Ll,\bD)}_\NA.
$$
Then we consider $\{u'_t\}_{t\geq 0}$ $[\psi]$-relative psh ray such that $U'_\NA=U^{(\Ll,\bD)}_\NA$. Unless taking its maximization (Proposition \ref{prop:Maximi}) we can also assume that $\{u'_t\}_{t\geq 0}$ is a $[\psi]$-relative psh geodesic ray and that $ u_t\leq u'_t\leq \tilde{u}_t $ for any $t\geq 0$, Thus Proposition \ref{prop:Legendre} implies
$$
v_s\leq v'_s\leq \tilde{v}_s
$$
for any $s\in \R$, where we denoted by $\{v'_s\}_{s\in\R}$ the maximal $[\psi]$-relative psh test curve associated to $\{u'_t\}_{t\geq 0}$. On the other hand $v'_s\leq \psi$ for any $s\in\R$. Therefore $v'_s\leq P_\omega(\tilde{v}_s,\psi)$, and by (\ref{eqn:Claim2}) we conclude that $v'_s=v_s$ for any $s\in\R$. Hence
$$
U_\NA=U_\NA^{(\Ll,\bD)}.
$$
\end{proof}
We can now characterize the $\bD$-triviality of a test configuration $\Ll\in N^1(\FX_{\PP^1}^{\C^*})$ (see Definition \ref{defi:Triviality}), in terms of the maximal $[\psi]$-relative psh test curve associated through Propositions \ref{prop:Maximi}, \ref{prop:Proj}.
\begin{prop}
\label{prop:Dtriviality}
Let $\bD\in\bDiv_L(X)$ and let $\Ll\in N^1(\FX_{\PP^1}^{\C^*})$ be a test configuration. For any $m\gg 1$ big enough let also $\{v_{m,s}\}_{s\in\R}$ be the maximal $[\psi]$-relative psh test curve associated to $\Ll+m(X\times\{0\})$. Then $\Ll$ is $\bD$-trivial if and only if for any $s\in \R$, $v_{m,s}$ is either equal to $-\infty$ or equal to $\psi$.\newline
In particular an ample test configuration $(\X,\Ll)$ with associated maximal test curve $\{\tilde{v}_s\}_{s\in\R}$ defines a $\bD$-trivial ample test configuration class if and only if $P_\omega[\psi](\tilde{v}_s)$ is either equal to $-\infty$ or equal to $\psi$.
\end{prop}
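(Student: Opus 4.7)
The plan is to identify $\bD$-triviality of $\Ll$ with $J^{\NA}(\Ll;\bD)=0$ and then to compute this slope explicitly in terms of the maximal test curve $\{v_{m,s}\}$. From Definitions~\ref{defi:Triviality} and~\ref{defi:JandE}, $\Ll$ is $\bD$-trivial precisely when $J^{\NA}(\Ll;\bD)=0$. For $m\gg 1$, Propositions~\ref{prop:Legendre}, \ref{prop:Maximi} and~\ref{prop:Proj} identify $\{v_{m,s}\}_{s\in\R}$ with the Legendre transform of a $[\psi]$-relative psh geodesic ray $\{u^m_t\}_{t\geq 0}$ satisfying $U^m_\NA=U_\NA^{(\Ll+m(X\times\{0\}),\bD)}$, so Corollary~\ref{cor:SlopeEJ} yields $J^{\NA}(\Ll;\bD)=\lim_{t\to+\infty}J_\psi(u^m_t)/t$.

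The heart of the proof is a clean formula for this slope. Set $v_s:=v_{m,s}$ and $s^+:=\inf\{s:v_s\equiv -\infty\}$. Proposition~\ref{prop:EnergyFormula} expresses $\lim E_\psi(u^m_t)/t$ as a $ds$-integral over $(-\infty,s^+)$ of $V_\psi^{-1}\big(\int_X MA_\omega(v_s)-V_\psi\big)$ plus the boundary term $s^+$. For the remaining linear piece $V_\psi^{-1}\int_X(u^m_t-\psi)MA_\omega(\psi)$, the inverse Legendre transform $u^m_t(x)=\sup_s\{v_s(x)+ts\}$ shows that for every $x$ with $\psi(x)>-\infty$ the quotient $(u^m_t(x)-\psi(x))/t$ is non-decreasing in $t$ and converges to $s^*(x):=\sup\{s:v_s(x)>-\infty\}\in(-\infty,s^+]$. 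Monotone convergence then gives
\[
\lim_{t\to+\infty}\frac{1}{tV_\psi}\int_X(u^m_t-\psi)\,MA_\omega(\psi)=\frac{1}{V_\psi}\int_X s^*(x)\,MA_\omega(\psi)(x).
\]
The crucial observation is that for every $s<s^+$ one has $v_s\in\cM^+(X,\omega)$, so $\{v_s=-\infty\}$ is pluripolar and is not charged by the non-pluripolar measure $MA_\omega(\psi)$. Writing $s^*(x)=s^+-\int_{-\infty}^{s^+}\mathbf{1}_{\{v_s(x)=-\infty\}}\,ds$ and applying Fubini, $\int_X s^*\,MA_\omega(\psi)=s^+V_\psi$. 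Subtracting the two slope formulas, the boundary terms $s^+$ cancel and
\[
J^{\NA}(\Ll;\bD)=\frac{1}{V_\psi}\int_{-\infty}^{s^+}\Big(V_\psi-\int_X MA_\omega(v_s)\Big)\,ds.
\]

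The integrand is non-negative by the monotonicity of non-pluripolar total masses (see subsection~\ref{ssec:KE}), and vanishes at $s<s^+$ if and only if $\int_X MA_\omega(v_s)=V_\psi$. Since $v_s\in\cM(X,\omega)$ with $v_s\leq\psi$, this forces $v_s\in\cE(X,\omega,\psi)\cap\cM(X,\omega)=\{\psi\}$, because the projection $P_\omega[\cdot](0)$ sends every element of $\cE(X,\omega,\psi)$ to $\psi$ while fixing each model type envelope. Hence $J^{\NA}(\Ll;\bD)=0$ if and only if $v_s=\psi$ for a.e. $s<s^+$, and the monotonicity and upper semicontinuity of $s\mapsto v_s$ upgrade this to $v_{m,s}\in\{\psi,-\infty\}$ for every $s\in\R$. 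The concluding assertion for an ample test configuration $(\X,\Ll)$ follows by combining the first part with Proposition~\ref{prop:Proj} applied to $\tilde\psi=0$, which identifies $v_{m,s}$ with $P_\omega[\psi](\tilde v_s)$ up to a harmless shift $s\mapsto s-m$. The main difficulty is securing the monotone convergence for $(u^m_t-\psi)/t$ and the evaluation $\int_X s^*\,MA_\omega(\psi)=s^+V_\psi$ when $\{u^m_t\}$ does not have $[\psi]$-relative minimal singularities; both hinge on the fact that the non-pluripolar measure $MA_\omega(\psi)$ is carried by the complement of every pluripolar set $\{v_s=-\infty\}$ with $s<s^+$.
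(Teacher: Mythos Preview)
Your argument is correct and follows essentially the same route as the paper: both identify $\bD$-triviality with $J^{\NA}(\Ll;\bD)=0$, invoke Corollary~\ref{cor:SlopeEJ} to pass to $\lim_{t\to+\infty}J_\psi(u^m_t)/t$, and then combine Proposition~\ref{prop:EnergyFormula} with the identity $\lim_{t\to+\infty}\tfrac{1}{tV_\psi}\int_X(u^m_t-\psi)\,MA_\omega(\psi)=s^+$ to reduce the question to $\int_X MA_\omega(v_s)=V_\psi$ for all $s<s^+$, hence $v_s=\psi$.

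The only notable difference is in how you justify that last limit. The paper simply quotes \cite[Lemma 3.7]{Tru20b}, which gives $\int_X(u_t-\psi)\,MA_\omega(\psi)=V_\psi\sup_X(u_t-\psi)+O(1)$, and then uses the elementary Legendre fact $\sup_X(u_t-\psi)/t\to s^+$. Your derivation via the pointwise monotone convergence of $(u^m_t-\psi)/t$ to $s^*(x)$ and the Fubini computation $\int_X s^*\,MA_\omega(\psi)=s^+V_\psi$ is a valid and self-contained alternative, but it is more work than necessary: once one has the $O(1)$ estimate, the pluripolarity argument is not needed. (Incidentally, for the Fubini step you only need that $\{v_s=-\infty\}$ is pluripolar for $s<s^+$, which holds because $v_s\not\equiv-\infty$; the stronger claim $v_s\in\cM^+(X,\omega)$ is not required.)
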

\begin{proof}
Without loss of generality we may assume that $\Ll-L\times\PP^1$ is psef and that $\Ll-\bD$ is big. We denote by $\{v_s\}_{s\in\R}$ the associated maximal $[\psi]$-relative psh test curve given through Propositions \ref{prop:Maximi}, \ref{prop:Proj}. Set also $s^+:=s^+_\psi(v_s)$.\newline
If $v_s=\psi$ for any $s\leq s^+$ then we clearly get $u_t=\psi+ts^+$ for any $t\geq 0$ where $\{u_t\}_{t\geq 0}$ is the associated $[\psi]$-relative psh geodesic ray. Thus by definition we obtain $J^\NA(\Ll;\bD)=0$ applying Corollary \ref{cor:SlopeEJ}, i.e. $\Ll$ is $\bD$-trivial.\newline
Vice versa assume that $\Ll$ is $\bD$-trivial. Then if by contradiction $v_s$ is not equal to $\psi$ for any $s\leq s^+$, by Proposition \ref{prop:EnergyFormula} we get
$$
\lim_{t\to +\infty}\frac{E_\psi(u_t)}{t}<s^+
$$
since $\int_X MA_\omega(v_s)=\int_X MA_\omega(\psi)$ if and only if $v_s=\psi$ (recall that $v_s, \psi$ are model type envelopes and that $\int_XMA_\omega(\psi)>0$). Therefore again by Corollary \ref{cor:SlopeEJ} we deduce that
\begin{equation}
    \label{eqn:IneqContr}
    J^\NA(\Ll;\bD)=\lim_{t\to+\infty}\frac{J_\psi(u_t)}{t}=s^+-\lim_{t\to +\infty}\frac{E_\psi(u_t)}{t}>0
\end{equation}
where in the second equality we used that $\int_X (u_t-\psi)MA_\omega(\psi)=V_\psi\sup_X(u_t-\psi) +O(1)$. The inequality (\ref{eqn:IneqContr}) clearly contradicts the assumption on the $\bD$-triviality of $\Ll$. Hence the proof of the first part is concluded.\newline
The second statement is instead a clear consequence of what we have just proved and of Proposition \ref{prop:Proj}.
\end{proof}
\subsection{From Uniform $\bD$-log Ding stability to $\delta_\bD>1$.}
In this subsection we prove that uniform $\bD$-log Ding stability implies $\delta_\bD>1$, which is the implication $(ii)\Rightarrow (iii)$ in Theorem \ref{thmD}.\newline

Letting $E$ be a prime divisor over $X$, similarly to \cite{RWN14} we introduce the psh test curve
\begin{equation}
    \label{eqn:VE}
    v^E_s:=
    \begin{cases}
    0 \, \mathrm{if}\, s\leq 0\\
    \phi_s^E \, \mathrm{if}\, 0<s\leq \tau_0(E)\\
    -\infty \, \mathrm{if}\, s>\tau_0(E),
    \end{cases}
\end{equation}
where
$$
\phi_s^E:=\sup\{u\in \PSH(X,\omega)\, :\, u\leq 0, \, \nu(u,E)\geq s\}.
$$
We let the reader to check that $v_s^E$ is indeed a maximal relative psh test curve (for the existence of $\tilde{v}^E_{\tau_0(E)}=\lim_{s\nearrow \tau_\psi(E)}\tilde{v}_s^E$ see for instance \cite[Lemma 2.6]{Tru19}). We denote by $u_t^E$ the associated relative psh geodesic ray.
\begin{prop}
\label{prop:Esmooth}
Let $E$ be a prime divisor over $X$ and let $m>\tau_0(E)$ be a rational number. Then there exists a sequence of psh geodesic rays $\{u_{k,t}^{E,m}\}_{t\geq 0}$ such that
\begin{itemize}
    \item[i)] for any $k\in\N$, $U_{k,\NA}^{E,m}=\varphi_{\Ll_{k}^{E,m}}$ where $\Ll_k^{E,m}$ is an ample test configuration class such that $\Ll^{E,m}_k-\CD$ is big and $\Ll_k^{E,m}-L\times\PP^1$ is psef;
    \item[ii)] the associated sequences of maximal psh test curves $\{v_{k,s}^{E,m}\}_{s\in \R}$ is decreasing in $k\in\N$ and converges to $v^E_{s+\tau_0(E)-m}$.
\end{itemize}
\end{prop}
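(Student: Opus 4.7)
The plan is to construct $\Ll_k^{E,m}$ via multiplier ideal sheaf regularization in the spirit of \cite{BBJ15}. For each $k \geq 1$ and $s \in \R$, I would introduce the ideal $\mathcal{J}_{k,s} := \mathcal{I}(2k\phi_s^E) \subset \mathcal{O}_X$ (with $\mathcal{J}_{k,s} = \mathcal{O}_X$ for $s \leq 0$ and $\mathcal{J}_{k,s} = 0$ for $s > \tau_0(E)$), together with the Bergman-type approximation
$$\phi_{k,s} := \frac{1}{2k} \log \sum_j \lvert \sigma_j^{(k,s)}\rvert_{h^{2k}}^2,$$
where $\{\sigma_j^{(k,s)}\}$ is an orthonormal basis of $H^0(X, 2kL) \otimes \mathcal{J}_{k,s}$. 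By Demailly's approximation theorem $\phi_{k,s} \to \phi_s^E$ in $L^1$ as $k \to \infty$, with $\phi_{k,s} \leq \phi_s^E + C/k$.

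First I would assemble these into a flag ideal $\mathcal{I}_k^{E,m}$ on $X \times \A^1$ whose $t^\lambda$-component is $\mathcal{J}_{k,\lambda/(2k)}$ for $0 \leq \lambda \leq \lceil 2k\tau_0(E)\rceil$, and take $\X_k^{E,m}$ to be the normalized blow-up of $X \times \PP^1$ along $\mathcal{I}_k^{E,m}$ (trivial off $0 \in \PP^1$). With polarization
$$\Ll_k^{E,m} := L\times \PP^1 + m(X\times\{0\}) - \tfrac{1}{2k} E_k + \epsilon_k A_k,$$
where $E_k$ is the exceptional Cartier divisor and $\epsilon_k A_k$ is a small ample perturbation, this gives an ample test configuration class, and by Proposition \ref{prop:Maximi} the induced psh ray has maximal test curve
$$v_{k,s}^{E,m} = P_\omega\bigl[\phi_{k, s+\tau_0(E)-m}\bigr](0),$$
extended by $-\infty$ past $s = m$, with $U_{k,\NA}^{E,m} = \varphi_{\Ll_k^{E,m}}$ by construction.

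Next, monotonicity $v_{k+1,s}^{E,m} \leq v_{k,s}^{E,m}$ would follow from subadditivity of multiplier ideals, $\mathcal{I}(2(k+\ell)\phi_s^E) \supset \mathcal{I}(2k\phi_s^E) \cdot \mathcal{I}(2\ell\phi_s^E)$, via a standard doubling argument, while convergence $v_{k,s}^{E,m} \searrow v^E_{s+\tau_0(E)-m}$ would follow from Demailly's $\phi_{k,s} \to \phi_s^E$ combined with the continuity of $P_\omega[\cdot](0)$ along decreasing sequences of $\cM^+(X,\omega)$ recalled in subsection \ref{ssec:KE}. The condition $\Ll_k^{E,m} - L\times\PP^1$ psef is equivalent, via the test-curve dictionary, to $v_{k,s}^{E,m} \leq 0$ for $s \leq m$, which is exactly what the shift $m > \tau_0(E)$ delivers; bigness of $\Ll_k^{E,m} - \CD$ then follows from the decomposition $\Ll_k^{E,m} - \CD = (\Ll_k^{E,m} - L\times\PP^1) + (L\times\PP^1 - \CD)$ as psef plus big, using $\langle(L-\bD)^n\rangle > 0$ by definition of $\bDiv_L(X)$.

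The main obstacle will be reconciling strict ampleness of $\Ll_k^{E,m}$ with the monotone decrease of the test curves. The flag-ideal blow-up is only semi-ample a priori, and the ample perturbation $\epsilon_k A_k$ must be taken small enough neither to destroy the monotonicity in $k$ nor to spoil the psef condition $\Ll_k^{E,m} - L\times\PP^1$. A careful choice of the perturbation, e.g.\ letting $\epsilon_k \searrow 0$ sufficiently fast and replacing $\mathcal{J}_{k,s}$ by slightly smaller ample-flavoured variants obtained by twisting by fractional powers of a small ample class, should make the construction simultaneously satisfy (i) and (ii), at the cost of routine technical bookkeeping.
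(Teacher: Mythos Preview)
Your overall strategy (multiplier-ideal regularization \`a la \cite{BBJ15}) is the same as the paper's, but the execution differs in a way that creates exactly the obstacle you flag at the end. The paper does not take multiplier ideals slice-by-slice on $X$ and then assemble a flag ideal; instead it normalizes the ray to $\tilde{u}_t^E:=u_t^E-\tau_0(E)t$, views the associated $S^1$-invariant potential $\tilde U^{E,\tau_0}$ as a $p_1^*\omega$-psh function on $X\times\D$, and sets $\mathfrak a_k:=\mathcal I(2^k\tilde U^{E,\tau_0})$ directly on $X\times\C$. These ideals are cosupported on the central fiber, and \cite[Lemma~5.6]{BBJ15} gives a $k_0$ such that $\mathcal O((2^k+k_0)p_1^*L)\otimes\mathfrak a_k$ is globally generated; this produces genuinely \emph{ample} normal test configurations $(\X_k^E,\Ll_k^E)$ with no perturbation needed. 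The geometric indexing $2^k$ (rather than your linear $k$) is what makes subadditivity $\mathcal I(2^{k+1}\phi)\supset\mathcal I(2^k\phi)^2$ yield a clean decreasing sequence after rescaling; with linear $k$ your ``standard doubling argument'' does not give $v_{k+1,s}\le v_{k,s}$.

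Two further points. Your argument for $\Ll_k^{E,m}-L\times\PP^1$ psef via ``$v_{k,s}^{E,m}\le 0$'' is vacuous (all test curves satisfy this); the paper instead shows $v_{k,s}^{E,m}=0$ for $s\le m-\tau_0(E)$, whence $u_{k,t}^{E,m}\ge (m-\tau_0)t\ge\psi+(m-\tau_0)t$, and then invokes Lemma~\ref{lem:Effe} and Proposition~\ref{prop:Abo}. Your bigness argument is also wrong: $L\times\PP^1-\CD$ is not big (already $(L\times\PP^1)^{n+1}=0$); bigness of $\Ll_k^{E,m}-\CD$ comes from the $m(X\times\{0\})$ contribution via the same lower bound on the ray. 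Finally, for the convergence $v_{k,s}^{E,m}\searrow v^E_{s+\tau_0-m}$ the paper appeals to \cite[Theorem~1.1]{DX20}: the decreasing limit of the algebraic geodesic rays is the smallest geodesic ray dominating $\{\tilde u_t^E\}$ with the same singularities, and this equals $\{\tilde u_t^E\}$ precisely because $\tilde v_s^E\in\cM_D(X,\omega)$ (it is an $\mathcal I$-model). Your slice-wise Demailly approximation does not by itself rule out the limit being strictly more singular than $v_s^E$.
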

\begin{proof}
Similarly to \cite[subsection 5.3]{BBJ15}, we set $\tau_0:=\tau_0(E)$ and we consider the normalized psh ray $\tilde{u}^E:\R_{\geq 0}\to \cE^1$ given by
$$
\tilde{u}_t^E:=u_t^E-\tau_0t,
$$
which clearly extends to a $p_1^*\omega$-psh function on $X\times \D$. Moreover the multiplier ideal sheaves $\mathfrak{a}_k:=\mathcal{I}(2^k \tilde{U}^{E,\tau_0})$ are cosupported in the central fiber and there exists $k_0\in \N$ such that $\mathcal{O}\big((2^k+k_0)p_1^*L\big)\otimes \mathfrak{a}_k$ is generated by its global sections on $X\times\C$ for any $k\geq 0$ (\cite[Lemma 5.6]{BBJ15}). In other words, we can produce a sequence of normal ample test configurations $(\X_k^E,\Ll^E_k)$ associated to the singularities formally encoded in $\big(\mathfrak{a}_k, \frac{1}{2^k+k_0}\big)$. 
Next, by the subadditivity of multiplier ideal sheaves (\cite{DEL00}) it is not hard to check that the singularities of $\Big\{\big(\mathfrak{a}_{2^k}, \frac{1}{2^k+k_0}\big)\Big\}_{k\in\N}$ are increasing. In particular we can consider the maximal psh test curves $\{\tilde{v}_{k,s}^E\}_{s\in\R}$ given by the maximization (Proposition \ref{prop:Maximi}) of the psh test curve associated to a psh ray induced by a locally bounded metric on $(\X_k^E,\Ll_k^E)$. Thus, by construction $\{\tilde{v}_{k,s}^E\}_{s\in\R}$ is a decreasing sequence in $k\in\N$. 
We denote by $\{\tilde{u}_{k,t}^E\}_{t>0}$ the associated decreasing family of psh geodesic rays, and we set
$$
u_{k,t}^{E,m}:=\tilde{u}_{k,t}^E+t m.
$$
As $\tilde{U}_{k,\NA}^E=\varphi_{\Ll_k^E}$ (Proposition \ref{prop:Maximi}), we immediately get $U_{k,\NA}^{E,m}=\varphi_{\Ll_k^{E,m}}$ where $\Ll_k^{E,m}:=\Ll_k^E+m(X\times\{0\})$ is clearly an ample test configuration class.

Next, denoting by $\tilde{v}_s^E$ the maximal psh test curve associated to $\tilde{u}_t^E$, we have $ \tilde{v}_s^E=v^E_{s+\tau_0} $ (see \ref{eqn:VE}). In particular, for any $s\leq-\tau_0$ we have
$$
0\geq \tilde{v}^E_{k,s}\geq \tilde{v}^E_s=v^E_{s+\tau_0}=0,
$$
i.e. the maximal psh test curve $\{v_{k,s}^{E,m}\}_{s\in\R}$ associated to $\{u_{k,t}^{E,m}\}_{t\geq 0}$ satisfies
$$
v_{k,s}^{E,m}=\tilde{v}_{k,s-m}^E=0
$$
for any $s\leq m-\tau_0$. Thus, $u_{k,t}^{E,m}\geq \psi+t(m-\tau_0)$ for any $k\in\N$, which leads to $\Ll_k^{E,m}-\CD$ big and $\Ll_k^{E,m}-L\times\PP^1$ psef by Lemma \ref{lem:Effe} and Proposition \ref{prop:Abo}.\newline
It then remains to show that $ v_{k,s}^{E,m}\searrow v_{s+\tau_0-m}^E $, which is equivalent to prove that
\begin{equation}
    \label{eqn:Conv}
    \tilde{v}^E_{k,s}\searrow \tilde{v}^E_s
\end{equation}
for any $s\in\R$. But by construction the sequence of algebraic psh geodesic rays $\{\tilde{u}_{k,t}^E\}_{t\geq 0}$ decreases to the smallest psh geodesic ray which is bigger than $\{\tilde{u}_t^E\}_{t\geq 0}$ and which has the same singularities of $\{\tilde{u}_t^E\}_{t\geq 0}$ (see again \cite[Lemma 5.6]{BBJ15}). Thus the convergence (\ref{eqn:Conv}) follows from \cite[Theorem 1.1]{DX20} since by definition $\tilde{v}_s^E\in \cM_D(X,\omega)$ for any $s\leq 0$, while $\tilde{v}_s^E\equiv -\infty$ for $s>0$.
\end{proof}
Next, similarly to (\ref{eqn:VE}) we define
\begin{equation}
    \label{eqn:VE2}
    v^{\psi,E}_s:=
    \begin{cases}
    \psi \, \mathrm{if}\, s\leq 0\\
    \phi_s^{\psi,E} \, \mathrm{if}\, 0<s\leq \tau_\bD(E)+\ord_E(\bD)\\
    -\infty \, \mathrm{if}\, s>\tau_\bD(E)+\ord_E(\bD),
    \end{cases}
\end{equation}
where
$$
\phi_s^{\psi,E}:=\sup\{u\in \PSH(X,\omega)\, :\, u\leq \psi, \, \nu(u,E)\geq s\}.
$$
Proposition \ref{prop:Proj} implies that $\{v_s^{\psi,E}\}_{s\in\R}$ is a maximal $[\psi]$-relative psh test curve as $v_s^{\psi,E}=P_\omega[v_s^E](0)$ (see (\ref{eqn:VE}) for the definition of $\{v_s^E\}_{s\in\R}$).\newline
We will denote by $\{u^{\psi,E}_t\}_{t\geq 0}$ the $[\psi]$-relative psh geodesic ray associated to $\{v^{\psi,E}_s\}_{s\in\R}$. %\ref{prop:Esmooth} we then have the following result.
\begin{prop}
\label{prop:Appoo}
Let $\bD\in \bDiv_L(X)$, let $E$ be a prime divisor over $X$ and let $m>\tau_0(E)$ be a rational number. Then there exists a sequence $\{\Ll^{E,m}_k\}_k\in N^1(\FX_{\PP^1}^{\C^*})$ of ample test configurations classes such that
\begin{itemize}
    \item[i)] $\Ll^{E,m}_k-\CD$ is big and $\Ll_k^{E,m}-L\times\PP^1$ is psef for any $k\in\N$;
    \item[ii)] $L^\NA(U^{\psi,E}_\NA)=\lim_{k\to +\infty} L^\NA\big(\Ll^{E,m}_k;\bD\big)-m+\tau_0(E)$;
    \item[iii)] $\lim_{t\to +\infty}\frac{E_\psi(u_t^{\psi,E})}{t}=\lim_{k\to +\infty} E^\NA\big(\Ll_k^{E,m};\bD\big)-m+\tau_0(E)$;
    \item[iv)] $\lim_{t\to +\infty}\frac{J_\psi(u_t^{\psi,E})}{t}=\lim_{k\to +\infty} J^\NA\big(\Ll_k^{E,m};\bD\big)$.%-m+\tau_0(E)$.
\end{itemize}
\end{prop}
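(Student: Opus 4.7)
The plan is to take the sequence $\{\Ll_k^{E,m}\}$ from Proposition \ref{prop:Esmooth} (which lives in the absolute setting $\psi=0$) and transfer it to the prescribed singularities setting by applying the projection operator $P_\omega[\psi](\cdot)$ provided by Proposition \ref{prop:Proj}. Concretely, write $\{u_{k,t}^{E,m}\}_{t\geq 0}$ and $\{v_{k,s}^{E,m}\}_{s\in\R}$ for the psh geodesic rays and maximal psh test curves of Proposition \ref{prop:Esmooth}, so that $v_{k,s}^{E,m}\searrow v^{E}_{s+\tau_0(E)-m}$. Define $v_{k,s}^{\psi,E,m}:=P_\omega[\psi]\bigl(v_{k,s}^{E,m}\bigr)$, which by Proposition \ref{prop:Proj} is a maximal $[\psi]$-relative psh test curve; its inverse Legendre transform is a $[\psi]$-relative psh geodesic ray $\{u_{k,t}^{\psi,E,m}\}_{t\geq 0}$ satisfying $U_{k,\NA}^{\psi,E,m}=U_\NA^{(\Ll_k^{E,m},\bD)}$. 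A direct envelope comparison gives $P_\omega[\psi]\bigl(v^{E}_{s'}\bigr)=v^{\psi,E}_{s'}$ (with the convention both are $-\infty$ outside their ranges), whence standard stability of $P_\omega[\psi](\cdot)$ under decreasing limits yields the key monotone convergence $v_{k,s}^{\psi,E,m}\searrow v^{\psi,E}_{s+\tau_0(E)-m}$. Part (i) is immediate from Proposition \ref{prop:Esmooth} combined with Proposition \ref{prop:Big}.

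For (ii), invoke Proposition \ref{prop:L2} (with $\lambda=1$, legitimate since $(X,\psi)$ is klt): $L^\NA(\Ll_k^{E,m};\bD)=L^\NA(U^{\psi,E,m}_{k,\NA})=\sup\{s\in\R:\int_X e^{-2v_{k,s}^{\psi,E,m}}d\mu<\infty\}$. Since $v_{k,s}^{\psi,E,m}\searrow v^{\psi,E}_{s+\tau_0(E)-m}$, the functions $e^{-2v_{k,s}^{\psi,E,m}}$ increase monotonically; the monotone convergence theorem forces the critical exponents to converge, and a change of variable $s'=s+\tau_0(E)-m$ gives $\lim_k L^\NA(\Ll_k^{E,m};\bD)=L^\NA(U^{\psi,E}_\NA)+m-\tau_0(E)$. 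For (iii), Corollary \ref{cor:SlopeEJ} identifies $\lim_{t\to\infty}E_\psi(u_{k,t}^{\psi,E,m})/t=E^\NA(\Ll_k^{E,m};\bD)$ and Proposition \ref{prop:EnergyFormula} expresses it as $V_\psi^{-1}\int_{-\infty}^{m}\bigl[\int_X MA_\omega(v_{k,s}^{\psi,E,m})-V_\psi\bigr]ds+m$. Sending $k\to\infty$, the Monge--Ampère masses $\int_X MA_\omega(v_{k,s}^{\psi,E,m})$ decrease to $\int_X MA_\omega(v^{\psi,E}_{s+\tau_0(E)-m})$ (by continuity along decreasing sequences in $\cM^+$ recalled in subsection \ref{ssec:KE}); monotone convergence and the change of variable $s'=s+\tau_0(E)-m$ land on Proposition \ref{prop:EnergyFormula}'s expression for $\lim_{t\to\infty}E_\psi(u_t^{\psi,E})/t$ shifted by $m-\tau_0(E)$, proving (iii).

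Part (iv) is then a consequence of (iii): by Corollary \ref{cor:SlopeEJ} applied to $J$, $\lim_{t\to\infty}J_\psi(u_{k,t}^{\psi,E,m})/t=J^\NA(\Ll_k^{E,m};\bD)$, while the standard slope identity for $[\psi]$-relative psh rays gives $\lim_{t\to\infty}V_\psi^{-1}\int_X(u_{k,t}^{\psi,E,m}-\psi)MA_\omega(\psi)/t=m$ (the support top of the test curve), so $J^\NA(\Ll_k^{E,m};\bD)=m-E^\NA(\Ll_k^{E,m};\bD)$. Applying the same slope identity to $u_t^{\psi,E}$ (with support top $\tau_\bD(E)+\ord_E(\bD)$) and combining with (iii) yields (iv), using that $\tau_0(E)=\tau_\bD(E)+\ord_E(\bD)$; this equality is a direct consequence of the projection identity above, since any $u\in\PSH(X,\omega)$ with $u\leq 0$ and $\nu(u,E)\geq s$ projects to $P_\omega[\psi](u)\leq\psi$ which retains Lelong number $\geq s$ along $E$, so non-triviality of $v^E_s$ and $v^{\psi,E}_s$ occurs on the same range. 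The main obstacle is the stability result $v_{k,s}^{\psi,E,m}\searrow v^{\psi,E}_{s+\tau_0(E)-m}$ \emph{together with} the accompanying continuity of the non-pluripolar mass through the limit, which is what allows all the slope formulas to commute with the $k$-limit; this packages the pluripotential-theoretic difficulty of passing from the absolute to the prescribed singularities setting, and is precisely where the model-type envelope structure of $\cM_D^+(X,\omega)$ is used.
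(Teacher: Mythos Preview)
Your overall architecture matches the paper's: project the absolute test curves through $P_\omega[\psi](\cdot)$, use Proposition~\ref{prop:Proj} to identify the resulting Non-Archimedean data with $U_\NA^{(\Ll_k^{E,m},\bD)}$, and then pass to the limit in the slope formulas. However, two steps contain genuine gaps.

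\textbf{Gap in (ii).} The sentence ``the monotone convergence theorem forces the critical exponents to converge'' is not correct as stated. Monotone convergence gives, for each fixed $s$, that $\int_X e^{-2v_{k,s}^{\psi,E,m}}d\mu\nearrow\int_X e^{-2v_s^{\psi,E,m}}d\mu$. From this you only get $\liminf_k\sup\{s:\int e^{-2v_{k,s}^{\psi,E,m}}<\infty\}\geq\sup\{s:\int e^{-2v_s^{\psi,E,m}}<\infty\}$; the reverse inequality does \emph{not} follow, because an increasing sequence of finite integrals can diverge without any term being infinite. The paper closes this by a contradiction argument in terms of Lelong numbers: if the limiting threshold were strictly smaller, pick $s_1<s_2$ in the gap and a divisor $F$ witnessing non-integrability at $s_1$; then use that $v_{k,s_2}^{\psi,E,m}\in\cM_D^+(X,\omega)$ so $\nu(v_{k,s_2}^{\psi,E,m},F)\nearrow\nu(v_{s_2}^{\psi,E,m},F)$, combine with $A_X(F)>\nu(v_{k,s_2}^{\psi,E,m},F)$, and derive a contradiction from the $s$-concavity of $s\mapsto\nu(v_s^{\psi,E,m},F)$. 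This Lelong-number-plus-concavity step is the actual content here and cannot be replaced by MCT.

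\textbf{Gap in (iv).} Your claim that $\tau_0(E)=\tau_\bD(E)+\ord_E(\bD)$ is false in general, and so is the related assertion that $s_\psi^+(v_{k,s}^{\psi,E,m})=m$. The projection $u\mapsto P_\omega[\psi](u)=P_\omega(u,\psi)$ can be identically $-\infty$ even when $u\not\equiv-\infty$: if $u$ saturates the positivity of $\{\omega\}$ along $E$ there may be no room left to also lie below $\psi$. Concretely, on a surface with $\psi$ carrying a positive Lelong number at a point off $E$, one has $\tau_\bD(E)+\ord_E(\bD)<\tau_0(E)$. The paper avoids this by never asserting either equality; instead it proves $s_\psi^+(v_{k,s}^{\psi,E,m})\searrow s_\psi^+(v_s^{\psi,E,m})$ and combines this convergence with (iii) to obtain (iv). The same issue contaminates your integral formula in (iii) where you write the upper limit as $m$; this requires $s_\psi^+(v_{k,s}^{\psi,E,m})\leq m$, which is not established.
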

\begin{proof}
Using the same notations of Proposition \ref{prop:Esmooth}, $(i)$ follows immediately. Then we define
$$
v_{k,s}^{\psi,E,m}:=P_\omega[\psi](v_{k,s}^{E,m}),
$$
observing that it is a maximal $[\psi]$-relative psh test curve by Proposition \ref{prop:Proj}. Observe also that, as $v_{k,s}^{E,m}\searrow v_s^{E,m}$,
$$
v_{k,s}^{\psi,E,m}\searrow P_\omega[\psi](v_{s+\tau_0-m}^E)=v_{s+\tau_0-m}^{\psi,E}
$$
where the equality can be easily checked from their definitions. Moreover again by Proposition \ref{prop:Proj} the associated $[\psi]$-relative psh geodesic rays $\{u^{\psi,E,m}_{k,t}\}_{t\geq 0}$ are algebraic and satisfy $U^{\psi,E,m}_{k,\NA}=U_\NA^{(\Ll_k^{E,m};\bD)}$.\newline
Then, setting $v_s^{\psi,E,m}:=v^{\psi,E}_{s+\tau-m}$, we claim that
\begin{multline*}
    \lct_X(v_s^{\psi,E,m})=\sup\Big\{s\in \R\, : \, \int_X e^{-v_s^{\psi,E,m}}d\mu<+\infty \Big\}=\\
    =\lim_{k\to +\infty}\sup\Big\{s\in \R\, : \, \int_X e^{-v_{s,k}^{\psi,E,m}}d\mu<+\infty \Big\}=\lim_{k\to +\infty} \lct_X(v_{s,k}^{\psi,E,m}).
\end{multline*}
As recalled in Lemma \ref{lem:bKlt}, if by contradiction $\lct_X(v_s^{\psi,E,m})< s_1<s_2< \inf_{k\geq 1} \lct_X(v_{s,k}^{\psi,E,m})$ then there exists a prime divisor $F$ above $X$, and $s_0< \lct_X(v_s^{\psi,E,m})$ such that
\begin{equation}
    \label{eqn:KKK}
    \nu(v_{s_0}^{\psi,E,m},F)< A_X(F)\leq \nu(v_{s_1}^{\psi,E,m},F)\leq\nu(v_{s_2}^{\psi,E,m},F).
\end{equation}
On the other hand, as $\cM_D^+(X,\omega)\ni v_{k,s}^{\psi,E,m}\searrow v_s^{\psi,E,m}$ we also have $\nu(v_{k,s_2}^{\psi,E,m},F)\nearrow \nu(v_{s_2}^{\psi,E,m},F)$ (see subsection \ref{ssec:KE}). Hence, from $A_X(F)>\nu(v_{k,s_2}^{\psi,E,m},F)$ we obtain
$$
A_X(F)\geq \nu(v_{k,s_2}^{\psi,E,m},F)\geq \nu(v_{k,s_1}^{\psi,E,m},F),
$$
which combined with (\ref{eqn:KKK}) leads to $A_X(F)=\nu(v_{k,s_2}^{\psi,E,m},F)=\nu(v_{k,s_1}^{\psi,E,m},F)$. Thus, by the concavity of the map $s\to v_s^{\psi,E,m}$ and the linearity of Lelong numbers we get $\nu(v_{k,s_0}^{\psi,E,m},F)=A_X(F)$ which contradicts (\ref{eqn:KKK}).\newline
Therefore Propositions \ref{prop:L}, \ref{prop:L2} yield
\begin{equation*}
    L^\NA(U_\NA^{\psi,E,m})=\lim_{t\to +\infty}\frac{L(u_t^{\psi,E,m})}{t}=\lct_X(v_s^{\psi,E,m})=\lim_{k\to +\infty}\lct_X(v_{s,k}^{\psi,E,m})=\lim_{k\to +\infty}L^\NA\big(\Ll_k^E+m(X\times\{0\});\bD\big),
\end{equation*}
which leads to $(ii)$ as $L^\NA(U_\NA^{\psi,E,m})=L^\NA(U_\NA^{\psi,E})+m-\tau_0$ by construction and again Proposition \ref{prop:L}.\newline
Next Proposition \ref{prop:EnergyFormula} implies
$$
\lim_{t\to+\infty} \frac{E_\psi(u_t^{\psi,E,m})}{t}=\lim_{k\to +\infty}E^\NA\big(\Ll_k^{E,m};\bD\big)
$$
by Lebesgue Dominated Convergence Theorem as $s_\psi^+(v_{k,s}^{\psi,E,m})\searrow s_\psi^+(v_s^{\psi,E,m})$ and $\int_X MA_\omega(v_{k,s}^{\psi,E,m})\searrow \int_X MA_\omega (v_s^{\psi,E,m})$ (subsection \ref{ssec:KE}). Thus $(iii)$ follows observing that $u_t^{\psi,E,m}=u_t^{\psi,E}+(m-\tau_0)t$.\newline
Regarding $(iv)$, by Corollary \ref{cor:SlopeEJ} we have
\begin{multline*}
    J^\NA(\Ll_k^{E,m};\bD)=\lim_{t\to+\infty} \frac{J_\psi(u_{k,t}^{\psi,E,m})}{t}=s_\psi^+(v_{k,s}^{\psi,E,m})-\lim_{t\to +\infty}\frac{E_\psi(u_{k,t}^{\psi,E,m})}{t}=s_\psi^+(v_{k,s}^{\psi,E,m})-E^\NA(\Ll_k^{E,m};\bD)
\end{multline*}
where the second-to-last equality follows observing that $\frac{1}{V_\psi}\int_{X}(u_{k,t}^{\psi,E,m}-\psi)MA_\omega(\psi)=\sup_X u_{k,t}^{\psi,E,m}+O(1)$ \cite[Lemma 3.7]{Tru20b}. Hence, using also the translation property of $J_\psi$, $(iii)$ and $s_\psi^+(v_{k,s}^{\psi,E,m})\searrow s_\psi^+(v_s^{\psi,E,m})$ yields
\begin{multline*}
    \lim_{t\to+\infty} \frac{J_\psi(u_t^{\psi,E})}{t}=\lim_{t\to+\infty} \frac{J_\psi(u_t^{\psi,E,m})}{t}=s_\psi^+(v_s^{\psi,E,m})-\lim_{t\to+\infty}\frac{E_\psi(u_t^{\psi,E,m})}{t}=\\
    =\lim_{k\to+\infty}\Big(s_\psi^+(v_{k,s}^{\psi,E,m})-E^\NA(\Ll_k^{E,m};\bD)\Big)=\lim_{k\to +\infty}J^\NA(\Ll_k^{E,m};\bD),
\end{multline*}
which concludes the proof.
\end{proof}
The following Lemma generalizes \cite[Lemma 4.4]{DZ22} in the prescribed singularities setting.
\begin{lemma}
\label{lem:Z1}
Let $\bD\in \bDiv_L(X)$, let $\psi\in \cM^+_D(X,\omega)$ the model type envelope associated (Proposition \ref{prop:Corre}) and let $E$ be a prime divisor over $X$. Then
$$
\lim_{t\to+\infty}\frac{E_\psi(u_t^{\psi,E})}{t}=S_\bD(E)+\ord_E(\bD)
$$
\end{lemma}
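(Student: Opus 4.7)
The plan is to apply Proposition \ref{prop:EnergyFormula} to the maximal $[\psi]$-relative psh test curve $\{v_s^{\psi,E}\}_{s\in\R}$ defined in (\ref{eqn:VE2}), which identifies the slope with the integral
\[
\frac{1}{V_\psi}\int_{-\infty}^{s_\psi^+}\Bigl(\int_X MA_\omega(v_s^{\psi,E})-V_\psi\Bigr)ds + s_\psi^+,
\]
where $s_\psi^+ = s_\psi^+(v_s^{\psi,E})$ and $V_\psi = \int_X MA_\omega(\psi) = \langle(L-\bD)^n\rangle$. Hence the task reduces to computing $\int_X MA_\omega(v_s^{\psi,E})$ explicitly and identifying $s_\psi^+$.

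The key observation is that $\phi_s^{\psi,E}$ coincides with the model envelope $\psi_{s-\nu(\psi,E)}$ introduced during the proof of Lemma \ref{lem:Dini}, using $\ord_E(\bD) = \nu(\psi,E)$. Indeed, setting $x = s - \nu(\psi,E)$, both are defined as the supremum of $\omega$-psh functions $u \leq \psi$ with $\nu(u,E) \geq \nu(\psi,E)+x$. This identification, combined with Lemma \ref{lem:Dini}, gives
\[
\int_X MA_\omega(v_s^{\psi,E}) = \langle(L-\bD-(s-\nu(\psi,E))E)^n\rangle
\]
for $\nu(\psi,E) \leq s \leq \tau_\bD(E)+\nu(\psi,E)$, and shows that $s_\psi^+ = \tau_\bD(E)+\ord_E(\bD)$. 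For $s \leq \nu(\psi,E)$, $\psi$ itself belongs to the defining family of $\phi_s^{\psi,E}$, so $v_s^{\psi,E} = \psi$ and the integrand vanishes.

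Splitting the integral at $s = \nu(\psi,E)$ and performing the substitution $x = s - \nu(\psi,E)$ on the nontrivial part yields
\[
\int_{-\infty}^{s_\psi^+}\Bigl(\int_X MA_\omega(v_s^{\psi,E})-V_\psi\Bigr)ds = \int_0^{\tau_\bD(E)}\bigl(\langle(L-\bD-xE)^n\rangle - V_\psi\bigr)dx = V_\psi S_\bD(E) - \tau_\bD(E)\,V_\psi,
\]
by the very definition of $S_\bD(E)$. Dividing by $V_\psi$ and adding $s_\psi^+ = \tau_\bD(E)+\ord_E(\bD)$, the $\tau_\bD(E)$ terms cancel and the result $S_\bD(E)+\ord_E(\bD)$ falls out.

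The only point requiring care is verifying that $\{u_t^{\psi,E}\}_{t\geq 0}$ is a finite-energy $[\psi]$-relative psh geodesic ray, so that Proposition \ref{prop:EnergyFormula} applies. This amounts to checking that the test-curve energy integral above is finite, which is immediate from $\int_X MA_\omega(v_s^{\psi,E}) \leq V_\psi$, $s_\psi^+ < \infty$, and the fact that $v_s^{\psi,E}$ interpolates between the model envelope $\psi$ and $-\infty$ in a controlled way. I expect no serious obstacle: the main delicacy is simply matching the two a priori different envelopes $\phi_s^{\psi,E}$ and $\psi_{s-\nu(\psi,E)}$, which follows from the definitions once one uses $\ord_E(\bD) = \nu(\psi,E)$ recorded right after Definition \ref{defi:delta}.
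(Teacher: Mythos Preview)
Your proposal is correct and follows essentially the same approach as the paper: apply Proposition~\ref{prop:EnergyFormula}, identify $\int_X MA_\omega(v_s^{\psi,E})$ with $\langle(L-\bD-(s-\ord_E\bD)E)^n\rangle$ via the envelope $\psi_x$ from the proof of Lemma~\ref{lem:Dini}, and compute the resulting integral. The only difference is that the paper first reduces to the case $\bD=D_Y$ by an approximation argument (taking $\psi_k\searrow\psi$ with $\psi_k$ algebraic and passing to the limit on both sides), whereas you invoke the identity $\int_X MA_\omega(\psi_x)=\langle(L-\bD-xE)^n\rangle$ for general $\bD$ directly from the proof of Lemma~\ref{lem:Dini}; since that identity is indeed established there for arbitrary $\bD\in\bDiv_L(X)$, your shortcut is legitimate and the two arguments are equivalent.
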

\begin{proof}
By Proposition \ref{prop:Corre} we can choose a sequence $Y_{k+1}\geq Y_k\geq X$ such that the decreasing sequence of model type envelopes $\{\psi_k\}_{k\in\N}\subset \cM^+_D(X,\omega)$ associated to $\{D_{Y_k}\}_{k\in\N}$ converges to $\psi$. Moreover by Proposition \ref{prop:EnergyFormula} we have
$$
\lim_{t\to+\infty} \frac{E_{\psi_k}(u_t^{\psi_k,E})}{t}=\frac{1}{V_{\psi_k}}\int_{-\infty}^{\tau_{D_{Y_k}}(E)+\ord_E D_{Y_k}}\Big(\int_X MA_\omega(v_s^{\psi_k,E})-\int_X MA_\omega(\psi)\Big)ds+\tau_{D_{Y_k}}(E)+\ord_E D_{Y_k}
$$
where with obvious notation $v_s^{\psi_k,E}$ is defined as in (\ref{eqn:VE2}) replacing $\psi$ by $\psi_k$. In particular, as $\ord_E D_{Y_k}=\nu(\psi_k,E)\searrow \nu(\psi,E)=\ord_E \bD$ and $v_s^{\psi_k,E}\searrow v_s^{\psi,E}$ we get $\tau_{D_{Y_k}}(E)+\ord_E D_{Y_k}\searrow \tau_{\bD}(E)+\ord_E \bD$ (see also Lemma \ref{lem:Dini}) and
$$
\lim_{k\to+\infty}\lim_{t\to+\infty}\frac{E_{\psi_k}(u_t^{\psi_k,E})}{t}=\lim_{t\to+\infty} \frac{E_{\psi}(u_t^{\psi,E})}{t}.
$$
Therefore Lemma \ref{lem:Dini} implies that we can assume $\bD=D_Y$ for $Y\geq X$ without loss of generalities. Then, for any $s\leq \ord_E \bD$ we clearly have $v_s^{\psi,E}=\psi$ while for any $\ord_E \bD< s\leq \tau_\bD E+\ord_E \bD$,
$$
v_s^{\psi,E}=\sup\{u\in \PSH(X,\omega)\, : \, u\leq \psi, \nu(u,E)\geq \ord_E \bD+(s-\ord_E \bD)\}.
$$
Thus, by what observed during the proof of Lemma \ref{lem:Dini}, Proposition \ref{prop:Corre} and \cite[Lemma 3.12]{Tru20a} lead to
$$
\int_X MA_\omega(v_s^{\psi,E})=\langle\big(L-\bD-(s-\ord_E \bD)E\big)^n \rangle
$$
for any $s\in [\ord_E\bD, \tau_\bD (E)+\ord_E \bD]$. Hence
\begin{multline*}
    \lim_{t\to+\infty}\frac{E_\psi(u_t^{\psi,E})}{t}=\frac{1}{V_\psi}\int_{\ord_E\bD}^{\tau_\bD(E)+\ord_E \bD}\Big(\int_X MA_\omega(v_s^{\psi,E})-\int_X MA_\omega(\psi)\Big)ds+\tau_\bD(E)+\ord_E \bD=\\
    =\frac{1}{V_\psi}\int_0^{\tau_\bD(E)}\langle(L-\bD-xE)^n \rangle dx+\ord_E\bD=S_\bD(E)+\ord_E \bD,
\end{multline*}
which concludes the proof.
\end{proof}
We will also need the following easy consequence of Propositions \ref{prop:L}, \ref{prop:L2}.
\begin{lemma}
\label{lem:Z2}
Let $\bD\in \bDiv_L(X)$ and let $E$ be a prime divisor above $X$. Then
$$
L^\NA(U_\NA^{\psi,E})\leq A_X(E).
$$
\end{lemma}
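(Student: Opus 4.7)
The plan is to combine Propositions \ref{prop:L} and \ref{prop:L2} to rewrite $L^\NA(U_\NA^{\psi,E})$ as a supremum over levels $s$ for which $e^{-2v_s^{\psi,E}}$ is integrable against $d\mu$, and then to bound this supremum from above by $A_X(E)$ via the classical Lelong-number/log-canonical-threshold inequality.

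First I would verify the applicability of Propositions \ref{prop:L} and \ref{prop:L2} to the ray $\{u_t^{\psi,E}\}_{t\geq 0}$. By its very definition (\ref{eqn:VE2}) the maximal $[\psi]$-relative psh test curve $\{v_s^{\psi,E}\}_{s\in\R}$ is bounded: it equals $\psi$ for $s\leq 0$ and $-\infty$ for $s>\tau_\bD(E)+\ord_E(\bD)$. By Proposition \ref{prop:Legendre}$(iii)$ its inverse Legendre transform $\{u_t^{\psi,E}\}_{t\geq 0}$ is therefore a $[\psi]$-relative psh geodesic ray with minimal singularities, in particular contained in $\cE^1_\psi$ and of linear growth. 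Since the standing assumption $\psi\in \cM^+_{D,klt}(X,\omega)$ yields $\lct_X(\psi)>1$ (via the Guan--Zhou strong openness theorem), Proposition \ref{prop:L2} applies at $\lambda=1$; combining it with Proposition \ref{prop:L} produces the identification
\begin{equation*}
L^\NA(U_\NA^{\psi,E})=\lim_{t\to+\infty}\frac{L(u_t^{\psi,E})}{t}=\sup\Big\{s\in \R:\int_X e^{-2v_s^{\psi,E}}d\mu<+\infty\Big\}.
\end{equation*}

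The core step will then be to show that this supremum does not exceed $A_X(E)$. From the defining envelope property of $v_s^{\psi,E}$, together with the standard fact that upper semicontinuous regularization does not decrease Lelong numbers, one has $\nu(v_s^{\psi,E},E)\geq s$ for every $0<s\leq \tau_\bD(E)+\ord_E(\bD)$. The classical inequality $\lct_X(u)\cdot\nu(u,F)\leq A_X(F)$, valid for any prime divisor $F$ over $X$, applied with $u=v_s^{\psi,E}$ and $F=E$, then yields
\begin{equation*}
\lct_X(v_s^{\psi,E})\leq \frac{A_X(E)}{\nu(v_s^{\psi,E},E)}\leq \frac{A_X(E)}{s}<1
\end{equation*}
whenever $s>A_X(E)$. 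Since $d\mu$ has smooth positive density on $X$, the condition $\lct_X(v_s^{\psi,E})<1$ is equivalent to $e^{-2v_s^{\psi,E}}\notin L^1(d\mu)$, so the integral diverges and the required bound follows.

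I do not anticipate a real obstacle: the only point deserving some care is the use of Proposition \ref{prop:L2} at $\lambda=1$, which relies on the klt hypothesis combined with strong openness to guarantee the strict inequality $\lct_X(\psi)>1$.
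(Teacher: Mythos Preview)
Your proposal is correct and follows essentially the same approach as the paper: both identify $L^\NA(U_\NA^{\psi,E})$ with the integrability threshold $\sup\{s:\int_X e^{-2v_s^{\psi,E}}d\mu<+\infty\}$ via Propositions \ref{prop:L} and \ref{prop:L2}, and then use $\nu(v_s^{\psi,E},E)\geq s$ to force non-integrability for $s>A_X(E)$. The only cosmetic difference is that the paper carries out the local divergence computation explicitly on a resolution (pulling back $d\mu$ and picking up the Jacobian factor $|w_1|^{2(A_X(E)-1)}$), whereas you package the same computation as the standard inequality $\lct_X(u)\cdot\nu(u,E)\leq A_X(E)$.
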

\begin{proof}
By Propositions \ref{prop:L}, \ref{prop:L2},
$$
L^\NA(U_\NA^{\psi,E})=\lim_{t\to+\infty}\frac{L(u_t^{\psi,E})}{t}=\sup\Big\{s\in \R\, : \, \int_Xe^{-2v_s^{\psi,E}}d\mu<+\infty\Big\}.
$$
Thus, letting $Y\overset{p}{\geq} X$ such that $E\subset Y$, we can consider holomorphic coordinates $(w_1,\dots,w_n)$ on a small open set $U\Subset Y$ such that $U\cap E=\{w_1=0\}$ and such that $U$ does not meet other exceptional divisors. Moreover, for fixed $s\in \R$, by the upper semicontinuity of the Lelong numbers, we can also consider a possibly smaller open set $V\Subset U$ such that $V\cap \{w_1=0\}\neq \emptyset$ and such that $\nu(v_s\circ p, y)\geq s$ for any $y\in \overline{V}\cap\{w_1=0\}$. Therefore, letting $d\tilde{V}$ be the local volume form on $V$ such that $p^{-1}d\mu=\lvert w_1 \rvert^2(A_X(E)-1)d\tilde{V}$ (recall that $K_Y=p^*K_X+ (A_X(E)-1)E+F$ for $F$ divisor whose support does not meet $V$),
$$
\int_X e^{-2\lambda v_s^{\psi,E}}d\mu=\int_Y e^{-2\lambda v_s^{\psi,E}\circ p}p^{-1}d\mu\geq C\int_V \lvert w_1 \rvert^{2(A_X(E)-1-s)}d\tilde{V}
$$
for a constant $C>0$. Hence if $\int_X e^{-2v_s^{\psi,E}}d\mu<+\infty$ then necessarily $s<A_X(E)$ and the Lemma follows.
\end{proof}
We are now ready to prove the main result of this subsection.
\begin{theorem}
\label{thm:UniformToDelta}
Let $\bD\in\bDiv_L(X)$. If $D^\NA(\Ll;\bD)\geq \delta J^\NA(\Ll;\bD)$ for any $\Ll\in N^1(\FX_{\PP^1}^{\C^*})$ ample test configuration class, then
\begin{equation}
    \label{eqn:deltaine}
    \delta_\bD\geq 1+ \frac{\delta}{n}.
\end{equation}
\end{theorem}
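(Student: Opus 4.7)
The plan is to test the uniform Ding stability inequality along the $[\psi]$-relative psh geodesic ray $\{u_t^{\psi,E}\}_{t\geq 0}$ associated to an arbitrary prime divisor $E$ over $X$ via its Legendre transform (the curve defined in (\ref{eqn:VE2})), and then to read off each slope in divisorial-invariant terms using the tools already developed.

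\textbf{Step 1 (Approximation).} Fix $E$ prime over $X$ and choose $m>\tau_0(E)$. Proposition \ref{prop:Appoo} produces a sequence $\Ll_k^{E,m}\in N^1(\FX_{\PP^1}^{\C^*})$ of ample test configuration classes such that $\Ll_k^{E,m}-\CD$ is big and $\Ll_k^{E,m}-L\times\PP^1$ is psef. By hypothesis
\[
L^\NA(\Ll_k^{E,m};\bD)-E^\NA(\Ll_k^{E,m};\bD)=D^\NA(\Ll_k^{E,m};\bD)\geq \delta\, J^\NA(\Ll_k^{E,m};\bD).
\]
I would let $k\to\infty$ and use items $(ii)$, $(iii)$, $(iv)$ of Proposition \ref{prop:Appoo}: the constants $m-\tau_0(E)$ cancel between the $L^\NA$ and $E^\NA$ terms, yielding
\[
L^\NA(U_\NA^{\psi,E})-\lim_{t\to+\infty}\frac{E_\psi(u_t^{\psi,E})}{t}\geq \delta\lim_{t\to+\infty}\frac{J_\psi(u_t^{\psi,E})}{t}.
\]

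\textbf{Step 2 (Identification of slopes).} Lemma \ref{lem:Z2} gives $L^\NA(U_\NA^{\psi,E})\leq A_X(E)$, and Lemma \ref{lem:Z1} gives $\lim_{t\to+\infty}E_\psi(u_t^{\psi,E})/t=S_\bD(E)+\ord_E(\bD)$. For the $J_\psi$-slope, I would use the computation already occurring in the proof of Proposition \ref{prop:Appoo}$(iv)$, namely
\[
\lim_{t\to+\infty}\frac{J_\psi(u_t^{\psi,E})}{t}=s_\psi^+(v_s^{\psi,E})-\lim_{t\to+\infty}\frac{E_\psi(u_t^{\psi,E})}{t}=\bigl(\tau_\bD(E)+\ord_E\bD\bigr)-\bigl(S_\bD(E)+\ord_E\bD\bigr)=\tau_\bD(E)-S_\bD(E),
\]
since $s_\psi^+(v_s^{\psi,E})=\tau_\bD(E)+\ord_E\bD$ by the very definition (\ref{eqn:VE2}).

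\textbf{Step 3 (Fujita-type estimate and conclusion).} Combining the previous inequalities yields
\[
A_X(E)-S_\bD(E)-\ord_E(\bD)\geq \delta\bigl(\tau_\bD(E)-S_\bD(E)\bigr).
\]
Now Proposition \ref{prop:Fujita} gives $\tau_\bD(E)\geq \tfrac{n+1}{n}S_\bD(E)$, so $\tau_\bD(E)-S_\bD(E)\geq \tfrac{1}{n}S_\bD(E)$, leading to
\[
A_X(E)-\ord_E(\bD)\geq \Bigl(1+\frac{\delta}{n}\Bigr)S_\bD(E).
\]
Dividing by $S_\bD(E)>0$ (Lemma \ref{lem:Dini}) and taking infimum over prime divisors $E$ over $X$ delivers $\delta_\bD\geq 1+\delta/n$.

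No step is especially difficult once Propositions \ref{prop:Appoo}, \ref{prop:Fujita} and Lemmas \ref{lem:Z1}, \ref{lem:Z2} are in place; the only subtle point is to ensure that in passing to the limit $k\to\infty$ after applying the stability inequality, the auxiliary parameter $m$ cancels cleanly (which it does, because the shifts $m-\tau_0(E)$ appear identically in Proposition \ref{prop:Appoo}$(ii)$ and $(iii)$, while $J_\psi$ is translation invariant so the corresponding shift is absent in $(iv)$).
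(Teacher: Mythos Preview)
Your proof is correct and follows essentially the same approach as the paper's own proof: both test the stability inequality on the approximating ample test configuration classes $\Ll_k^{E,m}$ from Proposition \ref{prop:Appoo}, pass to the limit to obtain the slope inequality along $\{u_t^{\psi,E}\}_{t\geq 0}$, identify the slopes via Lemmas \ref{lem:Z1}, \ref{lem:Z2}, and conclude with the Fujita-type estimate of Proposition \ref{prop:Fujita}. Your explicit observation that the $m-\tau_0(E)$ shifts cancel between the $L^\NA$ and $E^\NA$ terms (while $J^\NA$ is translation invariant) is exactly the mechanism the paper uses implicitly.
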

\begin{proof}
Fix $E$ prime divisor over $X$, $m>\tau_0(E)$ and consider the sequence of ample test configurations $\{\Ll_k^{E,m}\}$ given by Proposition \ref{prop:Appoo}. By hypothesis on the uniform $\bD$-log Ding Stability and by Proposition \ref{prop:Appoo} it follows that
\begin{multline*}
    L^\NA(U_\NA^{\psi,E})=\lim_{k\to +\infty} L^\NA(\Ll_k^{E,m};\bD)-m+\tau_0(E)\geq\\
    \geq \lim_{k\to\infty} \Big(E^\NA(\Ll_k^{E,m};\bD)+\delta J^\NA(\Ll_k^{E,m};\bD)\Big)-m+\tau_0(E) =\lim_{t\to+\infty} \Big( \frac{E_\psi(u_t^{\psi,E})}{t}+\delta \frac{J_\psi(u_t^{\psi,E})}{t}\Big)
\end{multline*}
where $\{u_t^{\psi,E}\}_{t\geq 0}$ is the $[\psi]$-relative psh geodesic ray associated to the $[\psi]$-relative psh test curve defined in (\ref{eqn:VE2}). Moreover, as $\frac{1}{V_\psi}\int_X(u_t^{\psi,E}-\psi)MA_\omega(\psi)=\sup_X u_t^{\psi,E}+O(1)$ as $t\to+\infty$ \cite[Lemma 3.7]{Tru20b}, we also have
$$
\lim_{t\to+\infty} \frac{ J_\psi(u_t^{\psi,E})}{t}=\lim_{t\to+\infty} \frac{\sup_X u_t^{\psi,E}}{t}-\lim_{t\to+\infty}\frac{E_\psi(u_t^{\psi,E})}{t}=\tau_\bD(E)+\ord_E(\bD)-\lim_{t\to+\infty}\frac{E_\psi(u_t^{\psi,E})}{t}
$$
where the last equality follows by the definition of $\{u_t^{\psi,E}\}_{t\geq 0}$. Therefore Lemmas \ref{lem:Z1}, \ref{lem:Z2} leads to
$$
A_X(E)\geq S_\bD(E)+\ord_E(\bD)+ \delta\big(\tau_\bD(E)-S_\bD(E)\big).
$$
On the other hand $\tau_\bD(E)\geq \frac{n+1}{n}S_\bD(E)$ by Proposition \ref{prop:Fujita}. Thus, it follows that
$$
A_X(E)-\ord_E(\bD)\geq \Big(1+\frac{\delta}{n}\Big)S_\bD(E),
$$
i.e. $\delta_\bD>1+\frac{\delta}{n}$.
\end{proof}
\subsection{From $\delta_{\bD}>1$ to the existence of a $[\psi]$-KE metric} In this subsection we conclude the proof of Theorem \ref{thmD} showing that $\tilde{\delta}_{\bD}>1$ implies the existence of a $[\psi]$-KE metric. Note that for $\bD\in\bDiv_L(X)$ klt the equivalence $\delta_\bD>1$ if and only if $\tilde{\delta}_\bD>1$ is proved in Proposition \ref{prop:deltacompa}.\newline

\begin{prop}
\label{prop:Necessaria}
Let $\{v_s\}_{s\in\R}$ be a maximal $[\psi]$-relative psh test curve and let $E$ be a prime divisor over $X$. Let also $s_0=\lim_{t\to +\infty}\frac{E_\psi(u_t)}{t}$ where $\{u_t\}_{t\geq 0}$ is the $[\psi]$-relative psh geodesic ray associated to $\{v_s\}_{s\in \R}$. Then
\begin{equation}
    \label{eqn:Easy}
    \nu(v_{s_0},E)\leq S_\bD(E)+\ord_E \bD.
\end{equation}
\end{prop}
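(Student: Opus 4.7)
The plan is to bound $\lambda_0:=\nu(v_{s_0},E)$ via Jensen's inequality applied to the Lelong-number function of the test curve, and then compare the resulting probability measure on the Lelong axis with the one associated with the extremal test curve $\{v_s^{\psi,E}\}_{s\in\R}$ introduced in \eqref{eqn:VE2}. The key structural input is that $\lambda(s):=\nu(v_s,E)$ is convex in $s$: indeed, concavity of $s\mapsto v_s(x)$ gives $v_{(1-t)s_1+ts_2}\geq (1-t)v_{s_1}+tv_{s_2}$, and since Lelong numbers are linear on sums and monotone with respect to pointwise comparison, this yields $\lambda((1-t)s_1+ts_2)\leq (1-t)\lambda(s_1)+t\lambda(s_2)$. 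Moreover $\lambda$ is non-decreasing as $s\mapsto v_s$ is decreasing.

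I will then exploit the maximality of $v_s^{\psi,E}$: by definition $v_\lambda^{\psi,E}$ is the upper envelope of $\omega$-psh functions $u\leq \psi$ with $\nu(u,E)\geq \lambda$, so $v_s\leq v_{\lambda(s)}^{\psi,E}$, and this comparison among model envelopes below $\psi$ holds in the $\preccurlyeq$ order. Monotonicity of non-pluripolar total mass (as recalled in subsection \ref{ssec:KE}) then implies
$$V_\psi(s):=\int_X MA_\omega(v_s)\leq \int_X MA_\omega\bigl(v_{\lambda(s)}^{\psi,E}\bigr)=:V_\psi^E(\lambda(s)).$$

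Next, Proposition \ref{prop:EnergyFormula} combined with an integration by parts expresses $s_0$ as the barycenter of the probability measure $\rho:=-dV_\psi/V_\psi$ on $[s_\psi^-,s_\psi^+]$, namely $s_0=\int s\,d\rho(s)$, using that $V_\psi$ decreases from $V_\psi$ to $0$. Jensen's inequality applied to the convex function $\lambda$ and the probability $\rho$ gives
$$\lambda_0=\lambda(s_0)\leq \int \lambda(s)\,d\rho(s)=\int t\,d(\lambda_*\rho)(t).$$
Writing the same formula for the extremal test curve $\{v_s^{\psi,E}\}_{s\in\R}$ and invoking Lemma \ref{lem:Z1} identifies
$$\int t\,d\sigma(t)=S_\bD(E)+\ord_E(\bD),\qquad \sigma:=-dV_\psi^E/V_\psi,$$
so it suffices to prove $\int t\,d(\lambda_*\rho)\leq \int t\,d\sigma$. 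I plan to obtain this via stochastic dominance of cumulative distribution functions: the inequality $V_\psi(s)\leq V_\psi^E(\lambda(s))$, evaluated at $s^*(t):=\sup\{s:\lambda(s)\leq t\}$, gives $F_{\lambda_*\rho}(t)\geq F_\sigma(t)$, which is exactly the condition ensuring $\int t\,d(\lambda_*\rho)\leq \int t\,d\sigma$.

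The main technical obstacle will be dealing with the endpoint behavior: one has to verify that $V_\psi(s_\psi^+)=0$ (which follows from $v_s\searrow -\infty$ pointwise as $s\nearrow s_\psi^+$ together with \cite[Lemma 2.6]{Tru19}), justify the integration by parts and Jensen step in the Lebesgue--Stieltjes sense when $\lambda$ has plateaus or jumps at the endpoints, and check that $\lambda^+:=\lim_{s\nearrow s_\psi^+}\lambda(s)\leq \tau_\bD(E)+\ord_E(\bD)$ so that the pushforward $\lambda_*\rho$ is supported in the same interval $[\ord_E(\bD),\tau_\bD(E)+\ord_E(\bD)]$ as $\sigma$. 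These are standard measure-theoretic issues and should not present serious difficulty once the four-step outline above is in place.
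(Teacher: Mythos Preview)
Your approach is correct and constitutes a genuinely different route from the paper's proof, though both rest on the same two pillars: the convexity of $s\mapsto \lambda(s):=\nu(v_s,E)$ and the mass comparison $\int_X MA_\omega(v_s)\leq \int_X MA_\omega(v^{\psi,E}_{\lambda(s)})$. The paper proceeds by replacing $\lambda$ with its tangent line $g$ at $s_0$ (this is Jensen in disguise), then uses $g\leq \lambda$ to bound $\int_X MA_\omega(v_s)\leq \int_X MA_\omega(v^{\psi,E}_{g(s)})$, and finishes with an affine change of variable; this forces a separate treatment of the degenerate cases $s_0=s^+$ and $\lambda(s_0)=\lambda(-\infty)$. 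Your barycenter/Jensen/stochastic-dominance formulation is more unified and avoids the case split, at the cost of the endpoint bookkeeping you flag. Concretely, your dominance step does go through: for $t\in[c,\lambda^+)$ (with $c:=\ord_E\bD$) the continuity of $\lambda$ on $(-\infty,s_\psi^+)$ forces $\lambda(s^*(t))=t$ exactly, so $V_\psi(s^*(t))\leq V_\psi^E(\lambda(s^*(t)))=V_\psi^E(t)$, giving $F_{\lambda_*\rho}(t)\geq F_\sigma(t)$; for $t\geq \lambda^+$ the survival function of $\lambda_*\rho$ is dominated by the residual atom at $s_\psi^+$, which again is bounded by $V_\psi^E(t)/V_\psi$ since $\lambda(s_\psi^+)>t$ there.

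One correction to your endpoint discussion: it is \emph{not} true that $v_s\searrow -\infty$ as $s\nearrow s_\psi^+$, nor that $V_\psi(s_\psi^+)=\int_X MA_\omega(v_{s_\psi^+})=0$ in general; $v_{s_\psi^+}$ is typically a genuine model envelope with possibly positive mass (cf.\ the proof of Proposition~\ref{prop:Maximi}). This does not break your argument: simply extend $V_\psi(s):=0$ for $s>s_\psi^+$, so that $\rho=-dV_\psi/V_\psi$ is a probability on $(-\infty,s_\psi^+]$ carrying an atom $V_\psi(s_\psi^+)/V_\psi$ at $s_\psi^+$. The integration-by-parts giving $s_0=\int s\,d\rho$, Jensen, and the dominance comparison all remain valid with this atom included; in particular $\lambda(s_\psi^+)\leq \tau_\bD(E)+\ord_E\bD$ whenever the atom is nonzero (since then $v_{s_\psi^+}$ has positive mass), so $\lambda_*\rho$ is still supported in the correct interval.
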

\begin{proof}
The proof is an easy adaptation of \cite[Proposition 4.5]{DZ22} but we will write the details as a courtesy to the reader.\newline
Proposition \ref{prop:EnergyFormula} implies that $s_0=s^+:=s_\psi^+(v_s)$ if and only if $ \int_X MA_\omega(v_s)=\int_X MA_\omega(\psi)$ for any $s\leq s^+$. But, since $v_s$ are model type envelopes more singular than $\psi\in \cM^+(X,\omega)$, \cite[Theorem 1.3]{DDNL17b} implies that $v_s=\psi$ for any $s\leq s^+$. Hence $s_0=s^+$ if and only if $u_t=\psi+t s_0$ and (\ref{eqn:Easy}) follows in this case from $\nu(\psi,E)=\ord_E \bD$.\newline
Let $f(s):=\nu(v_s,E), s\in (-\infty,s^+]$. Observe that $f$ is non-negative, convex and non-decreasing, and set $f(-\infty)=\lim_{s\to -\infty}f(s)$.\newline
If $f(s_0)=f(-\infty)$, then $f(s)=f(s_0)=:a$ for any $s\leq s_0$, i.e. $v_s$ is more singular than
$$
\psi_{a}:=\{u\leq \psi \, ; \, \nu(u,E)\geq a\}.
$$
In particular Proposition \ref{prop:EnergyFormula} gives
$$
-\infty<s_0\leq \frac{1}{V_\psi}\int_{-\infty}^{s_0} \Big(\int_X MA_\omega(\psi_a)-\int_X MA_\omega(\psi)\Big)ds,
$$
which necessarily implies $\int_X MA_\omega(\psi_a)=\int_X MA_\omega(\psi)$. Hence, by \cite[Theorem 1.3]{DDNL17b} we have $\psi_a=\psi$, $a=\nu(\psi,E)$ and $(\ref{eqn:Easy})$ follows.\newline
Therefore we can assume $s_0<s^+$ and $f(-\infty)<f(s_0)$. Set also $c:=\ord_E\bD$ to lighten notations. By the convexity of $f$ we have that $ b:=f'_-(s_0)$ is a finite positive number and that the function
$$
g(s):=
\begin{cases}
c\,\,\,\,\,\,\,\,\,\, s\leq s_0-b^{-1} f(s_0)+b^{-1}c\\
b(s-s_0)+f(s_0) \,\,\,\,\,\,\, s\in \big(s_0-b^{-1}f(s_0)+b^{-1}c, s^+\big]
\end{cases}
$$
satisfies $f(s)\geq g(s)$ for any $s\leq s^+$. Thus, letting $v_s^{\psi,E}$ be the maximal $[\psi]$-relative psh test curve defined in (\ref{eqn:VE2}), we obtain
$$
\int_X MA_\omega(v_s)\leq \int_X MA_\omega(v_{g(s)}^{\psi,E})
$$
for any $s\leq s^+$ by \cite[Theorem 1.2]{WN17}. Hence, by Proposition \ref{prop:EnergyFormula}
\begin{multline}
    \label{eqn:Tog}
    s_0\leq \frac{1}{V_\psi}\int_{s_0-b^{-1}\big(f(s_0)-c\big)}^{s^+}\Big(\int_X MA_\omega(v_{g(s)}^{\psi,E})-\int_X MA_\omega(\psi)\Big)ds +s^+=\\
    =\frac{1}{bV_\psi}\int_c^{b(s^+-s_0)+f(s_0)}\Big(\int_X MA_\omega(v_s^{\psi,E})\Big) ds+s_0-b^{-1}\big(f(s_0)-c\big)\leq\\
    \leq \frac{1}{bV_\psi} \int_c^{\tau_\bD(E)+c}\Big(\int_X MA_\omega(v_s^{\psi,E})\Big)ds+s_0-b^{-1}\big(f(s_0)-c\big).
\end{multline}
On the other hand, as seen in the proof of Proposition \ref{lem:Z1},
$$
\frac{1}{V_\psi}\int_c^{\tau_\bD(E)+c}\Big(\int_X MA_\omega(v_s^{\psi,E})\Big)ds=S_\bD(E),
$$
which together with \ref{eqn:Tog} leads to $ 0\leq b^{-1}\Big(S_\bD(E)+c-f(s_0)\Big) $.
i.e. (\ref{eqn:Easy}) since $b>0$.
\end{proof}
Similarly to \cite{DZ22}, for any $\lambda\in \big(0,\lct_X(\psi)\big)$, we define the translation-invariant functional $D^\lambda_\psi:\cE^1_\psi\to \R$ as
$$
D^\lambda_\psi(u):=L^\lambda(u)-E_\psi(u).
$$
We recall that $L^\lambda(u):=\frac{-1}{2\lambda}\int_X e^{-2\lambda u}d\mu$ has been introduced in subsection \ref{ssec:SlopeDZ}. Observe that $D^\lambda_\psi(u)\in \R$ since $\lct_X(u)=\lct_X(\psi)$ for any $u\in\cE^1_\psi$ \cite[Proposition 3.10]{Tru20b}.
\begin{lemma}
\label{lem:Delta&lct}
Let $\bD\in \bDiv_L(X)$. Then
\begin{gather*}
    \tilde{\delta}_\bD\leq\lct_X(\bD).
\end{gather*}
In particular if $\tilde{\delta}_\bD>1$ then $(X,\bD)$ is klt.
\end{lemma}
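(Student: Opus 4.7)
The approach is to compare the valuative expressions for $\tilde\delta_\bD$ and $\lct_X(\bD)$ divisor by divisor. By definition,
\[
\tilde\delta_\bD = \inf_{E/X} \frac{A_X(E)}{S_\bD(E) + \ord_E \bD},
\]
where the infimum runs over all prime divisors $E$ above $X$. On the other hand, the discussion preceding the Lemma (namely the equality $\lct_X(\bD) = \lct_X(\psi)$ combined with Proposition \ref{prop:Corre}, which identifies $\ord_E(\bD) = \nu(\psi, E)$) provides the valuative identity
\[
\lct_X(\bD) = \inf_{E/X} \frac{A_X(E)}{\ord_E \bD},
\]
with the usual convention that terms with $\ord_E \bD = 0$ contribute $+\infty$.

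The key---and essentially only---input is then the nonnegativity $S_\bD(E) \geq 0$, which is immediate from its very definition as the integral over $[0, \tau_\bD(E)]$ of the nonnegative quantities $\langle (L - \bD - xE)^n \rangle$ (nonnegativity of the latter is explicit in the setup around Lemma \ref{lem:Dini}). From this, for every prime $E/X$ with $\ord_E \bD > 0$,
\[
\frac{A_X(E)}{S_\bD(E) + \ord_E \bD} \leq \frac{A_X(E)}{\ord_E \bD},
\]
and the inequality survives passage to the infimum, yielding $\tilde\delta_\bD \leq \lct_X(\bD)$.

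Finally, the ``in particular'' statement is automatic: if $\tilde\delta_\bD > 1$ then the displayed inequality forces $\lct_X(\bD) > 1$, which by definition is the condition that $(X, \bD)$ is klt. There is no genuine obstacle to speak of; the real work has already been carried out in establishing the two valuative formulas above.
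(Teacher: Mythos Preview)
Your proof is correct and follows essentially the same route as the paper: both arguments use the valuative formula $\lct_X(\bD)=\inf_{E/X}\frac{A_X(E)}{\ord_E\bD}$ and the trivial bound $S_\bD(E)\geq 0$ to compare term by term with the definition of $\tilde\delta_\bD$. The paper cites \cite[Theorem B.5]{BBJ15} for the valuative description of $\lct_X(\bD)$, whereas you invoke the identification $\lct_X(\bD)=\lct_X(\psi)$ together with Proposition~\ref{prop:Corre}; these amount to the same thing.
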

\begin{proof}
As immediate consequence of its definition and \cite[Theorem B.5]{BBJ15},
$$
\lct_X(\bD)=\sup\big\{c>0\, : \, A_X(E)\geq c \, \ord_E\bD \, \mathrm{for}\, \mathrm{any} \, E/X\big\}
$$
where as usual with $E/X$ we mean $E$ prime divisor over $X$. Hence
$$
\tilde{\delta}_\bD=\inf_{E/X} \frac{A_X(E)}{S_\bD(E)+\ord_E\bD}\leq \inf_{E/X} \frac{A_X(E)}{\ord_E \bD}\leq \lct_X(\bD).
$$
\end{proof}
\begin{prop}
\label{prop:438}
Let $\bD\in\bDiv_L(X)$ such that $\tilde{\delta}_\bD>1$. Then, for any $\lambda \in (0,\tilde{\delta}_\bD)$ and for any $U:\R_{\geq 0}\to \cE^1_\psi$ $[\psi]$-relative psh geodesic ray,
\begin{equation}
    \label{eqn:lambdaDing}
    \liminf_{t\to+\infty}\frac{D^\lambda_\psi(u_t)}{t}\geq 0.
\end{equation}
\end{prop}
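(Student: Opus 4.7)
The plan is to reduce (\ref{eqn:lambdaDing}) to a slope inequality and then exploit the valuative formula for the complex singularity exponent together with Proposition \ref{prop:Necessaria}. Since $E_\psi$ is linear along finite-energy psh geodesic paths by Theorem \ref{thm:E1}(iii), it is affine along $U$, so the slope $s_0:=\lim_{t\to +\infty}E_\psi(u_t)/t\in\R$ exists, and (\ref{eqn:lambdaDing}) is equivalent to
$$
\liminf_{t\to +\infty}\frac{L^\lambda(u_t)}{t}\geq s_0.
$$

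Next, Lemma \ref{lem:Delta&lct} guarantees $\lambda<\tilde{\delta}_\bD\leq \lct_X(\bD)=\lct_X(\psi)$, so Proposition \ref{prop:L2} applies and rewrites the left-hand side as $\sup\{s\in\R\,:\,\int_X e^{-2\lambda v_s}d\mu<+\infty\}$, where $\{v_s\}_{s\in\R}$ is the maximal $[\psi]$-relative psh test curve associated to $U$ (Proposition \ref{prop:EnergyFormula}). Since $s\mapsto v_s$ is decreasing and Lelong numbers behave monotonically with respect to singularities, the set of $s$ for which integrability holds is a lower half-line, so it is enough to establish
$$
\int_X e^{-2\lambda v_{s_0}}d\mu<+\infty.
$$

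For this I would invoke the valuative formula $\lct_X(u)=\inf_{E/X} A_X(E)/\nu(u,E)$, a consequence of the strong openness theorem (cf.\ \cite[Theorem B.5]{BBJ15}). By Proposition \ref{prop:Necessaria}, $\nu(v_{s_0},E)\leq S_\bD(E)+\ord_E\bD$ for every prime divisor $E$ over $X$, while the very definition of $\tilde{\delta}_\bD$ gives $\tilde{\delta}_\bD\big(S_\bD(E)+\ord_E\bD\big)\leq A_X(E)$. Combining the two (the case $\nu(v_{s_0},E)=0$ being trivial) yields $A_X(E)/\nu(v_{s_0},E)\geq \tilde{\delta}_\bD$ for every such $E$, hence $\lct_X(v_{s_0})\geq \tilde{\delta}_\bD>\lambda$, which forces the required integrability by the very definition of the complex singularity exponent.

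The argument is a rather clean chaining of already established ingredients; the only delicate point I anticipate is applying the valuative formula for $\lct_X$ to the potentially non-algebraic model type envelope $v_{s_0}$. This is ensured by the strong openness theorem of Guan--Zhou combined with \cite[Theorem B.5]{BBJ15}, as already used in similar form in Lemma \ref{lem:bKlt}. The essential work is really concentrated in Proposition \ref{prop:Necessaria}, whose control on $\nu(v_{s_0},E)$ allows one to convert the hypothesis $\tilde{\delta}_\bD>\lambda$ into the lower bound $\lct_X(v_{s_0})>\lambda$ from which everything else follows.
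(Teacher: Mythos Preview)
Your proposal is correct and follows essentially the same approach as the paper: reduce to the $L^\lambda$-slope via Proposition~\ref{prop:L2}, bound $\nu(v_{s_0},E)$ using Proposition~\ref{prop:Necessaria}, and conclude integrability of $e^{-2\lambda v_{s_0}}$ through the valuative criterion \cite[Theorem B.5]{BBJ15}. The only cosmetic difference is that the paper separates out the degenerate case $s_0=s_\psi^+$ (where $u_t=\psi+ts_0$) before invoking Proposition~\ref{prop:Necessaria}, whereas you let that proposition absorb both cases uniformly.
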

\begin{proof}
Fix $\lambda< \tilde{\delta}_\bD$. By Lemma \ref{lem:Delta&lct}, $\lambda<\lct_X(\bD)=\lct_X(\psi)$ for any $u\in \cE^1_\psi$, i.e. $D_\psi^\lambda(u_t)$ is well-defined.\newline
Denote by $\{v_s\}_{s\in\R}$ the maximal $[\psi]$-relative psh test curve associated to $\{u_t\}_{t\geq 0}$ (Proposition \ref{prop:Legendre}). Then, as observed during the proof of by Proposition \ref{prop:Necessaria}, if $\lim_{t\to+\infty}\frac{E_\psi(u_t)}{t}=s_\psi^+(v_s)$ then $v_s=\psi$ for any $s\leq s_\psi^+(v_s)$ and $u_t=\psi+t s_\psi^+(v_s)$. In this case (\ref{eqn:lambdaDing}) easily follows.\newline
If instead $s_0:=\lim_{t\to+\infty}\frac{E_\psi(u_t)}{t}<s_\psi^+(v_s)$, then by Proposition \ref{prop:Necessaria}
$$
\frac{\tilde{\delta}_\bD}{\lambda} \nu(\lambda v_{s_0},E)\leq \tilde{\delta}_\bD (S_\bD(E)+\ord_E\bD)\leq A_X(E)
$$
for any $E$ prime divisor over $X$. Thus, combining \cite[Theorem B.5]{BBJ15} with Proposition \ref{prop:L2} we deduce that
$$
\liminf_{t\to+\infty} \frac{L^\lambda(u_t)}{t}=\sup\Big\{s\in \R\, : \, \int_Xe^{-\lambda v_s}d\mu<+\infty\Big\}\geq s_0.
$$
Hence (\ref{eqn:lambdaDing}) follows by definition of $s_0$.
\end{proof}
Next, we define
$$
\alpha_\bD:=\inf_{E/X}\frac{A_X(E)-\ord_E(\bD)}{\tau_\bD(E)},
$$
which is the algebraic version of the $\psi$-relative $\alpha$-invariant introduced in \cite{Tru20c}. Indeed it is not hard to check that
$$
\alpha_\bD=\alpha_\psi:=\sup\Big\{\alpha>0\, : \, \sup_{\{u\in \PSH(X,\omega)\, :\, u\leq \psi,\, \sup_X u=0\}}\int_X e^{\alpha(\psi-u)}e^{-\psi}d\mu<+\infty\Big\}.
$$
Similarly we also define its modified version
$$
\tilde{\alpha}_\bD:=\inf_{E/X} \frac{A_X(E)}{\tau_\bD(E)+\ord_E(\bD)},
$$
which coincides with
\begin{equation}
    \label{eqn:Alpha}
    \tilde{\alpha}_\bD=\tilde{\alpha}_\psi:=\sup\Big\{\alpha>0\, : \, \sup_{\{u\in \PSH(X,\omega) \, :\, u\leq \psi,\, \sup_X u=0\}}\int_X e^{-\alpha u}d\mu<+\infty\Big\}
\end{equation}
(see again \cite{Tru20c}).\newline

The following result shows that $\tilde{\delta}_\bD>1$ implies a \emph{geodesic stability} notion in $\cE^1_\psi$.
\begin{theorem}
\label{thm:GeodStab}
Let $\bD\in\bDiv_L(X)$ such that $\tilde{\delta}_\bD>1$. Then, for any $[\psi]$-relative psh geodesic ray $U:\R_{\geq 0}\to \cE^1_\psi$,
$$
\lim_{t\to+\infty}\frac{D_\psi(u_t)}{t}\geq c^2\lim_{t\to +\infty} \frac{J_\psi(u_t)}{t}
$$
where $c^2:=\min(1,\tilde{\alpha}_\bD,\tilde{\delta}_\bD-1)$.
\end{theorem}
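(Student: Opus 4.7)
The plan is to work with the Legendre dual of the ray. By Proposition \ref{prop:Legendre}, the $[\psi]$-relative psh geodesic ray $\{u_t\}_{t\ge 0}$ corresponds bijectively to a maximal $[\psi]$-relative psh test curve $\{v_s\}_{s\in\R}$. Setting $s_0:=\lim_{t\to\infty}E_\psi(u_t)/t$ (finite by Proposition \ref{prop:EnergyFormula}) and $s^+:=s^+_\psi(v_s)$, the identity $\sup_X u_t=\sup_X(u_t-\psi)+O(1)$ (which holds because $u_t\preccurlyeq\psi\in\cM(X,\omega)$) together with \cite[Lemma 3.7]{Tru20b} gives $\lim_{t\to\infty}J_\psi(u_t)/t=s^+-s_0$. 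If $s^+=s_0$ then the argument of Proposition \ref{prop:Necessaria} forces $v_s\equiv\psi$, so $u_t=\psi+ts^+$ and the claim is vacuous; assume henceforth $s^+>s_0$.

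By Proposition \ref{prop:L2} combined with the local computation used in Lemma \ref{lem:Z2}, the slope $\lim_{t\to\infty}L(u_t)/t$ equals $\sup\{s\in\R:\nu(v_s,E)<A_X(E)$ for every prime divisor $E$ over $X\}$. The task is therefore to establish $\nu(v_s,E)<A_X(E)$ uniformly in $E$ for every $s<s_0+c^2(s^+-s_0)$. Two sharp divisorial inputs drive the argument: Proposition \ref{prop:Necessaria} gives $\nu(v_{s_0},E)\le S_\bD(E)+\ord_E(\bD)\le A_X(E)/\tilde\delta_\bD$, while the analytic description of $\tau_\bD(E)$ as the maximal Lelong number of a $[\psi]$-relative $\omega$-psh function with positive non-pluripolar mass (cf.\ Proposition \ref{prop:Corre}) yields $\nu(v_{s^+},E)\le\tau_\bD(E)+\ord_E(\bD)\le A_X(E)/\tilde\alpha_\bD$. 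Concavity of $s\mapsto v_s$, together with additivity of generic Lelong numbers, propagates these endpoint bounds to $\nu(v_s,E)\le(1-\gamma)\nu(v_{s_0},E)+\gamma\nu(v_{s^+},E)$ whenever $s=(1-\gamma)s_0+\gamma s^+$.

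The constant $c^2=\min(1,\tilde\alpha_\bD,\tilde\delta_\bD-1)$ is reached by splitting into regimes matching each term of the minimum. When $\tilde\alpha_\bD\ge 1$, the endpoint bound at $s^+$ is itself $<A_X(E)$, and the convex interpolation carries this across all $\gamma\in[0,1]$, giving $\lim L(u_t)/t\ge s^+$ and hence $\lim D_\psi(u_t)/t\ge\lim J_\psi(u_t)/t$, which covers the factor $1$. When $\tilde\alpha_\bD<1$, integrability at $v_{s^+}$ is obtained from the $\alpha$-invariant \ref{eqn:Alpha} applied to the normalised function $v_{s^+}-\sup_X v_{s^+}\le\psi$, yielding $\int_X e^{-2\lambda v_{s^+}}d\mu<\infty$ for $\lambda<\tilde\alpha_\bD$. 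Combined with $\int_X e^{-2\lambda v_{s_0}}d\mu<\infty$ for $\lambda<\tilde\delta_\bD$ (from Proposition \ref{prop:Necessaria}) and with the pointwise inequality $e^{-2v_s}\le(e^{-2v_{s_0}})^{1-\gamma}(e^{-2v_{s^+}})^\gamma$ (a consequence of concavity of $v_\bullet$), a H\"older estimate with conjugate exponents $p,q$ satisfying $p(1-\gamma)<\tilde\delta_\bD$ and $q\gamma<\tilde\alpha_\bD$ produces integrability of $e^{-2v_s}d\mu$ across the required range of $s$, accounting for the remaining two terms $\tilde\alpha_\bD$ and $\tilde\delta_\bD-1$.

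The main technical obstacle is the H\"older optimisation in the regime $\tilde\alpha_\bD<1$: from the system $p(1-\gamma)<\tilde\delta_\bD$, $q\gamma<\tilde\alpha_\bD$ and $1/p+1/q=1$ one must extract the sharp admissible range of $\gamma$, and then perform a limiting argument $\lambda\to\tilde\alpha_\bD^-$, $\lambda\to\tilde\delta_\bD^-$ while handling borderline subtleties (such as the possibility $v_{s^+}\equiv-\infty$, equalities in the divisorial bounds, and the translation needed to apply the $\alpha$-invariant to $v_{s^+}$).
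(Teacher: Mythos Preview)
Your approach is sound and leads to the result, but it is a genuinely different argument from the paper's. The paper does \emph{not} work at the level of the test curve at all: for a normalised potential $u\in\cE^1_\psi$ with $\sup_X u=0$ it writes $-2u=(2-\lambda)(-2\lambda u)+(\lambda-1)\bigl(-2(\lambda-1)u\bigr)$ for $1<\lambda<\tilde\delta_\bD$ with $\lambda-1<\min(1,\tilde\alpha_\bD)$, and applies convexity of $f\mapsto\log\int_X e^{f}d\mu$ directly. This gives the pointwise inequality $L(u)\geq(2-\lambda)\lambda\,L^\lambda(u)-C_\lambda$, hence $D_\psi(u)\geq(2-\lambda)\lambda\,D_\psi^\lambda(u)+(\lambda-1)^2 J_\psi(u)-C_\lambda$. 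Taking slopes along a geodesic ray, the $D_\psi^\lambda$ term disappears by Proposition~\ref{prop:438}, and letting $\lambda-1\to c$ yields the constant $c^2$.

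By contrast, you work through the Legendre transform and the slope formula $\lim_{t\to\infty}L(u_t)/t=\sup\{s:\int_X e^{-2v_s}d\mu<\infty\}$, interpolating between the divisorial bounds at $s_0$ and $s^+$. Carrying out the H\"older optimisation you flag as the obstacle, one finds the admissible range is $\gamma<\gamma^*:=\tilde\alpha_\bD(\tilde\delta_\bD-1)/(\tilde\delta_\bD-\tilde\alpha_\bD)$ when $\tilde\alpha_\bD<\tilde\delta_\bD$ (and $\gamma^*=1$ otherwise), and a short case analysis shows $\gamma^*\geq c^2$ in every regime; so your route in fact gives a slightly sharper slope constant. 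The trade-offs: the paper's proof is shorter, avoids the borderline issues you mention, and produces a genuine pointwise inequality on $\cE^1_\psi$ (not just a slope estimate), whereas your argument is more geometric and makes transparent \emph{why} $\tilde\alpha_\bD$ and $\tilde\delta_\bD$ govern the threshold via the Lelong numbers of $v_{s_0}$ and $v_{s^+}$. One minor imprecision in your write-up: $\int_X e^{-2v_s}d\mu<\infty$ is equivalent to $\inf_E A_X(E)/\nu(v_s,E)>1$ (a uniform strict bound), not merely to $\nu(v_s,E)<A_X(E)$ for every $E$; but your interpolation does produce a uniform bound, so this does not affect the argument. The $v_{s^+}\equiv-\infty$ issue is handled, as you indicate, by replacing $s^+$ with $s^+-\epsilon$ and letting $\epsilon\to0$.
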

\begin{proof}
By Lemma \ref{lem:Delta&lct} $D_\psi^\lambda(u_t)$ is well defined for any $\lambda< \tilde{\delta}_\bD$. Thus, let $1<\lambda< \tilde{\delta}_\bD$ such that $\lambda-1\leq \min(1,\tilde{\alpha}_\bD)$.\newline
Fixed $u\in \cE^1_\psi$ with $\sup_X u=0$. Writing $u=\lambda(2-\lambda)u+(\lambda-1)^2u$, the convexity of $\log\int_X e^fd\mu$ yields
\begin{multline*}
    -\frac{1}{2}\log\int_X e^{-2u}d\mu\geq -\frac{2-\lambda}{2}\log \int_X e^{-2\lambda u}d\mu-\frac{\lambda-1}{2}\log\int_X e^{-2(\lambda-1)u}d\mu\geq\\
    \geq-\frac{2-\lambda}{2}\log \int_X e^{-2\lambda u}d\mu-C_\lambda
\end{multline*}
where $C_\lambda>0$ is a uniform constant given by the definition of $\tilde{\alpha}_\bD$. Thus we deduce
$$
D_\psi(u)\geq \big((2-\lambda)\lambda\big)D^{\lambda}_\psi(u)-(\lambda-1)^2E_\psi(u)-C_{c,\lambda}\geq\big((2-\lambda)\lambda\big)D^{\lambda}_\psi(u)+(\lambda-1)^2J_\psi(u)-C_\lambda
$$
where the last inequality follows from $J_\psi(u)\leq -E_\psi(u)$ if $\sup_X u\leq 0$. Hence, for any $\psi$-relative psh geodesic ray $U:\R_{\geq 0}\to \cE^1_\psi$, the translation invariance of $D_\psi,J_\psi$ leads to
$$
\lim_{t\to +\infty} \frac{D_\psi(u_t)}{t}\geq\big((2-\lambda)\lambda\big) \liminf_{t\to+\infty}\frac{D^\lambda(u_t)}{t}+(\lambda-1)^2\lim_{t\to+\infty}\frac{J_\psi(u_t)}{t}\geq(\lambda-1)^2\lim_{t\to+\infty}\frac{J_\psi(u_t)}{t}
$$
where we also used Proposition \ref{prop:438}. The arbitrariness of $\lambda$ concludes the proof.
\end{proof}
Together with Theorem \ref{thm:KEDing}, Theorem \ref{thm:UniformToDelta} and Proposition \ref{prop:deltacompa}, the following result concludes the the proof of Theorem \ref{thmD}.
\begin{theorem}
\label{thm:ConclusionKE}
Let $\bD\in \bDiv_L(X)$ such that $\tilde{\delta}_\bD>1$. Then there exists a unique $[\psi]$-KE metric.
\end{theorem}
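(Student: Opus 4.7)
By Lemma \ref{lem:Delta&lct}, the hypothesis $\tilde\delta_\bD>1$ implies that $(X,\bD)$ is klt, so that $L$, $E_\psi$ and $D_\psi$ are well-defined finite functionals on $\cE^1_\psi$. The plan is to derive the coercivity estimate $D_\psi(u)\geq A\,J_\psi(u)-B$ for some $A>0$, $B\geq 0$, and then apply Theorem \ref{thm:Ding}$(iv)\Rightarrow(iii)$ to obtain the unique $[\psi]$-KE metric. The discreteness $\mathrm{Aut}(X,[\psi])^\circ=\{\Id\}$ needed to invoke Theorem \ref{thm:Ding} is in fact a byproduct: any nontrivial one-parameter subgroup would generate a $[\psi]$-relative psh geodesic ray along which $D_\psi$ stays bounded while $J_\psi$ grows linearly, contradicting the coercivity we are about to establish, and equivalently the strict geodesic stability of Theorem \ref{thm:GeodStab}.

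Coercivity is proved by contradiction, following the scheme of \cite{BBJ15, DZ22} adapted to the $[\psi]$-relative setting. Fix $0<\epsilon<c^2$, where $c^2:=\min(1,\tilde\alpha_\bD,\tilde\delta_\bD-1)$ is the constant of Theorem \ref{thm:GeodStab}. If coercivity failed with constant $\epsilon$, one could find $u_k\in\cE^1_\psi$ with $t_k:=d_1(\psi,u_k)\to+\infty$ and $D_\psi(u_k)\leq \epsilon\,J_\psi(u_k)\leq\epsilon\,t_k$, using $J_\psi(u)\leq d_1(\psi,u)$. Let $\{u_{k,t}\}_{t\in[0,t_k]}$ be the $d_1$-unit-speed $[\psi]$-relative psh geodesic segment joining $\psi$ to $u_k$ provided by Theorem \ref{thm:E1}$(ii)$. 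I would then extract a subsequential limit yielding a $[\psi]$-relative psh geodesic ray $\{u_t\}_{t\geq 0}$ in $\cE^1_\psi$ of unit $d_1$-speed, and use convexity of $D_\psi$ along psh geodesics (Proposition \ref{prop:ConveDing}) to transport the sublinear estimate to the limiting ray.

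Quantitatively, convexity gives
\begin{equation*}
D_\psi(u_{k,t})\leq \tfrac{t}{t_k}D_\psi(u_k)+\bigl(1-\tfrac{t}{t_k}\bigr)D_\psi(\psi)\leq \epsilon\, t+C
\end{equation*}
for all $t\in[0,t_k]$, with $C$ independent of $k$. Passing to the limit at fixed $t$ using lower semi-continuity of $D_\psi$ (from lower semi-continuity of $-E_\psi$ on $d_1$-convergent sequences, Theorem \ref{thm:E1}$(iii)$, combined with continuity of $L$ along such sequences guaranteed by the klt hypothesis) yields $D_\psi(u_t)\leq\epsilon\,t+C$; dividing by $t$ and sending $t\to+\infty$ gives $\lim_{t\to+\infty}D_\psi(u_t)/t\leq\epsilon$. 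Since the limit ray has unit $d_1$-speed and $J_\psi(u)=d_1(\psi,u)+O(1)$, one has $\lim_{t\to+\infty}J_\psi(u_t)/t=1$, so Theorem \ref{thm:GeodStab} forces $\epsilon\geq c^2$, contradicting the choice of $\epsilon$.

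The main technical obstacle is the compactness step: extracting a $[\psi]$-relative psh geodesic ray in $\cE^1_\psi$ as a limit of $d_1$-geodesic segments of diverging length. The segments are equi-Lipschitz in $d_1$, so Arzel\`a--Ascoli combined with completeness of $(\cE^1_\psi,d_1)$ (Theorem \ref{thm:E1}$(i)$) and weak relative compactness of $d_1$-bounded subsets of $\PSH(X,\omega,\psi)$ produce, by a diagonal extraction, a pointwise $d_1$-limit $u_t$ for each $t\geq 0$; the psh geodesic property survives because the associated $S^1$-invariant $p_1^*\omega$-psh functions on $X\times\D^*$ converge in $L^1_{\mathrm{loc}}$, and unit-speed parametrization is preserved under $d_1$-limits. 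Verifying that this limit is genuinely $[\psi]$-relative (and not merely $\omega$-psh) and that it remains in $\cE^1_\psi$ — rather than escaping to a more singular stratum — is the delicate point; this is where the sublinear upper bound $D_\psi(u_{k,t})\leq\epsilon\,t+C$, together with the klt-type estimate for $L$ encoded in Lemma \ref{lem:Below2}, keeps $E_\psi(u_{k,t})$ uniformly controlled and thus confines the limit to $\cE^1_\psi$.
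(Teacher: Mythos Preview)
Your overall scheme is correct and matches the paper's, but the compactness step, which you flag as ``the main technical obstacle'', does not go through as written, and the fix is precisely what the paper supplies.

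The issue is that $d_1$-bounded subsets of $\cE^1_\psi$ are \emph{not} $d_1$-precompact, so Arzel\`a--Ascoli does not produce a $d_1$-limit of the segments $\{u_{k,t}\}$; only weak ($L^1$) compactness is available. Extracting a weak limit (as an $S^1$-invariant $p_1^*\omega$-psh function on $X\times\D^*$) yields a $[\psi]$-relative psh \emph{ray} $\{u_t\}$, and upper semicontinuity of $E_\psi$ gives $E_\psi(u_t)\geq -t$, so $u_t\in\cE^1_\psi$. But nothing forces $E_\psi(u_t)=-t$: the limit may be slower than unit speed, may fail to be a geodesic, and in the worst case may collapse to the trivial ray $u_t\equiv\psi$, in which case $\lim J_\psi(u_t)/t=0$ and there is no contradiction with Theorem~\ref{thm:GeodStab}. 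Your sentence ``unit-speed parametrization is preserved under $d_1$-limits'' is correct but vacuous here, since you do not have $d_1$-limits.

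The paper handles this by two additional devices you are missing. First, instead of $D_\psi$ alone it uses the tilted functional $D_\psi^\beta:=D_\psi+\beta E_\psi=L-(1-\beta)E_\psi$ for small $\beta>0$; this is still convex along psh geodesics and weakly lower semicontinuous, and since $E_\psi(u_{k,t})=-t$ one gets $D_\psi^\beta(u_{k,t})/t\leq \epsilon_k-\beta$, hence $D_\psi^\beta(u_t)/t\leq -\beta<0$ in the limit, which forces the limit ray to be nontrivial. Second, the weak limit $\{u_t\}$ is only a psh ray, so the paper applies the maximization of Proposition~\ref{prop:Maximi} to upgrade it to a $[\psi]$-relative psh \emph{geodesic} ray $\{u_t^M\}$ with the same $E_\psi$-slope and the same $U_\NA$ (hence the same $L$-slope). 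Only then can one reparametrize to unit speed and invoke Theorem~\ref{thm:GeodStab} with $\lim J_\psi(\tilde u_t)/t=1$, obtaining $\beta\geq c^2$ and a contradiction for $\beta$ chosen small enough.
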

\begin{proof}
By Theorem \ref{thm:Ding} it is sufficient to prove that there exists $A>0,B\geq 0$ such that
\begin{equation}
    \label{eqn:CoercivityG}
    D_\psi(u)\geq AJ_\psi(u)-B
\end{equation}
for any $u\in\cE^1_\psi$. Assume by contradiction there exists a sequence $\{u_k\}_{k\in\N}\subset \cE^1_\psi$ with $\sup_X u_k =0$ such that
$$
D_\psi(u_k)\leq \epsilon_k J_\psi(u_k)-C_k\leq \epsilon_k d_1(\psi,u_k)-C_k.
$$
where $\epsilon_k\searrow 0$ while $C_k\nearrow +\infty$. Observe that by weak compactness of $\{u\in \cE^1_\psi \, : \, E_\psi(u)>-C, \, \sup_X u=0\}\subset \cE^1_\psi$ \cite[Proposition 4.19]{DDNL17b} and by lower-semicontinuity of $D_\psi$ with respect to the weak topology \cite[Proposition 4.18]{Tru20c}, we necessarily have $J_\psi(u_k)\to+\infty$ as $k\to +\infty$.\newline
Let $\{u_{k,t}\}_{t\in[0,d_1(\psi,u_k)]}$ the unit speed $[\psi]$-relative geodesic psh path joining $\psi$ and $u_k$. By convexity of $D_\psi$ (Proposition \ref{prop:ConveDing}), we deduce that
$$
\frac{D_\psi(u_{k,t})}{t}\leq \epsilon_k
$$
for any $t\in [0,d_1(\psi,u_k)]$ and for any $k\in\N$. Then we want to proceed similarly to \cite[Theorem 5.2]{DZ22}. Namely,we observe that the functional
$$
D^\beta_\psi(u_{k,t}):=D_\psi(u_{k,t})+\beta E_\psi(u_{k,t})=L(u_{k,t})-(1-\beta)E_\psi(u_{k,t})
$$
is still lower semicontinuous with respect to the weak topology if $\beta\in(0,1)$ and that it is convex along $[\psi]$-relative psh geodesic paths in $\cE^1_\psi$ (as immediate consequence of Theorem \ref{thm:E1}$.(iii)$ and of Proposition \ref{prop:ConveDing}). Note that
$$
\frac{D_\psi^\beta(u_{k,t})}{t}\leq \epsilon_k-\beta
$$
as $E_\psi(u_{k,t})=-d_1(\psi,u_{k,t})=-t$. Indeed, combining \cite[Proposition 3.9.(iii)]{Tru20c} with \cite[Theorem 1]{Dar17}, it is not hard to check that $\sup_X u_{k,t}=\sup_X(u_{k,t}-\psi)=0$ for any $t\in [0,d_1(\psi,u_k)]$ and for any $k\in\N$.\newline
Next, setting $\D_k:=\{\tau\in\D \, : \, -\log\lvert \tau\rvert\in [0,d_1(\psi,u_k)] \}$, a diagonal argument shows that $v_k(x,\tau):=u_{k,-\log\lvert \tau\rvert}(x)\in X\times \D_k$ converges to a $S^1$-invariant $\pi_X^*\omega$-psh function $v\in X\times\D^*$. In particular there exists a set $Z\subset (0,+\infty)$ of zero Lebesgue measure such that $u_t:=u_{-\log\lvert \tau\rvert}=v(\cdot,\tau)\in \PSH(X,\omega)$ is the weak limit of $u_{k,t}$ as $k\to +\infty$ for any $t\in(0,+\infty)\setminus Z$. However, Hartogs' Lemma \cite[Proposition 8.5]{GZ17} and the $t$-convexity of $\{u_t\}_{t\geq 0}$ show that $Z=\emptyset$ and that $\sup_X(u_t-\psi)=\sup_X u_t=0$ for any $t\in X$. Furthermore the upper semicontinuity of the $[\psi]$-relative Monge-Ampère energy \cite[Proposition 4.19]{DDNL17b} implies that $0\leq E_\psi(u_t)\leq -t$, while $d_1(u_t,\psi)\to 0$ as $t\to 0^+$ leads to $u_t\to \psi$ weakly \cite[Proposition 3.6]{Tru20c}. Therefore, exploiting again the lower-semicontinuity of $D_\psi^\beta$, $\{u_t\}_{t\geq 0}$ is a $[\psi]$-relative psh ray such that
$$
\frac{D^\beta_\psi(u_t)}{t}\leq -\beta.
$$
Then, considering the $[\psi]$-relative psh geodesic ray $\{u_t^M\}_{t\geq 0}$ given by the maximization of $\{u_t\}_{t\geq 0}$ (Proposition \ref{prop:Maximi}), we have
$$
\lim_{t\to+\infty} \frac{D^\beta_\psi(u_t^M)}{t}=\lim_{t\to+\infty} \frac{D^\beta_\psi(u_t)}{t}\leq -\beta.
$$
In particular, $\lim_{t\to +\infty}\frac{E_\psi(u_t^M)}{t}<0$ since otherwise $u_t^M=\psi$ (see the proof of Proposition \ref{prop:Necessaria}) which would lead to $D^\beta_\psi(u_t^M)$ constant. Thus, letting $\{\tilde{u}_t\}_{t\geq 0}$ to be the unit speed reparametrization of $\{u_t^M\}_{t\geq 0}$, we deduce that $\lim_{t\to+\infty}\frac{D_\psi^\beta(\tilde{u}_t)}{t}\leq 0$, i.e. that
\begin{equation}
    \label{eqn:Contr1}
    \lim_{t\to +\infty} \frac{D_\psi(\tilde{u}_t)}{t}\leq \beta.
\end{equation}
On the other hand, as immediate consequence of Theorem \ref{thm:GeodStab} we have that $D_\psi$ is uniformly geodesically coercive, i.e. there exists $\lambda>0$ such that
\begin{equation}
    \label{eqn:Contr2}
    \lim_{t\to+\infty} \frac{D_\psi(\tilde{u}_t)}{t}\geq \lambda \lim_{t\to+\infty}\frac{J_\psi(\tilde{u}_t)}{t}.
\end{equation}
Thus, as $J_\psi(\tilde{u}_t)=-E_\psi(\tilde{u}_t)+O(1)=t+O(1)$, it is enough to choose $0<\beta<\lambda$ to get a contradiction from (\ref{eqn:Contr1}), (\ref{eqn:Contr2}).
\end{proof}
%\begin{remark}
%As a consequence of Theorem \ref{thm:GeodStab} and of the proof of Theorem \ref{thm:ConclusionKE}, if $\tilde{\delta}_\bD>1$ then
%$$
%A_{\sup}\geq \frac{\tilde{\alpha}}{\tilde{\delta}_\bD-\tilde{\alpha}}(\tilde{\delta}_\bD-1)
%$$
%where $\tilde{\alpha}=\min(1,\tilde{\alpha}_\bD)$, and where $A_{\sup}:=\sup\{A>0\, : \, D_\psi$ is coercive with \emph{slope} $A\}$.
%\end{remark}

\section{Varying the singularities}
\label{sec:Last}
Keeping assuming $X$ Fano manifold and $L=-K_X$, we will study the modified delta invariant $\tilde{\delta}_{\bD}$ to deduce some interesting properties of the uniform $\bD$-log Ding stability varying the generalized $b$-divisor $\bD\in\bDiv_L(X)$.\newline

Consider $\bD_0,\bD_1\in\bDiv_L(X)$ such that $\bD_0\leq \bD_1$, and let $\psi_0,\psi_1\in\cM^+_D(X,\omega)$ the associated model type envelopes where as usual $\omega$ is a fixed reference K\"ahler form such that $\{\omega\}=c_1(X)$ (Proposition \ref{prop:Corre}). We can then join $\psi_0,\psi_1$ through the path $[0,1]\ni t \to \psi_t:=P_\omega[t\psi_1+(1-t)\psi_0](0)\in \cM^+(X,\omega)$. By construction $t\to \psi_t$ is decreasing. Moreover, by the exact same argument of \cite[Proposition 3.7]{Tru20c} it follows that $\psi_t\in \cM^+_D(X,\omega)$, that $t\to \psi_t$ is continuous with respect to the weak topology and that
$$
V_t:=\int_XMA_\omega(\psi_t)=\int_XMA_\omega(t\psi).
$$
We denote by $\bD_t\in \bDiv_L(X)$ the generalized $b$-divisor associated to $\psi_t$ through Proposition \ref{prop:Corre}.% it is not hard to check that $\psi_t$ is associated to $\bD_t:=t\bD_1+(1-t)\bD_0$.
\begin{theorem}
\label{thm:Main}
Let $\bD_0,\bD_1\in \bDiv_L(X)$ such that $\bD_0\leq \bD_1$, $\bD_0\neq \bD_1$. Let also $[0,1]\ni t\to \bD_t\in\bDiv_L(X)$ be the increasing path explained above. Set $\tilde{\delta}_t:=\tilde{\delta}_{\bD_t}$, $V_t:=\langle(L-\bD_t)^n \rangle$, $\nu_t^E:=\ord_{\bD_t}(E)$ and $\tau_t^E:=\tau_{\bD_t}(E)$. Then
\begin{itemize}
    %\item[i)] the map $[0,1]\ni t\to V_t/\tilde{\delta}_t$ satisfies
    %$$
    %\frac{V_t}{\tilde{\delta}_t}\geq a^{n+1}\frac{V_{t_1}}{\tilde{\delta}_{t_1}}+(1-a)^{n+1}\frac{V_{t_2}}{\tilde{\delta}_{t_2}}
    %$$
    %for any convex combination $t=at_1+(1-a)t_2$, $a\in [0,1]$;
    \item[i)] letting $\tilde{\alpha}_s:=\tilde{\alpha}_{\psi_s}:=\inf_{E/X}\frac{A_X(E)}{\tau^E_s+\nu^E_s}$, for any $0\leq s\leq t\leq 1$
    $$
    \frac{V_s}{\tilde{\delta}_s}\geq \frac{V_t}{\tilde{\delta}_t}\geq \frac{V_s}{\tilde{\delta}_s}-\frac{V_s-V_t}{\tilde{\alpha}_s}
    $$
    \item[ii)]
    \begin{equation}
        \label{eqn:Main1}
        \frac{V_0}{V_1}\geq \frac{\tilde{\delta}_0}{\tilde{\delta}_1}\geq\Big(\frac{V_1}{V_0}\Big)^n;
    \end{equation}
    \item[iii)] letting $c:=\inf_{E/X}\Big\{\frac{\tau_0^E-\tau_1^E+\nu_0^E}{\nu_1^E}\Big\}\in \R_{\geq 1}$ and $\lct_1:=\lct_X(\bD_1)$,
    \begin{equation}
        \label{eqn:Main2}
        \tilde{\delta}_1^{-1}\leq \frac{n}{n+1}\tilde{\alpha}_0^{-1}+\max\Big(0,1-\frac{cn}{n+1}\Big)\lct_1^{-1}.
        %\lct(\FX,\bD_1)\geq 1+\frac{1-\big(\frac{n+1-c}{n+1}\big)\delta_0}{\delta_0-1}
    \end{equation}
    %then $\tilde{\delta}_1\geq 1$;
    %\item[iii)] letting $[0,1]\ni t\to \bD_t\in\bDiv_L(X)$ be the increasing path explained above, the map $t\to \frac{V_t}{\tilde{\delta}_t}$ is concave, decreasing and letting $\tilde{\alpha}_s:=\tilde{\alpha}(\psi_s)$,
    %$$
    %\frac{V_t}{\tilde{\delta}_t}\geq \frac{V_s}{\tilde{\delta}_s}-\frac{V_s-V_t}{\tilde{\alpha}_s}
    %$$
    %for any $0\leq s\leq t\leq 1$.
\end{itemize}
\end{theorem}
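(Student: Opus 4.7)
The proof rests on the analytic reformulation coming from Proposition \ref{prop:Corre}. For any prime divisor $E$ over $X$ set $\psi_t^{(y)}:=\sup\{u\in\PSH(X,\omega)\,:\,u\leq\psi_t,\,\nu(u,E)\geq y\}$ and $f_t^E(y):=\int_X MA_\omega(\psi_t^{(y)})$. The substitution $y=x+\nu_t^E$ in the computation carried out in the proof of Lemma \ref{lem:Z1}, together with the obvious identity $f_t^E(y)=V_t$ for $y\in[0,\nu_t^E]$, yields the key formula
$$
V_t\bigl(S_{\bD_t}(E)+\nu_t^E\bigr)\;=\;\int_0^{\tilde{\tau}_t^E} f_t^E(y)\,dy,\qquad \tilde{\tau}_t^E:=\tau_t^E+\nu_t^E.
$$
Since $t\mapsto\psi_t$ is decreasing, for $s\leq t$ one has $\psi_s\geq\psi_t$, hence $\psi_s^{(y)}\geq\psi_t^{(y)}$ and $\tilde{\tau}_s^E\geq\tilde{\tau}_t^E$; by monotonicity of non-pluripolar Monge--Amp\`ere mass \cite[Theorem 1.2]{WN17}, $f_s^E(y)\geq f_t^E(y)$ pointwise (extending $f_t^E$ by $0$ beyond $\tilde{\tau}_t^E$). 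All three statements will be read off from this integral identity, from monotonicity of $f_t^E$ in $t$, and from the bounds of Propositions \ref{prop:Fujita} and \ref{prop:OtherIneq}.

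For part (i) the first inequality is immediate: $V_t(S_{\bD_t}(E)+\nu_t^E)\leq V_s(S_{\bD_s}(E)+\nu_s^E)$ pointwise in $E$, and dividing by $A_X(E)$ before taking the supremum over $E$ yields $V_t/\tilde{\delta}_t\leq V_s/\tilde{\delta}_s$. For the second inequality, picking $E_\varepsilon$ nearly realizing the supremum $V_s/\tilde{\delta}_s=\sup_E V_s(S_{\bD_s}(E)+\nu_s^E)/A_X(E)$ reduces matters to
$$
\int_0^{\tilde{\tau}_s^{E_\varepsilon}}\bigl(f_s^{E_\varepsilon}(y)-f_t^{E_\varepsilon}(y)\bigr)\,dy\;\leq\;(V_s-V_t)\,\tilde{\tau}_s^{E_\varepsilon},
$$
which I would establish by a mass-balance argument: on $[0,\nu_t^E]$ one has $f_s^E(y)-f_t^E(y)\leq V_s-V_t$ because $f_t^E\equiv V_t$ and $f_s^E\leq V_s$ there, while on $[\nu_t^E,\tilde{\tau}_s^E]$ the $[\psi_t]$-relative full-mass analogue of Proposition \ref{prop:OtherIneq} applied to $\max(\psi_s^{(y)},\psi_t^{(y)})$ controls the pointwise deficit $V_t-f_t^E(y)$ by $V_s-f_s^E(y)$. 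Using finally $\tilde{\tau}_s^{E_\varepsilon}/A_X(E_\varepsilon)\leq 1/\tilde{\alpha}_s$ completes the estimate.

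The left inequality of (ii) is the $s=0,t=1$ rearrangement of the first inequality of (i). For the right inequality, equivalent to $V_0^n\tilde{\delta}_0\geq V_1^n\tilde{\delta}_1$, I would work at the level of the function $H_E(t):=V_t^{n+1}/\int_0^{\tilde{\tau}_t^E}f_t^E(y)\,dy$, so that $V_t^n\tilde{\delta}_t=\inf_E A_X(E)H_E(t)$. Combining the Brunn--Minkowski-type lower bound $f_t^E(y)\geq V_t\bigl(1-(y-\nu_t^E)_+/\tau_t^E\bigr)^n$ (the core estimate of Proposition \ref{prop:OtherIneq}) with the trivial bound $f_t^E(y)\leq V_t$ sandwiches $H_E(t)$ within a factor $n+1$ of $V_t^n/\tilde{\tau}_t^E$, and the monotonicity of $V_t^n\tilde{\delta}_t$ is then extracted by comparing the near-minimizers of $H_E(t)$ at the endpoints and exploiting the concavity of $y\mapsto f_t^E(y)^{1/n}$ (in turn a consequence of \cite[Theorem B]{DDNL18b}). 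This last step, turning per-$E$ estimates into the required comparison of infima, is the main technical obstacle; pointwise monotonicity of $H_E(t)$ can fail and must be replaced by an inf-level argument.

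Part (iii) is elementary given Proposition \ref{prop:Fujita}. Writing $\tilde{\delta}_1^{-1}=\sup_E(S_{\bD_1}(E)+\nu_1^E)/A_X(E)$ and applying $S_{\bD_1}(E)\leq n\tau_1^E/(n+1)$, the relation $\tilde{\tau}_1^E\leq\tilde{\tau}_0^E$ combined with the definition of $c$ yields $\tau_1^E\leq\tau_0^E+\nu_0^E-c\,\nu_1^E$, so that
$$
\frac{S_{\bD_1}(E)+\nu_1^E}{A_X(E)}\;\leq\;\frac{n}{n+1}\cdot\frac{\tau_0^E+\nu_0^E}{A_X(E)}\;+\;\Bigl(1-\frac{cn}{n+1}\Bigr)\frac{\nu_1^E}{A_X(E)}.
$$
Replacing the second coefficient by $\max(0,1-cn/(n+1))$ (valid because the corresponding term is non-negative and may be dropped when the coefficient is negative) and taking $\sup_E$ gives (\ref{eqn:Main2}) immediately.
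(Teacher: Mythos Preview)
Your setup (the integral formula $V_t(S_{\bD_t}(E)+\nu_t^E)=\int_0^{\tilde\tau_t^E}f_t^E(y)\,dy$ and the monotonicity of $f_t^E$ in $t$) is exactly the paper's, and your arguments for the first inequality of (i), the left inequality of (ii), and all of (iii) match the paper. Two places need correction.

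\textbf{Part (i), second inequality.} Your target bound $f_s^E(y)-f_t^E(y)\leq V_s-V_t$ is correct and is precisely what the paper proves, but the tool you invoke is wrong: Proposition \ref{prop:OtherIneq} concerns $\tau\leq(n+1)S$ and has nothing to do with comparing masses at two different $t$-values. The paper instead uses the inequality of \cite[Theorem 1.2]{DDNL19},
\[
\int_X MA_\omega\bigl(P_\omega(v_r^s,\psi_t)\bigr)+\int_X MA_\omega\bigl(\max(v_r^s,\psi_t)\bigr)\;\geq\;\int_X MA_\omega(v_r^s)+\int_X MA_\omega(\psi_t),
\]
together with $v_r^t\geq P_\omega(v_r^s,\psi_t)$ and $\psi_s\geq\max(v_r^s,\psi_t)$, to obtain $f_t^E(r)+V_s\geq f_s^E(r)+V_t$ directly. (Note also that the relevant max is $\max(v_r^s,\psi_t)$, not $\max(\psi_s^{(y)},\psi_t^{(y)})$.) Once this pointwise bound is in hand, integrating over $[0,\tilde\tau_s^E]$ and dividing by $A_X(E)$ gives the second inequality exactly as you describe.

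\textbf{Part (ii), right inequality.} Your claim that ``pointwise monotonicity of $H_E(t)$ can fail and must be replaced by an inf-level argument'' is mistaken, and the sandwich-and-concavity strategy you sketch is too vague to constitute a proof. The paper argues pointwise in $E$. From the bound used in (i) together with $\tau_s^E+\nu_s^E\leq(n+1)(S_s^E+\nu_s^E)$ (Proposition \ref{prop:OtherIneq}), one derives for $G_E(t):=S_t^E+\nu_t^E$ the inequality $G_E(t)\geq G_E(s)\bigl(n+1-nV_s/V_t\bigr)$, hence the differential inequality $G_E'(r)\geq nG_E(r)V'(r)/V(r)$ almost everywhere. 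Integrating gives $G_E(1)\geq G_E(0)(V_1/V_0)^n$ for \emph{every} $E$, from which $1/\tilde\delta_1\geq(V_1/V_0)^n/\tilde\delta_0$ follows by taking the supremum over $E$. No inf-level subtlety is required.
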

%In $(iii)$ with obvious notations we set $\tilde{\delta}_t:=\tilde{\delta}_{\bD_t}$, $V_t:=\langle(L-\bD_t)^n \rangle$, $\nu_t^E:=\ord_{\bD_t}(E)$ and $\tau_t^E:=\tau_{\bD_t}(E)$.\newline
Observe also that the invariant $c$ in $(iii)$ is bigger than $1$ as
$$
\tau_0^E+\nu_0^E\geq \tau_1^E+\nu_1^E
$$
for any $E$ prime divisor over $X$ by definition of $\tau_i^E,\nu_i^E$.\newline
See (\ref{eqn:Alpha}) for the analytic version of the \emph{modified $\alpha$-invariant} $\tilde{\alpha}_\psi$.\newline

The proof of Theorem \ref{thm:Main} will be given in subsection \ref{ssec:Proof}.
\subsection{Consequences of Theorem \ref{thm:Main}}
We collect here some corollaries of Theorem \ref{thm:Main}, and in particular we prove Theorem \ref{thmA} and Theorem \ref{thmB} of the Introduction.\newline

\begin{defi}
Let $L:=-K_X$. The \emph{$\bD$-log $K$-stability locus} $\bDiv^{K}(X)$ is defined as
$$
\bDiv^{K}(X):=\Big\{\bD\in \bDiv_L(X)\, : \, \lct_X(\bD)>1 \, \mathrm{and}\, (X,L)\, \bD\mathrm{-log}\, K\mathrm{-stable} \Big\}.
$$
The \emph{uniform $\bD$-log $K$-stability locus} $\bDiv^{UK}(X)$ is given as
$$
\bDiv^{UK}(X):=\Big\{\bD\in \bDiv_L(X)\, : \, \lct_X(\bD)>1 \, \mathrm{and}\, (X,L)\, \mathrm{uniformly}\, \bD\mathrm{-log}\, K\mathrm{-stable} \Big\}.
$$
The $\bD$\emph{-log Ding stability locus} $\bDiv^D(X)$ and the \emph{uniform $\bD$-log Ding stability locus} $\bDiv^{UD}(X)$ are defined similarly. 
\end{defi}
We have the following inclusions
$$
\begin{tikzcd}
    \bDiv^{UD}(X) \arrow[d, phantom, sloped, "\subset"] \arrow[r, phantom, sloped, "\subset"]& \bDiv^D(X) \arrow[d, phantom, sloped, "\subset"]\\
    \bDiv^{UK}(X) \arrow[r, phantom, sloped, "\subset"] & \bDiv^K(X)
\end{tikzcd}
$$
as immediate consequence of Proposition \ref{prop:DingK}. Moreover by Theorem \ref{thmD} it follows that
$$
\mathcal{M}_{KE}^{U}:=\big\{\psi\in \cM^+(X,\omega)\, : \, \exists\, !\, [\psi]\mathrm{-KE}\, \mathrm{metric}\big\}=\bDiv^{UD}(X),
$$
(see also \cite{Tru20c}), which implies that there exists a unique genuine KE metric if and only if $0\in \bDiv^{UK}(X)$ if and only if $0\in \bDiv^{UD}(X)$ by \cite{BBJ15, Fuj19b}.
\subsubsection{Openness of $\bDiv^{UD}(X)$}
The set $\bDiv_L(X)$ inherits a natural \emph{strong topology} given by Proposition \ref{prop:Corre}. Indeed, as showed in \cite{DDNL19}, the set $\cM^+(X,\omega)$ is endowed of a distance $d_S$ induced by the radial $L^1$-distance on the space of psh geodesic rays. We refer to \cite{DDNL19} for the precise definition of $d_S$. Here we recall the following properties.
\begin{prop}
\label{prop:d_S}
Let $\{\psi_k\}_{k\in\N}\subset \cM^+(X,\omega)$ be a sequence $d_S$-convergent to $\psi\in \cM^+(X,\omega)$. Set also $\tilde{\psi}_k:=P_\omega\big[\max(\psi_k,\psi)\big](0)\in\cM^+(X,\omega)$. Then
\begin{itemize}
    \item[i)] $\psi_k\to \psi$ and $\int_X MA_\omega(\psi_k)\to \int_X MA_\omega(\psi)$ as $k\to+\infty$;
    \item[ii)] $d_S(\tilde{\psi}_k,\psi)\to 0$ as $k\to+\infty$.
\end{itemize}
\end{prop}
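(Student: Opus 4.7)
The plan is to read off both statements from the structural properties of the $d_S$-distance on $\cM^+(X,\omega)$ established in \cite{DDNL19}. The two ingredients I will use are: (a) $d_S$-convergence of a sequence in $\cM^+(X,\omega)$ implies weak $L^1$-convergence of the underlying $\omega$-plurisubharmonic functions together with convergence of the total non-pluripolar Monge--Amp\`ere masses (this is the main characterization theorem in \cite{DDNL19}); and (b) the explicit formula
\[
d_S(\phi, \psi) = 2\int_X MA_\omega(\phi \vee \psi) - \int_X MA_\omega(\phi) - \int_X MA_\omega(\psi),
\]
where $\phi \vee \psi := P_\omega[\max(\phi,\psi)](0)$ is the join of $\phi$ and $\psi$ in the $\preccurlyeq$-order on $\cM^+(X,\omega)$. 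Since $\phi \vee \psi \succcurlyeq \phi$ and $\phi \vee \psi \succcurlyeq \psi$, the monotonicity of the non-pluripolar mass along $\preccurlyeq$ recalled in subsection~\ref{ssec:KE} shows that each of the two differences on the right is non-negative; in particular we get the one-sided bound
\[
0 \leq \int_X MA_\omega(\psi_k \vee \psi) - \int_X MA_\omega(\psi) \leq d_S(\psi_k, \psi).
\]

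Part (i) is then immediate from (a). For part (ii), the key observation is that by construction $\tilde{\psi}_k = \psi_k \vee \psi$, and since $\tilde{\psi}_k \succcurlyeq \psi$ one has $\tilde{\psi}_k \vee \psi = \tilde{\psi}_k$; substituting into (b) with the pair $(\tilde{\psi}_k, \psi)$ collapses the formula to
\[
d_S(\tilde{\psi}_k, \psi) = \int_X MA_\omega(\tilde{\psi}_k) - \int_X MA_\omega(\psi),
\]
which is precisely the left-hand side of the displayed one-sided bound applied to the pair $(\psi_k, \psi)$. The right-hand side $d_S(\psi_k, \psi)$ tends to zero by hypothesis, hence so does $d_S(\tilde{\psi}_k, \psi)$.

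The main obstacle is really just quoting the two inputs (a) and (b) correctly from \cite{DDNL19}; once granted, the argument is purely algebraic. If one wished to avoid invoking the explicit formula (b), the alternative route would be to establish the contraction property $d_S(\phi \vee \psi, \psi) \leq d_S(\phi, \psi)$ directly from the geodesic-ray definition of $d_S$, by comparing the radial $L^1$-distance between the rays associated to $\phi$ and $\psi$ with that between the rays associated to $\phi \vee \psi$ and $\psi$; combined with (a), this contraction gives (ii) as a one-line consequence of (i).
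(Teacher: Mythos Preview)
Your proof is correct and follows essentially the same route as the paper's: both parts are read off directly from the structural results on $d_S$ in \cite{DDNL19}. The paper cites \cite[Lemma 3.7, Theorem 5.6]{DDNL19} for (i) and \cite[Proposition 3.5]{DDNL19} for (ii); your argument for (ii) is just an explicit unpacking of those same ingredients via the mass formula $d_S(\phi,\psi)=2\int_X MA_\omega(\phi\vee\psi)-\int_X MA_\omega(\phi)-\int_X MA_\omega(\psi)$, and the contraction alternative you mention at the end is precisely what \cite[Proposition 3.5]{DDNL19} gives.
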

\begin{proof}
If $\psi_k\overset{d_S}{\longrightarrow}\psi$ then $\int_X MA_\omega(\psi_k)\to\int_X MA_\omega(\psi)>0$ as immediate consequence of \cite[Lemma 3.7]{DDNL19}. Thus, \cite[Theorem 5.6]{DDNL19} leads to $\psi_k\to \psi$ and $(i)$ is proved.\newline
Then, $(ii)$ immediately follows from \cite[Proposition 3.5]{DDNL19}.
\end{proof}
As said before we call \emph{strong topology} the topology induced by $d_S$ on $\bDiv_L(X)$ thanks to Proposition \ref{prop:Corre}. The following result gives the first part of Theorem \ref{thmB}.
\begin{theorem}
\label{thm:SO}
The function $\tilde{\delta}:\bDiv_L(X)\to \R_{\geq 0}$ is continuous with respect to the strong topology induced by $d_S$ on $\bDiv_L(X)$. In particular the set $\bDiv^{UD}(X)$ is strongly open.
\end{theorem}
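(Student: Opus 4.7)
The strategy is to reduce the continuity question to Theorem \ref{thm:Main}(ii) via an envelope trick that manufactures monotone comparisons. Given $\bD_k\to \bD$ strongly, i.e.\ $\psi_k\overset{d_S}{\to}\psi$ for the associated model type envelopes (Proposition \ref{prop:Corre}), I would consider the auxiliary envelopes $\tilde{\psi}_k:=P_\omega[\max(\psi_k,\psi)](0)\in\cM^+(X,\omega)$ provided by Proposition \ref{prop:d_S}(ii). Since $\max(\psi_k,\psi)\geq \psi$ and $\max(\psi_k,\psi)\geq \psi_k$, monotonicity of the operator $P_\omega[\cdot](0)$ with respect to the partial order $\preccurlyeq$ yields $\tilde{\psi}_k\geq\psi$ and $\tilde{\psi}_k\geq \psi_k$. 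Under the order-reversing correspondence of Proposition \ref{prop:Corre}, this produces $\tilde{\bD}_k\leq \bD$ and $\tilde{\bD}_k\leq \bD_k$ inside $\bDiv_L(X)$, once one checks $\tilde{\psi}_k\in \cM^+_D(X,\omega)$ using the stability of $\cM_D$ under $P_\omega[\cdot](0)$ (cf.\ Remark \ref{rem:Imodel} and subsection \ref{ssec:KE}).

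Proposition \ref{prop:d_S}(i) gives $V_{\bD_k}=\int_X MA_\omega(\psi_k)\to V_\bD$, and part (ii) combined with (i) applied to $\tilde{\psi}_k\overset{d_S}{\to}\psi$ gives $V_{\tilde{\bD}_k}\to V_\bD$ as well. Applying Theorem \ref{thm:Main}(ii) to the pair $\tilde{\bD}_k\leq \bD$ yields
$$
\Bigl(\tfrac{V_\bD}{V_{\tilde{\bD}_k}}\Bigr)^{n}\tilde{\delta}_\bD\leq \tilde{\delta}_{\tilde{\bD}_k}\leq \tfrac{V_{\tilde{\bD}_k}}{V_\bD}\tilde{\delta}_\bD,
$$
so $\tilde{\delta}_{\tilde{\bD}_k}\to \tilde{\delta}_\bD$. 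Applying Theorem \ref{thm:Main}(ii) a second time to $\tilde{\bD}_k\leq \bD_k$ gives
$$
\Bigl(\tfrac{V_{\bD_k}}{V_{\tilde{\bD}_k}}\Bigr)^{n}\tilde{\delta}_{\bD_k}\leq \tilde{\delta}_{\tilde{\bD}_k}\leq \tfrac{V_{\tilde{\bD}_k}}{V_{\bD_k}}\tilde{\delta}_{\bD_k},
$$
and since both volume ratios tend to $1$, these inequalities force $\tilde{\delta}_{\bD_k}\to \tilde{\delta}_\bD$. This establishes the strong continuity of $\tilde{\delta}$.

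For the openness of $\bDiv^{UD}(X)$, I would combine Theorem \ref{thmD} with Lemma \ref{lem:Delta&lct}. Theorem \ref{thmD} shows that within the klt locus, uniform $\bD$-log Ding stability is equivalent to $\tilde{\delta}_\bD>1$; Lemma \ref{lem:Delta&lct} shows that $\tilde{\delta}_\bD>1$ already implies $\lct_X(\bD)>1$, hence $(X,\bD)$ is automatically klt. Therefore
$$
\bDiv^{UD}(X)=\bigl\{\bD\in \bDiv_L(X)\, : \, \tilde{\delta}_\bD>1\bigr\},
$$
which is strongly open as the preimage of $(1,+\infty)$ under the strongly continuous function $\tilde{\delta}$.

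The only substantive obstacle is the verification that the auxiliary envelope $\tilde{\psi}_k$ actually belongs to $\cM^+_D(X,\omega)$ (so that the correspondence of Proposition \ref{prop:Corre} applies and Theorem \ref{thm:Main} is in scope), and that the order-reversing dictionary $\cM_D^+\leftrightarrow \bDiv_L(X)$ is used consistently; once that is in place, the rest of the argument is a purely quantitative squeeze, powered entirely by the monotone comparison inequality (\ref{eqn:Main1}) and by the continuity of the total Monge--Ampère mass along $d_S$-convergence.
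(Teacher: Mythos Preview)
Your proof is correct and follows essentially the same route as the paper: introduce the auxiliary envelope $\tilde{\psi}_k=P_\omega[\max(\psi_k,\psi)](0)$ to create monotone comparisons $\tilde{\bD}_k\leq \bD$ and $\tilde{\bD}_k\leq \bD_k$, apply Theorem~\ref{thm:Main}(ii) to each pair, and conclude via the convergence of volumes from Proposition~\ref{prop:d_S}. The paper packages the two applications of (\ref{eqn:Main1}) into a single logarithmic triangle inequality bound $\lvert\log\tilde{\delta}_k-\log\tilde{\delta}\rvert\leq n\log\big(V_{\tilde{\bD}_k}^2/(V_{\bD_k}V_\bD)\big)$, but this is just a rewriting of your two squeeze estimates; and your explicit invocation of Lemma~\ref{lem:Delta&lct} to drop the klt hypothesis in the description of $\bDiv^{UD}(X)$ is exactly what the paper's one-line appeal to Theorem~\ref{thmD} implicitly uses.
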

\begin{proof}
Let $\bD_k\in \bDiv_L(X)$ strongly converging to $\bD\in \bDiv_L(X)$, i.e. passing to the associated model type envelopes $\psi_k\overset{d_S}{\longrightarrow} \psi$ where $\psi_k,\psi\in\cM^+_D(X,\omega)$. Set also $\tilde{\psi}_k:=P_\omega[\max(\psi_k,\psi)](0)\in \cM^+(X,\omega)$. It is not hard to check that $\tilde{\psi}_k\in\cM^+_D(X,\omega)$, i.e. it is associated to an element $\tilde{\bD}_k$.\newline 
Denoting by $\tilde{\delta}_k:=\tilde{\delta}_{\bD_k}, \tilde{\delta}_{\max,k}:=\tilde{\delta}_{\tilde{\bD}_k}$ and $\tilde{\delta}:=\tilde{\delta}_\bD$, we then have
\begin{equation}
    \label{eqn:d_S}
    \lVert \log \tilde{\delta}_k-\log\tilde{\delta} \rVert\leq \lVert \log \tilde{\delta}_k-\log\tilde{\delta}_{\max,k} \rVert+\lVert \log \tilde{\delta}_{\max,k}-\log\tilde{\delta} \rVert\leq n \log \Big(\frac{\big(\int_XMA_\omega(\tilde{\psi}_k)\big)^2}{\big(\int_X MA_\omega(\psi_k)\big)\big(\int_X MA_\omega(\psi)\big)}\Big)
\end{equation}
where the last inequality follows from Theorem \ref{thm:Main}$.(ii)$. Then Proposition \ref{prop:d_S} implies that the right hand side in (\ref{eqn:d_S}) converges to $0$ as $k\to +\infty$. We have proved continuity of $\tilde{\delta}:\bDiv_L(X)\to \R_{\geq 0}$.\newline
Next, the strongly openness of $\bDiv^{UD}(X)$ follows from the equivalence $(ii)\Longleftrightarrow (iv)$ in Theorem \ref{thmD}.
\end{proof}

\subsubsection{Other consequences of Theorem \ref{thm:Main}}
Assuming $\bD_0=0$, we have the following interesting result, which in particular concludes the proofs of Theorems \ref{thmA} and \ref{thmB}.
\begin{prop}
\label{prop:QQQQQ}
With the same notations of Theorem \ref{thm:Main}, the following statements holds.
\begin{itemize}
    \item[i)] If $0\in \bDiv^{UD}(X)$, i.e. $\delta_0>1$, then
    $$
    \Big\{\bD_1\in \bDiv_L(X)\, : \, V_1> V_0/ \delta_0 \Big\}\subset \bDiv^{UD}(X).
    $$
    In particular \begin{equation}
        \label{eqn:FFF}
        \delta_0\leq (L^n) \inf_{\bD\in\bDiv_L(X)} \frac{\tilde{\delta}_\bD}{\langle (L-\bD)^n \rangle} \leq (L^n) \inf_{\bD\in\bDiv_L(X)} \frac{\lct_X(\bD)}{\langle (L-\bD)^n \rangle}.
    \end{equation}
    \item[ii)] If $\alpha_0> \frac{n}{n+1}$ then
    \begin{gather*}
        \Big\{\bD_1\in \bDiv_L(X)\, : \, c_{\bD_1}\geq(n+1)/n\Big\}\subset \bDiv^{UD}(X)\\
        \Big\{\bD_1\in \bDiv_L(X)\, : \, c_{\bD_1}<(n+1)/n, \, \lct_X(\bD_1)>1+\frac{n}{n+1}\Big(\frac{1-\alpha_0c_{\bD_1}}{\alpha_0-n/(n+1)}\Big)\Big\}\subset\bDiv^{UD}(X).
    \end{gather*}
    where $c_{\bD_1}:=\inf_{E/X}\Big\{\frac{\tau_0^E-\tau_1^E}{\nu_1^E}\Big\}$;
    \item[iii)] $0\in \bDiv^{UD}(X)$ if and only if
    $$
    \sup_{\bD\in \bDiv_L(X)}\tilde{\delta}_{\bD}^{\frac{1}{n}}\langle (L-\bD)^n\rangle>(L^n).
    $$
\end{itemize}
\end{prop}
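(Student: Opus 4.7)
The strategy is to specialize Theorem \ref{thm:Main} to the case $\bD_0 = 0$ and to invoke Theorem \ref{thmD} in order to translate the resulting numerical estimates into inclusions involving $\bDiv^{UD}(X)$. When $\bD_0 = 0$, every prime divisor $E/X$ satisfies $\ord_E(\bD_0) = 0$, so $\tilde{\delta}_0 = \delta_0$ and $\tilde{\alpha}_0 = \alpha_0$, while $V_0 = (L^n)$, $\nu_0^E = 0$, $\tau_0^E = \tau_0(E)$. Moreover, since every element of $\bDiv_L(X)$ is effective (Definition \ref{defi:Lpositive}), the monotonicity hypothesis $\bD_1 \geq \bD_0$ of Theorem \ref{thm:Main} is automatic, so $\bD_1$ may be taken to be arbitrary in $\bDiv_L(X)$.

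For part (i) I would exploit the first inequality in (\ref{eqn:Main1}), which rearranges to $\tilde{\delta}_1 \geq \delta_0\,V_1/V_0$. Under $\delta_0 > 1$, the condition $V_1 > V_0/\delta_0$ forces $\tilde{\delta}_1 > 1$, hence $\bD_1 \in \bDiv^{UD}(X)$ by Theorem \ref{thmD}. The same bound, $\tilde{\delta}_\bD/\langle(L-\bD)^n\rangle \geq \delta_0/(L^n)$ for every $\bD$, gives the first half of (\ref{eqn:FFF}) after taking the infimum; the second half is immediate from $\tilde{\delta}_\bD \leq \lct_X(\bD)$ (Lemma \ref{lem:Delta&lct}).

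For part (ii) I would feed the specialization into (\ref{eqn:Main2}). If $c_{\bD_1} \geq (n+1)/n$, the $\max$-term vanishes and the hypothesis $\alpha_0 > n/(n+1)$ gives $\tilde{\delta}_1^{-1} \leq \frac{n}{(n+1)\alpha_0} < 1$. If $c_{\bD_1} < (n+1)/n$, the inequality $\tilde{\delta}_1^{-1} < 1$ becomes $\tfrac{n}{(n+1)\alpha_0} + \big(1 - \tfrac{c_{\bD_1}n}{n+1}\big)\lct_1^{-1} < 1$; solving for $\lct_1$ and simplifying shows the resulting lower bound is exactly $1 + \tfrac{n}{n+1}\cdot\tfrac{1-\alpha_0 c_{\bD_1}}{\alpha_0 - n/(n+1)}$. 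In both cases $\tilde{\delta}_1 > 1$, and Theorem \ref{thmD} places $\bD_1$ in $\bDiv^{UD}(X)$.

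For part (iii) the second inequality in (\ref{eqn:Main1}) rearranges to $\tilde{\delta}_\bD^{1/n}\langle(L-\bD)^n\rangle \leq \delta_0^{1/n}(L^n)$ for every $\bD$; hence $\sup_\bD \tilde{\delta}_\bD^{1/n}\langle(L-\bD)^n\rangle > (L^n)$ forces $\delta_0 > 1$, so $0 \in \bDiv^{UD}(X)$ by Theorem \ref{thmD}. Conversely, if $0 \in \bDiv^{UD}(X)$ then $\delta_0 > 1$, and substituting $\bD = 0$ in the supremum already produces $\delta_0^{1/n}(L^n) > (L^n)$. The only mildly delicate point is the algebraic inversion in part (ii); all the analytic content has been absorbed into Theorems \ref{thm:Main} and \ref{thmD}, so the derivation is otherwise essentially bookkeeping.
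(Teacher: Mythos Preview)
Your proposal is correct and follows essentially the same route as the paper: specialize Theorem \ref{thm:Main} to $\bD_0=0$ (so $\tilde{\delta}_0=\delta_0$, $\tilde{\alpha}_0=\alpha_0$, $\nu_0^E=0$, $V_0=(L^n)$), use the two inequalities in (\ref{eqn:Main1}) for (i) and (iii) and the estimate (\ref{eqn:Main2}) for (ii), and conclude via Theorem \ref{thmD}. The paper in fact leaves the algebra for (ii) to the reader, so your explicit computation there is a welcome addition. One small point you glossed over: Theorem \ref{thmD} carries the hypothesis $\lct_X(\bD)>1$, and the definition of $\bDiv^{UD}(X)$ requires it as well, so before invoking Theorem \ref{thmD} from $\tilde{\delta}_1>1$ you should record that $\lct_X(\bD_1)\geq \tilde{\delta}_1>1$ by Lemma \ref{lem:Delta&lct}; you already cite this lemma, so this is only a matter of ordering.
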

\begin{proof}
Let $\bD_1\in\bDiv_L(X)$. By Theorem \ref{thm:Main}$.(ii)$ it follows that $\tilde{\delta}_1\geq \delta_0 V_1/V_0$ (by definition $\tilde{\delta}_0=\delta_0$). Thus, if $V_1> V_0/\delta_0$ then
$$
\lct_X(\bD_1)\geq \tilde{\delta}_1> 1
$$
where the first inequality is the content of Lemma \ref{lem:Delta&lct}. Hence, $\bD_1\subset \bDiv^{UD}(X)$ as a consequence of Theorem \ref{thmD}. Observe that the same calculation also leads to $\delta_0\leq \tilde{\delta}_{\bD_1} V_0/V_1 \leq \lct_X(\bD_1)V_0/V_1$, which clearly implies (\ref{eqn:FFF}).\newline
Then, by the other inequality in Theorem \ref{thm:Main}$.(ii)$ we have $ \delta_0\geq \sup_{\bD\in\bDiv_L(X)}\tilde{\delta}_\bD\Big( \frac{V_\bD}{V_0}\Big)^n $, which gives $(iii)$.\newline
Finally, the proof in $(ii)$ proceeds by an easy calculation using Theorem \ref{thm:Main}$.(iii)$ and it is left to the reader.
\end{proof}
\subsection{Proof of Theorem \ref{thm:Main}}
\label{ssec:Proof}
\begin{lemma}
\label{lem:Concav}
Let $t\to \bD_t$ as in Theorem \ref{thm:Main} and let $E$ be a prime divisor over $X$. Then the function $[0,1]\ni t\to g_E(t):=\tau_t^E+\nu_t^E$ is decreasing, continuous and concave.
\end{lemma}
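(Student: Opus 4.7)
The plan is to reformulate $g_E(t)$ as a supremum of Lelong numbers over a suitable family of $\omega$-psh functions, from which decreasing and concavity become essentially immediate. Under the correspondence $\bD_t \leftrightarrow \psi_t$ of Proposition \ref{prop:Corre}, the discussion around the $[\psi]$-relative test curve (\ref{eqn:VE2}) identifies
$$g_E(t) = \tau_t^E + \nu_t^E = s_{\psi_t}^+ = \sup\Big\{s\geq 0\, : \, \exists\, u\in \PSH(X,\omega)\, \text{with}\, u\leq \psi_t,\, \nu(u,E)\geq s\Big\}.$$
This quantity is finite: $L$ ample forces $\tau_0^E<\infty$ and $\nu_t^E$ is bounded via the klt-ness of $\psi_t$ (or via \cite[Theorem B.5]{BBJ15}).

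First I would verify the decreasing property: since the path $t\mapsto \psi_t$ is pointwise decreasing, the admissible family $\{u\in\PSH(X,\omega):u\leq \psi_t\}$ shrinks as $t$ grows, so $g_E$ is decreasing. Next, for concavity, fix $t_1,t_2\in [0,1]$, $a\in[0,1]$ and set $t=at_1+(1-a)t_2$. For $u_i\in\PSH(X,\omega)$ with $u_i\leq \psi_{t_i}$, the convex combination $u:=au_1+(1-a)u_2$ lies in $\PSH(X,\omega)$ and satisfies $\sup_X u\leq 0$; moreover, because $\psi_{t_i}\preccurlyeq t_i\psi_1+(1-t_i)\psi_0$ by definition of the envelope $\psi_{t_i}=P_\omega[t_i\psi_1+(1-t_i)\psi_0](0)$, one gets
$$u \leq a\psi_{t_1}+(1-a)\psi_{t_2}\preccurlyeq t\psi_1+(1-t)\psi_0.$$
Thus $u$ is admissible in the sup defining $\psi_t$, whence $u\leq \psi_t$. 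By linearity of Lelong numbers, $\nu(u,E)=a\nu(u_1,E)+(1-a)\nu(u_2,E)$, and taking $\nu(u_i,E)\nearrow g_E(t_i)$ yields $g_E(t)\geq ag_E(t_1)+(1-a)g_E(t_2)$.

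Continuity then follows automatically: a concave function on $[0,1]$ is continuous on the open interval $(0,1)$, and at the endpoint $t=0$ the concavity inequality $g_E(\lambda s)\geq \lambda g_E(s)+(1-\lambda)g_E(0)$ (obtained by choosing $t_1=s,t_2=0,a=\lambda$) combined with $\lambda\to 0^+$ gives $\liminf_{\tau\to 0^+}g_E(\tau)\geq g_E(0)$, while the decreasing property supplies the matching upper bound $g_E(\tau)\leq g_E(0)$; the endpoint $t=1$ is symmetric. The only delicate point is the initial identification: one must confirm that ``$u\leq \psi_t$'' is indeed equivalent to the two-part condition ``$\sup_X u\leq 0$ and $u\preccurlyeq t\psi_1+(1-t)\psi_0$'', which uses precisely that $\psi_t$ is the maximal $\omega$-psh function bounded above by $0$ that is more singular than $t\psi_1+(1-t)\psi_0$, together with the fact recalled in subsection~\ref{ssec:KE} that for model type envelopes $\psi$ the relation $u\preccurlyeq \psi$ with $\sup_X u\leq 0$ sharpens to the pointwise bound $u\leq \psi$.
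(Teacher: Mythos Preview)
Your overall strategy---rewriting $g_E(t)$ as a supremum of Lelong numbers---matches the paper, but there are two genuine gaps.

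\textbf{Concavity.} You claim that $\psi_{t_i}\preccurlyeq t_i\psi_1+(1-t_i)\psi_0$ ``by definition of the envelope''. This is not what the definition gives: $P_\omega[v](0)$ is the supremum of functions $w\leq 0$ with $w\preccurlyeq v$, but a supremum of functions more singular than $v$ need not itself be more singular than $v$ (the constants $C_w$ in $w\leq v+C_w$ are not uniform). What the definition yields is the opposite relation $t_i\psi_1+(1-t_i)\psi_0\preccurlyeq\psi_{t_i}$, which is of no use here. Your attempted justification at the end is circular: knowing that $\psi_t$ is maximal among $\{w\leq 0:w\preccurlyeq v\}$ does not place $\psi_t$ itself inside that set. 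The paper avoids this by proving $a\psi_{t_1}+(1-a)\psi_{t_2}\leq\psi_t$ directly, via the scaling $aP_\omega[v](0)=P_{a\omega}[av](0)$ together with the superadditivity $P_{a\omega}[v_1](0)+P_{(1-a)\omega}[v_2](0)\leq P_\omega[v_1+v_2](0)$, which follows because a sum of candidates is a candidate.

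\textbf{Continuity at $t=1$.} The two endpoints are \emph{not} symmetric. At $t=0$ your argument works because monotonicity gives $g_E(\tau)\leq g_E(0)$ while concavity gives $\liminf_{\tau\to 0^+}g_E(\tau)\geq g_E(0)$, and these bound from opposite sides. At $t=1$, however, monotonicity gives $g_E(\tau)\geq g_E(1)$ for $\tau<1$, and concavity again gives $\liminf_{\tau\to 1^-}g_E(\tau)\geq g_E(1)$---both inequalities point the same way. The missing upper bound $\limsup_{\tau\to 1^-}g_E(\tau)\leq g_E(1)$ is genuinely nontrivial; indeed, the function equal to $0$ on $[0,1)$ and $-1$ at $1$ is concave and decreasing but discontinuous at $1$. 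The paper obtains this upper bound by a compactness argument: take near-optimizers $u_k\leq\psi_{t_k}$ with $\nu(u_k,E)\geq g_E(t_k)-1/k$, extract a weak limit $u\leq\psi_1$, and invoke upper semicontinuity of Lelong numbers.
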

\begin{proof}
We observe that by definition
$$
g_E(t)=\sup\Big\{\nu(u,E)\, : \, u\leq \psi_t, \int_X MA_\omega(u)>0\Big\}=\sup\big\{\nu(u,E)\, : \, u\leq \psi_t \big\},
$$
which clearly implies the monotonicity of $t\to g_E(t)$. Moreover, letting $u_1\leq \psi_{t_1}$, $u_2\leq \psi_{t_2}$, $a\in [0,1]$ and setting $t:=at_1+(1-a)t_2$, we have
\begin{multline}
    au_1+(1-a)u_2\leq a\psi_{t_1}+(1-a)\psi_{t_2}=\\
    =aP_\omega[t_1\psi_1+(1-t_1)\psi_0](0)+(1-a)P_\omega[t_2\psi_1+(1-t_2)\psi_0](0)=\\
    =P_{a\omega}[at_1\psi_1+a(1-t_1)\psi_0](0)+P_{(1-a)\omega}[(1-a)t_2\psi_1+(1-a)(1-t_2)\psi_0](0)\leq\\
    \leq P_\omega[t\psi_1+(1-t)\psi_0](0)=P_\omega[\psi_t](0)=\psi_t,
\end{multline}
where we used easy properties of $P_\omega[\cdot](0)$ coming directly from its definition. Thus,
$a\nu(u_1,E)+(1-a)\nu(u_2,E)\leq g_E(t)$, which leads to the concavity of $t\to g_E(t)$ by taking the supremum in $u_1\leq \psi_{t_1}$ and in $u_1\leq \psi_{t_2}$.\newline
It remains to prove the continuity of $g_E(t)$ at $t=0$ and at $t=1$. Assume first that $t_k\nearrow 1$, and let $u_k\leq \psi_{t_k}$ such that $\nu(u_k,E)\geq g_E(t_k)-\frac{1}{k}$ and such that $\sup_X u_k=0$ for any $k\in\N$. Then, by compactness of $\{u\in \PSH(X,\omega) \, :\, \sup_X u=0 \}$ \cite[Proposition 8.5]{GZ17}, we can also suppose that $u_k$ converges weakly to $u\in \PSH(X,\omega)$. As by $\psi_{t_k}\searrow \psi_1$, we have $u\leq \psi_1$ and $\nu(u,E)\leq g_E(1)$. Hence, by the upper semicontinuity of Lelong numbers we deduce
$$
g_E(1)\geq \nu(u,E)\geq \limsup_{k\to +\infty}\nu(u_k,E)\geq \limsup_{k\to +\infty}g_E(t_k).
$$
On the other hand, by monotonicity, $g_E(t_k)\geq g_E(1)$ for any $k\in \N$. Therefore $g_E(t_k)\searrow g_E(1)$.\newline
Next, fix $\epsilon>0$ and let $u\leq \psi_0$ such that $g_E(0)\leq\nu(u,E)+\epsilon$. Then for any $t>0$ the $\omega$-psh function $(1-t)u+t\psi_1$ is more singular than $t\psi_1+(1-t)\psi_0$ and hence of $\psi_t$. Thus,
$$
g_E(0)\leq \nu(u,E)+\epsilon\leq\frac{1}{1-t}\nu\big((1-t)u+t\psi_1,E\big)+\epsilon\leq\frac{1}{1-t}g_E(t)+\epsilon,
$$
which gives the continuity at $t=0$ and concludes the proof.
\end{proof}
\begin{prop}
\label{prop:Magic}
Let $t\to \bD_t$ as in Theorem \ref{thm:Main} and let $E$ be a prime divisor over $X$. Keeping using the notations of Theorem \ref{thm:Main}, set %$G_E(t):=V_t S_{\bD_t}(E), 
$F_E(t):=V_t(S_{\bD_t}(E)+\nu_t^E)$. Then $[0,1]\ni t \to F_E(t)$ is decreasing and for any convex combination $t=at_1+(1-a)t_2$
\begin{gather}
    \label{eqn:MM2}
    F_E(t)\geq a^{n+1}F_E(t_1)+(1-a)^{n+1}F_E(t_2)
\end{gather}
Moreover, for any $0\leq s\leq t\leq 1$
\begin{gather}
    \label{eqn:Magic}
    F_E(s)\geq F_E(t)\geq F_E(s)+\int_s^t (\tau_r^E+\nu^E_r)\frac{d V_u}{du}_{|u=r} dr.
\end{gather}
\end{prop}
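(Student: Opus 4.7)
The plan is to recast $F_E(t)$ as a single integral of a non-pluripolar Monge--Amp\`ere mass. For $y\geq 0$, set
\[
\phi_{t,y} := \sup\bigl\{u \in \PSH(X,\omega) : u \leq \psi_t,\; \nu(u,E) \geq y\bigr\}, \quad M(t,y) := \int_X MA_\omega(\phi_{t,y}).
\]
By construction, $\phi_{t,y} = \psi_t$ for $y \leq \nu_t^E$ and $\phi_{t,y} \equiv -\infty$ for $y > g_E(t) := \tau_t^E + \nu_t^E$. The identification $\langle (L - \bD_t - xE)^n \rangle = M(t, \nu_t^E + x)$ from the proof of Lemma \ref{lem:Dini} then gives
\[
F_E(t) \;=\; V_t\nu_t^E + \int_0^{\tau_t^E} \langle (L-\bD_t-xE)^n\rangle\, dx \;=\; \int_0^{g_E(t)} M(t,y)\, dy.
\]

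The monotonicity of $F_E$ is now immediate: $g_E$ is decreasing by Lemma \ref{lem:Concav}, and for each fixed $y$, $\phi_{t,y}$ is decreasing in $t$ (since $\psi_t$ is), so $M(t,y)$ is decreasing by monotonicity of the non-pluripolar total mass (subsection \ref{ssec:KE}). Inequality (\ref{eqn:MM2}) follows from an envelope-combination argument. Given $t = at_1 + (1-a)t_2$, set $u := a\phi_{t_1, y/a} + (1-a)\phi_{t_2, y/(1-a)}$. Arguing as in the proof of Lemma \ref{lem:Concav}, $u \leq a\psi_{t_1} + (1-a)\psi_{t_2} \leq \psi_t$, while $\nu(u,E) \geq y$ by linearity of Lelong numbers, so $u \leq \phi_{t,y}$. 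Monotonicity of the non-pluripolar mass together with the multinomial lower bound
\[
\bigl(a(\omega+dd^c\phi_{t_1,y/a}) + (1-a)(\omega+dd^c\phi_{t_2,y/(1-a)})\bigr)^n \;\geq\; a^n MA_\omega(\phi_{t_1,y/a}) + (1-a)^n MA_\omega(\phi_{t_2,y/(1-a)}),
\]
obtained by discarding the positive mixed terms, yield $M(t,y) \geq a^n M(t_1, y/a) + (1-a)^n M(t_2, y/(1-a))$. Integrating in $y \in [0, g_E(t)]$ and making the changes of variables $y = ax$, resp.\ $y = (1-a)x$, produces the Jacobian factors $a^{n+1}$ and $(1-a)^{n+1}$ of (\ref{eqn:MM2}); the ranges where one of $\phi_{t_i, y/\alpha_i}$ degenerates to $-\infty$ are handled by replacing that term with $\alpha_i \psi_{t_i} \leq 0$, which still respects the Lelong-number constraint.

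The right-hand inequality in (\ref{eqn:Magic}) is the main obstacle. Writing $F_E'(r) = \int_0^{g_E(r)} \partial_r M(r,y)\, dy$ (the boundary contribution vanishes as $M(r,g_E(r))=0$) and $g_E(r) V'(r) = \int_0^{g_E(r)} \partial_r M(r,0)\, dy$, the claim reduces to the a.e.\ pointwise inequality $\partial_r M(r,y) \geq \partial_r M(r,0) = V'(r)$ on $(\nu_r^E, g_E(r)]$ (the integrand vanishes on $[0,\nu_r^E]$ since $\phi_{r,y} = \psi_r$ there). The plan for the pointwise bound is to compare $\phi_{r+\delta, y}$ with a convex combination of $\phi_{r, y'}$ and $\psi_{r+\eta}$, with parameters tuned so that both the $\psi$-family concavity from Lemma \ref{lem:Concav} (ensuring the combination remains below $\psi_{r+\delta}$) and the Lelong-number constraint are preserved, then expand the resulting Monge--Amp\`ere multinomially as in (\ref{eqn:MM2}) to extract the desired lower bound after letting $\delta \to 0$. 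The delicate point is the behavior near $y = g_E(r)$, where both $M(r,y)$ and $\partial_r M(r,y)$ vanish and the parameter tuning degenerates; there one must exploit the concavity of $g_E$ together with the upper semicontinuity of Lelong numbers to keep the comparison envelope non-trivial.
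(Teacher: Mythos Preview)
Your integral representation $F_E(t)=\int_0^{g_E(t)}M(t,y)\,dy$ is exactly the one the paper uses (with $\phi_{t,y}=v_y^t$ in their notation), and your arguments for monotonicity and for \eqref{eqn:MM2} are correct. Your rescaling $y\mapsto y/a,\,y/(1-a)$ is in fact slightly cleaner than the paper's $r\mapsto rg(t_i)/h(t)$, which needs the additional input $av_{y_1}^E+(1-a)v_{y_2}^E\le v_{ay_1+(1-a)y_2}^E$; your version bypasses this by working directly with the relative envelopes.

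The gap is in the right-hand inequality of \eqref{eqn:Magic}. Your convex-combination plan for the pointwise bound $\partial_r M(r,y)\ge V'(r)$ runs into a real obstruction: to force $\lambda\phi_{r,y'}+(1-\lambda)\psi_{r+\eta}\le\psi_{r+\delta}$ you need $(1-\lambda)\eta\ge\delta$, and to meet the Lelong constraint with $y'=y$ you need $\nu_{r+\eta}^E\ge y$. For $y>\nu_{r+\eta}^E$ (which is the whole interesting range) you are forced to take $y'>y$, and then the multinomial bound produces $M(r,y')\le M(r,y)$ on the wrong side. There is no parameter tuning that rescues this; the difficulty is not only at $y=g_E(r)$.

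The paper's device is different and much shorter: for $s\le t$ and any $y\le g_E(t)$ apply the max/min total-mass inequality \cite[Theorem 1.2]{DDNL19} to the pair $(v_y^s,\psi_t)$. Since $P_\omega(v_y^s,\psi_t)\le\phi_{t,y}$ and $\max(v_y^s,\psi_t)\le\psi_s$, monotonicity of the non-pluripolar mass gives
\[
M(t,y)+V_s\;\ge\;\int_X MA_\omega\bigl(P_\omega(v_y^s,\psi_t)\bigr)+\int_X MA_\omega\bigl(\max(v_y^s,\psi_t)\bigr)\;\ge\;M(s,y)+V_t,
\]
i.e.\ $M(t,y)-M(s,y)\ge V_t-V_s$ in one stroke. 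Integrating over $y\in[0,g_E(t)]$ and bounding the leftover $\int_{g_E(t)}^{g_E(s)}M(s,y)\,dy$ by $(g_E(s)-g_E(t))M(s,g_E(t))$, the concavity of $g_E$ (Lemma \ref{lem:Concav}) makes this term $o(t-s)$, yielding $F_E'(r)\ge g_E(r)V'(r)$ a.e.\ and hence \eqref{eqn:Magic}. This min/max mass inequality is the missing idea.
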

\begin{proof}
To lighten notations, we will drop the dependence on $E$. Set $g(t):=\tau_t+\nu_t$.\newline
As seen in Lemma \ref{lem:Z1}
\begin{equation}
    \label{eqn:Formu}
    F_E(t)=\int_0^{g(t)}\Big(\int_X MA_{\omega}(v_r^t)\Big)dr
\end{equation}
where $v_r^t:=P_\omega[\psi_t](v_r^E)$ (see (\ref{eqn:VE2})). By Lemma \ref{lem:Concav} $g(t)\geq ag(t_1)+(1-a)g(t_2)=:h(t)$ if $t=at_1+(1-a)t_2$, $a\in[0,1]$.  Moreover, by easy properties of $P_\omega[\cdot](\cdot)$ we also have that 
\begin{multline}
    \label{eqn:Nata1}
    a v_{rg(t_1)/h(t)}^{t_1}+(1-a)v_{rg(t_2)/h(t)}^{t_2}=aP_\omega[\psi_{t_1}](v_{rg(t_1)/h(t)}^E)+(1-a)P_\omega[\psi_{t_2}](v_{rg(t_2)/h(t)}^E)=\\
    =P_{a\omega}[at_1\psi_1+a(1-t_1)\psi_0](av_{rg(t_1)/h(t)}^E)+P_{(1-a)\omega}[(1-a)t_2\psi_1+(1-a)(1-t_2)\psi_0]\big((1-a)v_{rg(t_2)/h(t)}^E\big)\leq\\
    \leq P_\omega[\psi_t]\big(av_{rg(t_1)/h(t)}^E+(1-a)v_{rg(t_2)/h(t)}^E\big).
\end{multline}
Then, by definition (\ref{eqn:VE}) we have
\begin{equation}
    \label{eqn:Nata2}
    a v_{rg(t_1)/h(t)}^E+(1-a)v_{rg(t_2)/h(t)}^E\leq v_{r(ag(t_1)+(1-a)g(t_2))/h(t)}^E=v_r^E.
\end{equation}
Thus, combining (\ref{eqn:Nata1}) and (\ref{eqn:Nata2}), by \cite[Theorem 1.2]{WN17} we get
\begin{multline*}
    f(r):=\int_X MA_\omega (v_r^t)\geq \int_X MA_\omega\big(a v_{rg(t_1)/h(t)}^{t_1}+(1-a)v_{rg(t_2)/h(t)}^{t_2}\big)\geq\\
    \geq a^n\int_X MA_\omega(v_{rg(t_1)/h(t)}^{t_1})+(1-a)^{n}\int_X MA_\omega(v_{rg(t_2)/h(t)}^{t_2})=:a^n f_1\big(rg(t_2)/h(t)\big)+(1-a)^n f_2\big(rg(t_1)/h(t)\big)
\end{multline*}
where in the last equality we clearly defined $f_i(r):=\int_X MA_\omega(v_r^{t_i})$. Hence, by an easy calculation,
\begin{multline*}
    a^{n+1}F_E(t_1)+(1-a)^{n+1}F_E(t_2)=a^{n+1}\int_0^{g(t_1)}f_1(u)du+(1-a)^{n+1}\int_0^{g(t_2)}f_2(u)du=\\
    =a^n\int_0^{h(t)}f_1\big(rg(t_1)/h(t)\big)\frac{ag(t_1)}{h(t)}dr +(1-a)^n\int_0^{h(t)}f_2\big(rg(t_2)/h(t)\big)\frac{(1-a)g(t_2)}{h(t)}dr\leq\\
    \leq a^n\int_0^{h(t)}f_1\big(rg(t_1)/h(t)\big)dr +(1-a)^n\int_0^{h(t)}f_2\big(rg(t_1)/h(t)\big)dr\leq\\
    \leq\int_0^{h(t)}f(r)dr\leq\int_0^{g(t)}f(r)dr=F_E(t),
\end{multline*}
i.e. (\ref{eqn:MM2}) is proved.\newline
We now want to prove (\ref{eqn:Magic}). The monotonicity of $t\to F_E(t)$ follows again from the formula (\ref{eqn:Formu}) and \cite[Theorem 1.2]{WN17} as $t\to v_r^t, t\to g(t)$ are decreasing.  %Moreover it is easy to check that the right-hand side inequality in (\ref{eqn:Magic2}) follows from the right-hand side inequality in (\ref{eqn:Magic}). Indeed, assuming (\ref{eqn:Magic}),
%\begin{multline*}
%    G_E(t)=F_E(t)-\nu_t V_t\geq F_E(s)+\int_s^t\tau_r \frac{dV_u}{du}_{|u=r}dr+\int_s^t \nu_r \frac{dV_u}{du}_{|u=r}dr-\nu_t V_t=\\
%    =G_E(s)+\int_s^t\tau_r \frac{dV_u}{du}_{|u=r}dr-(\nu_1-\nu_0)\int_s^tV_r dr
%\end{multline*}
%where the last equality is clearly given by integration by parts. Therefore it remains to prove (\ref{eqn:Magic}).
Moreover, from (\ref{eqn:MM2}) and the monotonicity of $t\to F_E(t)$ it is not hard to deduce that $t\to F_E(t)$ is absolute continuous. In particular, it is differentiable almost everywhere and
\begin{equation}
    \label{eqn:TFCI}
    F_E(t)=F_E(s)+\int_s^t \frac{dF_E}{du}_{|u=r}dr.
\end{equation}
for any $0\leq s\leq t\leq 1$.

Then we observe that for any $0\leq s\leq t\leq 1$ fixed and any $r\leq g(t)$,
\begin{multline*}
    \int_X MA_\omega(v_r^t)+\int_X MA_\omega(\psi_s)\geq \int_X MA_\omega\big(P_\omega(v_r^s,\psi_t)\big)+\int_X MA_\omega\big(\max(v_r^s,\psi_t)\big)\geq\\
    \geq\int_X MA_\omega(v_r^s)+\int_X MA_\omega(\psi_t)
\end{multline*}
where the first inequality is given by the montonicity of the non-pluripolar product while the last inequality is the content of \cite[Theorem 1.2]{DDNL19}. Thus
\begin{multline}
    \label{eqn:Label0}
    0\geq F_E(t)-F_E(s)=\\
    =\int_0^{g(t)}\Big(\int_X MA_\omega(v_r^t)-\int_X MA_\omega(v_r^s)\Big)dr-\int_{g(t)}^{g(s)}\Big(\int_X MA_\omega(v_r^s) \Big)dr\geq \\
    \geq \int_0^{g(t)}\Big(\int_X MA_\omega(\psi_t)-\int_X MA_\omega(\psi_s)\Big)dr-\int_{g(t)}^{g(s)}\Big(\int_X MA_\omega(v_r^s) \Big)dr=\\
    =-\int_{g(t)}^{g(s)}\Big(\int_X MA_\omega(v_r^s) \Big)dr+g(t)(V_t-V_s)\geq -\int_{g(t)}^{g(s)}\Big(\int_X MA_\omega(v_r^s) \Big)dr +g(t)(V_t-V_s)
\end{multline}
for any $0\leq s\leq t\leq 1$. On the other hand, assuming $0\leq s<1$, by Lemma \ref{lem:Concav} we have
\begin{equation}
    \label{eqn:Label}
    \int_{g(t)}^{g(s)}\Big(\int_X MA_\omega(v_r^s)\Big)dr\leq \big(g(s)-g(t)\big)\int_X MA_\omega(v_{g(t)}^s)\leq (t-s)\frac{g(s)-g(1)}{1-s}\int_X MA_\omega(v_{g(t)}^s).
\end{equation}
Hence, combining (\ref{eqn:Label0}) and (\ref{eqn:Label}) we obtain
$$
\frac{dF_E}{du}_{|u=r}\geq g(r)\frac{dV_u}{du}_{|u=r},
$$
for any $r\in [0,1)$ where $F_E$ is differentiable, as $\int_X MA_\omega(v^s_{g(t)})\to 0$ if $t\searrow s$. The latter, together with (\ref{eqn:TFCI}), concludes the proof.
\end{proof}
\begin{remark}
Note that (\ref{eqn:Label0}) represents an analog of \cite[Proposition 4.3]{Tru19} for psh geodesic rays thanks to Lemma \ref{lem:Z1}. Indeed, letting $\bD,\bD'\in \bDiv_L(X)$ such that $\bD'\leq \bD$, (\ref{eqn:Label0}) leads to
\begin{multline*}
    \lim_{t\to+\infty} \frac{VE_\psi(u_t-At)}{t}=V\big(S_{\bD}(E)+\ord_{E}(\bD)-\tau_{\bD'}(E)-\ord_E(\bD')\big)\geq\\
    \geq V'\big(S_{\bD'}(E)-\tau_{\bD'}(E)\big)=\lim_{t\to +\infty}\frac{V'E_{\psi'}(u'_t-At)}{t},
\end{multline*}
where $\psi,\psi'\in\mathcal{M}_D^+(X,\omega)$ are the model type envelopes associated to $\bD,\bD'$, $\{u_t\}_{t>0}, \{u_t'\}_{t>0}$ are the psh geodesic ray associated to the psh test curves given by (\ref{eqn:VE2}), $V':=\langle (L-\bD')^n \rangle, V:=\langle(L-\bD)^n  \rangle$ and $A:=\tau_{\bD'}(E)+\ord_E(\bD')$.
\end{remark}
\begin{proof}[Proof of Theorem \ref{thm:Main}]
\textbf{Step 1: proof of (i)}.\newline
Letting $E$ be a prime divisor over $X$ and let $0\leq s\leq t\leq 1$. Set also $S_t^E:=S_{\bD_t}(E)$ and similarly as in the statement of Theorem \ref{thm:Main} for the other quantities. As immediate consequence of Proposition \ref{prop:Magic} we obtain that
$$
\frac{V_s}{\tilde{\delta}_s}=\sup_{E/X}\frac{V_s(S_s^E+\nu_s^E)}{A_X(E)}\geq \sup_{E/X} \frac{V_t(S^E_t+\nu^E_t)}{A_X(E)}=\frac{V_t}{\tilde{\delta}_t},
$$
i.e. the map $[0,1]\ni t \to \frac{V_t}{\tilde{\delta}_t}$ is monotone. On the other hand, Proposition \ref{prop:Magic} also leads to
\begin{equation}
    \label{eqn:Utile}
    \frac{V_t(S_t^E+\nu_t^E)}{A_X(E)}\geq \frac{V_s(S_s^E+\nu_s^E)}{A_X(E)}+\frac{\int_s^t(\tau_r^E+\nu_r^E)\frac{d V_u}{du}_{|u=r}dr}{A_X(E)}\geq \frac{V_s(S_s^E+\nu_s^E)}{A_X(E)}+\frac{\tau_s^E+\nu_s^E}{A_X(E)}(V_t-V_s)
\end{equation}
where the last inequality follows from the monotonicity of $t\to \tau_r^E+\nu_t^E$ (Lemma \ref{lem:Concav}). Moreover, $V_t\leq V_s$ and by definition of $\tilde{\alpha}_s$ we have $\tau_s^E+\nu_s^E\leq A_X(E)/\tilde{\alpha}_s$ for any $E$ prime divisor over $X$. Thus, we also obtain
\begin{equation}
    \frac{V_t(S_t^E+\nu_t^E)}{A_X(E)}\geq \frac{V_s(S_s^E+\nu_s^E)}{A_X(E)}-\frac{V_s-V_t}{\tilde{\alpha}_s}.
\end{equation}
Taking the supremum over all $E/X$ concludes the proof of $(i)$.\newline
\textbf{Step 2: proof of (ii)}.\newline
The left-hand side inequality has already been proved in $(i)$. For the right-hand side, letting $E$ be a fixed prime divisor over $X$, by Proposition \ref{prop:OtherIneq} we have $\tau_s^E+\nu_s^E\leq (n+1)(S_s^E+\nu_s^E)$. Thus, the inequality (\ref{eqn:Utile}) yields
\begin{equation}
    \label{eqn:Nice111}
    S_t^E+\nu_t^E\geq (S_s^E+\nu_s^E)\Big(n+1-n\frac{V_s}{V_t}\Big) 
\end{equation}
for any $0\leq s\leq t\leq 1$. Moreover, as $[0,1]\ni t\to V_t$ is continuously differentiable and $V_0\geq V_t\geq V_1>0$, the proof of Proposition \ref{prop:Magic} implies that $[0,1]\ni t\to F_E(t), G_E(t):=S_t^E+\nu_t^E$ are Lipschitz. Therefore for any $s\in(0,1)$ where $G_E(s)$ is differentiable, from (\ref{eqn:Nice111}) we get
\begin{equation}
    \label{eqn:Nice}
    \frac{dG_E(r)}{dr}_{|r=s}\geq n\frac{G_E(s)}{V_s}\frac{d V_r}{dr}_{|r=s}.
\end{equation}
Thus, as $G_E\geq c>0$ for a small constant $c>0$, $[0,1]\ni t\to \log G_E(t), [0,1]\ni t\to \log V_t$ are absolute continuous and
$$
\log \frac{G_E(1)}{G_E(0)}\geq n\log \frac{V_1}{V_0}
$$
follows by integrating (\ref{eqn:Nice}). Hence $G_E(1)\geq G_E(0)\big(V_1/V_0\big)^n$ and we easily get the right-hand inequality in (\ref{eqn:Main1}) by
$$
\frac{1}{\tilde{\delta}_1}=\sup_{E/X}\frac{G_E(1)}{A_X(E)}\geq \Big(\frac{V_1}{V_0}\Big)^n \sup_{E/X}\frac{G_E(0)}{A_X(E)}= \Big(\frac{V_1}{V_0}\Big)^n\frac{1}{\tilde{\delta}_0}.
$$
\textbf{Step 3: proof of (iii).}\newline
Let $c\in \R_{\geq 1}$ such that $\tau_1^E+c\nu_1^E\leq \tau_0^E+\nu_0^E$ for any prime divisor $E$ over $X$. By Proposition \ref{prop:Fujita} we have
$$
S_1^E+\nu_1^E\leq \frac{n}{n+1}\tau_1^E+\nu_1^E\leq \frac{n}{n+1}(\tau_0^E+\nu_0^E)+\big(1-\frac{cn}{n+1}\big)\nu_1^E\leq \frac{n}{n+1}(\tau_0^E+\nu_0^E)+\max\Big(0,1-\frac{cn}{n+1}\Big)\nu_1^E.
$$
Thus, as $\nu_1^E\leq \frac{A_X(E)}{\lct_X(\bD_1)}=:\frac{A_X(E)}{\lct}$ by definition of $\lct$, we obtain
$$
\frac{S_1^E+\nu_1^E}{A_X(E)}\leq \frac{n}{n+1}\frac{\tau_0^E+\nu_0^E}{A_X(E)}+\max\Big(0,1-\frac{cn}{n+1}\Big)\lct^{-1}.
$$
Taking the supremum over $E/X$, we get (\ref{eqn:Main2}) which concludes the proof.
\end{proof}

{\footnotesize
\bibliographystyle{acm}
\bibliography{main}
}
\end{document}